\newcommand{\ld}{\ldots}
\newcommand{\pdot}{\cdot \ld \cdot}
\newcommand{\Z}{\mathbb{Z}}
\newcommand{\Q}{\mathbb{Q}}
\newcommand{\R}{\mathbb{R}}
\newcommand{\C}{\mathbb{C}}
\newcommand{\fp}[1]{\{ #1 \}}
\newcommand{\vect}{\mathbf}
\newcommand{\poch}[2]{( #1 )_{#2}}
\newcommand{\A}{\mathcal{A}}
\newcommand{\al}{\alpha}
\newcommand{\bal}{\boldsymbol \alpha}
\newcommand{\gam}{\boldsymbol \gamma}
\newcommand{\la}{\lambda}
\newcommand{\KA}{K_\A(\bal)}
\newcommand{\sign}{\sigma_\A(\bal)}
\newcommand{\signk}[1]{\sigma_\A(#1 \bal)}
\newcommand{\lat}{\mathbb{L}}
\newcommand{\subs}{\subseteq}
\newcommand{\andd}{\text{ and }}
\newcommand{\entier}[1]{\lfloor #1 \rfloor}
\newcommand{\half}{\frac{1}{2}}
\newcommand{\parset}{(0,1) \cap \Q \setminus \{\half\}}
\newcommand{\e}{\vect{e}}
\newcommand{\x}{\vect{x}}
\newcommand{\va}{\vect{a}}
\newcommand{\trian}{\mathcal{T}}
\newtheorem{thm}{Theorem}[subsection]
\newtheorem{cor}[thm]{Corollary}
\newtheorem{lem}[thm]{Lemma}
\theoremstyle{definition}
\newtheorem{defn}[thm]{Definition}
\newtheorem{rmk}[thm]{Remark}
\begin{document}

\title{Algebraicity of the Appell-Lauricella and Horn hypergeometric functions}
\date{\today}
\author{Esther Bod\thanks{Department of Mathematics, Universiteit Utrecht, The Netherlands.
This work was supported by the Netherlands Organisation for Scientific Research (NWO) under the grant OND1331860.
The author would like to thank Frits Beukers for interesting conversations and suggestions.}}
\maketitle

\begin{abstract}
We extend Schwarz' list of irreducible algebraic Gauss functions to the four classes of Appell-Lauricella functions in several variables and the 14 complete Horn functions in two variables.
This gives an example of a family of functions such that for any number of variables there are infinitely many algebraic functions, namely the Lauricella $F_C$ functions.
\end{abstract}


\section{Algebraic hypergeometric functions}

\subsection{Introduction}\label{subsec:introduction}

The classical Gauss hypergeometric function is
\begin{equation*}
F(a,b,c|z) =\sum_{n \geq 0} \frac{\poch{a}{n} \poch{b}{n}}{\poch{c}{n} n!} z^n,
\end{equation*}
\noindent where $a$, $b$ and $c$ are complex parameters.
Here $\poch{x}{n}$ denotes the Pochhammer symbol defined by $\poch{x}{n} = \frac{\Gamma(x+n)}{\Gamma(x)}$.
There are many generalizations to hypergeometric functions in several variables.
The most well-known are the Lauricella functions, introduced by Lauricella in 1893 (\cite{lauricella_introduction}), and the so-called Horn series, introduced by Horn in 1931 (\cite{horn_introduction}).
A Horn series is a series $\sum_{m \in \Z_{\geq 0}^n} c(\vect{m}) \vect{z}^{\vect{m}}$ such that all $f_i(\vect{m})=\frac{c(\vect{m}+\e_i)}{c(\vect{m})}$ are rational functions in $m$ and $n$.
Here $\vect{z}^\vect{m} = z_1^{m_1} \pdot z_n^{m_n}$ and $\e_i$ is the $i^{\textrm{th}}$ standard basis vector.
Up to multiplication by rational functions and multiplication of the coordinates of $\vect{z}$ by constants, there are 34 series in two variables for which the numerator and denominator of all $f_i$ have degree at most 2.
The 14 complete Horn series are the series for which all degrees are exactly 2.
Four of these are the Appell series $F_1$, $F_2$, $F_3$ and $F_4$. 
These can easily be generalized to any number of variables, which gives the Lauricella functions.
For example, the Lauricella $F_D$ function is given by 
\begin{equation*}
F_D(a,\vect{b},c | \vect{z}) = 
\sum_{\vect{m} \in \Z_{\geq 0}^n} \frac{\poch{a}{|\vect{m}|} \poch{\vect{b}}{\vect{m}}}{\poch{c}{|\vect{m}|} \vect{m}!} \vect{z}^{\vect{m}}, 
\end{equation*}
where $\poch{\x}{\vect{m}}$ is given by $\poch{x_1}{m_1} \pdot \poch{x_n}{m_n}$.
For $n=2$, this is the Appell $F_1$ function.

In 1873, Schwarz found a list of all irreducible algebraic Gauss functions (see~\cite{schwarz_Gaussfunction}).
By irreducible we mean that the monodromy group acts irreducible.
This list has been extended to general one-variable hypergeometric functions $_{p+1} F_p$ by Beukers and Heckman (see~\cite{beukers_heckman_monodromy_nFn-1}), to the Appell-Lauricella functions $F_1$ and $F_D$ by Beazley Cohen, Wolfart and Sasaki (\cite{beazley_cohen_wolfart_algebraic_appell_lauricella}), the Appell functions $F_2$ and $F_4$ by Kato (\cite{kato_Appell_F2}, \cite{kato_Appell_F4}) and the Horn $G_3$ function by Schipper (\cite{schipper_thesis}).
In~\cite{beazley_cohen_wolfart_algebraic_appell_lauricella}, Beazley Cohen and Wolfart also give some results on reducible algebraic $F_2$, $F_3$ and $F_4$ functions.

The goal of this paper is to determine the parameter values for which the Appell-Lauricella and Horn series are non-resonant algebraic functions over $\C(\vect{z})$ or $\C(x,y)$. 
Non-resonance is a condition that is almost equivalent to irreducibility, as will be made precise in the next section.
Note that the parameters of an algebraic function have to be rational, since they determine the local exponents of the series.


\subsection{Some general theory}\label{subsec:background}

In this section, we will recall some results about GKZ-hypergeometric functions and prove some lemmas that will by useful in determining the algebraic functions.
We start with the definition of a GKZ-hypergeometric function: 

\begin{defn}\label{defn:GKZ}
Let $\A=\{\va_1, \ld, \va_N\}$ be a finite subset of $\Z^r$ such that the $\Z$-span of $\va_1, \ld, \va_N$ equals $\Z^r$ and there exists a linear form $h$ on $\R^r$ such that $h(\va_i)=1$ for all $i$. 
We assume that $\A$ is saturated, i.e.\ $\R_{\geq 0} \A \cap \Z^r = \Z_{\geq 0} \A$.
Let $\lat \subs \Z^N$be the lattice of relations in $\A$, i.e.\ $\lat = \{(l_1, \ld, l_N) \in \Z^N \ | \ l_1 \va_1 + \ld + l_N \va_N = 0 \}$.
Furthermore, let $\bal \in \Q^r$ (in general, $\bal$ can be an element of $\C^r$, but we will only consider $\bal \in \Q^r$). 
Denote by $\partial_i$ the operator $\frac{\partial}{\partial z_i}$.
The GKZ-system associated with $\A$ and $\bal$, denoted $H_\A(\bal)$, consists of the equation  
\begin{equation*}
\prod_{l_i<0} \partial_i^{-l_i} \Phi =  \prod_{l_i>0} \partial_i^{l_i} \Phi
\end{equation*}
for each $(l_1, \ld, l_N) \in \lat$, and the Euler equations 
$\va_1 z_1 \partial_1 \Phi + \ld + \va_N z_N \partial_N \Phi = \bal \Phi$.
\end{defn}

For every $\gam$ such that $\gamma_1 \va_1 + \ld + \gamma_N \va_N = \bal$, the system of equations has a formal solution 
\begin{equation}\label{eq:powerseries}
\Phi(z_1, \ld, z_N) = \sum_{(l_1, \ld, l_N) \in \lat} \frac{z_1^{l_1+\gamma_1} \cdot \ld \cdot z_n^{l_N+\gamma_N}}{\Gamma(l_1+\gamma_1+1) \cdot \ld \cdot \Gamma(l_N+\gamma_N+1)}.
\end{equation}
If $I$ is a subset of $\{1, \ld, N\}$ such that $\{\va_i \ | \ i \in I\}$ is a maximal independent set and $\gamma_j \in \Z$ for all $j \not\in I$, then the Laurent series has a positive radius of convergence (see~\cite{stienstra_GKZ_hypergeometric_structures}, section 3). 
We will always choose $\gamma_j=0$ for all $j \not\in I$. \\

Let $C(\A)$ be the real positive cone generated by $\A$, i.e. $C(\A) = \R_{\geq 0} \A$.
Furthermore, let $Q(\A)$ be the convex hull.
In this notation, $\A$ is saturated if $C(\A) \cap \Z^r = \Z_{\geq 0} \A$.
Note that $\Z_{\geq 0} \A \subs C(\A) \cap \Z^r$ for all $\A \subs \Z^r$. 

\begin{defn}\label{defn:resonance}
$H_\A(\bal)$ is called resonant if $\bal+\Z^r$ contains a point in a face of $C(\A)$.
\end{defn}

\begin{thm}\cite[Theorem~2.11]{gkz_euler_integrals_and_hypergeometric_functions}\label{thm:irrreducibility}
If $H_\A(\bal)$ is non-resonant, then it is irreducible.
\end{thm}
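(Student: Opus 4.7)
The plan is to show that under non-resonance, the GKZ D-module $H_\A(\bal)$ has no non-trivial proper D-submodule. Because the system is holonomic with finite-dimensional solution space, this is equivalent to irreducibility of the monodromy representation on the sheaf of solutions. My approach would combine the explicit $\Gamma$-series basis given by equation~(\ref{eq:powerseries}) with contiguity operators that relate $H_\A(\bal)$ to $H_\A(\bal + \vect{k})$ for $\vect{k} \in \Z^r$.

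First I would collect the ingredients. Fix a regular triangulation $\trian$ of $Q(\A)$; for each maximal simplex one obtains a basis of $\Gamma$-series solutions near the corresponding cusp, and the total number of basis elements equals the normalized volume of $Q(\A)$, which is the generic holonomic rank. This provides a concrete model of the solution space and its monodromy. Next, for each index $i$, the partial $\partial_i$ sends $H_\A(\bal)$ to $H_\A(\bal - \va_i)$, and there is an operator $B_i$ in the reverse direction built out of the relations in $\lat$. The compositions $\partial_i B_i$ and $B_i \partial_i$ act on each $\Gamma$-series in the basis as scalar multiplication by an explicit product of linear forms in the exponent vector $\gam$ used to index the series.

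The key step is then to verify that non-resonance is precisely the condition that these scalars are all non-zero: if $\bal + \Z^r$ avoids every proper face of $C(\A)$, none of the linear factors appearing in the contiguity composites degenerates, so $\partial_i$ and $B_i$ are D-module isomorphisms in both directions. Consequently, if $M \subs H_\A(\bal)$ were a proper non-zero submodule, translating by arbitrary $\vect{k} \in \Z^r$ via these operators would produce a compatible family of proper non-zero submodules of every $H_\A(\bal + \vect{k})$. One then shows, by examining how the $\Gamma$-series basis transforms under these shifts and using the Euler equations to control the weight grading, that no non-trivial subspace of the solution space can be stable under this entire orbit; this contradicts properness of $M$.

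The main obstacle is the third step: pinning down the exact link between the face structure of $C(\A)$ and the invertibility of the contiguity composites, and then converting ``translation-stability under $\Z^r$'' into ``equal to $0$ or everything'' at the level of D-modules rather than merely at the level of individual fibers. I would expect the cleanest route to be to realise $H_\A(\bal)$ as the Mellin/Fourier transform of a sheaf on $\C^N$, in the spirit of Gelfand-Kapranov-Zelevinsky, and to translate non-resonance into the acyclicity of a Koszul-type complex along the coordinate hyperplanes; this combinatorial-cohomological core is where the real content of the theorem lives and is the part I would need to prepare most carefully.
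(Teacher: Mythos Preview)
The paper does not supply its own proof of this theorem: the statement is quoted directly from \cite[Theorem~2.11]{gkz_euler_integrals_and_hypergeometric_functions} and used as a black box, with no argument given in the present paper. Hence there is nothing here to compare your proposal against.

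Your sketch is broadly in the spirit of the original GKZ argument (contiguity operators shifting $\bal$ by lattice vectors, with non-resonance ensuring these are invertible, combined with a D-module/sheaf-theoretic interpretation), and you correctly identify the hard part as establishing the precise combinatorial-cohomological link between the face structure of $C(\A)$ and the invertibility of the shift operators. But note that what you have written is a plan, not a proof: the passage from ``the contiguity composites are invertible'' to ``no proper non-zero D-submodule exists'' is asserted rather than carried out, and your own final paragraph acknowledges that this is where the real work lies. If you intend to actually reprove the result rather than cite it, you would need to fill in exactly that step, most naturally via the Euler--Koszul or Fourier-transform machinery you allude to; as it stands, the proposal is an outline of the known strategy rather than an independent argument.
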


The converse is almost true:

\begin{rmk}\label{rmk:reducible_is_resonant}
Let $H_\A(\bal)$ be resonant.
Suppose that for every $i \in \{1, \ld, N\}$ there exists $(l_1, \ld, l_N) \in \lat$ such that $l_i \neq 0$.
Then $H_\A(\bal)$ is reducible.
\end{rmk}

We could not find a proof of this Remark in the literature yet.
However, it will be the subject of an upcoming paper by Beukers.
The condition on the lattice is satisfied for all Appell-Lauricella and Horn function, as will be immediately clear from the exposition in the next sections.
Therefore, one can think of irreducibility as being equivalent to non-resonance.
For the Gauss function and some of the Appell-Lauricella functions, irreducibility conditions can be found in the literature.
For these functions, we will prove the equivalence with non-resonance.

\begin{defn}
Let $\KA = (\bal + \Z^r) \cap C(\A)$.
A point $\vect{p} \in \KA$ is called an apexpoint if for every $\vect{q} \in \KA$ such that $\vect{p} \neq \vect{q}$, it holds that $\vect{p}-\vect{q} \not\in C(\A)$.
The number of apexpoints is called the signature of $\A$ and $\bal$ and is denoted by $\sign$.
\end{defn}

Note that $\sign$ only depends on the fractional part $\fp{\bal}$ of $\bal$ (where $\fp{\bal}_i = \fp{\al_i} = \al_i - \entier{\al_i}$).

\begin{lem}\label{lem:apexpoints}
Let $\vect{p} \in \KA$. Then $\vect{p}$ is an apexpoint if and only if $\vect{p}-\va_i \not\in C(\A)$ for all $\va_i \in \A$.
\end{lem}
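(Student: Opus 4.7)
The plan is to prove both implications by contraposition, with the saturation hypothesis $C(\A) \cap \Z^r = \Z_{\geq 0} \A$ doing the real work in the converse direction.

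For the forward implication I would suppose, toward a contradiction, that $\vect{p}-\va_i \in C(\A)$ for some $\va_i \in \A$, and set $\vect{q} := \vect{p}-\va_i$. Since $\va_i \in \Z^r$, the point $\vect{q}$ still lies in $\bal + \Z^r$, and by assumption it lies in $C(\A)$, so $\vect{q} \in \KA$. Moreover $\va_i \neq 0$ (because the linear form $h$ from Definition~\ref{defn:GKZ} satisfies $h(\va_i)=1$), hence $\vect{q} \neq \vect{p}$. But $\vect{p}-\vect{q} = \va_i \in C(\A)$, contradicting the apexpoint property of $\vect{p}$.

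For the converse I would again argue by contraposition: suppose $\vect{p}$ is not an apexpoint, so there exists $\vect{q} \in \KA$ with $\vect{q} \neq \vect{p}$ and $\vect{p}-\vect{q} \in C(\A)$. Since $\vect{p}$ and $\vect{q}$ both lie in $\bal + \Z^r$, their difference lies in $\Z^r$, hence in $C(\A) \cap \Z^r$. By the saturation hypothesis this intersection equals $\Z_{\geq 0} \A$, so we may write
\[
\vect{p}-\vect{q} = \sum_{j=1}^{N} c_j \va_j, \qquad c_j \in \Z_{\geq 0},
\]
with not all $c_j$ zero (because $\vect{p} \neq \vect{q}$). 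Pick any index $i$ with $c_i \geq 1$; then
\[
\vect{p}-\va_i \;=\; \vect{q} + (c_i-1)\va_i + \sum_{j \neq i} c_j \va_j
\]
is a sum of elements of $C(\A)$ and therefore lies in $C(\A)$. This contradicts the assumption that $\vect{p}-\va_i \notin C(\A)$ for every $\va_i \in \A$.

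The only subtlety, and the place where the argument could fail without care, is the invocation of saturation: without it one would only know $\vect{p}-\vect{q} \in C(\A)\cap\Z^r$, which in general is strictly larger than $\Z_{\geq 0} \A$ and would not allow us to extract a single generator $\va_i$ to peel off. Everything else is routine bookkeeping with cones.
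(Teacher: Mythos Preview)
Your proof is correct and follows essentially the same approach as the paper's: both directions by contraposition, with saturation used to write $\vect{p}-\vect{q}\in C(\A)\cap\Z^r$ as a non-negative integer combination of the $\va_i$ and then peel off one generator. Your version is marginally cleaner in that you keep $\vect{q}$ intact and use closure of the cone under addition, whereas the paper also decomposes $\vect{q}$ as $\sum\la_i\va_i$; but this is a cosmetic difference, not a genuinely different route.
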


\begin{proof}
If there exists $\va_i \in \A$ such that $\vect{p}-\va_i \in C(\A)$, then we can take $\vect{q}=\vect{p}-\va_i \in \KA$.
Then $\vect{p} \neq \vect{q}$ and $\vect{p}-\vect{q} = \va_i  \in C(\A)$, so $\vect{p}$ is not an apexpoint. 

Suppose that $\vect{p} \in \KA$ is not an apexpoint.
Then there exists $\vect{q} \in \KA$ such that $\vect{p} \neq \vect{q}$ and $\vect{p}-\vect{q} \in C(\A)$.
Since $\vect{q} \in C(\A)$, there exists $\la_1, \ld, \la_N \geq 0$ such that $\vect{q} = \la_1 \va_1 + \ld + \la_N \va_N$. 
Define $\vect{v}=\vect{p}-\vect{q}$. 
Then $\vect{v} \in C(\A) \cap \Z^r$.
$\A$ is saturated, so there exist $\mu_1, \ld, \mu_N \in \Z_{\geq 0}$ such that $\vect{v} =\mu_1 \va_1 + \ld + \mu_N \va_N$.
It follows that $\vect{p} = \vect{q} + \vect{v} = \sum_{i=1}^N (\la_i + \mu_i) \va_i$.
Since $\vect{v} \neq 0$, there is some $i$ such that $\mu_i \geq 1$.
Now consider $\vect{p}-\va_i$. 
This is clearly an element of $\bal + \Z^r$, and since $\la_i + \mu_i - 1 \geq 0$, it also lies in $C(\A)$.
Hence $\vect{p}-\va_i \in \KA$.
\end{proof}

\begin{lem}\cite[Proposition~1.9]{beukers_algebraic_Ahypergeometric_functions}\label{lem:maximal_signature}
Then $\sign$ is less than or equal to the simplex volume of $Q(\A)$.
\end{lem}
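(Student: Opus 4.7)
The plan is to bound the number of apex points by distributing them among the simplicial subcones of a triangulation of $C(\A)$ and counting via fundamental parallelepipeds.

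First, I would fix a triangulation $\trian$ of $C(\A)$ into simplicial cones $\sigma = \R_{\geq 0}\va_{i_1} + \ld + \R_{\geq 0}\va_{i_r}$ whose generators are taken from $\A$; since all $\va_i$ lie on the hyperplane $h=1$, this is equivalent to a triangulation of $Q(\A)$ into simplices with vertices in $\A$. Let $d_\sigma = |\det(\va_{i_1}, \ld, \va_{i_r})|$ be the normalized volume of $\sigma$; by definition, $\sum_{\sigma \in \trian} d_\sigma$ equals the simplex volume of $Q(\A)$.

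Next, I would show that every apex point lies in the half-open parallelepiped $P_\sigma := \{\sum_j t_j \va_{i_j} : 0 \leq t_j < 1\}$ for some $\sigma \in \trian$. Let $\vect{p} \in \KA$ be an apex point, and choose any $\sigma \in \trian$ containing $\vect{p}$; write $\vect{p} = \sum_j t_j \va_{i_j}$ with $t_j \geq 0$ (uniquely, since the $\va_{i_j}$ are linearly independent). The key observation is that if some $t_j \geq 1$, then
\begin{equation*}
\vect{p} - \va_{i_j} = (t_j-1)\va_{i_j} + \sum_{k \neq j} t_k \va_{i_k}
\end{equation*}
is still a nonnegative combination of the $\va_{i_k}$, so it lies in $C(\A)$; this contradicts Lemma~\ref{lem:apexpoints}. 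Hence $0 \leq t_j < 1$ for every $j$, and $\vect{p} \in P_\sigma$.

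Finally, I would invoke the standard lattice-counting fact that $P_\sigma$ is a fundamental domain for the sublattice $\Lambda_\sigma := \Z\va_{i_1} + \ld + \Z\va_{i_r}$ of $\Z^r$, which has index $d_\sigma$. Therefore $\bal + \Z^r$ meets $P_\sigma$ in exactly $d_\sigma$ points (one per coset of $\Lambda_\sigma$ in $\Z^r$), yielding
\begin{equation*}
\sign \leq \sum_{\sigma \in \trian} |(\bal + \Z^r) \cap P_\sigma| = \sum_{\sigma \in \trian} d_\sigma = \text{simplex volume of } Q(\A).
\end{equation*}

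I do not expect a serious obstacle: the crucial step is translating the apex condition of Lemma~\ref{lem:apexpoints} into the constraint $t_j < 1$ inside any simplicial cone containing $\vect{p}$, after which the bound is purely combinatorial. The only minor subtlety is that an apex point sitting on a shared face of two simplicial cones may be counted more than once on the right, but this only loosens the inequality, so one need not bother choosing a half-open refinement of $\trian$.
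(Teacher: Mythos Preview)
Your argument is correct and is essentially the standard proof of this bound (the one given by Beukers in the cited reference). Note, however, that the present paper does not actually prove Lemma~\ref{lem:maximal_signature}: it is stated with a citation to \cite[Proposition~1.9]{beukers_algebraic_Ahypergeometric_functions} and no proof is supplied here. So there is no ``paper's own proof'' to compare against; your write-up simply fills in what the paper outsources. The key step---using Lemma~\ref{lem:apexpoints} to force each coordinate $t_j<1$ in a simplicial cone containing the apex point, and then counting coset representatives in the half-open parallelepiped---is exactly the intended mechanism.
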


The simplex volume is a normalization of the Euclidean volume, such that the simplex spanned by the standard basis has volume 1.
Since the determinant function is the only linear function that gives 0 if two of the arguments are equal and maps the standard basis to 1, the simplex volume of the simplex spanned by the vectors $\vect{v}_1, \ld, \vect{v}_n$ equals $|\det(\vect{v}_1, \ld, \vect{v}_n)|$.

\begin{lem}
\label{lem:signature_entier_linear_forms}
Let $H_\A(\bal)$ be non-resonant and 
$C(\A) = \{ \x \in \R^r \ | \ m_1(\x) \geq 0, \ld, m_d(\x) \geq 0 \}$
where $m_1, \ld, m_d$ are linear forms with integral coefficients.
Then $\sign$ only depends on $(\entier{m_1(\bal)}, \ld, \entier{m_d(\bal)})$, but not on $\bal$ itself.
\end{lem}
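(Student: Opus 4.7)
The plan is to recast $\sign$ as a combinatorial count indexed by the tuples $T(\vect{p}) := (\entier{m_1(\vect{p})}, \ld, \entier{m_d(\vect{p})})$ of $\vect{p} \in \KA$, and then observe that both the set of admissible tuples and the apexpoint condition, once expressed in terms of such tuples, depend only on $(\entier{m_1(\bal)}, \ld, \entier{m_d(\bal)})$.

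First I locate the image $T(\KA)$. Writing $\vect{p} = \bal + \vect{z}$ with $\vect{z} \in \Z^r$ and using that each $m_i$ has integer coefficients, $m_i(\vect{p}) = m_i(\bal) + m_i(\vect{z})$, so $T(\vect{p}) = (\entier{m_i(\bal)})_i + (m_1(\vect{z}), \ld, m_d(\vect{z}))$. Hence $T(\bal + \Z^r) = (\entier{m_1(\bal)}, \ld, \entier{m_d(\bal)}) + \lat'$, where $\lat' = \{ (m_1(\vect{z}), \ld, m_d(\vect{z})) : \vect{z} \in \Z^r \} \subs \Z^d$ is a sublattice depending only on $\A$. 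The condition $\vect{p} \in C(\A)$ reads $m_i(\vect{p}) \geq 0$ for all $i$, and since $\fp{m_i(\vect{p})} = \fp{m_i(\bal)} \in [0,1)$ this is equivalent to $\entier{m_i(\vect{p})} \geq 0$, i.e.\ $T(\vect{p}) \in \Z_{\geq 0}^d$. Thus
\[
T(\KA) = \bigl( (\entier{m_1(\bal)}, \ld, \entier{m_d(\bal)}) + \lat' \bigr) \cap \Z_{\geq 0}^d,
\]
a set determined by the floors alone.

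Next I check that $T$ is injective. All points of $\bal + \Z^r$ share the fractional parts $\fp{m_i(\bal)}$, so the floors $T(\vect{p})$ recover the exact values $(m_1(\vect{p}), \ld, m_d(\vect{p}))$; injectivity then reduces to that of the linear map $M : \R^r \map \R^d$, $\x \mapsto (m_i(\x))_i$. The kernel of $M$ is the lineality space of $C(\A)$, and the form $h$ with $h(\va_i) = 1$ forces $C(\A)$ to be pointed (as $h$ is strictly positive on $C(\A) \setminus \{0\}$), so $\ker M = \{0\}$. Hence $T$ restricts to a bijection from $\KA$ onto the set above.

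Finally, I rephrase the apexpoint criterion of Lemma~\ref{lem:apexpoints}. A point $\vect{p} \in \KA$ is an apexpoint iff for each $j \in \{1, \ld, N\}$ there exists some $i$ with $m_i(\vect{p}) - m_i(\va_j) < 0$; since $m_i(\va_j) \in \Z$ and $\fp{m_i(\vect{p})} \in [0,1)$, this is equivalent to $\entier{m_i(\vect{p})} < m_i(\va_j)$, a condition depending only on $T(\vect{p})$ and $\A$. Combining,
\[
\sign = \#\Bigl\{ \vect{t} \in \bigl( (\entier{m_1(\bal)}, \ld, \entier{m_d(\bal)}) + \lat' \bigr) \cap \Z_{\geq 0}^d : \forall j\ \exists i,\ t_i < m_i(\va_j) \Bigr\},
\]
and the right-hand side manifestly depends only on $(\entier{m_1(\bal)}, \ld, \entier{m_d(\bal)})$. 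The main subtle step is the injectivity of $M$, which rests on the pointedness of $C(\A)$; the rest is bookkeeping with floor functions and integer translates.
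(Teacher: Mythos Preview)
Your proof is correct and shares the paper's essential mechanism: once an apexpoint is written as $\bal+\vect{z}$ with $\vect{z}\in\Z^r$, the defining inequalities $m_j(\bal+\vect{z})\geq 0$ and $m_j(\bal+\vect{z}-\va_i)<0$ reduce to integer inequalities in $m_j(\vect{z})$ involving only $\entier{m_j(\bal)}$. The paper stops right there: the set of admissible $\vect{z}\in\Z^r$ is already visibly determined by the floors, hence so is $\sign$. You instead push the count forward along $T$ into $\Z^d$, which obliges you to verify injectivity of $M$ via pointedness of $C(\A)$; that step is correct but unnecessary, since one can simply count the apexpoints over $\vect{z}\in\Z^r$ without ever leaving $\Z^r$. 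Your packaging does produce, as a by-product, an explicit description of $\sign$ as the number of points of a shifted sublattice in $\Z_{\geq 0}^d$ satisfying a fixed system of strict upper bounds, which is a nice computational reformulation even if not needed for the lemma itself.
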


\begin{proof}
Let $\x \in \R^r$.
Then $\x + \bal$ is an apexpoint if and only if $\x+\bal \in C(\A)$ and for all $i$, $\x-\va_i+\bal \not\in C(\A)$.
Equivalently, we have $m_j(\x) \geq -m_j(\bal$) for all $j$, and for all $i$ there exists $j$ such that $m_j(\x) < m_j(\va_i)-m_j(\bal)$.
Since $m_j(\x)$ and $m_j(\va_i)$ are integral, whereas $m_j(\bal)$ is non-integral, the apexpoints are those $\x+\bal$ such that we have $m_j(\x) \geq -\entier{m_j(\bal)}$ for all $j$, and for all $i$ there exists $j$ such that $m_j(\x) \leq m_j(\va_i)-\entier{m_j(\bal)}-1$.
Hence the conditions on $\x+\bal$ to be an apexpoint only depend on $\entier{m_j(\bal)}$.
\end{proof}

The following Theorem will be our main tool to classify the algebraic hypergeometric functions.
It reduces the problem of finding algebraic functions to a combinatorical problem.

\begin{thm}\cite[Theorem~1.10]{beukers_algebraic_Ahypergeometric_functions}\label{thm:algebraic_solutions}
Suppose that $H_\A(\bal)$ is non-resonant. 
Let $D$ be the smallest common denominator of the coordinates of $\bal \in \Q^r$. 
Then the solutions of $H_\A(\bal)$ are algebraic over $\C(\vect{z})$ if and only if $\signk{k}$ equals the simplex volume of $Q(\A)$ for all integers $k$ with $1 \leq k < D$ and $\gcd(k,D)=1$.
\end{thm}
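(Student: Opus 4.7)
The plan is to follow Beukers' approach, which combines a $p$-adic criterion for algebraicity with the combinatorics of apexpoints developed earlier in this section.

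First, I would invoke a $p$-adic criterion for algebraic solutions of the non-resonant system $H_\A(\bal)$. By a theorem in the spirit of Christol and Dwork (a case of the Grothendieck--Katz conjecture which is known for GKZ-hypergeometric systems), the solutions are algebraic over $\C(\vect{z})$ if and only if for all but finitely many primes $p$ the system has nilpotent $p$-curvature at $p$. Non-resonance, together with Theorem~\ref{thm:irrreducibility}, guarantees that the monodromy representation is irreducible so this criterion applies without degeneration and that a full basis of solutions is given by the $\Gamma$-series~\eqref{eq:powerseries}.

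Second, I would translate nilpotent $p$-curvature into $p$-integrality of the $\Gamma$-series. By Legendre's formula for $v_p(n!)$, applied coefficient by coefficient to~\eqref{eq:powerseries}, the series indexed by a shift $\gam$ is $p$-integral precisely when the point $\gamma_1 \va_1 + \cdots + \gamma_N \va_N$ is an apexpoint of the set $\KA$ corresponding to the reduction of $\bal$ modulo~$p$. The rank of the non-resonant GKZ system equals the simplex volume of $Q(\A)$ by Gelfand--Kapranov--Zelevinsky, so a full basis of $p$-integral solutions consists of exactly that many independent series. Combining this with the upper bound of Lemma~\ref{lem:maximal_signature}, the existence of a full $p$-integral basis forces the apexpoint count to be maximal, equal to the simplex volume.

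Third, I would use Dirichlet's theorem on primes in arithmetic progressions to quantify over residue classes. For each $k$ with $1 \leq k < D$ and $\gcd(k,D)=1$ there are infinitely many primes $p \equiv k \pmod{D}$. For such a prime, the fractional part of $\bal$ governing the apexpoint combinatorics is the same as that of $k\bal$, which by Lemma~\ref{lem:signature_entier_linear_forms} depends only on $\entier{m_j(k\bal)}$; the associated count is precisely $\signk{k}$. Hence the $p$-adic condition aggregated over all primes is equivalent to the statement $\signk{k} = \text{simplex volume of } Q(\A)$ for every such $k$, giving the theorem.

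The main obstacle is the exact identification in step two between $p$-integrality of the $\Gamma$-series and the apexpoint condition of Lemma~\ref{lem:apexpoints}. It requires the saturatedness of $\A$ to move freely between $C(\A) \cap \Z^r$ and $\Z_{\geq 0}\A$, and non-resonance to stay away from the boundary of $C(\A)$, where ordinary $\Gamma$-poles would either destroy or trivialize $p$-integrality. The combinatorial heart is the match between maximal chains of integer shifts by elements of $\A$ inside $C(\A)$ and the base-$p$ expansions controlling the valuations of the Pochhammer denominators; this is the technically involved part of Beukers' argument and is what one would have to verify carefully to turn this sketch into a full proof.
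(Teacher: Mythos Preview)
The paper does not prove this theorem at all: it is stated with the citation \cite[Theorem~1.10]{beukers_algebraic_Ahypergeometric_functions} and used as a black box throughout. There is therefore no ``paper's own proof'' to compare your proposal against; the result is imported from Beukers' work and the present paper's contribution lies in applying it, not in reproving it.

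Your sketch is a reasonable outline of the ideas behind Beukers' argument, but as a self-contained proof it has genuine gaps. The most serious is in step two: the claimed equivalence between $p$-integrality of a $\Gamma$-series attached to a shift $\gam$ and the apexpoint condition for the point $\sum_i \gamma_i \va_i$ is not something one can read off directly from Legendre's formula. Beukers' actual argument does not go through the Grothendieck--Katz $p$-curvature criterion; rather, he shows that maximal signature is equivalent to the existence of a full set of \emph{polynomial mod $p$} solutions (for all primes $p$ in the relevant residue classes), and then invokes a separate theorem of his (with Katz-style input) that this mod-$p$ polynomial condition characterises algebraicity for irreducible systems. Your step three also needs care: the relation between the residue class of $p$ modulo $D$ and the parameter $k\bal$ is that one looks at the apexpoints of $p^j\bal$ for all $j$, and the set $\{p^j \bmod D\}$ ranges over a subgroup of $(\Z/D\Z)^*$, not a single class; one needs the condition for \emph{every} $k$ coprime to $D$ precisely because every such $k$ arises as some $p^j$ for infinitely many $p$. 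So while the overall shape of your proposal is in the right spirit, turning it into an actual proof would require substantially reworking steps two and three along the lines of Beukers' paper rather than the $p$-curvature route you suggest.
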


\begin{rmk}\label{rmk:all_solutions_algebraic}
Note that either all solutions of $H_\A(\bal)$ are algebraic or they are all transcendental.
This follows from the fact that the solutions are algebraic if and only if the monodromy group is finite. 
\end{rmk}

\begin{cor}\label{cor:algebraic_orbits}
Let $\bal \in \Q^r$.
If $H_\A(\bal)$ is non-resonant, then algebraicity of the solutions of the GKZ system only depends on $\A$ and $\fp{\bal}$.
Furthermore, either the solution set of $H_\A(k \bal)$ consists of algebraic functions for all $k$ coprime to the smallest common denominator of the coordinates of $\bal$, or the solutions are transcendental for all $k$.
\end{cor}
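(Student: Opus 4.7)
The plan is to deduce both assertions directly from Theorem \ref{thm:algebraic_solutions} together with the observation stated immediately after the definition of apexpoints: $\sign$ depends only on the fractional part $\fp{\bal}$. The algebraicity criterion of that theorem asserts that $\signk{k}$ equals the simplex volume of $Q(\A)$ for every $k$ with $1 \leq k < D$ and $\gcd(k,D)=1$, so it suffices to check that this criterion is invariant under (i) replacing $\bal$ by any vector with the same fractional part, and (ii) replacing $\bal$ by $k_0 \bal$ for any $k_0$ coprime to $D$.

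For (i), observe that $D$, being the least common denominator of the coordinates of $\bal$, depends only on $\fp{\bal}$, and that $\bal' = \bal + \vect{n}$ with $\vect{n} \in \Z^r$ gives $k\bal' - k\bal = k\vect{n} \in \Z^r$, so $\fp{k\bal'} = \fp{k\bal}$ and consequently $\sigma_\A(k\bal') = \sigma_\A(k\bal)$. Hence the algebraicity criterion for $\bal'$ coincides with the one for $\bal$, yielding the first assertion.

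For (ii), I would first check that the least common denominator is unchanged: writing each coordinate as $\al_i = p_i/q_i$ in lowest terms gives $D = \lcm(q_i)$, and $\gcd(k_0,D)=1$ forces $\gcd(k_0,q_i)=1$, so $k_0 \bal$ also has smallest common denominator $D$. The criterion for $H_\A(k_0 \bal)$ thus demands that $\sigma_\A(j k_0 \bal)$ equal the simplex volume of $Q(\A)$ for $1 \leq j < D$ with $\gcd(j,D)=1$. Since $\sigma_\A$ depends only on the fractional part, $j k_0 \bal$ and $(j k_0 \bmod D)\bal$ differ by an integer vector, and multiplication by $k_0$ is a bijection on $(\Z/D\Z)^\times$, the set $\{\sigma_\A(j k_0 \bal) : 1 \leq j < D,\ \gcd(j,D)=1\}$ equals $\{\sigma_\A(m \bal) : 1 \leq m < D,\ \gcd(m,D)=1\}$. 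Thus the conditions imposed are identical to those for $H_\A(\bal)$, and by Theorem \ref{thm:algebraic_solutions} algebraicity of $H_\A(k_0 \bal)$ is equivalent to algebraicity of $H_\A(\bal)$.

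The only substantive step is the elementary denominator computation at the start of (ii); the rest is bookkeeping with fractional parts and with the bijection $j \mapsto j k_0$ on $(\Z/D\Z)^\times$, so I foresee no serious obstacle.
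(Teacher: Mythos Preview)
Your argument is correct and is exactly what the paper has in mind: the corollary is stated there with no proof, as an immediate consequence of Theorem~\ref{thm:algebraic_solutions} together with the observation (made right after the definition of the signature) that $\sign$ depends only on $\fp{\bal}$. Your bookkeeping with fractional parts and the bijection $j\mapsto jk_0$ on $(\Z/D\Z)^\times$ is the natural way to spell this out.

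There is one small omission. In part~(ii) you apply Theorem~\ref{thm:algebraic_solutions} to $H_\A(k_0\bal)$, but that theorem has non-resonance as a hypothesis, and you have only assumed it for $H_\A(\bal)$. You should note that non-resonance is inherited: if $k_0\bal+\vect{n}$ lay on a face of $C(\A)$ for some $\vect{n}\in\Z^r$, then some facet form $m_i$ with integer coefficients would satisfy $m_i(k_0\bal)\in\Z$; writing $m_i(\bal)=a/b$ in lowest terms with $b\mid D$, coprimality of $k_0$ with $D$ (hence with $b$) forces $b=1$, so $m_i(\bal)\in\Z$, and then a suitable integer translate of $\bal$ lies on the same facet, contradicting non-resonance of $H_\A(\bal)$. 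With that one line added, the proof is complete.
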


By this Corollary, it suffices to consider $\bal$ such that $\bal=\fp{\bal}$.
Troughout this paper, we will assume that $0 \leq \al_i < 1$.

\begin{rmk}\label{rmk:algorithm_interlacing}
It is well known that an irreducible Gauss function $F(a,b,c|z)$ is algebraic if and only if for every $k$ coprime with the denominators of $a, b$ and $c$, we have either $\fp{ka}  \leq \fp{kc} < \fp{kb}$ or $\fp{kb} \leq \fp{kc} < \fp{ka}$ (see Theorem~\ref{thm:gauss_irreducible_algebraic}).
Using Lemma~\ref{lem:signature_entier_linear_forms} and Theorem~\ref{thm:algebraic_solutions}, we can find such an interlacing condition for other algebraic hypergeometric functions.
It clearly suffices to find a condition on $\bal$ to have maximal signature.
As input we need the linear forms $m_i$ that determine the faces of the cone $C(\A)$.
Write $m_i(\x) = \sum_j m_{ij} x_j$.
Since we only have to consider $\bal$ such that $\al_i \in [0,1)$ for all $i$, $\entier{m_i(\bal)}$ can only take integral values between $\sum_j \min(m_{ij},0)$ and $\sum_j \max(m_{ij},0)$ (both boundaries are excluded, unless they are zero).
Hence $(\entier{m_1(\bal)}, \ld, \entier{m_d(\bal)})$ takes only finitely many values.
For each of those, it suffices to find one corresponding $\bal$ and compute the number of apexpoints.
Finding $\bal$ boils down to solving a linear system of inequalities.
This can easily be done by hand or using a computer algebra system, which will also detect the values of $(\entier{m_1(\bal)}, \ld, \entier{m_d(\bal)})$ for which no $\bal$ exists.
Having found $\bal$, finding apexpoints can again be done by solving a system of linear inequalities, in this case over the integers.

Note that finding the interlacing condition can entirely be done by a computer.
However, this algorithm can be a bit slow, although in turns out to be fast enough for the functions considered in this paper. 
\end{rmk}

The following Remark makes it possible to reduce the number of variables of a function: 

\begin{rmk}\label{rmk:reduction_algebraicity}
If $f(z_1, \ld, z_n)$ is an algebraic function over $\C(z_1, \ld, z_n)$ and $a \in \C$, then $f(z_1, \ld, z_{i-1}, a, z_{i+1}, \ld, z_n)$ is algebraic over $\C(z_1, \ld, z_{i-1}, z_{i+1}, \ld, z_n)$ for all $i \in \{1,\ld,n\}$.
\end{rmk}

To prove that certain functions are not algebraic, we will have to find $k$ such that $\gcd(k,D)=1$ and $\signk{k}$ is not maximal.
By the following Lemma, it suffices to find $k$ coprime with the denominator of several, but not all, coefficients.
Sometimes we can show that $\signk{k}$ is not maximal if some parameter is close to $\half$.
The second Lemma handles this case.

\begin{lem}\label{lem:higher_denom}
Let $k, D$ and $\tilde{D}$ be positive integers such that $D | \tilde{D}$ and $\gcd(k,D)=1$.
Then there exists an integer $l$ such that $l \equiv k \pmod D$ and $\gcd(l,\tilde{D})=1$.
\end{lem}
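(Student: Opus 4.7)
The plan is to build $l$ explicitly as $l=k+mD$ for a suitable integer $m$, so that the congruence $l\equiv k\pmod D$ is automatic and the entire task reduces to arranging $\gcd(l,\tilde D)=1$ by choosing $m$ appropriately.

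First I would analyze the coprimality prime by prime. For every prime $p$ dividing $\tilde D$ there are two cases. If $p\mid D$, then $l\equiv k\pmod p$, and since $\gcd(k,D)=1$ implies $p\nmid k$, we automatically get $p\nmid l$ regardless of $m$. If $p\nmid D$, then $D$ is invertible modulo $p$, and the condition $p\nmid l$ becomes $m\not\equiv -kD^{-1}\pmod p$; that is, we must avoid a single forbidden residue class modulo $p$.

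Next I would produce $m$ by the Chinese Remainder Theorem. Let $P$ be the product of the primes $p\mid\tilde D$ with $p\nmid D$ (a squarefree integer coprime to $D$). For each such $p$ choose any residue $m_p\pmod p$ different from $-kD^{-1}$ (which is possible since $p\geq 2$ leaves at least one admissible class), and use CRT to find $m\in\Z$ simultaneously satisfying $m\equiv m_p\pmod p$ for all $p\mid P$. Setting $l=k+mD$ then gives $l\equiv k\pmod D$ and, by the case analysis above, $p\nmid l$ for every prime $p\mid\tilde D$, so $\gcd(l,\tilde D)=1$.

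There is no real obstacle here; the only thing to be slightly careful about is to treat the two classes of primes of $\tilde D$ separately so that the primes dividing $D$ are handled by the hypothesis $\gcd(k,D)=1$ while the primes coprime to $D$ are handled by the choice of $m$. One could alternatively invoke Dirichlet's theorem on primes in arithmetic progressions to pick $l$ prime with $l\equiv k\pmod D$ and $l>\tilde D$, but the CRT argument above is elementary and self-contained.
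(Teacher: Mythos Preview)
Your proof is correct and follows essentially the same approach as the paper: both separate the primes of $\tilde D$ into those dividing $D$ (handled automatically by $\gcd(k,D)=1$) and those coprime to $D$ (handled by choosing the multiple of $D$ appropriately). The only cosmetic difference is that the paper writes down an explicit choice, namely $l=k-(k-1)DE$ with $DE\equiv 1$ modulo the part of $\tilde D$ coprime to $D$, which forces $l\equiv 1$ modulo that part, whereas you invoke CRT to produce a suitable $m$; both arguments are equally elementary.
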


\begin{proof}
Write $D=p_1^{k_1} \pdot p_r^{k_r}$ and $\tilde{D}=p_1^{l_1} \pdot p_r^{l_r} q_1^{m_1} \pdot q_s^{m_s}$.
Define $c=p_1^{l_1} \pdot p_r^{l_r}$.
Then there exists an integer $E$ such that $DE \equiv 1 \pmod{\frac{\tilde{D}}{c}}$.
Define $l=k-(k-1)DE$. 
Then it is clear that $l \equiv k \pmod D$ and $l \equiv 1 \pmod{\frac{\tilde{D}}{c}}$, so $\gcd(l,\frac{\tilde{D}}{c})=1$.
This implies that $l$ is coprime with $D \frac{\tilde{D}}{c}$.
Now it follows from the definition of $c$ that $l$ is coprime with $\tilde{D}$.
\end{proof}

\begin{lem}\label{lem:closetohalf}
Let $r=\frac{p}{q}$ with $\gcd(p,q)=1$ and $q \geq 3$.
Define $d=1$ if $q$ is odd, $d=2$ if 4 divides $q$ and $d=4$ if $q \equiv 2 \pmod 4$.
Let $t \in (0,\half)$.
If $q \geq \frac{d}{1-2t}$, then there exists $k \in \Z$ with $\gcd(k,q)=1$ such that $\fp{kr} \in [t,\half)$.
\end{lem}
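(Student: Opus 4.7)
My plan is to reduce the problem to exhibiting an appropriate integer residue and then choose it explicitly. Since $\fp{kr} = \fp{kp/q}$ equals $m/q$ where $m = kp \bmod q \in \{0, \ld, q-1\}$, and since $\gcd(p,q)=1$ implies $\gcd(k,q) = \gcd(m,q)$, the problem reduces to finding an integer $m$ with $\gcd(m,q)=1$ satisfying $qt \leq m < q/2$. The desired $k$ is then recovered by solving $kp \equiv m \pmod q$, and it will automatically be coprime to $q$.

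The natural choice is the largest integer strictly below $q/2$ that is coprime to $q$. I would take $m = (q-d)/2$ in all three cases; this is an integer by comparing parities of $q$ and $d$ in each case. The upper bound $m < q/2$ is immediate, and the lower bound $m/q \geq t$ is equivalent, after writing $m/q = \half - \frac{d}{2q}$, to the hypothesis $q \geq \frac{d}{1-2t}$.

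The heart of the argument is verifying that $\gcd(m,q) = 1$. Using the identity $q - 2m = d$, any common divisor of $m$ and $q$ must divide $d$, so it suffices to show $\gcd(m,d) = 1$. For $d=1$ this is automatic. For $d=2$, the assumption $4 \mid q$ forces $q/2$ to be even, so $m = q/2 - 1$ is odd. For $d=4$, the assumption $q \equiv 2 \pmod 4$ forces $q/2$ to be odd, so $m = q/2 - 2$ is odd and hence coprime to $4$.

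I do not expect substantial obstacles here: the value of $d$ in the lemma statement has clearly been engineered precisely so that this parity case analysis goes through, and the bound $q \geq d/(1-2t)$ is exactly what is needed to place the chosen $m/q$ into $[t, \half)$. The only thing one must be careful about is to carry out the three parity cases separately rather than hoping for a uniform argument.
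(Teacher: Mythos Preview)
Your proposal is correct and follows essentially the same approach as the paper: the paper chooses $p'$ with $pp'\equiv 1\pmod q$ and sets $k=\frac{q-d}{2}\,p'$, which is exactly your choice $m=\frac{q-d}{2}$ pulled back through multiplication by $p'$. In fact your write-up is more complete, since the paper asserts $\gcd(k,q)=1$ without justification, whereas you supply the parity case analysis showing $\gcd\!\left(\frac{q-d}{2},q\right)=1$.
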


\begin{proof}
Choose $p' \in \Z$ such that $p p' \equiv 1 \pmod q$ and take $k=\frac{q-d}{2}p'$.
Then $k$ is an integer with $\gcd(k,q)=1$.
Since $q \geq \frac{d}{1-2t}$, we have $\frac{q-d}{2q} \geq t$.
Hence $\fp{kr} = \fp{\frac{q-d}{2} \cdot \frac{p p'}{q}} = \fp{\frac{q-d}{2q}} \in [t,\half)$.
\end{proof}

\begin{rmk}\label{rmk:start_from_series}
A GKZ-function is determined by the set $\A$ and the parameter vector $\bal$.
We will take the opposite approach: we will start with a Laurent series of the form \eqref{eq:powerseries} and read off $\lat$ and $\gam$.
Then we choose $\A$ such that $\lat$ is the lattice of relations in $\A$, and compute $\bal = \sum_{i=1}^N \gamma_i \va_i$.
Of course, we have to choose $\A$ such that it spans $\Z^r$, lies in a hyperplane $h(\x)=1$ and is saturated.
\end{rmk}

\begin{rmk}\label{rmk:transformations}
Let $\A, \cal{B} \subs$ $\Z^r$ be as in Definition~\ref{defn:GKZ} with $|\A|=|\cal{B}|$, and let $f: \Z^r \rightarrow \Z^r$ be a linear isomorphism such that $f(\A) = \cal{B}$ (as sets, so the order of the vectors can be changed).
Write $\boldsymbol\beta = f(\bal)$.
Then $C(\cal{B}) =$ $f(C(\A))$ and $\sigma_{\cal B}(\boldsymbol \beta) = \sign$, so $H_{\cal B}(\boldsymbol\beta)$ is non-resonant and algebraic if and only if $H_\A(\bal)$ is non-resonant and algebraic.
Hence if the faces of $C(\A)$ are given by linear forms $m_i$ as in Lemma~\ref{lem:signature_entier_linear_forms}, then the faces of $C(\cal{B})$ are given by $m_i \circ f^{-1}$ and the algebraic solutions of $H_{\cal B}(\boldsymbol\beta)$ have parameters $f(\bal)$, where $\bal$ is chosen so that $H_\A(\bal)$ has algebraic solutions.
\end{rmk}


To show that a set $\A$ satisfies the conditions of Definition~\ref{defn:GKZ} and to compute the simplex volume of $Q(\A)$, we will use triangulations of $Q(\A)$.

\begin{defn}\label{defn:triangulation}
Let $\A \subs \Z^r$ be as in Definition~\ref{defn:GKZ}.
A triangulation of $Q(\A)$ is a finite set $\trian = \{Q(V_1), \ld, Q(V_l)\}$ such that each $V_i$ is a subset of $\A$ consisting of $r$ linearly independent elements, $Q(V_i) \cap Q(V_j) = Q(V_i \cap V_j)$ for all $i$ and $j$ and $Q(\A) = \cup_{i=1}^l Q(V_i)$.
If all $Q(V_i)$ have simplex volume 1, then the triangulation is called unimodular.
\end{defn}

\begin{rmk}\label{rmk:Q=C&h1}
Note that $C(V) = \R_{\geq 0} Q(V)$ and $Q(V) = C(V) \cap h^{-1}(\{1\})$ for all $V \subs \A$.
\end{rmk}

\begin{lem}\label{lem:convex_hull}
Suppose that $V_1, \ld, V_l$ are subsets of $\A$ consisting of $r$ linearly independent vectors with determinant $\pm 1$, such that $\A = \cup_{i=1}^l V_i$, $C(V_i) \cap C(V_j) \subs C(V_i \cap V_j)$ for all $i$ and $j$ and $\cup_{i=1}^l C(V_i)$ is convex.
Then $\trian = \{Q(V_1), \ld, Q(V_l)\}$ is a unimodular triangulation of $Q(\A)$.
\end{lem}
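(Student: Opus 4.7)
The plan is to verify the four conditions in Definition~\ref{defn:triangulation} one at a time, translating between the convex hulls $Q(V_i)$ and the cones $C(V_i)$ via Remark~\ref{rmk:Q=C&h1}.

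That each $V_i$ is a subset of $\A$ with $r$ linearly independent elements is part of the hypothesis, and unimodularity is essentially free: the discussion following Lemma~\ref{lem:maximal_signature} identifies the simplex volume of $Q(V_i)$ with $|\det(\vect{v}_{i,1}, \ld, \vect{v}_{i,r})|$, which equals $1$ by assumption. So the real content lies in the intersection property $Q(V_i) \cap Q(V_j) = Q(V_i \cap V_j)$ and the covering $Q(\A) = \cup_{i=1}^l Q(V_i)$.

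For the intersection property, using $Q(V) = C(V) \cap h^{-1}(\{1\})$ I obtain
\[
Q(V_i) \cap Q(V_j) = \bigl(C(V_i) \cap C(V_j)\bigr) \cap h^{-1}(\{1\}) \subs C(V_i \cap V_j) \cap h^{-1}(\{1\}) = Q(V_i \cap V_j),
\]
where the middle inclusion is exactly the hypothesis $C(V_i) \cap C(V_j) \subs C(V_i \cap V_j)$. The reverse inclusion $Q(V_i \cap V_j) \subs Q(V_i) \cap Q(V_j)$ is immediate from $V_i \cap V_j \subs V_i, V_j$, giving equality.

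For the covering, I would first observe that $\cup_{i=1}^l C(V_i)$ is a union of cones through the origin, hence itself a cone, and by hypothesis also convex. Since it contains $\A = \cup_{i=1}^l V_i$, it contains the smallest convex cone through the origin containing $\A$, namely $C(\A) = \R_{\geq 0} \A$. The reverse inclusion $\cup_{i=1}^l C(V_i) \subs C(\A)$ is immediate from $V_i \subs \A$, so $C(\A) = \cup_{i=1}^l C(V_i)$; intersecting with $h^{-1}(\{1\})$ yields $Q(\A) = \cup_{i=1}^l Q(V_i)$. There is no real obstacle: the argument is essentially a dictionary between cones and their slices at $h=1$, with the convex conical hull characterization of $C(\A)$ doing the only substantive work.
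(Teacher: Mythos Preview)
Your proposal is correct and follows essentially the same approach as the paper: unimodularity from the determinant, the intersection property via Remark~\ref{rmk:Q=C&h1} together with the hypothesis $C(V_i)\cap C(V_j)\subs C(V_i\cap V_j)$, and the covering by observing that the convex union contains $\A$ and hence its convex (conical) hull. The only cosmetic difference is that the paper argues the covering directly at the level of $\cup_i Q(V_i)$ (a convex set containing $\A$, hence containing $Q(\A)$), whereas you argue at the level of $\cup_i C(V_i)$ and then slice; the two are equivalent through the same remark.
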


\begin{proof}
Since the determinant of the vectors in $V_i$ is $\pm 1$, the vectors are linearly independent and $Q(V_i)$ has volume 1.
It is clear that $C(V_i \cap V_j) \subs C(V_i) \cap C(V_j)$, so $C(V_i) \cap C(V_j) = C(V_i \cap V_j)$.
Now Remark~\ref{rmk:Q=C&h1} implies that $Q(V_i) \cap Q(V_j) = Q(V_i \cap V_j)$.
It is also clear that $\cup_{i=1}^l Q(V_i) \subs Q(\A)$.
Note that $\cup_{i=1}^l Q(V_i) = \cup_{i=1}^l C(V_i) \cap h^{-1}(\{1\})$ is a convex set.
It contains $\A = \cup_{i=1}^l V_i$, so it also contains the convex hull of $\A$.
This implies that $Q(\A) \subs \cup_{i=1}^l Q(V_i)$.
\end{proof}

\begin{lem}\label{lem:properties_trian}
Let $\trian=\{Q(V_1), \ld, Q(V_l)\}$ be a unimodular triangulation of $Q(\A)$.
Then \\
\textit{(i)} $vol(Q(\A))=l$. \\
\textit{(ii)} $C(\A) = \cup_{i=1}^l C(V_i)$. \\
\textit{(iii)} $\A$ is saturated. 
\end{lem}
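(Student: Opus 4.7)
The plan is to handle the three parts in the order (ii), (i), (iii), because (ii) is the conceptual core and feeds directly into (iii).

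For (ii), I would prove the two inclusions. The inclusion $\cup_{i=1}^l C(V_i) \subs C(\A)$ is immediate from $V_i \subs \A$. For the reverse, given $\vect{v} \in C(\A) \setminus \{0\}$, I would use the defining property $h(\va_i) = 1$ for all $\va_i \in \A$ to conclude that $s := h(\vect{v}) > 0$. The rescaled point $\vect{v}/s$ then lies in the affine hyperplane $h^{-1}(\{1\})$, and by Remark~\ref{rmk:Q=C&h1} combined with the triangulation hypothesis $Q(\A) = \cup_i Q(V_i)$, we get $\vect{v}/s \in Q(V_j)$ for some $j$. Hence $\vect{v} = s \cdot (\vect{v}/s) \in \R_{\geq 0} Q(V_j) = C(V_j)$.

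For (i), each $Q(V_i)$ has simplex volume $1$ by unimodularity. Since $V_i$ and $V_j$ are distinct $r$-element subsets of $\A$ whenever $i \neq j$, we have $|V_i \cap V_j| \leq r-1$, so $Q(V_i \cap V_j)$ has dimension at most $r-2$ inside the $(r-1)$-dimensional hyperplane $h^{-1}(\{1\})$. The triangulation property $Q(V_i) \cap Q(V_j) = Q(V_i \cap V_j)$ then says the pieces of $\trian$ overlap only in sets of measure zero, so $vol(Q(\A)) = \sum_{i=1}^l vol(Q(V_i)) = l$.

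For (iii), the inclusion $\Z_{\geq 0} \A \subs C(\A) \cap \Z^r$ is automatic. Conversely, take $\vect{v} \in C(\A) \cap \Z^r$. By (ii), $\vect{v}$ lies in some $C(V_i)$, so it can be written as a non-negative real combination of the vectors of $V_i$. Because these $r$ vectors have determinant $\pm 1$ (unimodular volume) and are therefore a $\Z$-basis of $\Z^r$, Cramer's rule forces the coefficients to be integers; combined with non-negativity this yields $\vect{v} \in \Z_{\geq 0} V_i \subs \Z_{\geq 0} \A$.

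I do not anticipate a serious obstacle. The subtlest points are (a) the measure-zero overlap step in (i), which relies on the dimension drop $|V_i \cap V_j| \leq r-1$ for distinct $r$-subsets, and (b) the integrality of the coefficients in (iii), which uses that unimodularity of $V_i$ gives a $\Z$-basis, not merely an $\R$-basis.
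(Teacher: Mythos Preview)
Your proposal is correct and follows essentially the same approach as the paper. The paper dismisses (i) as clear and says (ii) follows from Remark~\ref{rmk:Q=C&h1}, while you spell out both arguments explicitly; your proof of (iii) via unimodularity (inverting the integer matrix with determinant $\pm 1$) is exactly the paper's argument.
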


\begin{proof}
\textit{(i)} is clear.
\textit{(ii)} follows from Remark~\ref{rmk:Q=C&h1}. 

For \textit{(iii)}, let $\x \in \R_{\geq 0} \A \cap \Z^r$.
Then $\x \in C(\A)$, so there exists $V_i =\{\vect{v}_1, \ld, \vect{v}_r\} \subs \A$ such that $\x \in C(V_i)$.
Then there exist $\la_1, \ld, \la_r \geq 0$ such that $\x = \sum_{i=1}^r \la_i \vect{v}_i$.
Hence $\x = (\vect{v}_1, \ld, \vect{v}_r) \boldsymbol{\la}$, so $\boldsymbol{\la} = (\vect{v}_1, \ld, \vect{v}_r)^{-1} \x$ (here $(\vect{v}_1, \ld, \vect{v}_r)$ denotes the matrix with columns $\vect{v}_1, \ld, \vect{v}_r$).
Since the determinant of $(\vect{v}_1, \ld, \vect{v}_r)$ is $\pm 1$, $\boldsymbol{\la}$ has integral coordinates.
They are non-negative because $\la_i \geq 0$.
It follows that $\x = (\vect{v}_1, \ld, \vect{v}_r) \cdot \boldsymbol{\la}$ is an element of $\Z_{\geq 0} \A$. 
\end{proof}


\subsection{The Gauss hypergeometric function}\label{subsec:Gauss}

The Appell-Lauricella and Horn functions are generalizations of the Gauss hypergeometric function $F(a,b,c|z) = \sum_{n \geq 0} \frac{\poch{a}{n} \poch{b}{n}}{\poch{c}{n} n!} z^n$. 
This function can be reproduced as a GKZ-function as follows:
take $\A=\{\e_1, \e_2, \e_3, \e_1+\e_2-\e_3\} \subs \Z^3$ and $\bal=(-a,-b,c-1)$.
With $\gam=(-a,-b,c-1,0)$, we get the formal Laurent series solution
\begin{equation*}
G(a,b,c|\vect{z}) = 
z_1^{-a} z_2^{-b} z_3^{c-1} \sum_{n \in \Z} \frac{(z_1^{-1} z_2^{-1} z_3 z_4)^n}{\Gamma(1-n-a) \Gamma(1-n-b) \Gamma(c+n) \Gamma(n+1)}.
\end{equation*}
The Laurent series converges because $\gamma_4 \in \Z$.
Since $\Gamma(n+1)$ has a pole for $n<0$, we only sum over $n \geq 0$.
Note that $F(z) = G(1,1,1,z)$ and $G(\vect{z}) = z_1^a z_2^b z_3^{1-c} F(z_1^{-1} z_2^{-1} z_3 z_4)$.
Hence by Remark~\ref{rmk:reduction_algebraicity}, the Gauss function $F(a,b,c|z)$ is algebraic if and only if the GKZ-function $G(a,b,c|\vect{z})$ is algebraic. 
In the remaining of this paper, we will identify the Appell-Lauricella and Horn function with their GKZ-counterpart. \\

To find all algebraic Appell-Lauricella and Horn functions, we will use a reduction to algebraic Gauss functions.
In 1873, Schwarz published a list of all irreducible algebraic Gauss functions (\cite{schwarz_Gaussfunction}).
To each function $F(a,b,c,|z)$, associate the triple $(\la, \mu, \nu) = (1-c, c-a-b, b-a)$.
Up to permutations of $\{\la, \mu, \nu\}$, sign changes of each of $\la$, $\mu$ and $\nu$ and addition of $(l,m,n) \in \Z^3$ with $l+m+n$ even to $(\la, \mu, \nu)$, Table~\ref{tab:gauss_lmn} gives all irreducible algebraic Gauss functions.

\renewcommand{\arraystretch}{2}
\begin{table}[htb]
\begin{center}
\caption{The tuples $(\la,\mu,\nu)$ such that $F(a,b,c|z)$ is irreducible and algebraic} 
\label{tab:gauss_lmn} 
\begin{tabular}{lllllll}
\hline 
$(\half, \half, s)$ & \multicolumn{2}{l}{with $s \in \Q \setminus \Z$} \\

$(\half, \frac{1}{3}, \frac{1}{3})$ & 
$(\half, \frac{1}{3}, \frac{1}{4})$ & 
$(\half, \frac{1}{3}, \frac{1}{5})$ & 
$(\half, \frac{2}{5}, \frac{1}{3}$) & 
$(\half, \frac{2}{5}, \frac{1}{5})$ & 
$(\frac{2}{3}, \frac{1}{3}, \frac{1}{3})$ &
$(\frac{2}{3}, \frac{1}{3}, \frac{1}{5})$ \\ 

$(\frac{2}{3}, \frac{1}{4}, \frac{1}{4})$ &
$(\frac{2}{3}, \frac{1}{5}, \frac{1}{5})$ &
$(\frac{2}{5}, \frac{1}{3}, \frac{1}{3})$ & 
$(\frac{2}{5}, \frac{2}{5}, \frac{2}{5})$ &
$(\frac{3}{5}, \frac{1}{3}, \frac{1}{5})$ & 
$(\frac{3}{5}, \frac{2}{5}, \frac{1}{3})$ &
$(\frac{4}{5}, \frac{1}{5}, \frac{1}{5})$ \\ 
\hline
\end{tabular}
\end{center}
\end{table}
\renewcommand{\arraystretch}{1}

To compute all triples $(a,b,c)$ such that $F(a,b,c|z)$ is irreducible and algebraic, note that $(a,b,c) = (\frac{1-\la-\mu-\nu}{2}, \frac{1-\la-\mu+\nu}{2}, 1-\la)$. 
Since algebraicity only depends on the fractional part of $(a,b,c)$, it suffices to compute $(a,b,c)$ for all 48 tuples obtained from $(\la, \mu, \nu)$ by permutations and changes of signs and choose $(l,m,n)$ such that $0 \leq a,b,c < 1$.
For the tuple $(\la, \mu, \nu)=(\half, \half, s)$, this give 3 possible forms for $(a,b,c)$:
writing $r=\fp{a}$, we have $(a,b,c) = (r, 1-r, \half), (r, r+\half, \half)$ or $(r, r+\half, 2r) \pmod \Z$.
Since $s \in \Q \setminus \Z$, we have $r \in \parset$.
We will call these tuples Gauss triples of type 1. 
This terminology is not standard, but will be used troughout this paper. 
For the other 14 tuples $(\la, \mu, \nu)$, we use the computer to compute $(a,b,c)$ for all 48 tuples obtained from $(\la, \mu, \nu)$.
This gives 408 triples $(a,b,c)$ which form orbits under conjugation by Corollary~\ref{cor:algebraic_orbits}.
By conjugation we mean the action of $(\Z/D\Z)^*$ by $k(\fp{a},\fp{b},\fp{c})= (\fp{ka}, \fp{kb}, \fp{kc})$, where $D$ is the smallest common denominator of $a$, $b$ and $c$.
Furthermore, the tuples come in pairs $(a,b,c)$ and $(b,a,c)$.
In Table~\ref{tab:gauss_abc_orbits}, the smallest element of each pair of orbits is given (where $\frac{p}{q}$ is considered to be smaller than $\frac{u}{v}$ if either $q<v$, or $q=v$ and $p \leq u$.
Tuples of fractions are ordered lexicographically).
We call these 408 tuples Gauss tuples of type 2.
Notice that the denominators of $a$ and $c$ are at most 60 and 5, respectively, for all tuples of type 2.

\renewcommand{\arraystretch}{2}
\begin{table}[htb]
\begin{center}
\caption{The tuples $(a,b,c)$ such that $F(a,b,c|z)$ is irreducible and algebraic} 
\label{tab:gauss_abc_orbits} 
\begin{tabularx}{\textwidth}{XXXXXXX} 
\hline
$(r, -r, \half)$ &
$(r, r+\half, \half)$ &
\multicolumn{2}{l}{$(r, r+\half, 2r)$} &
\multicolumn{3}{l}{with $r \in \parset$} \\

$(\half, \frac{1}{6}, \frac{1}{3})$ &
$(\frac{1}{4}, \frac{3}{4}, \frac{1}{3})$ & 
$(\frac{1}{4}, \frac{7}{12}, \half)$ & 
$(\frac{1}{4}, \frac{7}{12}, \frac{1}{3})$ &
$(\frac{1}{6}, \frac{5}{6}, \frac{1}{3})$ &
$(\frac{1}{6}, \frac{5}{6}, \frac{1}{4})$ &
$(\frac{1}{6}, \frac{5}{6}, \frac{1}{5})$ \\

$(\frac{1}{6}, \frac{5}{12}, \frac{1}{3})$ &
$(\frac{1}{6}, \frac{5}{12}, \frac{1}{4})$ &
$(\frac{1}{6}, \frac{11}{30}, \frac{1}{3})$ &
$(\frac{1}{6}, \frac{11}{30}, \frac{1}{5})$ &
$(\frac{1}{10}, \frac{3}{10}, \frac{1}{5})$ &
$(\frac{1}{10}, \frac{9}{10}, \frac{1}{3})$ &
$(\frac{1}{10}, \frac{9}{10}, \frac{1}{5})$ \\

$(\frac{1}{10}, \frac{13}{30}, \frac{1}{3})$ &
$(\frac{1}{10}, \frac{13}{30}, \frac{1}{5})$ &
$(\frac{1}{12}, \frac{5}{12}, \frac{1}{4})$ &
$(\frac{1}{12}, \frac{7}{12}, \frac{1}{3})$ &
$(\frac{1}{15}, \frac{7}{15}, \frac{1}{3})$ &
$(\frac{1}{15}, \frac{7}{15}, \frac{1}{5})$ &
$(\frac{1}{15}, \frac{11}{15}, \frac{1}{5})$ \\

$(\frac{1}{15}, \frac{11}{15}, \frac{3}{5})$ &
$(\frac{1}{20}, \frac{11}{20}, \frac{1}{5})$ &
$(\frac{1}{20}, \frac{11}{20}, \frac{2}{5})$ &
$(\frac{1}{20}, \frac{13}{20}, \half)$ &
$(\frac{1}{20}, \frac{13}{20}, \frac{1}{5})$ &
$(\frac{1}{24}, \frac{13}{24}, \frac{1}{3})$ &
$(\frac{1}{24}, \frac{13}{24}, \frac{1}{4})$ \\

$(\frac{1}{24}, \frac{17}{24}, \half)$ &
$(\frac{1}{24}, \frac{17}{24}, \frac{1}{4})$ &
$(\frac{1}{24}, \frac{19}{24}, \half)$ &
$(\frac{1}{24}, \frac{19}{24}, \frac{1}{3})$ &
$(\frac{1}{30}, \frac{11}{30}, \frac{1}{5})$ &
$(\frac{1}{30}, \frac{19}{30}, \frac{1}{3})$ &
$(\frac{1}{60}, \frac{31}{60}, \frac{1}{3})$ \\

$(\frac{1}{60}, \frac{31}{60}, \frac{1}{5})$ &
$(\frac{1}{60}, \frac{41}{60}, \half)$ &
$(\frac{1}{60}, \frac{41}{60}, \frac{1}{5})$ &
$(\frac{1}{60}, \frac{49}{60}, \half)$ &
$(\frac{1}{60}, \frac{49}{60}, \frac{1}{3})$ \\
\hline
\end{tabularx}
\end{center}
\end{table}
\renewcommand{\arraystretch}{1}

We recall the usual criterion for irreducibility and the interlacing condition for the Gauss function (see e.g.~\cite{beukers_heckman_monodromy_nFn-1}).
These are special cases of Corollary~\ref{cor:FD_irreducible} and Lemma~\ref{lem:FD_interlacing}.

\begin{thm}\label{thm:gauss_irreducible_algebraic}
$F(a,b,c|z)$ is irreducible if and only if $a$, $b$, $c-a$ and $c-b$ are non-integral.
If it is irreducible, then it is algebraic if and only if for every $k$ coprime with the denominators of $a, b$ and $c$, either $\fp{ka}  \leq \fp{kc} < \fp{kb}$ or $\fp{kb} \leq \fp{kc} < \fp{ka}$.
\end{thm}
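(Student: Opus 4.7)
The plan is to apply the GKZ framework of Section~\ref{subsec:background} to the realization $G(a,b,c|\vect{z})$ with $\A=\{\e_1,\e_2,\e_3,\e_1+\e_2-\e_3\}$ and $\bal=(-a,-b,c-1)$. The four elements of $\A$ lie on $h(\x)=x_1+x_2+x_3=1$ and, viewed in that plane, form the corners of a unit square, so $C(\A)$ is cut out by the four linear forms
\begin{equation*}
m_1(\x)=x_1,\quad m_2(\x)=x_2,\quad m_3(\x)=x_1+x_3,\quad m_4(\x)=x_2+x_3.
\end{equation*}
Splitting $Q(\A)$ along the diagonal through $\e_3$ and $\e_1+\e_2-\e_3$ yields a unimodular triangulation, so Lemmas~\ref{lem:convex_hull} and~\ref{lem:properties_trian} show that $\A$ is saturated and that $Q(\A)$ has simplex volume $2$. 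The relation lattice $\lat=\Z\cdot(1,1,-1,-1)$ has a nonzero entry in every position, so the hypothesis of Remark~\ref{rmk:reducible_is_resonant} is satisfied and non-resonance is equivalent to irreducibility.

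For the first half of the theorem, I would observe that a translate $\bal+\vect{n}\in\bal+\Z^3$ meets a face of $C(\A)$ if and only if some $m_j$ takes an integer value there, and the evaluations $m_j(\bal)=-a,\,-b,\,c-a-1,\,c-b-1$ make this equivalent to one of $a,b,c-a,c-b$ being in $\Z$. Combining this with Theorem~\ref{thm:irrreducibility} and Remark~\ref{rmk:reducible_is_resonant} gives the irreducibility criterion.

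For algebraicity, by Theorem~\ref{thm:algebraic_solutions} the task reduces to showing that $\sign(k\bal)=2$ for every $k$ coprime to the common denominator exactly when the interlacing condition holds. By Corollary~\ref{cor:algebraic_orbits} I may assume $a,b,c\in(0,1)$, and Lemma~\ref{lem:signature_entier_linear_forms} then shows that the signature depends only on the quadruple $M_j:=-\entier{m_j(\bal)}$, which is $(1,1,M_3,M_4)$ with $M_3,M_4\in\{1,2\}$ according to the signs of $c-a$ and $c-b$. For an apexpoint $\vect{p}=\bal+\x$, set $y_j=m_j(\x)$; then $y_j\geq M_j$, and the linear identity $y_1+y_4=y_2+y_3$ holds since $y_3-y_1=x_3=y_4-y_2$. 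Lemma~\ref{lem:apexpoints} together with the values $m_j(\va_i)\in\{0,1\}$ translates the apexpoint conditions into: for each of the pairs $\{1,3\},\{2,4\},\{3,4\},\{1,2\}$ at least one coordinate must attain its minimum $M_j$. Enumerating the finitely many possibilities in each of the four cases for $(M_3,M_4)$ shows that $\sign=2$ precisely when $c$ strictly separates $a$ and $b$, and $\sign=1$ otherwise; running this for every conjugate $k\bal$ yields the stated interlacing condition.

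The main obstacle is the apexpoint enumeration itself: it is elementary but must be carried out carefully in each of the four cases to verify that the two "separating" cases yield exactly two apexpoints while the other two yield only one.
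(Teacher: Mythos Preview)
Your argument is correct. The paper does not prove this theorem directly but simply records it as the $n=1$ special case of Corollary~\ref{cor:FD_irreducible} and Lemma~\ref{lem:FD_interlacing} for the Lauricella $F_D$ function (together with Remark~\ref{rmk:FD_irreducible_is_nonresonant} for the equivalence of irreducibility and non-resonance). Your direct computation for the Gauss case---identifying the four facets of $C(\A)$, reading off the non-resonance conditions, and enumerating apexpoints via the quadruple $(M_1,M_2,M_3,M_4)=(1,1,M_3,M_4)$ subject to $y_1+y_4=y_2+y_3$---is exactly the $n=1$ specialization of the paper's $F_D$ argument, carried out in slightly different bookkeeping. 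So the route is essentially the same, just at a different level of generality; the only minor remark is that for the irreducibility direction the paper can also appeal to the classical literature (Beukers--Heckman, or Sasaki for $F_D$) rather than relying on Remark~\ref{rmk:reducible_is_resonant}, which the paper itself flags as not yet available in print.
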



\section{The Appell-Lauricella functions}

\subsection{The Appell $F_1$ and Lauricella $F_D$ functions}\label{subsec:F1}

The Lauricella $F_D$ function is defined by 
\begin{equation*}
F_D(a,\vect{b},c | \vect{z}) = 
\sum_{\vect{m} \in \Z_{\geq 0}^n} \frac{\poch{a}{|\vect{m}|} \poch{\vect{b}}{\vect{m}}}{\poch{c}{|\vect{m}|} \vect{m}!} \vect{z}^{\vect{m}}. 
\end{equation*}
Up to a constant factor, this equals
\begin{equation*}
\sum_{\vect{m} \in \Z_{\geq 0}^n} \frac{z_1^{m_1} \cdot \ld \cdot z_n^{m_n}}
{\Gamma(1-m_1-\ld-m_n-a) \prod_{i=1}^n \Gamma(1-m_i-b_i) \Gamma(m_1 + \ld + m_n+c) \prod_{i=1}^n \Gamma(1+m_i)}.
\end{equation*}
Hence the lattice is 
\begin{equation*}
\lat = \bigoplus_{i=1}^n \Z(-\e_1+\e_{i+1}+\e_{n+2}+\e_{n+i+2}) \subs \Z^{2n+2}
\end{equation*}
and $\gam = (-a, -\vect{b}, c-1, \vect{0}) \in \R^{2n+2}$.
We can take
\begin{equation*}
\A=\{\e_1, \e_2, \ld, \e_{n+2}, \e_1+\e_2-\e_{n+2}, \e_1+\e_3-\e_{n+2}, \ld, \e_1+\e_{n+1}-\e_{n+2}\} \subs \Z^{n+2}
\end{equation*}
and $\bal = \sum_{i=1}^{2n+2} \gamma_i \va_i = (-a, -\vect{b}, c-1) \in \Q^{n+2}$. 

\begin{lem}\label{lem:FD_normality}
$\A$ is saturated.
\end{lem}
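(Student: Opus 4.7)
The plan is to verify saturation directly, by constructing for each integer point in $C(\A)$ an explicit expression as a non-negative integer combination of vectors of $\A$. Writing a candidate representation
\[
\x = \sum_{j=1}^{n+2} c_j \e_j + \sum_{i=2}^{n+1} d_i(\e_1+\e_i-\e_{n+2})
\]
forces $c_i = x_i - d_i$ for $2 \le i \le n+1$, $c_1 = x_1 - \sum_{i=2}^{n+1} d_i$ and $c_{n+2} = x_{n+2} + \sum_{i=2}^{n+1} d_i$, so a valid representation with all $c_j, d_i \geq 0$ exists iff one can choose $d_2, \ld, d_{n+1} \geq 0$ with $d_i \le x_i$, $\sum d_i \le x_1$ and $\sum d_i \geq -x_{n+2}$. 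Eliminating the $d_i$ from this system over $\R$ then yields the facet description
\[
C(\A) = \{\x \in \R^{n+2} : x_1, \ld, x_{n+1} \geq 0,\ x_1 + x_{n+2} \geq 0,\ x_2 + \ld + x_{n+1} + x_{n+2} \geq 0\}.
\]

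Given an integer point $\x$ of this cone, the construction of the $d_i$ splits on the sign of $x_{n+2}$. If $x_{n+2} \geq 0$, take $d_i = 0$ for all $i$ and $c_j = x_j$; all are non-negative integers by the facet description. If $x_{n+2} < 0$, set $s = -x_{n+2} > 0$; the last two facet inequalities read $s \le x_1$ and $s \le x_2 + \ld + x_{n+1}$. Assign $d_i$ greedily by $d_2 = \min(x_2, s)$ and $d_i = \min\bigl(x_i,\ s - d_2 - \ld - d_{i-1}\bigr)$ for $3 \le i \le n+1$. Each $d_i$ is then a non-negative integer with $d_i \le x_i$, and $\sum d_i = s$ since $s \le x_2 + \ld + x_{n+1}$; together with $s \le x_1$ this gives $c_1 = x_1 - s \geq 0$, $c_i = x_i - d_i \geq 0$ and $c_{n+2} = 0$.

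The argument is essentially bookkeeping once the facet description is in hand; the only step with any content is the elimination that produces that description. As an alternative one could apply Lemma~\ref{lem:properties_trian}(iii) to the natural triangulation whose $2^n$ maximal simplices are $V_S = \{\e_1, \e_{n+2}\} \cup \{\e_i : i \notin S\} \cup \{\e_1 + \e_i - \e_{n+2} : i \in S\}$ for $S \subs \{2, \ld, n+1\}$, each of determinant $\pm 1$ (visible by expanding along the rows indexed $2, \ld, n+1$). Verifying the hypotheses of Lemma~\ref{lem:convex_hull} for this triangulation, however, requires essentially the same case analysis on $x_{n+2}$, so the direct route seems shorter.
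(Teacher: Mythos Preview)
Your main argument is correct and essentially the same as the paper's: both give an explicit greedy construction of non-negative integer coefficients, with only cosmetic differences (you split on the sign of $x_{n+2}$ and target $\sum d_i = -x_{n+2}$, whereas the paper's greedy caps the running sum by $x_1$; you also derive the facet description of $C(\A)$ en route, which the paper establishes separately as the next lemma).

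Your suggested alternative, however, is wrong. The simplex volume of $Q(\A)$ is $n+1$, not $2^n$, so no unimodular triangulation into $2^n$ simplices can exist. Concretely, for $n=2$ your four $V_S$ have overlapping interiors: $C(V_{\{2\}}) \cap C(V_{\{3\}})$ is the full-dimensional cone $\{x_2,x_3 \geq 0,\ x_1 \geq x_2,\ x_1 \geq x_3,\ x_4+x_2 \geq 0,\ x_4+x_3 \geq 0\}$, while $C(V_{\{2\}} \cap V_{\{3\}}) = C(\{\e_1,\e_4\})$ is only two-dimensional, so the hypothesis of Lemma~\ref{lem:convex_hull} fails. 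You may be thinking of the $F_A$ or $F_C$ setting, where the analogous $2^n$-simplex construction does give a triangulation.
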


\begin{proof}
Suppose that $\x \in \Z^{n+2}$ lies in the $\R_{\geq 0}$-span of $\A$.
Then there exist $\mu_1, \ld, \mu_{2n+2} \in \R_{\geq 0}$ such that $\x = \sum_{i=1}^{2n+2} \mu_i \va_i$. 
Define $\la_{n+3}=\min(x_1,x_2)$, and for $n+4 \leq i \leq 2n+2$, define recursively $\la_i = \min(x_1-\la_{n+3}- \ld - \la_{i-1}, x_{i-n-1})$.
Furthermore, for $2 \leq i \leq n+1$, let $\la_i=x_i-\la_{n+i+1}$.
Finally, let $\la_1=x_1-\la_{n+3} - \ld - \la_{2n+2}$ and $\la_{n+2} = x_{n+2} + \la_{n+3} + \ld + \la_{2n+2}$.
Then it is clear that $\la_i \in \Z$ for all $i$, and $\sum_{i=1}^{2n+2} \la_i \va_i = \x$.
It remains to show that $\la_i \geq 0$ for all $i$. 
This is clear for all $i$ except for $i=n+2$.
If $\la_{n+i+1} = x_i$ for all $2 \leq i \leq n+1$, then $\la_{n+2} = x_2 + \ld + x_{n+1} + x_{n+2} \geq 0$.
If there exists $2 \leq i \leq n+1$ such that $\la_{n+i+1} = x_1-\la_{n+3} - \ld - \la_{n+i}$, then $\la_{n+j}=0$ for all $i+2 \leq j \leq 2n+2$, so $\la_{n+2} =  \la_{n+3} + \ld + \la_{n+i} + x_1 -\la_{n+3} - \ld - \la_{n+i} + x_{n+2} = x_1 + x_{n+2} \geq 0$.
It follows that $\x$ lies in the $\Z_{\geq 0}$-span of $\A$.
\end{proof}

\begin{lem}\label{lem:FD_CA}
The positive real cone spanned by $\A$ is
\begin{equation*}
C(\A) = \{\x \in \R^{n+2} \ | \ x_1, \ld, x_{n+1} \geq 0, x_1+x_{n+2} \geq 0, x_2 + \ld + x_{n+1}+x_{n+2} \geq 0\}.
\end{equation*}
\end{lem}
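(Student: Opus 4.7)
The plan is to prove the two inclusions separately. Let $P$ denote the polyhedral set on the right-hand side of the claimed equality.

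\textbf{$C(\A) \subs P$:} First I would verify that each of the $2n+2$ generators of $\A$ satisfies each defining linear inequality of $P$. This is a routine case check: the basis vectors $\e_1, \ld, \e_{n+1}$ and $\e_{n+2}$ clearly satisfy all the inequalities, and for the vectors $\e_1 + \e_i - \e_{n+2}$ (with $2 \leq i \leq n+1$) one computes that $x_j \geq 0$ for $1 \leq j \leq n+1$, $x_1 + x_{n+2} = 1 - 1 = 0$, and $x_2 + \ld + x_{n+1} + x_{n+2} = 1 - 1 = 0$. Since $P$ is a convex cone containing $\A$, the inclusion $C(\A) \subs P$ follows.

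\textbf{$P \subs C(\A)$:} Given $\x \in P$, the plan is to produce non-negative coefficients $\la_1, \ld, \la_{n+2}, \mu_2, \ld, \mu_{n+1}$ such that
\begin{equation*}
\x = \la_1 \e_1 + \sum_{i=2}^{n+1} \la_i \e_i + \la_{n+2} \e_{n+2} + \sum_{i=2}^{n+1} \mu_i (\e_1 + \e_i - \e_{n+2}).
\end{equation*}
Matching coordinates forces $\la_i = x_i - \mu_i$ for $2 \leq i \leq n+1$, $\la_1 = x_1 - \sum_{i=2}^{n+1} \mu_i$, and $\la_{n+2} = x_{n+2} + \sum_{i=2}^{n+1} \mu_i$, so the problem reduces to choosing $\mu_i$ satisfying $0 \leq \mu_i \leq x_i$, $\sum \mu_i \leq x_1$, and $\sum \mu_i \geq -x_{n+2}$.

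If $x_{n+2} \geq 0$, take all $\mu_i = 0$. Otherwise set $S = -x_{n+2} > 0$; then $S \leq x_1$ by the inequality $x_1 + x_{n+2} \geq 0$, and $S \leq x_2 + \ld + x_{n+1}$ by the inequality $x_2 + \ld + x_{n+1} + x_{n+2} \geq 0$. So one can distribute $S$ among the $\mu_i$ (for example, greedily filling each bucket up to capacity $x_i$) to obtain non-negative coefficients realizing $\x$ as an element of $C(\A)$. The main potential obstacle is purely bookkeeping: the whole argument amounts to a feasibility check for a small linear program, and the two non-trivial inequalities defining $P$ are exactly the upper-bound and lower-bound constraints one needs on $\sum \mu_i$, so nothing unexpected arises.
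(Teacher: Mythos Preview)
Your proof is correct and follows essentially the same strategy as the paper: the easy inclusion is checked on generators, and the hard inclusion is done by producing explicit non-negative coefficients, with the two defining inequalities of $P$ being exactly what guarantees feasibility. The paper recycles the greedy coefficients $\la_i$ from the proof of the preceding saturation lemma rather than splitting on the sign of $x_{n+2}$, but this is only a cosmetic difference in how the $\mu_i$ are chosen.
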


\begin{proof}
It is clear that $C(\A)$ is included in this set.
Suppose that $x_1, \ld, x_{n+1} \geq 0$, $x_1+x_{n+2} \geq 0$ and $x_2 + \ld + x_{n+1}+x_{n+2} \geq 0$.
Define $\la_i$ as in the proof of Lemma~\ref{lem:FD_normality}.
Then it is clear that $\x = \sum_{i=1}^{2n+2} \la_i \va_i$, and by an argument very similar to the argument given in the proof of Lemma~\ref{lem:FD_normality}, it follows that all $\la_i$ are non-negative.
Hence $\x \in C(\A)$.
\end{proof}

\begin{cor}\label{cor:FD_irreducible}
$F_D(a,\vect{b},c | \vect{z})$ is non-resonant if and only if $a$, $b_1,\ld,b_n$, $c-a,$ and $c-b_1-\ld-b_n$ are non-integral.
\end{cor}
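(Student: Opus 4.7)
The plan is to unpack the resonance definition using the explicit half-space description of $C(\A)$ from Lemma~\ref{lem:FD_CA}. Writing $m_i(\x) = x_i$ for $i = 1, \ld, n+1$, $m_{n+2}(\x) = x_1 + x_{n+2}$ and $m_{n+3}(\x) = x_2 + \ld + x_{n+1} + x_{n+2}$, the proper faces of $C(\A)$ are precisely the non-empty sets cut out by setting a subset of these forms to zero. Hence a point of $C(\A)$ lies in some face iff at least one $m_j$ vanishes at it. Since each $m_j$ has integer coefficients, for every $\vect{k} \in \Z^{n+2}$ we have $m_j(\bal + \vect{k}) \equiv m_j(\bal) \pmod{\Z}$, so $m_j$ can vanish on $\bal + \Z^{n+2}$ only when $m_j(\bal) \in \Z$.

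Next I would evaluate each $m_j$ on $\bal = (-a, -\vect{b}, c-1)$, obtaining
\[
m_1(\bal) = -a, \qquad m_i(\bal) = -b_{i-1} \ (2 \leq i \leq n+1),
\]
\[
m_{n+2}(\bal) = c - a - 1, \qquad m_{n+3}(\bal) = c - b_1 - \ld - b_n - 1.
\]
Thus some $m_j(\bal)$ is integral iff at least one of $a$, $b_1, \ld, b_n$, $c - a$, $c - b_1 - \ld - b_n$ is integral, which already settles the ``only if'' direction: if none of these is integral, no integer shift of $\bal$ can land on any face of $C(\A)$.

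For the converse, I would fix $j$ with $m_j(\bal) \in \Z$ and exhibit $\vect{k} \in \Z^{n+2}$ with $m_j(\bal + \vect{k}) = 0$ and $m_i(\bal + \vect{k}) \geq 0$ for all $i$. The recipe is to use one or two coordinates of $\vect{k}$ to enforce $m_j = 0$ and then inflate the remaining coordinates (making each a sufficiently large positive integer) so that all other $m_i$ become positive. The main obstacle---and really the only thing that needs a brief case-by-case check---is when $j \in \{n+2, n+3\}$, where $m_j = 0$ forces $x_{n+2}$ to be negative, so one must verify that $m_{n+3}$ (respectively $m_{n+2}$) can still be made non-negative; this follows easily by choosing $k_2, \ld, k_{n+1}$ large enough. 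Combining the two directions yields the corollary.
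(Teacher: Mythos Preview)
Your argument is correct and is precisely the intended elaboration: the paper states this as an immediate corollary of Lemma~\ref{lem:FD_CA} without proof, and you have filled in exactly the details one would expect---identifying the facet-defining forms $m_j$, evaluating them at $\bal=(-a,-\vect{b},c-1)$, and for the converse constructing an integer shift into the relevant facet by inflating the free coordinates. There is nothing to add.
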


\begin{rmk}\label{rmk:FD_irreducible_is_nonresonant}
In Proposition~1 of \cite{sasaki_Lauricella_FD}, Sasaki shows that $F_D(a,\vect{b},c | \vect{z})$ function is irreducible if and only if $a$, $b_1,\ld,b_n$, $c-a,$ and $c-b_1-\ld-b_n$ are non-integral.
Hence in this case, non-resonance is equivalent to irreducibility.
\end{rmk}

To compute the simplex volume of the convex hull of $\A$, we map $\A$ to the hyperplane $v_{n+2}=1$ by the invertible transformation $\vect{v} \mapsto (v_1, \ld, v_{n+1}, v_1+\ld+v_{n+2})$.
Now we can omit the last coordinate.
This gives the set $\tilde{\A} = \{\e_1, \ld, \e_{n+1}, 0, \e_1+\e_2, \ld, \e_1+\e_{n+1}\} \subs \Z^{n+1}$.

\begin{lem}\label{lem:FD_convexhull}
The convex hull of $\tilde{\A}$ is $\{\x \in \R^{n+1} \ | \ 0 \leq x_1, \ld, x_{n+1} \leq 1 , 0 \leq x_2 + \ld + x_{n+1} \leq 1\}$.
\end{lem}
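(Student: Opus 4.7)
The plan is to prove the two inclusions separately. The inclusion $Q(\tilde{\A}) \subseteq P$, where $P$ denotes the polytope on the right, is immediate: each element of $\tilde{\A}$ either has $x_2+\ld+x_{n+1}=0$ (namely $\vect{0}$ and $\e_1$) or $x_2+\ld+x_{n+1}=1$ (namely $\e_i$ and $\e_1+\e_i$ for $i\geq 2$), and all coordinates lie in $\{0,1\}$. Since $P$ is defined by linear inequalities and is therefore convex, it contains the convex hull of $\tilde{\A}$.

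For the reverse inclusion $P \subseteq Q(\tilde{\A})$, I would give an explicit convex combination. Fix $\x \in P$ and set $s = x_2 + \ld + x_{n+1} \in [0,1]$. The key idea is to split according to whether $x_1$ is large enough to cover the mass contributed by the vectors $\e_1+\e_i$ needed to build up $x_2,\ld,x_{n+1}$. In the easy case $x_1 \geq s$, write
\begin{equation*}
\x = (1-x_1)\,\vect{0} \;+\; (x_1 - s)\,\e_1 \;+\; \sum_{i=2}^{n+1} x_i\,(\e_1+\e_i),
\end{equation*}
and observe that all coefficients are non-negative and sum to $1$. In the other case $x_1 < s$ (which forces $s>0$), distribute the available $x_1$ proportionally among the $\e_1+\e_i$ and put the remainder on $\e_i$:
\begin{equation*}
\x = (1-s)\,\vect{0} \;+\; \sum_{i=2}^{n+1} \frac{x_1\,x_i}{s}\,(\e_1+\e_i) \;+\; \sum_{i=2}^{n+1} x_i\!\left(1-\frac{x_1}{s}\right)\e_i.
\end{equation*}
Again all coefficients are non-negative and sum to $(1-s)+x_1+(s-x_1)=1$, and a short check of the first and of the $i$-th coordinates (for $i\geq 2$) confirms that the expression equals $\x$.

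The main (and only real) obstacle is the case split in the second inclusion: one has to notice that in the regime $x_1<s$ the naive decomposition used in the first case produces a negative coefficient in front of $\e_1$, and that proportional redistribution on the $\e_1+\e_i$ repairs this while keeping all weights non-negative. Alternatively, one could invoke Lemma~\ref{lem:FD_CA} together with the transformation $\vect{v}\mapsto(v_1,\ld,v_{n+1},v_1+\ld+v_{n+2})$ introduced right before the lemma, using that $Q(\A)=C(\A)\cap h^{-1}(\{1\})$ and that the last coordinate of the image is precisely $h(\vect{v})$; substituting $w_{n+2}=1$ in the transformed cone inequalities $w_1,\ld,w_{n+1}\geq 0$, $w_1\leq w_{n+2}$, $w_2+\ld+w_{n+1}\leq w_{n+2}$ then yields the claimed description directly. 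I would mention this alternative but give the explicit convex combination, which is self-contained and will be immediately useful when constructing the unimodular triangulation needed for Lemma~\ref{lem:convex_hull}.
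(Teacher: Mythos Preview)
Your proof is correct and follows the same overall structure as the paper: the easy inclusion $Q(\tilde{\A})\subseteq P$ is immediate, and the reverse inclusion is established by exhibiting an explicit convex combination. The difference lies only in how that combination is built. The paper reuses the greedy recursive-minimum construction from Lemma~\ref{lem:FD_normality} (setting $\la_{n+i+1}=\min(x_1-\la_{n+3}-\cdots-\la_{n+i},\,x_i)$, etc.), observes that the partial sum of the $\la_i$ equals either $x_1$ or $x_2+\cdots+x_{n+1}$, and then puts the slack on the coefficient of $\vect{0}$. Your case split on $x_1\gtrless s$ with proportional distribution in the second case is a different, and arguably cleaner, realisation of the same idea: both approaches detect whether the ``$\e_1$-budget'' suffices to build all of $x_2,\ldots,x_{n+1}$ via the vectors $\e_1+\e_i$, and compensate with the $\e_i$ when it does not. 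The paper's version has the advantage of recycling existing machinery; yours is self-contained and produces a smoother formula. Your alternative via Lemma~\ref{lem:FD_CA} and the transformation is also valid (note that the upper bounds $x_i\leq 1$ for $i\geq 2$ are redundant given $x_i\geq 0$ and $\sum_{i\geq 2}x_i\leq 1$, which is why only $x_1\leq 1$ appears when you intersect the transformed cone with $w_{n+2}=1$).
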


\begin{proof}
Denote the above set by $V$.
It is clear that $V$ is a convex set containing $\tilde{\A}$, so $Q(\tilde{\A})$ is contained in $V$. 
Let $\x \in V$. 
For $i \neq n+2$, define $\la_i$ as in the proof of Lemma~\ref{lem:FD_normality}.
Similar to Lemma~\ref{lem:FD_normality}, we have $\la_i \geq 0$ for all $i \neq n+2$.
Note that $\x = \sum_{i=1}^{2n+2} \la_i \va_i$ for every value of $\la_{n+2}$.
By an argument similar to the proof of Lemma~\ref{lem:FD_normality}, one can show that $\la_1 + \ld + \la_{n+1} + \la_{n+3} + \ld + \la_{2n+2}$ is equal to either $x_1$ or $x_2+\ld+x_{n+1}$.
In both cases it is smaller than 1, so we can define $\la_{n+2}=1-(\la_1 + \ld + \la_{n+1} + \la_{n+3} + \ld + \la_{2n+2}) \geq 0$.
\end{proof}

The volume of $Q(\A)$ can now be obtained by an $(n+1)$-fold integration.

\begin{cor}\label{cor:FD_apexpoints}
The simplex-volume of $Q(\A)$ is $n+1$, so there are at most $n+1$ apexpoints. 
\end{cor}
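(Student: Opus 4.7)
The plan is to reduce everything to the explicit description of $Q(\tilde{\A})$ obtained in Lemma~\ref{lem:FD_convexhull} and carry out a simple integration. Since the transformation $\vect{v} \mapsto (v_1,\ld,v_{n+1}, v_1+\ld+v_{n+2})$ used to pass from $\A$ to $\tilde{\A}$ is a linear isomorphism of $\Z^{n+2}$ with determinant $\pm 1$, it preserves both Euclidean and simplex volumes, so it suffices to compute the simplex volume of $Q(\tilde{\A})$ in $\R^{n+1}$.

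By Lemma~\ref{lem:FD_convexhull},
\begin{equation*}
Q(\tilde{\A}) = \{\x \in \R^{n+1} \ | \ 0 \leq x_1, \ld, x_{n+1} \leq 1, \ 0 \leq x_2 + \ld + x_{n+1} \leq 1\}.
\end{equation*}
The variable $x_1$ decouples completely: it ranges freely over $[0,1]$ and contributes a factor $1$ to the Euclidean volume. The remaining constraints on $(x_2,\ld,x_{n+1})$ are exactly those defining the standard $n$-simplex $\{y_1,\ld,y_n \geq 0, \ y_1+\ld+y_n \leq 1\} \subs \R^n$, whose Euclidean volume is $\frac{1}{n!}$. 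Note that the upper bounds $x_i \leq 1$ are automatic once $x_2+\ld+x_{n+1}\leq 1$ and $x_i\geq 0$, so they impose no further restriction. Hence the Euclidean volume of $Q(\tilde{\A})$ equals $\frac{1}{n!}$.

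The simplex volume in $\R^{n+1}$ is the Euclidean volume multiplied by $(n+1)!$ (the normalization which makes the standard simplex have volume $1$), so
\begin{equation*}
\mathrm{vol}(Q(\A)) = \mathrm{vol}(Q(\tilde{\A})) = (n+1)! \cdot \frac{1}{n!} = n+1.
\end{equation*}
The bound on the number of apexpoints then follows immediately from Lemma~\ref{lem:maximal_signature}, which gives $\sign \leq \mathrm{vol}(Q(\A)) = n+1$. There is no real obstacle here; the only thing worth checking carefully is that the upper bounds $x_i \leq 1$ in the description of $Q(\tilde{\A})$ are redundant once the simplex constraint $x_2+\ld+x_{n+1}\leq 1$ and nonnegativity are in place, which avoids any need for inclusion-exclusion in the integration.
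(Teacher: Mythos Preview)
Your proof is correct and follows exactly the approach the paper indicates: the paper sets up the passage to $\tilde{\A}$ and then simply remarks that ``the volume of $Q(\A)$ can now be obtained by an $(n+1)$-fold integration,'' which is precisely the computation you carry out (factoring off the free $x_1$-variable and recognizing the standard $n$-simplex in $(x_2,\ld,x_{n+1})$). Your observation that the bounds $x_i\leq 1$ are redundant is a helpful clarification that the paper leaves implicit.
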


\begin{lem}\label{lem:FD_interlacing}
Suppose that $F_D(a,\vect{b},c | \vect{z})$ is non-resonant.
Then there are $n+1$ apexpoints if and only if either
$\fp{c} < \fp{a}$ and $\fp{b_1} + \ld + \fp{b_n} \leq \fp{c}$, or $\fp{a} \leq \fp{c}$ and $\fp{c}+n-1 < \fp{b_1} + \ld + \fp{b_n}$.
\end{lem}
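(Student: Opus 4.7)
The plan is to apply Lemma~\ref{lem:signature_entier_linear_forms} together with Lemma~\ref{lem:apexpoints} and count apexpoints directly from the face structure of $C(\A)$ given in Lemma~\ref{lem:FD_CA}. The faces are defined by $m_1(\x)=x_1$, $m_j(\x)=x_j$ for $j=2,\ld,n+1$, $m_{n+2}(\x)=x_1+x_{n+2}$ and $m_{n+3}(\x)=x_2+\ld+x_{n+1}+x_{n+2}$. Under the normalization $0<a,b_i,c<1$ (non-integrality being forced by non-resonance via Corollary~\ref{cor:FD_irreducible}), the lower bounds an integer vector $\x$ must satisfy for $\x+\bal\in C(\A)$ are $x_1\geq 1$, $x_j\geq 1$ ($2\leq j\leq n+1$), $x_1+x_{n+2}\geq\alpha$ and $x_2+\ld+x_{n+1}+x_{n+2}\geq\beta$, where $\alpha\in\{1,2\}$ equals $1$ iff $c>a$, and $\beta=\lceil B-c\rceil+1\in\{1,\ld,n+1\}$ with $B:=b_1+\ld+b_n$.

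Setting $P_1=x_1$, $P_j=x_j$, $P_{n+2}=x_1+x_{n+2}$, $P_{n+3}=x_2+\ld+x_{n+1}+x_{n+2}$, the apex condition from Lemma~\ref{lem:apexpoints} becomes, after checking each element of $\A$, the four disjunctions: \textit{(i)} $P_1=1$ or $P_{n+2}=\alpha$ (from $\e_1$); \textit{(ii)} $P_j=1$ or $P_{n+3}=\beta$ for each $j\in\{2,\ld,n+1\}$ (from $\e_j$); \textit{(iii)} $P_{n+2}=\alpha$ or $P_{n+3}=\beta$ (from $\e_{n+2}$); and \textit{(iv)} $P_1=1$ or $P_j=1$ for each $j\in\{2,\ld,n+1\}$ (from $\e_1+\e_j-\e_{n+2}$, using that the two sum-type face values are unchanged under this shift). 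I would then split on $(P_1,P_{n+3})$ and introduce $K:=n+\alpha-\beta\in\{0,\ld,n+1\}$. When $P_1=1$ and $P_{n+3}=\beta$ all disjunctions are automatic, leaving a stars-and-bars count of non-negative integer tuples summing to $1-K$ in $n+1$ variables, which yields $n+1$ apexpoints if $K=0$, one if $K=1$, and none if $K\geq 2$. When $P_1=1$ and $P_{n+3}>\beta$, \textit{(ii)} and \textit{(iii)} force $P_j=1$ and $P_{n+2}=\alpha$, giving one apexpoint exactly when $K\geq 2$. When $P_1>1$, \textit{(iv)} and \textit{(i)} force every $P_j=1$ and $P_{n+2}=\alpha$, and $x_1\in\{2,\ld,K\}$ contributes $\max(K-1,0)$ apexpoints. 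Summing the three contributions shows $\sign=n+1$ precisely when $K\in\{0,n+1\}$, and $\sign\leq n$ otherwise.

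Finally, $K=0$ corresponds to $\alpha=1$, $\beta=n+1$, that is $\fp{a}<\fp{c}$ together with $B>\fp{c}+n-1$, matching the second interlacing condition; and $K=n+1$ corresponds to $\alpha=2$, $\beta=1$, that is $\fp{c}<\fp{a}$ together with $B<\fp{c}$, matching the first (the $\leq$ sign in the lemma is permitted since equality is ruled out by non-resonance). The main obstacle is keeping the three subcases of the apexpoint enumeration disjoint and collectively exhaustive while faithfully implementing all four disjunctions, in particular recognizing in case \textit{(iv)} that the sum-type constraints are invariant under the shift by $\e_1+\e_j-\e_{n+2}$; once everything is reduced to the single integer $K$, both the combinatorial count and the translation back to the parameters $a$, $\vect{b}$, $c$ become routine.
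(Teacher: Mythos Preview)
Your proof is correct and follows essentially the same route as the paper: both directly enumerate apexpoints via Lemma~\ref{lem:apexpoints} and the face description of $C(\A)$ from Lemma~\ref{lem:FD_CA}, splitting into cases according to which face inequalities are tight. Your packaging via the single integer $K=n+\alpha-\beta$ is a clean way to unify the count, but the underlying case analysis matches the paper's (the paper splits on $x_1$ and on whether some $x_i$ with $2\le i\le n+1$ is positive, which corresponds to your split on $P_1$ and $P_{n+3}$).
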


\begin{proof}
Let $\vect{p} \in \R^{n+2}$.
Since $\vect{p}$ is an apexpoint if and only if $\vect{p} \in \KA$ and $\vect{p} - \va_i \not\in C(\A)$ for all $\va_i \in \A$, the apexpoints are precisely the points $\vect{p} = \x + \bal$, with $\x \in \Z^{n+2}$, satisfying the following conditions: 
$x_1, \ld, x_{n+1} \geq 0$, $x_1 + x_{n+2} + \al_1 + \al_{n+2} \geq 0$ and $x_2 + \ld + x_{n+2} + \al_2 + \ld + \al_{n+2} \geq 0$; 
$x_1=0$ or $x_1+x_{n+2}+\al_1+\al_{n+2}<1$; 
$x_2 + \ld + x_{n+2} + \al_2 +\ld + \al_{n+2}<1$ or $x_i=0$ for all $i \in \{2, \ld, n+1\}$; 
$x_1+x_{n+2}+\al_1+\al_{n+2}<1$ or $x_2 + \ld + x_{n+2} + \al_2 +\ld + \al_{n+2}<1$; 
and $x_1=0$ or $x_i=0$ for all $i \in \{2, \ld, n+1\}$. 

If $x_1=\ld=x_{n+1}=0$, then $x_{n+2}+\al_1+\al_{n+2} \geq 0$ so $x_{n+2} \geq -1$. 
Since we either have $x_{n+2}+\al_1+\al_{n+2}<1$ or $x_{n+2} + \al_2 +\ld + \al_{n+2}<1$, we also have $x_{n+2} \leq 0$.
$x_{n+2}=-1$ gives a apexpoint if and only if $\al_1 + \al_{n+2} \geq 1$ and $\al_2 +\ld + \al_{n+2} \geq1$, and $x_{n+2}=0$ gives an apexpoint in all other cases.
Hence there is always exactly one apexpoint with $x_1=\ld=x_{n+1}=0$. 

If $x_1=0$ and there exists $2 \leq i \leq n+1$ with $x_i>0$, then $x_2 + \ld + x_{n+2} + \al_2 +\ld + \al_{n+2}<1$. 
From $x_{n+2} + \al_1 + \al_{n+2} \geq 0$ it follows that $x_{n+2} \geq -1$.
Now $x_2, \ld, x_{n+1} \geq 0$ and $x_2 + \ld + x_{n+2} \leq 0$ imply that $x_i=1$, $x_{n+2}=-1$ and $x_j=0$ for all $j \neq i, n+2$.
Hence there is at most one apexpoint, and this is indeed an apexpoint if and only if $\al_1+\al_{n+2} \geq 1$ and $\al_2 + \ld + \al_{n+2}<1$. 
Since this condition is independent of $i$, there are either 0 or $n$ apexpoints of this form.

Finally, if $x_1>0$, then we have $0 \leq x_1+x_{n+2}+\al_1+\al_{n+2}<1$, $x_2= \ld = x_{n+1}=0$ and $x_{n+2} + \al_2 + \ld + \al_{n+2} \geq 0$. 
It follows that $-n-1 < -(\al_2 + \ld + \al_{n+2}) \leq x_{n+2} < -(\al_1+\al_{n+2}) \leq 0$, so $-n \leq x_{n+2} \leq -1$.
For every such $x_{n+2}$, there is exactly one $x_1$ which satisfies the conditions, namely $x_1=-x_{n+2}-\entier{\al_1+\al_{n+2}}$.
This gives at most $n$ apexpoints, and there are $n$ apexpoints if and only if $\al_1 + \al_{n+2}<1$ and $\al_2 + \ld + \al_{n+2} \geq n$.

Hence there are $n+1$ apexpoints if and only if either $\al_1+\al_{n+2} \geq 1$ and $\al_2 + \ld + \al_{n+2}<1$, or  $\al_1 + \al_{n+2}<1$ and $\al_2 + \ld + \al_{n+2} \geq n$.
Since $\bal=(1-\fp{a}, 1-\fp{b_1}, \ld, 1-\fp{b_n},\fp{c-1})$, this is equivalent to the condition that $\fp{c} < \fp{a}$ and $\fp{b_1} + \ld + \fp{b_n} \leq \fp{c}$, or $\fp{a} \leq \fp{c}$ and $\fp{c}+n-1 < \fp{b_1} + \ld + \fp{b_n}$.
\end{proof}

Now we have found an interlacing condition, we can easily check whether a parameter vector $(a,b_1,\ld,b_n,c)$ gives rise to an irreducible algebraic function. 
To find all irreducible algebraic functions, we use a reduction to Lauricella functions with less variables:

\begin{lem}\label{lem:FD_reduction}
If $F_D(a,\vect{b},c | \vect{z})$ is an irreducible algebraic function, then for every $i \in \{1, \ld, n\}$, 
$F_D(a, b_1, \ld, b_{i-1}, b_{i+1}, \ld, b_n,c | z_1, \ld, z_{i-1}, z_{i+1}, \ld, z_n)$ is also irreducible and algebraic.
\end{lem}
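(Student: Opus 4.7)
The plan is to treat algebraicity and irreducibility of the reduced $F_D$ separately, with the interlacing condition of Lemma~\ref{lem:FD_interlacing} handling the trickier non-resonance check.

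For algebraicity, I will specialize $z_i = 0$ in the power series defining $F_D(a,\vect{b},c | \vect{z})$. Every term with $m_i \geq 1$ then vanishes, and what survives is, after relabeling, precisely the series for the $(n-1)$-variable $F_D$ with $b_i$ and $z_i$ omitted. Since the original function is algebraic over $\C(z_1, \ld, z_n)$ by hypothesis, Remark~\ref{rmk:reduction_algebraicity} immediately yields algebraicity of the reduced function over $\C(z_1, \ld, z_{i-1}, z_{i+1}, \ld, z_n)$.

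For irreducibility, I apply Corollary~\ref{cor:FD_irreducible} to the $(n-1)$-variable $F_D$: I need to check that $a$, each $b_j$ with $j \neq i$, $c-a$, and $c - \sum_{j \neq i} b_j$ are all non-integral. The first three conditions follow at once from non-resonance of the original. For the last, I argue by contradiction. Assume $c - \sum_{j\neq i} b_j \in \Z$. Then $M := \sum_{j \neq i} \fp{b_j} - \fp{c}$ is an integer lying in $(-1, n-1)$, so $M \in \{0, 1, \ld, n-2\}$, and consequently $\sum_{j=1}^n \fp{b_j} = \fp{c} + M + \fp{b_i}$. Now I invoke the interlacing condition of Lemma~\ref{lem:FD_interlacing} at $k = 1$, which must hold because the original function is algebraic. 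In the first interlacing case, $\sum_j \fp{b_j} \leq \fp{c}$ forces $M + \fp{b_i} \leq 0$, hence $M = 0$ and $b_i \in \Z$, contradicting non-resonance. In the second case, $\fp{c} + n - 1 < \sum_j \fp{b_j}$ forces $M + \fp{b_i} > n - 1$, which is impossible since $M \leq n - 2$ and $\fp{b_i} < 1$.

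The main obstacle is precisely this last step: ruling out the possibility that removing $b_i$ makes $c - \sum_{j\neq i} b_j$ accidentally integral. Happily, the interlacing inequality at $k = 1$ alone provides enough leverage, so I do not need to invoke the condition at other values of $k$ coprime to the common denominator.
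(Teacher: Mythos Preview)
Your proof is correct and follows essentially the same approach as the paper's: algebraicity via Remark~\ref{rmk:reduction_algebraicity} by specializing $z_i=0$, and irreducibility by checking that $c-\sum_{j\neq i} b_j$ non-integral is forced by the interlacing condition of Lemma~\ref{lem:FD_interlacing}. The paper's proof states this last implication without details; your case analysis on the two interlacing alternatives spells out exactly why integrality of $c-\sum_{j\neq i} b_j$ contradicts the interlacing inequality at $k=1$.
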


\begin{proof}
To simplify notation, assume that $i=n$.
Write $F_{D,n-1} = F_D(a, b_1, \ld, b_{n-1},c | z_1, \ld, z_{n-1})$.
From the irreducibility conditions for $F_D(a,\vect{b},c | \vect{z})$, it follows that $F_{D,n-1}$ is irreducible unless $\fp{c}-\fp{b_1}-\ld-\fp{b_{n-1}}$ is an integer.
However, if this is an integer, then $F_D(a,\vect{b},c | \vect{z})$ doesn't satisfy the interlacing condition.
Hence $F_{D,n-1}$ is also irreducible. 
Algebraicity of $F_{D,n-1}$ follows from Remark~\ref{rmk:reduction_algebraicity}.
\end{proof}

\begin{lem}\label{lem:F1_b+c}
If $F_1(a,b_1,b_2,c|x,y)$ is irreducible and algebraic, then $F(a,b_1+b_2,c|z)$ is also irreducible and algebraic.
\end{lem}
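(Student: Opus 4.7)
The plan is to deduce both irreducibility and algebraicity of the Gauss function directly from the corresponding criteria for the Appell function, using the combinatorial interlacing statement already proved in Lemma~\ref{lem:FD_interlacing}. The key observation is that the interlacing for $F_1$ forces $\fp{b_1}+\fp{b_2}$ to avoid the value $1$, and specifies on which side of $1$ it lies, which in turn determines $\fp{b_1+b_2}$.

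First I would handle irreducibility. By Theorem~\ref{thm:gauss_irreducible_algebraic} I need to show that $a$, $b_1+b_2$, $c-a$, and $c-b_1-b_2$ are non-integral. Three of these follow immediately from Corollary~\ref{cor:FD_irreducible}. For $b_1+b_2 \not\in \Z$, I apply Lemma~\ref{lem:FD_interlacing} with $n=2$: either $\fp{b_1}+\fp{b_2} \leq \fp{c} < 1$ or $\fp{b_1}+\fp{b_2} > \fp{c}+1 \geq 1$, so in both situations $\fp{b_1}+\fp{b_2} \neq 1$. Combined with $b_1,b_2 \not\in \Z$ (whence $\fp{b_1},\fp{b_2} \in (0,1)$), this rules out $b_1+b_2 \in \Z$.

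Next I would verify the Gauss interlacing of Theorem~\ref{thm:gauss_irreducible_algebraic} for every $k$ coprime with the common denominator $\tilde D$ of $(a, b_1+b_2, c)$. Since $\tilde D$ divides the common denominator $D$ of $(a, b_1, b_2, c)$, Lemma~\ref{lem:higher_denom} produces an integer $l \equiv k \pmod{\tilde D}$ with $\gcd(l,D)=1$; the fractional parts $\fp{ka}$, $\fp{kc}$, $\fp{k(b_1+b_2)}$ are unchanged by passing from $k$ to $l$, so I may assume $\gcd(k,D)=1$. By Corollary~\ref{cor:algebraic_orbits} the function $F_1$ with parameters $k\bal$ is again non-resonant and algebraic, so Lemma~\ref{lem:FD_interlacing} applied to $k\bal$ splits into two cases. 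In the first, $\fp{kc}<\fp{ka}$ and $\fp{kb_1}+\fp{kb_2} \leq \fp{kc} < 1$, so $\fp{k(b_1+b_2)}=\fp{kb_1}+\fp{kb_2}$ and hence $\fp{k(b_1+b_2)} \leq \fp{kc} < \fp{ka}$. In the second, $\fp{ka} \leq \fp{kc}$ and $\fp{kb_1}+\fp{kb_2} > \fp{kc}+1 > 1$, so $\fp{k(b_1+b_2)}=\fp{kb_1}+\fp{kb_2}-1 > \fp{kc}$ and therefore $\fp{ka} \leq \fp{kc} < \fp{k(b_1+b_2)}$. Both alternatives are instances of the interlacing in Theorem~\ref{thm:gauss_irreducible_algebraic}, so $F(a,b_1+b_2,c|z)$ is irreducible and algebraic.

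There is no serious obstacle. The only subtlety is ensuring that $\fp{b_1}+\fp{b_2}$ never equals $1$, which would break the identification of $\fp{k(b_1+b_2)}$ with either $\fp{kb_1}+\fp{kb_2}$ or $\fp{kb_1}+\fp{kb_2}-1$; that is why the strict inequality $\fp{kc}+1 < \fp{kb_1}+\fp{kb_2}$ in the second alternative of Lemma~\ref{lem:FD_interlacing} is crucial.
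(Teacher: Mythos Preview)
Your argument is correct and follows essentially the same route as the paper: deduce the Gauss interlacing from the $F_1$ interlacing of Lemma~\ref{lem:FD_interlacing}, after lifting $k$ via Lemma~\ref{lem:higher_denom} to be coprime with the full denominator. The only cosmetic difference is that the paper uses the universal bounds $\fp{x+y}\le\fp{x}+\fp{y}$ and $\fp{x}+\fp{y}-1\le\fp{x+y}$ rather than computing $\fp{k(b_1+b_2)}$ exactly, and your symbols $D,\tilde D$ are swapped relative to the statement of Lemma~\ref{lem:higher_denom}; neither affects the mathematics.
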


\begin{proof}
Since $F_1(a,b_1,b_2,c|x,y)$ is irreducible, $a, c-a$ and $c-b_1-b_2$ are non-integral.
It follows from the interlacing condition for $F_1(a,b_1,b_2,c|x,y)$ that either $0 < \fp{b_1}+\fp{b_2} \leq \fp{c} <1$ or $1<\fp{c}+1<\fp{b_1}+\fp{b_2}<2$, so $b_1+b_2$ cannot be an integer.
Hence $F(a,b_1+b_2,c|z)$ is irreducible.

To prove that $F(a,b_1+b_2,c|z)$ is algebraic, let $D$ be the smallest common denominator of $a, b_1+b_2$ and $c$ and let $\tilde{D}$ be the smallest common denominator of $a,b_1,b_2$ and $c$.
Then clearly $D | \tilde{D}$. 
Let $k \in \Z$ such that $1 \leq k <D$ and $\gcd(k,D)=1$.
By Lemma~\ref{lem:higher_denom}, there exists $l \in \Z$ such that $l \equiv k \pmod D$ and $\gcd(l,\tilde{D})=1$.
Since $F_1(a,b_1,b_2,c|x,y)$ is algebraic, either $\fp{lc} < \fp{la}$ and $\fp{lb_1} + \fp{lb_2} \leq \fp{lc}$, or  $\fp{la} \leq \fp{lc}$ and $\fp{lc}+1 < \fp{lb_1} + \fp{lb_2}$.
If the first condition holds, then $\fp{kc}=\fp{lc} < \fp{la}=\fp{ka}$ and $\fp{k(b_1+b_2)} = \fp{l(b_1+b_2)} \leq \fp{lb_1} + \fp{lb_2} \leq \fp{lc} = \fp{kc}$.
If the second condition holds, then $\fp{ka} \leq \fp{kc}$ and $\fp{kc} < \fp{lb_1} + \fp{lb_2} -1 \leq \fp{l(b_1+b_2)} = \fp{k(b_1+b_2)}$.
In both cases, the interlacing condition for the Gauss functon is satisfied, so $F(a,b_1+b_2,c|z)$ is algebraic.
\end{proof}

\begin{rmk}\label{rmk:F1_b+c=gauss}
One can show that $F(a,b_1+b_2,c|z) = F_1(a,b_1,b_2,c|z,z)$.
Hence algebraicity of $F(a,b_1+b_2,c|z)$ also follows from a statement similar to Remark~\ref{rmk:reduction_algebraicity}.
\end{rmk}

\begin{thm}\label{thm:F1_solutions}
$F_1(a,b_1,b_2,c | x,y)$ is irreducible and algebraic if and only if $(a,b_1,b_2,c) \pmod \Z$ is equals $\pm(\frac{1}{3}, \frac{5}{6}, \frac{5}{6}, \half)$, $\pm(\frac{1}{6}, \frac{2}{3}, \frac{5}{6}, \frac{1}{3})$, $\pm(\frac{1}{6}, \frac{5}{6}, \frac{2}{3}, \frac{1}{3})$, $\pm(\frac{1}{6}, \frac{5}{6}, \frac{5}{6}, \half)$ or $\pm(\frac{1}{6}, \frac{5}{6}, \frac{5}{6}, \frac{1}{3})$. 
\end{thm}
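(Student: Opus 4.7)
The plan is to use the three Gauss reductions (Lemma~\ref{lem:FD_reduction} and Lemma~\ref{lem:F1_b+c}) to drastically restrict the candidate parameter vectors, then verify the remaining candidates against the interlacing criterion of Lemma~\ref{lem:FD_interlacing}.

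First, for the forward direction, suppose $F_1(a,b_1,b_2,c|x,y)$ is irreducible and algebraic. Setting $y=0$ and $x=0$ respectively, Lemma~\ref{lem:FD_reduction} forces both $F(a,b_1,c|z)$ and $F(a,b_2,c|z)$ to be irreducible algebraic Gauss functions; Lemma~\ref{lem:F1_b+c} forces $F(a,b_1+b_2,c|z)$ to be one as well. Hence the triples $(a,b_1,c)$, $(a,b_2,c)$ and $(a,b_1+b_2,c)$ must each appear in Schwarz's list, i.e.\ each belongs either to the Type~1 family $(r,-r,\half)$, $(r,r+\half,\half)$, $(r,r+\half,2r)$ with $r\in\parset$, or to an orbit of a Type~2 tuple from Table~\ref{tab:gauss_abc_orbits}. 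I would organize the enumeration by cases on which Schwarz class each of the three triples belongs to, always using that $a$ and $c$ are common.

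For the Type~2 sub-case, the common denominator $D$ of $(a,b_1,b_2,c)$ is a divisor of $\text{lcm}$ of the denominators appearing in Table~\ref{tab:gauss_abc_orbits}, so $D$ is bounded. For each pair $(a,c)$ arising from Type~2 triples, the admissible values of $b_1$ and $b_2$ are read off the table (together with the pair constraint on $b_1+b_2$). This leaves only finitely many candidate quadruples; for each I would check the interlacing condition from Lemma~\ref{lem:FD_interlacing} for every $k$ with $1\leq k<D$ and $\gcd(k,D)=1$, as authorized by Theorem~\ref{thm:algebraic_solutions} and Corollary~\ref{cor:FD_apexpoints}. This is a finite (and mechanical) verification that will leave precisely the listed tuples, and for each survivor the same verification completes the sufficiency direction.

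The main obstacle is the Type~1 sub-case, since $r$ there runs through the infinite set $\parset$. The idea is to show that if any of the three triples is of Type~1, then either $(a,b_1,b_2,c)$ in fact coincides with one already on the Type~2 list (so is captured by that enumeration) or one can exploit Lemma~\ref{lem:closetohalf} to destroy the interlacing. Concretely, from the three Type-1 shapes there are only a handful of simultaneous assignments of $(a,b_1,c)$ and $(a,b_2,c)$, and in each sub-sub-case either $b_1+b_2$ fails to give a Gauss triple, or $a$, $b_1$, $b_2$, $c$ reduce to the fixed fractions $\tfrac{1}{6},\tfrac{1}{3},\tfrac{1}{2},\tfrac{2}{3},\tfrac{5}{6}$ (denominator dividing $6$), or a free parameter $r=p/q$ remains and I apply Lemma~\ref{lem:closetohalf} to produce $k$ coprime to the denominator with $\fp{kr}$ arbitrarily close to $\half$; a suitable choice then yields $\fp{ka}\leq\fp{kc}<\fp{kb_1}+\fp{kb_2}-1$ or its dual failing, so the interlacing of Lemma~\ref{lem:FD_interlacing} collapses and $F_1$ is transcendental. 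Ruling out the large-denominator Type-1 case this way reduces the problem to the finite Type-2 enumeration already handled, after which the five listed tuples (and their negatives, cf.\ Corollary~\ref{cor:algebraic_orbits} applied with $k=-1$) are exactly those surviving.
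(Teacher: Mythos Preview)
Your overall strategy matches the paper's: reduce via Lemmas~\ref{lem:FD_reduction} and~\ref{lem:F1_b+c} to the requirement that $(a,b_1,c)$, $(a,b_2,c)$ and $(a,b_1+b_2,c)$ are all Gauss triples, split into Type~1/Type~2 cases, and finish with a finite check against Lemma~\ref{lem:FD_interlacing}.

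The one place where your argument is looser than the paper's is the pure Type~1 case. You leave open a branch where ``a free parameter $r=p/q$ remains'' and propose to kill it with Lemma~\ref{lem:closetohalf}. In fact no such branch exists, and the paper's argument here is cleaner and worth knowing. Since $a$ and $c$ are shared, if both $(a,b_1,c)$ and $(a,b_2,c)$ are Type~1 they must use the \emph{same} parameter $r$; this yields exactly five candidates for $(a,b_1,b_2,c)$, and hence four possible shapes for $(a,b_1+b_2,c)$, namely $(r,-2r,\tfrac12)$, $(r,\tfrac12,\tfrac12)$, $(r,2r,\tfrac12)$, $(r,2r,2r)$. Now apply the third constraint: either $(a,b_1+b_2,c)$ is itself Type~1, which (because $a=r$ is already fixed) forces $r$ into $\{\pm\tfrac16\}$ or $\{\pm\tfrac13\}$ in the relevant sub-cases; or it is Type~2, which bounds the denominator of $a=r$ by~$60$. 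Either way the set of candidates is finite and one proceeds directly to the interlacing check. Lemma~\ref{lem:closetohalf} is not needed for $F_1$ at all.

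So your plan would succeed, but the fallback to Lemma~\ref{lem:closetohalf} is superfluous: the third Gauss reduction (Lemma~\ref{lem:F1_b+c}) already does all the work of eliminating the continuous parameter.
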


\begin{proof}
If $F_1(a,b_1,b_2,c | x,y)$ is irreducible and algebraic, then by Lemma~\ref{lem:FD_reduction} and Lemma~\ref{lem:F1_b+c}, $(a,b_1,c), (a,b_2,c)$ and $(a,b_1+b_2,c)$ are Gauss triples. 

First suppose that $(a,b_1,c)$ and $(a,b_2,c)$ are both Gauss triples of type 1.
Then there exist $r,s \in \parset$ such that we have $(a,b_1,c) \in \{(r,-r,\half), (r,r+\half,\half), (r,r+\half,2r)\}$ and $(a,b_2,c) \in \{(s,-s,\half), (s,s+\half,\half), (s,s+\half,2s)\}$ (up to congruence modulo $\Z$).
This implies that $r=s$.
This gives five possibilities for $(a,b_1,b_2,c)$ and we obtain the four combinations
\begin{equation*}
(a,b_1+b_2,c) \equiv (r,-2r,\half), (r,\half,\half), (r,2r,\half),(r,2r,2r) \pmod \Z
\end{equation*}
$(r,-2r,\half)$ is of type 1 if and only if $r=\pm \frac{1}{6}$. 
This gives the tuples $(a,b_1,b_2,c)=(\frac{1}{6}, \frac{5}{6}, \frac{5}{6}, \half)$ and $(\frac{5}{6}, \frac{1}{6}, \frac{1}{6}, \half)$.
The triple $(r,2r,\half)$ is of type 1 if and only if $r=\pm \frac{1}{3}$, which gives the tuples $(a,b_1,b_2,c)=(\frac{1}{3}, \frac{5}{6}, \frac{5}{6}, \half)$ and $(\frac{1}{3}, \frac{5}{6}, \frac{5}{6}, \half)$. 
In all other cases, $(a,b_1+b_2,c)$ is of type 2, and hence the denominator of $r$ is at most 60.
This gives finitely many possibilities and for each of these possibilities, we let the computer check whether all conjugates $k(a,b_1,b_2,c)$ (with $\gcd(k,D)=1$, where $D$ is the smallest common denominator of $a,b_1,b_2$ and $c$) satisfy the interlacing condition.
It turns out that this gives the same four solutions as we found above.

If $(a,b_1,c)$ is a Gauss triple of type 1 and $(a,b_2,c)$ is of type 2, then the denominator of $a$ is at most 60.
This gives finitely many possibilities for the parameter $r$ in $(a,b_1,c)$.
Again we check these by computer, and we get the solutions $(\frac{1}{6}, \frac{2}{3}, \frac{5}{6}, \frac{1}{3})$ and $(\frac{5}{6}, \frac{1}{3}, \frac{1}{6}, \frac{2}{3})$. 
By symmetry, if $(a,b_1,c)$ is of type 2 and $(a,b_2,c)$ is of type 1, the solutions are $(\frac{1}{6}, \frac{5}{6}, \frac{2}{3}, \frac{1}{3})$ and $(\frac{5}{6}, \frac{1}{6}, \frac{1}{3}, \frac{2}{3})$. 

Finally, if both $(a,b_1,c)$ and $(a,b_2,c)$ are of type 2, we have a finite list to check. 
This gives two more solutions: $(\frac{1}{6}, \frac{5}{6}, \frac{5}{6}, \frac{1}{3})$ and $(\frac{5}{6}, \frac{1}{6}, \frac{1}{6}, \frac{2}{3})$.
\end{proof}

\begin{thm}\label{thm:FD_solutions}
For $n=3$, $F_D(a,\vect{b},c|\vect{z})$ is irreducible and algebraic if and only if $(a,\vect{b},c) = (\frac{1}{6}, \frac{5}{6}, \frac{5}{6}, \frac{5}{6}, \frac{1}{3})$ or $(\frac{5}{6}, \frac{1}{6}, \frac{1}{6}, \frac{1}{6}, \frac{2}{3}) \pmod \Z$.
For $n \geq 4$, there are no irreducible algebraic Lauricella $F_D$ functions.
\end{thm}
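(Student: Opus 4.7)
The plan is to use Lemma~\ref{lem:FD_reduction} as the principal reduction device (algebraicity descends to every sub-function obtained by dropping a $b_i$) and to take Theorem~\ref{thm:F1_solutions} as the base case. Combined with the interlacing criterion of Lemma~\ref{lem:FD_interlacing} and the orbit statement of Corollary~\ref{cor:algebraic_orbits} (which forces us to check the condition for every conjugate $k \bal$ with $\gcd(k,D)=1$), this turns the problem into a short finite enumeration.

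For $n=3$: assume $F_D(a,\vect{b},c|\vect{z})$ is irreducible and algebraic. By Lemma~\ref{lem:FD_reduction} the three $F_1$ functions obtained by dropping each $b_i$ in turn are irreducible and algebraic, so the three triples $(a, b_i, b_j, c)$, $\{i,j\} \subs \{1,2,3\}$, each fit (modulo $\Z$ and sign) one of the five patterns of Theorem~\ref{thm:F1_solutions}. Since $(a,c)$ is common to all three sub-tuples, only six pairs $(a,c) \pmod \Z$ remain: $\pm(\frac{1}{6}, \frac{1}{3})$, $\pm(\frac{1}{6}, \half)$, $\pm(\frac{1}{3}, \half)$. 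For each of these I would enumerate the $(b_1,b_2,b_3)$ triples compatible with all three pair-restrictions, noting in particular that for $(a,c) \equiv (\frac{1}{6}, \frac{1}{3})$ the near-duplicate patterns $(\frac{1}{6}, \frac{2}{3}, \frac{5}{6}, \frac{1}{3})$ and $(\frac{1}{6}, \frac{5}{6}, \frac{2}{3}, \frac{1}{3})$ force at most one $b_i \equiv \frac{2}{3}$, the rest being $\frac{5}{6}$. The resulting short list of candidates is then tested against Lemma~\ref{lem:FD_interlacing} for every $k$ coprime to the common denominator (which divides $6$). In all but two cases one of the inequalities $\sum\fp{kb_i} \leq \fp{kc}$ or $\fp{kc}+2 < \sum\fp{kb_i}$ collapses to equality for some conjugate; the only survivors are $(\frac{1}{6}, \frac{5}{6}, \frac{5}{6}, \frac{5}{6}, \frac{1}{3})$ and its $k=-1$ conjugate $(\frac{5}{6}, \frac{1}{6}, \frac{1}{6}, \frac{1}{6}, \frac{2}{3})$.

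For $n \geq 4$: induct using Lemma~\ref{lem:FD_reduction}. If $F_D$ with $n=4$ parameters were irreducible and algebraic, then every four-parameter sub-tuple obtained by dropping one $b_i$ would be one of the two $n=3$ solutions, forcing $(a,c) \equiv (\frac{1}{6}, \frac{1}{3})$ with all $b_i \equiv \frac{5}{6}$ (or the $k=-1$ image). The interlacing condition then demands $\fp{c}+3 < 4 \cdot \frac{5}{6} = \frac{10}{3}$, but $\frac{1}{3}+3 = \frac{10}{3}$, so it fails. Hence no $n=4$ solution exists, and iterating Lemma~\ref{lem:FD_reduction} kills every $n \geq 5$ as well.

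The bulk of the work is organizational rather than computational: the main obstacle is laying out the enumeration in the $n=3$ step cleanly, keeping track of which pairs may match which of the five $F_1$ patterns (and using the symmetry under conjugation by $k \in (\Z/D\Z)^*$ to cut cases in half), so that no candidate is overlooked before the interlacing inequalities eliminate it. Once the finite list is tabulated, the inequalities themselves are mechanical.
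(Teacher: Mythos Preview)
Your proposal is correct and follows essentially the same approach as the paper: reduce via Lemma~\ref{lem:FD_reduction} to the $F_1$ classification of Theorem~\ref{thm:F1_solutions}, enumerate the finitely many candidates compatible with all sub-tuples, and eliminate using the interlacing criterion of Lemma~\ref{lem:FD_interlacing}.

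The only difference worth noting is cosmetic. For $n=4$ (and for the non-surviving $n=3$ candidates) the paper eliminates by observing directly that the forced parameter tuples are \emph{reducible}: e.g.\ for $(\tfrac{1}{6},\tfrac{5}{6},\tfrac{5}{6},\tfrac{5}{6},\tfrac{5}{6},\tfrac{1}{3})$ one has $c-\sum b_i = \tfrac{1}{3}-\tfrac{10}{3}\in\Z$, violating the non-resonance condition of Corollary~\ref{cor:FD_irreducible}. You instead phrase the elimination as ``the interlacing inequality collapses to equality''. These are the same observation: the equality $\fp{c}+n-1=\sum\fp{b_i}$ is precisely the statement $c-\sum b_i\in\Z$. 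Since Lemma~\ref{lem:FD_interlacing} carries the hypothesis of non-resonance, it is slightly cleaner to say, as the paper does, that the candidate is reducible and hence excluded outright, rather than to invoke the interlacing lemma on a tuple that lies on its boundary of validity. Either way the contradiction is reached.
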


\begin{proof}
Let $n=3$and let $F_D(a,\vect{b},c|\vect{z})$ be irreducible and algebraic.
Then $F_1(a,b_1,b_2,c|x,y)$, $F_1(a,b_1,b_3,c|x,y)$ and $F_1(a,b_2,b_3,c|x,y)$ are also irreducible and algebraic.
Using Theorem~\ref{thm:F1_solutions}, one easily computes that the only irreducible possibilities for $(a,b_1,b_2,b_3,c)$ are $(a,b_1,b_2,b_3,c) = (\frac{1}{6}, \frac{5}{6}, \frac{5}{6}, \frac{5}{6}, \frac{1}{3})$ and $(\frac{5}{6}, \frac{1}{6}, \frac{1}{6}, \frac{1}{6}, \frac{2}{3})$.
They form an orbit and satisfy the interlacing condition. 

For $n=4$, the only possibilities for $(a,b_1,b_2,b_3,b_4,c)$ are $(\frac{1}{6}, \frac{5}{6}, \frac{5}{6}, \frac{5}{6}, \frac{5}{6}, \frac{1}{3})$ and $(\frac{5}{6}, \frac{1}{6}, \frac{1}{6}, \frac{1}{6}, \frac{1}{6}, \frac{2}{3})$. 
However, both functions are reducible.
Hence there are no irreducible algebraic functions in 4 variables. 
Lemma\ref{lem:FD_reduction} implies that there are also no irreducible algebraic $F_D$ functions in 5 or more variables.
\end{proof}


\subsection{The Appell $F_2$ and Lauricella $F_A$ functions}\label{subsec:F2}

The Lauricella $F_A$ function is defined by 
\begin{equation*}
F_A(a,\vect{b},\vect{c} | \vect{z}) =
\sum_{\vect{m} \in \Z_{\geq 0}^n} \frac{\poch{a}{|\vect{m}|} \poch{\vect{b}}{\vect{m}}}{\poch{\vect{c}}{\vect{m}} \vect{m}!} \vect{z}^{\vect{m}}. 
\end{equation*}
The lattice is $\lat = \bigoplus_{i=1}^n \Z(-\e_1-\e_{i+1}+\e_{n+i+1}+\e_{2n+i+1}) \subs \Z^{3n+1}$
and we can take 
\begin{align*}
\gam & = (-a, -\vect{b}, \vect{c}-1, \vect{0}) \in \R^{3n+1}, \\
\A & = \{\e_1, \e_2, \ld, \e_{2n+1}, \e_1+\e_2-\e_{n+2}, \e_1+\e_3-\e_{n+3}, \ld, \e_1+\e_{n+1}-\e_{2n+1}\} \subs \Z^{2n+1}, \\
\bal & = \sum_{i=1}^{3n+1} \gamma_i \va_i = (-a, -\vect{b}, \vect{c}-1) \in \Q^{2n+1}.
\end{align*}
where $\vect{c}-1= (c_1-1, \ld, c_n-1)$.
For each $I \subs \{n+2, \ld, 2n+1\}$, define $\tilde{I} = \{n+2, \ld, 2n+1\} \setminus I$ and $V_I = \{\e_1, \ld, \e_{n+1}\} \cup \{\e_1+\e_{i-n}-\e_i | i \in I\} \cup \{\e_i | i \in \tilde{I}\}$.
Then the determinant of the vectors in $V_I$ is $\pm 1$, so the vectors are the vertices of a $2n$-dimensional simplex with volume 1.

\begin{lem}\label{lem:FA_CVi}
Let $I \subs \{n+2, \ld, 2n+1\}$. 
Then
\begin{equation*}
\begin{split}
C(V_I) = \{\x \in \R^{2n+1} \ | \ & x_1, \ld, x_{n+1} \geq 0; x_1+\sum_{i \in I} x_i \geq 0; \\ 
 & \forall i \in \tilde{I}: x_i \geq 0; \forall i \in I: x_i \leq 0 \andd x_{i-n}+x_i \geq 0\}.
\end{split}
\end{equation*}
\end{lem}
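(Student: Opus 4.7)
The plan is to exploit the fact that $V_I$ consists of $2n+1$ linearly independent integer vectors with $|\det V_I|=1$. Hence every $\x \in \R^{2n+1}$ has a unique expression as an $\R$-linear combination of the vectors in $V_I$, and $\x \in C(V_I)$ if and only if all coefficients of this unique expression are non-negative. The proof then reduces to computing these coefficients explicitly and translating their non-negativity into the inequalities of the lemma.

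First I would write the general linear combination
\[
\x = \lambda_1 \e_1 + \sum_{j=2}^{n+1} \lambda_j \e_j + \sum_{i \in I} \mu_i(\e_1 + \e_{i-n} - \e_i) + \sum_{i \in \tilde{I}} \nu_i \e_i
\]
and read off each coefficient by matching the standard-basis components of $\x$. For $i \in \tilde{I}$ only $\nu_i \e_i$ contributes to the $\e_i$-component, so $\nu_i = x_i$. For $i \in I$, only $\e_1 + \e_{i-n} - \e_i$ contributes to the $\e_i$-component, forcing $\mu_i = -x_i$. For $j \in \{2,\ld,n+1\}$, matching the $\e_j$-component gives $\lambda_j = x_j + x_{j+n}$ when $j+n \in I$ (since then $\mu_{j+n}=-x_{j+n}$ contributes to $\e_j$) and $\lambda_j = x_j$ otherwise. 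Finally, matching the $\e_1$-component yields $\lambda_1 = x_1 - \sum_{i \in I} \mu_i = x_1 + \sum_{i \in I} x_i$.

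The second step is to read off the equivalences directly: $\nu_i \geq 0$ gives $x_i \geq 0$ for $i \in \tilde{I}$; $\mu_i \geq 0$ gives $x_i \leq 0$ for $i \in I$; $\lambda_j \geq 0$ gives $x_{i-n} + x_i \geq 0$ for $i=j+n \in I$ and $x_j \geq 0$ for $j$ with $j+n \in \tilde{I}$; and $\lambda_1 \geq 0$ produces $x_1 + \sum_{i\in I} x_i \geq 0$. The only mildly non-obvious point is that the stated condition $x_j \geq 0$ for all $j \in \{2,\ld,n+1\}$ (including those with $j+n \in I$) is automatic from $x_{j+n} \leq 0$ combined with $x_j + x_{j+n} \geq 0$, so the list of inequalities in the lemma reproduces precisely the non-negativity conditions of the unique coefficient expression.

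No step is a real obstacle; once the unimodularity of $V_I$ is used to reduce to a unique-coefficient computation, everything is routine linear algebra. The only care required is bookkeeping between the index sets $I$ and $\tilde{I}$ and the shift between $j \in \{2,\ld,n+1\}$ and $i = j+n \in \{n+2,\ld,2n+1\}$.
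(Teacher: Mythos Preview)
Your proof is correct and follows essentially the same approach as the paper: both express $\x$ uniquely as a linear combination of the vectors in $V_I$, read off the coefficients in terms of the coordinates $x_i$, and translate non-negativity of those coefficients into the stated inequalities. The paper's version is much terser (it just writes down the parametrization and leaves the rest implicit), whereas you spell out the bookkeeping and the redundancy of $x_j \geq 0$ for $j$ with $j+n \in I$; the same remark applies to $x_1 \geq 0$, which is likewise implied by $x_1 + \sum_{i\in I} x_i \geq 0$ together with $x_i \leq 0$ for $i \in I$.
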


\begin{proof}
This follows from  
\begin{equation*}
\begin{split}
C(V_I) = \{\x \in \R^{2n+1} \ | \ & \exists \la_1, \ld, \la_{2n+1} \geq 0:
x_1 = \la_1 + \sum_{i \in I} \la_i;
\forall i \in \tilde{I}: x_{i-n}=\la_{i-n} \andd x_i=\la_i; \\
 & \forall i \in I: x_{i-n}=\la_{i-n}+\la_i \andd x_i=-\la_i 
\}.
\qedhere
\end{split}
\end{equation*}
\end{proof}

\begin{cor}\label{cor:FA_CA}
\begin{equation*}
\begin{split}
\bigcup_I C(V_I) = \{\x \in \R^{2n+1} \ | \ & x_1, \ld, x_{n+1} \geq 0;
\textrm{for all } I \subs \{n+2, \ld, 2n+1\}: x_1 + \sum_{i \in I} x_i \geq 0; \\
 & \textrm{for all } i \in \{n+2, \ld, 2n+1\}: x_{i-n}+x_i \geq 0\}.
\end{split}
\end{equation*}
\end{cor}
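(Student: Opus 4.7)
The corollary has two directions and the plan is to handle each by exploiting the sign-partition of the last $n$ coordinates that $V_I$ encodes.

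For the inclusion $\bigcup_I C(V_I) \subseteq \text{RHS}$, I would fix $\x \in C(V_I)$ and read off the conditions from Lemma~\ref{lem:FA_CVi}. The nonnegativity $x_1,\ldots,x_{n+1}\geq 0$ is immediate. For $i \in I$ the lemma gives $x_{i-n}+x_i \geq 0$ directly; for $i \in \tilde I$ both $x_{i-n}\geq 0$ and $x_i\geq 0$, so $x_{i-n}+x_i \geq 0$ as well. The only nontrivial step is verifying $x_1+\sum_{i\in J}x_i \geq 0$ for an \emph{arbitrary} $J\subseteq\{n+2,\ldots,2n+1\}$, given only that it holds for $J=I$. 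The key observation is the sign pattern: $x_i\leq 0$ for $i\in I$ and $x_i\geq 0$ for $i\in\tilde I$. Splitting the sum,
\begin{equation*}
x_1+\sum_{i\in J}x_i = x_1+\sum_{i\in J\cap I}x_i+\sum_{i\in J\cap\tilde I}x_i \geq x_1+\sum_{i\in I}x_i+0 \geq 0,
\end{equation*}
because dropping negative terms (the indices in $I\setminus J$) only increases the sum, while adding nonnegative terms keeps it nonnegative.

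For the reverse inclusion, I would make the natural choice $I:=\{i\in\{n+2,\ldots,2n+1\}:x_i<0\}$ (or $\leq 0$; either works on the boundary). Then by construction $x_i\leq 0$ for $i\in I$ and $x_i\geq 0$ for $i\in\tilde I$, which covers the last two bullet-conditions of Lemma~\ref{lem:FA_CVi}; the inequalities $x_{i-n}+x_i\geq 0$ for $i\in I$ are exactly the hypothesis of the corollary restricted to those $i$; and $x_1+\sum_{i\in I}x_i\geq 0$ is exactly one of the $2^n$ hypotheses on $\x$. Thus $\x\in C(V_I)$.

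The proof is essentially mechanical once the right $I$ is identified, so there is no real obstacle; the one thing to be careful about is the case where some $x_i=0$ for $i\in\{n+2,\ldots,2n+1\}$, which can be placed in either $I$ or $\tilde I$ without harm, so the choice $I=\{i : x_i\leq 0\}$ is unambiguous up to this harmless ambiguity. I would state both inclusions compactly in one short paragraph, citing Lemma~\ref{lem:FA_CVi} for the characterization of each $C(V_I)$.
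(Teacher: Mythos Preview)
Your proposal is correct and is exactly the argument the paper has in mind: the corollary is stated without proof immediately after Lemma~\ref{lem:FA_CVi}, and your two inclusions---using the sign pattern $x_i\leq 0$ for $i\in I$, $x_i\geq 0$ for $i\in\tilde I$ to bound $x_1+\sum_{i\in J}x_i$ below by $x_1+\sum_{i\in I}x_i$, and choosing $I=\{i:x_i<0\}$ for the converse---constitute the intended one-line verification.
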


\begin{lem}\label{lem:FA_trian}
$\trian=\{Q(V_I) \ | \ I \subs \{n+2, \ld, 2n+1\}\}$ is a triangulation of $Q(\A)$.
\end{lem}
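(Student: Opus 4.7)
My plan is to apply Lemma~\ref{lem:convex_hull} to the family $\{V_I : I \subs \{n+2,\ld,2n+1\}\}$. Three of its four hypotheses are immediate: each $V_I$ consists of $2n+1$ vectors with determinant $\pm 1$, as already noted just before Lemma~\ref{lem:FA_CVi}; the equality $\A = V_\emptyset \cup V_{\{n+2,\ld,2n+1\}}$ shows $\A = \bigcup_I V_I$, since $V_\emptyset = \{\e_1,\ld,\e_{2n+1}\}$ and $V_{\{n+2,\ld,2n+1\}}$ supplies the remaining vectors $\e_1+\e_{i-n}-\e_i$; and convexity of $\bigcup_I C(V_I)$ is direct from Corollary~\ref{cor:FA_CA}, which exhibits this union as an intersection of finitely many closed half-spaces in $\R^{2n+1}$.

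The substantive step is checking $C(V_I) \cap C(V_J) \subs C(V_I \cap V_J)$ for all $I,J$. I would fix $\x \in C(V_I) \cap C(V_J)$ and first extract, from the half-space description of Lemma~\ref{lem:FA_CVi}, that $x_i \leq 0$ on $I$, $x_i \geq 0$ on $\tilde{I}$, together with the symmetric statements for $J$. Combining these yields the crucial observation $x_i = 0$ for every $i \in I \triangle J$. Writing $K = I \cap J$, I would then use the unique expansion of $\x$ along the basis $V_I$: the coefficient of $\e_i$ for $i \in J \setminus I \subs \tilde{I}$ is just $x_i = 0$, and the coefficient of $\e_1+\e_{i-n}-\e_i$ for $i \in I \setminus J \subs I$ is $-x_i = 0$. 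Thus every contribution coming from a vector in $V_I \setminus V_J$ drops out, and $\x$ is already a nonnegative combination of vectors in $V_I \cap V_J$, i.e.\ $\x \in C(V_I \cap V_J)$.

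The main obstacle I anticipate is pure bookkeeping in the step above: one must still check that the coefficients surviving on the always-present vectors $\e_1, \ld, \e_{n+1}$ remain nonnegative after the reduction. The coefficient of $\e_1$ works out to $x_1 + \sum_{i \in K} x_i$, which equals $x_1 + \sum_{i \in I} x_i$ by the $I \triangle J$ vanishing and is $\geq 0$ by Lemma~\ref{lem:FA_CVi}; the coefficient of $\e_j$ for $2 \leq j \leq n+1$ is either $x_j$ or $x_j + x_{j+n}$, both nonnegative for the same reason. With these sign checks in place, Lemma~\ref{lem:convex_hull} yields that $\trian$ is a unimodular triangulation of $Q(\A)$, which is stronger than the claim.
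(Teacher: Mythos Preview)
Your proof is correct and follows the same overall strategy as the paper: invoke Lemma~\ref{lem:convex_hull}, get convexity of $\bigcup_I C(V_I)$ from Corollary~\ref{cor:FA_CA}, and then verify $C(V_I)\cap C(V_J)\subs C(V_I\cap V_J)$. The only difference is in that last verification: the paper writes down an explicit half-space description and observes that both $C(V_I)\cap C(V_J)$ and $C(V_I\cap V_J)$ coincide with it, whereas you argue directly by expanding $\x$ in the basis $V_I$ and killing the coefficients indexed by $I\triangle J$. Both arguments rest on the same key observation that $x_i=0$ for $i\in I\triangle J$, so this is a cosmetic variation rather than a genuinely different route.
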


\begin{proof}
By Lemma~\ref{lem:convex_hull}, it suffices to prove that $\cup_I C(V_I)$ is convex and $C(V_I) \cap C(V_J) \subs C(V_I \cap V_J)$ for all $I, J \subs \{n+2, \ld, 2n+1\}$.
The first statement follows from Corollary~\ref{cor:FA_CA}.
For the second statement, one can easily show that both $C(V_I) \cap C(V_J)$ and $C(V_I \cap V_J)$ equal 
\begin{equation*}
\begin{split}
\{\x \in \R^{2n+1} \ | \ 
 & x_1, \ld, x_{n+1} \geq 0; 
x_1+\sum_{i \in I \cap J} x_i \geq 0; 
\forall i \in \tilde{I} \cap \tilde{J}: x_i \geq 0; \\
 & \forall i \in I \cap J: x_i \leq 0 \andd x_{i-n}+x_i \geq 0;
\forall i \in (I \cap \tilde{J}) \cup (\tilde{I} \cap J): x_i=0\}.
\qedhere
\end{split}
\end{equation*}
\end{proof}

\begin{cor}\label{cor:FA_prop_sd}
$\A$ is saturated, the volume of $Q(\A)$ is $2^n$ and 
\begin{equation*}
\begin{split}
C(\A) = \{\x \in \R^{2n+1} \ | \ & x_1, \ld, x_{n+1} \geq 0;
\textrm{for all } I \subs \{n+2, \ld, 2n+1\}: x_1 + \sum_{i \in I} x_i \geq 0; \\
 & \textrm{for all } i \in \{n+2, \ld, 2n+1\}: x_{i-n}+x_i \geq 0\}
\end{split}
\end{equation*}
$F_A(a,\vect{b},\vect{c} | \vect{z})$ is non-resonant if and only if $b_1, \ld, b_n$, $c_1-b_1, \ld, c_n-b_n$ and $-a + \sum_{j \in J} c_j$ are non-integral for all $J \subseteq \{1, \ld, n\}$.
\end{cor}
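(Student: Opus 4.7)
The plan is to combine the preceding results in a straightforward way. For each $I \subs \{n+2, \ld, 2n+1\}$ the matrix whose columns are the vectors of $V_I$ has determinant $\pm 1$ (as noted just before Lemma~\ref{lem:FA_CVi}), so Lemma~\ref{lem:FA_trian} in fact provides a unimodular triangulation of $Q(\A)$. Since there are $2^n$ subsets of $\{n+2, \ld, 2n+1\}$, this triangulation has $|\trian|=2^n$ simplices. Applying Lemma~\ref{lem:properties_trian} at once yields that $\A$ is saturated, that $\mathrm{vol}(Q(\A))=2^n$, and that $C(\A)=\bigcup_I C(V_I)$; substituting the explicit description of this union from Corollary~\ref{cor:FA_CA} produces the stated formula for $C(\A)$.

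For the non-resonance claim I invoke Definition~\ref{defn:resonance}: since the linear forms defining the inequalities of $C(\A)$ have integer coefficients, $H_\A(\bal)$ is non-resonant exactly when none of those forms takes an integer value at $\bal=(-a,-\vect{b},\vect{c}-1)$. Direct evaluation gives $x_1(\bal)=-a$, $x_j(\bal)=-b_{j-1}$ for $j=2,\ld,n+1$, and $(x_{j+1}+x_{n+1+j})(\bal)=c_j-b_j-1$ for $j=1,\ld,n$. For a subset $I \subseteq \{n+2, \ld, 2n+1\}$, writing $J=\{i-n-1 \mid i \in I\} \subseteq \{1, \ld, n\}$, one computes
\begin{equation*}
\Bigl(x_1+\sum_{i \in I}x_i\Bigr)(\bal)=-a+\sum_{j \in J}c_j-|J|.
\end{equation*}
The integrality of this last expression is equivalent to $-a+\sum_{j \in J}c_j \in \Z$, and the case $J=\emptyset$ absorbs the single condition $a \notin \Z$ coming from $x_1(\bal)=-a$. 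Collecting everything produces the stated list of non-integrality conditions.

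The reasoning is essentially routine. The only points requiring a little care are the reindexing $i \leftrightarrow j=i-n-1$ when passing between coordinates of $\A$ and the parameters $(b_j,c_j)$, and the observation that the family of linear forms $x_1+\sum_{i \in I}x_i$ parametrized by $I \subseteq \{n+2, \ld, 2n+1\}$ automatically includes $x_1$ itself (via $I=\emptyset$), so the non-integrality conditions reduce cleanly to the statement of the corollary.
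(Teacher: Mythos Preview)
Your proof is correct and follows exactly the route the paper intends: the corollary is stated without proof precisely because it is the direct combination of Lemma~\ref{lem:FA_trian}, Lemma~\ref{lem:properties_trian}, and Corollary~\ref{cor:FA_CA}, together with evaluating the facet forms at $\bal=(-a,-\vect{b},\vect{c}-1)$. Your careful handling of the reindexing and of the $I=\emptyset$ case is accurate, and the evaluations of the linear forms are all correct.
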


\begin{cor}\label{cor:FA_reduction}
If $F_A(a,\vect{b},\vect{c} | \vect{z})$ is non-resonant and algebraic, then for every $i \in \{1, \ld, n\}$,
$F_A(a, b_1, \ld, b_{i-1}, b_{i+1}, \ld, b_n,c_1, \ld, c_{i-1}, c_{i+1}, \ld, c_n \hfill | \hfill z_1, \ld, z_{i-1}, z_{i+1}, \ld, z_n)$ is non-resonant and algebraic.
\end{cor}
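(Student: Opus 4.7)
The plan is to proceed in direct analogy with Lemma~\ref{lem:FD_reduction}: we need to check two things, namely that the $(n-1)$-variable Lauricella $F_A$ obtained by deleting the $i$-th variable remains non-resonant, and that it remains algebraic. By the symmetry of the parameters we may assume $i=n$ throughout.

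For non-resonance, I would invoke the criterion recorded in Corollary~\ref{cor:FA_prop_sd}. The $n$-variable $F_A(a,\vect{b},\vect{c}|\vect{z})$ being non-resonant means that $b_j$ and $c_j-b_j$ are non-integral for every $j\in\{1,\ld,n\}$ and that $-a+\sum_{j\in J}c_j$ is non-integral for every $J\subseteq\{1,\ld,n\}$. The analogous non-resonance conditions for the $(n-1)$-variable function in the parameters $a,\,b_1,\ld,b_{n-1},\,c_1,\ld,c_{n-1}$ only require the same non-integrality statements restricted to $j\in\{1,\ld,n-1\}$ and to $J\subseteq\{1,\ld,n-1\}$. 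These are a subset of the hypotheses, hence automatically satisfied.

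For algebraicity, the key observation is that substituting $z_n=0$ in the defining series
\begin{equation*}
F_A(a,\vect{b},\vect{c}|\vect{z})=\sum_{\vect{m}\in\Z_{\geq 0}^n}\frac{\poch{a}{|\vect{m}|}\poch{\vect{b}}{\vect{m}}}{\poch{\vect{c}}{\vect{m}}\vect{m}!}\vect{z}^{\vect{m}}
\end{equation*}
kills every term with $m_n>0$, and the surviving terms (indexed by $\vect{m}$ with $m_n=0$) collapse to the $(n-1)$-variable $F_A$ series in the remaining parameters and variables, thanks to $\poch{b_n}{0}=\poch{c_n}{0}=0!=1$. Applying Remark~\ref{rmk:reduction_algebraicity} with $a=0$ then yields algebraicity of the reduced function over $\C(z_1,\ld,z_{n-1})$.

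There is no real obstacle here; the argument is essentially bookkeeping. The only point to be mindful of is the verification that the non-resonance condition for the reduced function is literally a subset of the conditions at the $n$-variable level (so that no new non-integrality has to be derived), and that the substitution $z_n=0$ is admissible—both are immediate from Corollary~\ref{cor:FA_prop_sd} and the explicit form of the series, respectively.
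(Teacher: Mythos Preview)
Your argument is correct and matches the paper's (implicit) reasoning: the statement is recorded as a corollary without proof precisely because, unlike the $F_D$ case in Lemma~\ref{lem:FD_reduction}, the non-resonance conditions for the reduced $F_A$ from Corollary~\ref{cor:FA_prop_sd} are literally a subset of those for the full $F_A$, and algebraicity then follows from Remark~\ref{rmk:reduction_algebraicity} via the specialization $z_n=0$. Your observation that no interlacing argument is needed here (in contrast to $F_D$) is exactly the point.
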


\begin{rmk}\label{rmk:F2_irreducible_is_nonresonant}
In section 2 of \cite{kato_Appell_F2}, Kato shows that $F_2(a,b_1,b_2,c_1,c_2|x,y)$ is irreducible if and only if $a$, $b_1$, $b_2$, $c_1-a$, $c_2-a$, $c_1+c_2-a$, $c_1-b_1$ and $c_2-b_2$ are non-integral.
However, we could not find similar results for $F_A$ in the literature.
\end{rmk}

Finding an interlacing condition isn't as easy as for $F_D$.
Therefore, we will first find an interlacing condition for the Appell function and compute all irreducible algebraic functions for $n=2$.
Using this, we can prove that there are no non-resonant algebraic functions for $n \geq 3$. 

\begin{lem}\label{lem:F2_interlacing}
Suppose that $F_2(a,b_1,b_2,c_1,c_2 | x,y)$ is irreducible.
Then there are 4 apexpoints if and only if one of the following conditions holds:
\begin{equation*}
\fp{b_1} \leq \fp{c_1} \andd \fp{b_2} \leq \fp{c_2} \andd \fp{c_1} + \fp{c_2} < \fp{a} 
\end{equation*}
or
\begin{equation*}
\fp{b_1} \leq \fp{c_1} \andd \fp{c_2} < \fp{b_2} \andd \fp{c_1} < \fp{a} \leq \fp{c_2} 
\end{equation*}
or
\begin{equation*}
\fp{c_1} < \fp{b_1} \andd \fp{b_2} \leq \fp{c_2} \andd \fp{c_2} < \fp{a} \leq \fp{c_1} 
\end{equation*}
or
\begin{equation*}
\fp{c_1} < \fp{b_1} \andd \fp{c_2} < \fp{b_2} \andd 1 + \fp{a} \leq \fp{c_1} + \fp{c_2}
\end{equation*}
\end{lem}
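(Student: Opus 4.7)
The plan is to apply Lemma~\ref{lem:apexpoints} together with the facet description of $C(\A)$ given in Corollary~\ref{cor:FA_prop_sd} (specialised to $n=2$), and enumerate apexpoints directly, paralleling the proof of Lemma~\ref{lem:FD_interlacing}. A point $\vect{p}=\x+\bal$ with $\x\in\Z^5$ is an apexpoint iff $\vect{p}\in C(\A)$ and, for each of the seven vectors $\va_i\in\A$, at least one of the eight facet inequalities of $C(\A)$ fails at $\vect{p}-\va_i$. Since each $\al_i\in(0,1)$ and non-resonance forces $m_j(\bal)\notin\Z$ for every facet-defining linear form $m_j$, these conditions become finite strict integer inequalities on $\x$ whose constants are the floors $M_j=\entier{m_j(\bal)}$. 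By Lemma~\ref{lem:signature_entier_linear_forms} the apexpoint count depends only on $(M_4,M_5,M_6,M_7,M_8)$ corresponding to the facets $x_1{+}x_4$, $x_1{+}x_5$, $x_1{+}x_4{+}x_5$, $x_2{+}x_4$, $x_3{+}x_5$ (the other three floors vanish), and the admissibility constraints coming from $\al_i\in(0,1)$ restrict these five floors to a short list of valid combinations.

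I would then case split on the sign pattern of $(x_4,x_5)$, which corresponds to the four unimodular simplices $V_I$ ($I\subseteq\{4,5\}$) of the triangulation from Lemma~\ref{lem:FA_trian}. Within each of the four regions, the failure conditions from the two composite vectors $\e_1+\e_2-\e_4$ and $\e_1+\e_3-\e_5$, combined with those coming from $\e_1,\e_2,\e_3,\e_4,\e_5$, trap the remaining coordinates $x_1,x_2,x_3,|x_4|,|x_5|$ in a small finite set (typically $\{0,1\}$, occasionally $x_1\in\{0,1,2\}$), so apexpoints can be listed by inspection. A direct computation shows that each simplex contains at most one apexpoint, and that all four simplices contain an apexpoint simultaneously exactly for the four tuples $(M_4,M_5,M_6,M_7,M_8)\in\{(0,0,0,1,1),(0,1,1,1,0),(1,0,1,0,1),(1,1,2,0,0)\}$.

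Translating these tuples back via $\al_1=1-\fp{a}$, $\al_2=1-\fp{b_1}$, $\al_3=1-\fp{b_2}$, $\al_4=\fp{c_1}$, $\al_5=\fp{c_2}$ yields exactly the four interlacing conditions of the lemma. For instance, $(M_7,M_8,M_6)=(1,1,0)$ rewrites as $\al_2+\al_4\ge 1$, $\al_3+\al_5\ge 1$, $\al_1+\al_4+\al_5<1$, which is precisely $\fp{b_1}\le\fp{c_1}\andd\fp{b_2}\le\fp{c_2}\andd\fp{c_1}+\fp{c_2}<\fp{a}$; the remaining three tuples produce the other three listed conditions under the same dictionary, with the middle two conditions arising from the ``mixed'' tuples in which one of $M_4,M_5$ vanishes and the other equals $1$.

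The main obstacle will be the bookkeeping: seven failure conditions in five integer variables across four sign-pattern regions, with additional care needed to verify that no apexpoints are missed, for example the ``large'' apexpoint $\x=(2,0,0,-1,-1)\in V_{\{4,5\}}$ that arises only under the first condition, and that none are double counted. Non-resonance is essential here, since it guarantees that each apexpoint lies strictly in the relative interior of a single simplex $V_I$, eliminating any ambiguity on shared boundaries.
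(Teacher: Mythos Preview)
Your proposal is correct and arrives at exactly the same four floor-tuples $(\entier{\al_1+\al_4},\entier{\al_1+\al_5},\entier{\al_1+\al_4+\al_5},\entier{\al_2+\al_4},\entier{\al_3+\al_5})$ that the paper records, followed by the identical translation via $\bal=(1-\fp{a},1-\fp{b_1},1-\fp{b_2},\fp{c_1},\fp{c_2})$. The paper's own proof is terser: it simply invokes the finite enumeration algorithm of Remark~\ref{rmk:algorithm_interlacing} rather than organising the apexpoint search simplex-by-simplex along the triangulation of Lemma~\ref{lem:FA_trian} as you do, but the substance of the computation is the same. One small caveat: your closing remark that non-resonance forces every apexpoint into the relative interior of a single $C(V_I)$ is not quite accurate, since the internal walls $x_4=0$ and $x_5=0$ are not facets of $C(\A)$ and nothing in Corollary~\ref{cor:FA_prop_sd} excludes $c_1$ or $c_2$ from being integral; however this does not affect the count in the maximal-signature case, where the four apexpoints are necessarily distinct and hence occupy all four simplices.
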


\begin{proof}
Using the algorithm described in Remark~\ref{rmk:algorithm_interlacing} and the assumption $\al_i \in [0,1)$, one easily computes that there are 4 apexpoints if and only if 
$(\entier{\al_1+\al_4}, \entier{\al_1+\al_5}, \entier{\al_1+\al_4+\al_5}, \entier{\al_2+\al_4}, \entier{\al_3+\al_5}) \in \{(1,1,2,0,0), (0,1,1,1,0), (1,0,1,0,1), (0,0,0,1,1)\}$
Since $\bal=(1-\fp{a},1-\fp{b_1},1-\fp{b_2},\fp{c_1},\fp{c_2})$, this is equivalent to the conditions given above.
\end{proof}

\begin{lem}\label{lem:F2_a-c_2}
If $F_2(a,b_1,b_2,c_1,c_2|x,y)$ is irreducible and algebraic, then $F(a-c_2,b_1,c_1|x)$ is also irreducible and algebraic.
\end{lem}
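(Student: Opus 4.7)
The plan is to treat irreducibility and algebraicity separately. For irreducibility of $F(a-c_2,b_1,c_1|z)$, Theorem~\ref{thm:gauss_irreducible_algebraic} demands non-integrality of the four quantities $a-c_2$, $b_1$, $c_1-(a-c_2)=c_1+c_2-a$ and $c_1-b_1$; each of these appears (possibly up to sign) in the list of non-integrality conditions for an irreducible $F_2$ recalled in Remark~\ref{rmk:F2_irreducible_is_nonresonant}, so irreducibility is immediate and the real content is algebraicity.

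For algebraicity, let $D$ be the common denominator of $(a-c_2, b_1, c_1)$ and $\tilde D$ that of $(a, b_1, b_2, c_1, c_2)$. Since $D\mid\tilde D$, Lemma~\ref{lem:higher_denom} allows us to replace any $k$ coprime with $D$ by some $l\equiv k\pmod{D}$ with $\gcd(l,\tilde D)=1$; because $l\equiv k\pmod{D}$ the three fractional parts $\fp{k(a-c_2)}$, $\fp{kb_1}$, $\fp{kc_1}$ are unchanged, so it suffices to verify the Gauss interlacing condition of Theorem~\ref{thm:gauss_irreducible_algebraic} for every $l$ coprime with $\tilde D$. Since $F_2$ is algebraic and the volume of $Q(\A)$ is $2^2=4$ by Corollary~\ref{cor:FA_prop_sd}, Theorem~\ref{thm:algebraic_solutions} together with Lemma~\ref{lem:F2_interlacing} guarantees that $(la, lb_1, lb_2, lc_1, lc_2)$ satisfies one of the four inequality systems listed there.

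The crux is then a short case check. Writing
\[
\fp{l(a-c_2)} \;=\; \fp{la}-\fp{lc_2} \quad \text{if } \fp{la}>\fp{lc_2}, \qquad \fp{l(a-c_2)} \;=\; 1+\fp{la}-\fp{lc_2} \quad \text{if } \fp{la}<\fp{lc_2}
\]
(the equality case is excluded since $c_2-a\notin\Z$ by irreducibility), one verifies that cases 1 and 2 of Lemma~\ref{lem:F2_interlacing} produce the Gauss inequality $\fp{lb_1}\leq\fp{lc_1}<\fp{l(a-c_2)}$, whereas cases 3 and 4 produce $\fp{l(a-c_2)}\leq\fp{lc_1}<\fp{lb_1}$.

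The main obstacle is simply executing this case analysis cleanly, in particular deducing $\fp{lc_1}<1+\fp{la}-\fp{lc_2}$ in case 2 from the hypothesis $\fp{lc_1}<\fp{la}$ (using $\fp{lc_2}<1$), and $1+\fp{la}-\fp{lc_2}\leq\fp{lc_1}$ in case 4 from $1+\fp{la}\leq\fp{lc_1}+\fp{lc_2}$. No deeper machinery is needed; irreducibility of $F_2$ also ensures $\fp{lc_2}\neq 0$, which prevents any boundary pathology between the two branches of the formula for $\fp{l(a-c_2)}$.
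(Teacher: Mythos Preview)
Your proof is correct and follows exactly the template the paper intends: the paper's own proof merely says ``The proof is similar to the proof of Lemma~\ref{lem:F1_b+c} and uses the interlacing condition from Lemma~\ref{lem:F2_interlacing},'' and your argument (irreducibility from the non-integrality list in Remark~\ref{rmk:F2_irreducible_is_nonresonant}, then Lemma~\ref{lem:higher_denom} followed by a four-case check against Lemma~\ref{lem:F2_interlacing}) is precisely the intended expansion.

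One small inaccuracy worth flagging: your final sentence claims that irreducibility of $F_2$ ensures $\fp{lc_2}\neq 0$. This is not true---the list in Remark~\ref{rmk:F2_irreducible_is_nonresonant} does not include $c_2\notin\Z$. Fortunately it is also unnecessary: if $\fp{lc_2}=0$ then $\fp{l(a-c_2)}=\fp{la}$ and cases~2 and~4 of Lemma~\ref{lem:F2_interlacing} become vacuous (they would force $\fp{la}\leq 0$ or $\fp{la}<0$, contradicting $a\notin\Z$), while cases~1 and~3 go through verbatim. So the argument stands; just drop or rephrase that last remark.
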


\begin{proof}
The proof is similar to the proof of Lemma~\ref{lem:F1_b+c} and uses the interlacing condition from Lemma~\ref{lem:F2_interlacing}.
\end{proof}

Since $F_2(a,b_1,b_2,c_1,c_2|x,y) = F_2(a,b_2,b_1,c_2,c_1|y,x)$, the algebraic functions come in pairs. 
 
\begin{thm}\label{thm:F2_solutions}
$F_2(a,b_1,b_2,c_1,c_2 | x,y)$ is irreducible and algebraic if and only if $(a,b_1,b_2,c_1,c_2) \pmod \Z$ or $(a,b_2,b_1,c_2,c_1) \pmod \Z$ is conjugate to one of the tuples $(\half, \frac{1}{6}, \frac{5}{6}, \frac{1}{3}, \frac{2}{3})$, $(\frac{1}{6}, \frac{5}{6}, \frac{5}{6}, \frac{2}{3}, \frac{2}{3})$, $(\frac{1}{10}, \frac{7}{10}, \frac{9}{10}, \frac{2}{5}, \frac{4}{5})$, $(\frac{1}{12}, \frac{3}{4}, \frac{5}{6}, \half, \frac{2}{3})$, $(\frac{1}{12}, \frac{5}{6}, \frac{11}{12}, \frac{2}{3}, \half)$, $(\frac{1}{12}, \frac{5}{6}, \frac{7}{12}, \frac{2}{3}, \half)$ and $(\frac{1}{30}, \frac{5}{6}, \frac{7}{10}, \frac{2}{3}, \frac{2}{5})$. 
\end{thm}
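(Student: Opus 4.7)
The strategy mirrors the proof of Theorem~\ref{thm:F1_solutions}: reduce to Gauss functions and split by Schwarz type. If $F_2(a,b_1,b_2,c_1,c_2|x,y)$ is irreducible and algebraic, then applying Lemma~\ref{lem:F2_a-c_2} together with the symmetry $F_2(a,b_1,b_2,c_1,c_2|x,y)=F_2(a,b_2,b_1,c_2,c_1|y,x)$ shows that both of the Gauss triples
\begin{equation*}
T_1=(a-c_2,\,b_1,\,c_1) \andd T_2=(a-c_1,\,b_2,\,c_2)
\end{equation*}
(modulo $\Z$) give irreducible algebraic Gauss functions and thus appear in the classification recalled in Section~\ref{subsec:Gauss}. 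I would split into four cases according to whether each of $T_1,T_2$ is of type~1 or type~2.

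If both $T_1$ and $T_2$ are of type~2, their fractional parts come from the finite list in Table~\ref{tab:gauss_abc_orbits}: the first coordinates have denominator at most $60$ and the third at most $5$, so the common denominator $D$ of $(a,b_1,b_2,c_1,c_2)$ divides a bounded integer. One enumerates all pairs of Schwarz type~2 triples whose $a$-values are compatible (i.e.\ $c_2+(a-c_2)=c_1+(a-c_1)$), reconstructs the quintuple, and checks by computer that every conjugate $k(a,b_1,b_2,c_1,c_2)$ with $1\leq k<D$ and $\gcd(k,D)=1$ satisfies one of the four disjunctive interlacing conditions of Lemma~\ref{lem:F2_interlacing}.

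The mixed cases (type~1, type~2) and (type~2, type~1) are interchanged by the $F_2$ symmetry, so it suffices to handle the first. A type~1 triple has the form $(r,-r,\half)$, $(r,r+\half,\half)$ or $(r,r+\half,2r)$ with $r\in\parset$. Since $T_2$ is of type~2, the denominator of $a-c_1$ is at most $60$ and that of $c_2$ is at most $5$, so the denominator of $a$, and hence of $r=\fp{a-c_2}$, divides $\lcm(60,5)=60$. This leaves finitely many candidates to check against Lemma~\ref{lem:F2_interlacing}.

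The hardest case, and the main obstacle, is (type~1, type~1), where a priori both $r=\fp{a-c_2}$ and $s=\fp{a-c_1}$ range freely over $\parset$. Each triple contributes one of three shapes, giving nine sub-cases in which $c_1\in\{\half,2r\}$, $c_2\in\{\half,2s\}$, and $b_1,b_2$ are determined from $r,s$. The identity $c_1-c_2\equiv r-s\pmod\Z$ collapses each sub-case to a one-parameter family: either $r=s$, or a rigid linear relation of the form $s=\half\pm r$. For each such family I would use Lemma~\ref{lem:higher_denom} to lift conjugations to the full denominator of the quintuple, and Lemma~\ref{lem:closetohalf} to produce a conjugate $kr$ lying in some interval $[t,\half)$; a case analysis then shows that the conjugated quintuple violates all four clauses of Lemma~\ref{lem:F2_interlacing} as soon as the denominator of $r$ exceeds a small explicit bound. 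The delicate point here is that Lemma~\ref{lem:F2_interlacing} is a fourfold disjunction rather than a twofold one as for Gauss, so care is needed to construct a conjugate violating every clause simultaneously. Once this is done the problem reduces to a finite computer search, and collecting the surviving quintuples from all four cases yields exactly the seven tuples listed.
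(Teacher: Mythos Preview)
Your approach is correct in outline, but it takes a different and harder route than the paper's.  The paper uses \emph{three} Gauss triples: the two coming from Corollary~\ref{cor:FA_reduction}, namely $(a,b_1,c_1)$ and $(a,b_2,c_2)$, together with $(a-c_2,b_1,c_1)$ from Lemma~\ref{lem:F2_a-c_2}.  The primary case split is on the first two, and because these share the first coordinate $a$, the type~1/type~1 case is automatically governed by a single parameter $r$ with no extra work.  In that case the paper observes that $a-c_2,\,b_1\in\{r+\half,-r\}$, so the third triple $(a-c_2,b_1,c_1)$ cannot itself be of type~1 (one checks $a-c_2\not\equiv -b_1$ and $a-c_2\not\equiv b_1+\half$ for $r\neq\half$); hence it is of type~2, which bounds the denominator of $r$ by $60$ (or $120$ if $\equiv 2\pmod 4$) and reduces everything to a finite check.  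No appeal to Lemma~\ref{lem:closetohalf} or to the fourfold disjunction of Lemma~\ref{lem:F2_interlacing} is needed at this stage.

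Your choice of $T_1=(a-c_2,b_1,c_1)$ and $T_2=(a-c_1,b_2,c_2)$ as primary triples forfeits this shared coordinate, so you must recover the one-parameter collapse via the identity $c_1-c_2\equiv r-s$ and then bound the denominator by producing, for each of the nine one-parameter families, a conjugate that violates all four clauses of Lemma~\ref{lem:F2_interlacing} simultaneously.  This is doable (and you correctly flag it as the delicate point), but it is exactly the work that the paper's choice of triples sidesteps.  In the mixed case your denominator bound on $a$ is also slightly glossed: you need to use that $c_1\in\{\half,2r\}$ from $T_1$ to express $a$ in terms of quantities with bounded denominator (the case $c_1=2r$ uses $a=2c_2-(a-c_1)$).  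Both routes lead to the same finite computer search, but the paper's is noticeably shorter.
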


\begin{proof}
If $F_2(a,b_1,b_2,c_1,c_2 | x,y)$ is irreducible and algebraic, then Corollary~\ref{cor:FA_reduction} and Lemma~\ref{lem:F2_a-c_2} imply that $(a,b_1,c_1)$, $(a,b_2,c_2)$ and $(a-c_2,b_1,c_1)$ are Gauss triples.

First suppose that $(a,b_1,c_1)$ and $(a,b_2,c_2)$ are both Gauss triples of type 1.
Then there exist $r \in \parset$ such that $(a,b_1,c_1), (a,b_2,c_2) \in \{(r,-r,\half), (r,r+\half,\half), (r,r+\half,2r)\}$ (up to congruence modulo $\Z$).
Hence $a-c_2, b_1 \in \{r+\half, -r\} \pmod \Z$.
If $(a-c_2,b_1,c_1)$ is a Gauss triple of type 1, then $a-c_2 \equiv -b_1$ or $a-c_2 \equiv b_1+\half$.
However, this doesn't hold for $r \neq \half$.
Hence $(a-c_2, b_1, c_1)$ must be of type 2, so the denominator of $a-c_2$ is at most 60.
This implies that the denominator of $r$ is at most 60, or $2 \pmod 4$ and at most 120.
This gives finitely many possibilities for $r$ and using a computer it turns out that there are no solutions.

If $(a,b_1,c_1)$ is a Gauss triple of type 1 and $(a,b_2,c_2)$ is of type 2, then the denominator of $a$ is at most 60 and there are again finitely many possibilities.
The solutions are the 8 points in the orbits of $(\frac{1}{12}, \frac{11}{12}, \frac{5}{6}, \half, \frac{2}{3})$ and $(\frac{1}{12}, \frac{7}{12}, \frac{5}{6}, \half, \frac{2}{3})$.
By symmetry, if $(a,b_1,c_1)$ is of type 2 and $(a,b_2,c_2)$ is of type 1, the solutions are the conjugates of $(\frac{1}{12}, \frac{5}{6}, \frac{11}{12}, \frac{2}{3}, \half)$ and $(\frac{1}{12}, \frac{5}{6}, \frac{7}{12}, \frac{2}{3}, \half)$.

Finally, when both $(a,b_1,c_1)$ and $(a,b_2,c_2)$ are of type 2, there are only finitely many possibilities.
This gives the other 36 solutions listed above.
\end{proof}

\begin{thm}\label{thm:FA_solutions}
For $n \geq 3$, there are no non-resonant algebraic Lauricella $F_A$ functions.
\end{thm}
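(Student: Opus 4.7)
The plan is to combine the reduction-in-variable provided by Corollary~\ref{cor:FA_reduction} with the complete classification of non-resonant algebraic Appell $F_2$ functions in Theorem~\ref{thm:F2_solutions}. Iterating Corollary~\ref{cor:FA_reduction} shows that any non-resonant algebraic $F_A$ in $n \geq 4$ variables yields a non-resonant algebraic $F_A$ in $n=3$ variables by deleting $n-3$ of the indices, so it suffices to treat the case $n=3$.

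Suppose then that $F_A(a, b_1, b_2, b_3, c_1, c_2, c_3 | \vect{z})$ is non-resonant and algebraic. Applying Corollary~\ref{cor:FA_reduction} to each of the three variables in turn, every marginal $F_2(a, b_i, b_j, c_i, c_j | x, y)$ with $\{i, j\} \subset \{1, 2, 3\}$ is also non-resonant and algebraic. By Remark~\ref{rmk:F2_irreducible_is_nonresonant}, non-resonance and irreducibility coincide for $F_2$, so Theorem~\ref{thm:F2_solutions} applies: each marginal tuple, or its swap $(a, b_j, b_i, c_j, c_i)$, must be conjugate to one of the seven explicit tuples listed there.

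The main step is a finite enumeration. Since $a$ is common to all three marginals, it suffices to fix one representative of each conjugation orbit of $a \pmod \Z$ arising from the seven $F_2$-orbits. For each admissible $a$ I record the set of pairs $(b, c)$ that can occur as $(b_i, c_i)$ in some conjugate of one of the seven listed tuples (with either the direct identification or the swap $(b_1, c_1) \leftrightarrow (b_2, c_2)$), and then check whether three pairs can be selected so that every two of them, combined with $a$, again form a listed tuple. For the orbits with $a \equiv \frac{1}{2}$ (orbit 1), $a$ of denominator $10$ (orbit 3), or $a$ of denominator $30$ (orbit 7), only two distinct pairs appear for each admissible $a$, and the relevant orbit contains no tuple with $b_1 = b_2$, so no triple of compatible pairs exists. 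For $a$ of denominator $12$ (orbits 4, 5, 6) four pairs appear, but the compatibility graph is a star centred at $(\frac{5}{6}, \frac{2}{3})$ and contains no triangle, so again no triple exists. The only case that survives is orbit 2, in which $b_1 = b_2$ and $c_1 = c_2$ already hold in the tuple; this forces $(a, \vect{b}, \vect{c}) \equiv (\frac{1}{6}, \frac{5}{6}, \frac{5}{6}, \frac{5}{6}, \frac{2}{3}, \frac{2}{3}, \frac{2}{3}) \pmod \Z$ up to conjugation.

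Finally I rule this last candidate out directly. By Corollary~\ref{cor:FA_prop_sd} the simplex volume of $Q(\A)$ is $2^3 = 8$, so by Theorem~\ref{thm:algebraic_solutions} it is enough to exhibit one $k$ coprime to $6$ for which $\signk{k} < 8$. Taking $k = 5$ gives normalised parameter vector $\bal = (\frac{1}{6}, \frac{5}{6}, \frac{5}{6}, \frac{5}{6}, \frac{1}{3}, \frac{1}{3}, \frac{1}{3})$. Using the explicit description of $C(\A)$ from Corollary~\ref{cor:FA_prop_sd} together with Lemma~\ref{lem:apexpoints}, a direct search over the finite set of integer offsets $\vect{y} \in \Z^7$ with $\bal + \vect{y} \in C(\A)$ shows that the number of apexpoints is strictly smaller than $8$, so the candidate is not algebraic. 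The main obstacle is the enumeration in paragraph three, which requires careful bookkeeping of the seven orbits, their conjugates and the $(b_1, c_1) \leftrightarrow (b_2, c_2)$ swap; each individual sub-case, however, is small and can be executed by hand or by a short computer check.
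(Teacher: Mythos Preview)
Your proof is correct and follows essentially the same route as the paper: reduce $n\ge 4$ to $n=3$ via Corollary~\ref{cor:FA_reduction}, use Theorem~\ref{thm:F2_solutions} on the three $F_2$ marginals to force $(a,\vect{b},\vect{c})\equiv\pm(\tfrac16,\tfrac56,\tfrac56,\tfrac56,\tfrac23,\tfrac23,\tfrac23)$, and then eliminate this candidate by an apexpoint count falling short of $2^3=8$. The only differences are cosmetic: you spell out the compatibility-graph enumeration that the paper summarises as ``easily follows'', and you exhibit a single failing $k$ whereas the paper records both apexpoint counts ($5$ and $7$) for the two conjugates.
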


\begin{proof}
First let $n=3$.
If $F_A(a,\vect{b},\vect{c}|\vect{z})$ is non-resonant and algebraic, then each of the three tuples $(a,b_1,b_2,c_1,c_2)$, $(a,b_1,b_3,c_1,c_3)$ and $(a,b_2,b_3,c_2,c_3)$ must give an irreducible algebraic $F_2$ function.
From Theorem~\ref{thm:F2_solutions}, it easily follows that $(a,\vect{b},\vect{c})= \pm (\frac{1}{6}, \frac{5}{6}, \frac{5}{6}, \frac{5}{6}, \frac{2}{3}, \frac{2}{3}, \frac{2}{3})$.
Hence $\bal$ is equal to $\pm (\frac{1}{6}, \frac{5}{6}, \frac{5}{6}, \frac{5}{6}, \frac{1}{3}, \frac{1}{3}, \frac{1}{3})$ and the corresponding functions are non-resonant.
There are 5 and 7 apexpoints, respectively.
Since the volume of $Q(\A)$ is 8, the functions are not algebraic.
By Corollary~\ref{cor:FA_reduction}, this implies that there are no non-resonant algebraic functions for $n \geq 4$.
\end{proof}


\subsection{The Appell $F_3$ and Lauricella $F_B$ functions}\label{subsec:F3}

The Lauricella $F_B$ function is defined by 
\begin{equation*}
F_B(\vect{a},\vect{b},c | \vect{z}) = 
\sum_{\vect{m} \in \Z_{\geq 0}^n} \frac{\poch{\vect{a}}{\vect{m}} \poch{\vect{b}}{\vect{m}}}{\poch{c}{|\vect{m}|} \vect{m}!} \vect{z}^{\vect{m}}.
\end{equation*}
The lattice is $\lat = \bigoplus_{i=1}^n \Z(-\e_i-\e_{n+i}+\e_{2n+1}+\e_{2n+i+1}) \subs \Z^{3n+1}$.
We can take
\begin{equation*}
\A = \{\e_1, \e_2, \ld, \e_{2n+1}, \e_1+\e_2-\e_{n+2}, \e_1+\e_3-\e_{n+3}, \ld, \e_1+\e_{n+1}-\e_{2n+1}\} \subs \Z^{2n+1}
\end{equation*}
and $\gam = (-\vect{a}, -\vect{b}, c-1, \vect{0})$.
Then $\bal = (-\vect{a}, -\vect{b}, c-1)$.
Consider the map $f:\Z^{2n+1} \rightarrow \Z^{2n+1}: x \mapsto (x_2+x_{n+2}, \ld, x_{n+1}+x_{2n+1}, x_2, \ld, x_{n+1}, x_1-x_2-\ld-x_{n+1}$.
Its inverse is $f^{-1}: x \mapsto (x_{n+1}+\ld+x_{2n}+x_{2n+1}, x_{n+1}, \ld, x_{2n}, x_1-x_{n+1}, \ld, x_n+x_{2n})$.
It maps the set $\A$ of $F_A$ to the set $\A$ of $F_B$.
Hence Remark~\ref{rmk:transformations} gives the following results:

\begin{lem}\label{lem:FB_irreducible}
$F_B(\vect{a},\vect{b},c | \vect{z})$ is non-resonant if and only if $a_1, \ld, a_n$, $b_1, \ld, b_n$ and $c-d_1- \ld -d_n$ with $d_i \in \{a_i,b_i\}$ are non-integral.
\end{lem}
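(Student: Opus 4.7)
The plan is to apply Remark~\ref{rmk:transformations} directly to the isomorphism $f$ introduced just before the lemma statement, thereby reducing non-resonance of $F_B$ to non-resonance of $F_A$ (Corollary~\ref{cor:FA_prop_sd}).

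First I would verify the hypotheses of Remark~\ref{rmk:transformations}. Since the sets $\A$ for $F_A$ and $F_B$ are literally the same subset of $\Z^{2n+1}$ (only the order of the entries of $\gam$ differs), the condition $f(\A^{F_A})=\A^{F_B}$ amounts to the statement that $f$ permutes this common set, which is a routine generator-by-generator check. I would also confirm that the displayed formula for $f^{-1}$ truly inverts $f$ by a direct composition.

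Next I would translate Corollary~\ref{cor:FA_prop_sd} to $F_B$ via $f$. Temporarily writing $(\tilde a,\tilde{\vect b},\tilde{\vect c})$ for the $F_A$ parameters and $(\vect a,\vect b,c)$ for the $F_B$ parameters, so that $\bal^{F_A}=(-\tilde a,-\tilde{\vect b},\tilde{\vect c}-1)$ and $\bal^{F_B}=(-\vect a,-\vect b,c-1)$, substituting into the defining formula of $f$ the relation $f(\bal^{F_A})=\bal^{F_B}$ yields, modulo $\Z$, the congruences
\begin{equation*}
\tilde b_i \equiv b_i, \qquad \tilde c_i - \tilde b_i \equiv -a_i, \qquad \tilde a \equiv 1 + \sum_{j=1}^n b_j - c.
\end{equation*}
Under these, the $F_A$ conditions ``$\tilde b_i$ non-integral'' and ``$\tilde c_i - \tilde b_i$ non-integral'' become ``$b_i$ and $a_i$ non-integral'', while the $2^n$ expressions $-\tilde a + \sum_{j\in J}\tilde c_j$ from Corollary~\ref{cor:FA_prop_sd} become, after using $\tilde c_j \equiv b_j - a_j + 1$, exactly $c - \sum_{j\in J} a_j - \sum_{j\notin J} b_j$. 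As $J$ ranges over all subsets of $\{1,\ld,n\}$, these are precisely the $2^n$ expressions $c - d_1 - \ld - d_n$ with $d_i \in \{a_i,b_i\}$, which is the claim of the lemma.

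The only point requiring care is the bookkeeping of the position mapping and the signs, but because non-resonance depends only on the fractional parts of the linear expressions involved, the translation is clean and no genuine obstacle arises.
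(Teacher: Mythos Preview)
Your approach is exactly the paper's: the lemma is stated immediately after the sentence ``Hence Remark~\ref{rmk:transformations} gives the following results'', so the intended proof is precisely to transport the $F_A$ non-resonance conditions of Corollary~\ref{cor:FA_prop_sd} through the isomorphism $f$, and your parameter translation $\tilde b_i\equiv b_i$, $\tilde c_i-\tilde b_i\equiv -a_i$, $-\tilde a+\sum_{j\in J}\tilde c_j\equiv c-\sum_{j\in J}a_j-\sum_{j\notin J}b_j$ is correct.

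One slip to fix: the displayed $\A$ for $F_B$ in the paper is a typo, and the two sets are \emph{not} literally the same for $n\geq 2$. A direct check shows $f(\e_{i+1})=\e_i+\e_{n+i}-\e_{2n+1}$ and $f(\e_1+\e_{i+1}-\e_{n+i+1})=\e_{n+i}$, so $f(\A^{F_A})=\{\e_1,\ldots,\e_{2n+1}\}\cup\{\e_i+\e_{n+i}-\e_{2n+1}:1\le i\le n\}$, which is the correct $\A^{F_B}$ (consistent with the $F_B$ lattice $\lat$ and with $\bal=(-\vect a,-\vect b,c-1)$). Your ``routine generator-by-generator check'' therefore does go through, just not as a permutation of a common set.
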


\begin{thm}\label{thm:FB_solutions}
$F_3(a_1,a_2,b_1,b_2,c | x,y)$ is non-resonant and algebraic if and only if, up to equivalence modulo $\Z$, $(a_1,a_2,b_1,b_2,c), (a_2,a_1,b_2,b_1,c), (b_1,b_2,a_1,a_2,c)$ or $(b_2,b_1,a_2,a_1,c)$ is conjugate to 
$(\frac{1}{4}, \frac{1}{6}, \frac{3}{4}, \frac{5}{6}, \half)$, 
$(\frac{1}{6}, \frac{1}{6}, \frac{5}{6}, \frac{5}{6}, \half)$, 
$(\frac{1}{6}, \frac{1}{10}, \frac{5}{6}, \frac{9}{10}, \half)$, 
$(\frac{1}{6}, \frac{1}{12}, \frac{5}{6}, \frac{7}{12}, \frac{1}{3})$ or 
$(\frac{1}{10}, \frac{3}{10}, \frac{9}{10}, \frac{7}{10}, \frac{1}{2})$.
There are no non-resonant algebraic Lauricella $F_B$ functions for $n \geq 3$.
\end{thm}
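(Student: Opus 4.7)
The plan is to exploit the $\Z$-linear isomorphism $f \colon \Z^{2n+1} \to \Z^{2n+1}$ constructed just before Lemma~\ref{lem:FB_irreducible}, which carries the GKZ-set $\A$ associated with $F_A$ bijectively onto the GKZ-set $\A$ associated with $F_B$. By Remark~\ref{rmk:transformations}, the $F_B$-system with parameter $f(\bal)$ is non-resonant and algebraic if and only if the $F_A$-system with parameter $\bal$ is. Thus the classification of non-resonant algebraic $F_B$ functions reduces to the classification of non-resonant algebraic $F_A$ functions already accomplished in Theorems~\ref{thm:F2_solutions} and~\ref{thm:FA_solutions}.

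To make this concrete I would first extract the explicit parameter dictionary. Applying $f$ to the $F_A$-vector $\bal = (-a, -\vect{b}, \vect{c} - 1)$ yields
\[
f(\bal) = (c_1 - b_1 - 1, \ld, c_n - b_n - 1, -b_1, \ld, -b_n, -a + b_1 + \ld + b_n),
\]
and identifying the right-hand side with the $F_B$-vector $(-\vect{a}', -\vect{b}', c' - 1)$ modulo $\Z$ gives
\[
a_i' \equiv b_i - c_i, \qquad b_i' \equiv b_i, \qquad c' \equiv -a + b_1 + \ld + b_n \pmod{\Z},
\]
where unprimed variables denote $F_A$-parameters and primed variables denote $F_B$-parameters. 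For $n \geq 3$ the dictionary, combined with Theorem~\ref{thm:FA_solutions}, immediately yields the second half of the statement: there are no non-resonant algebraic $F_B$ functions for $n \geq 3$.

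For the $F_3$ case ($n = 2$) I would apply the dictionary to each of the seven orbit representatives $(a, b_1, b_2, c_1, c_2)$ listed in Theorem~\ref{thm:F2_solutions}, together with the partner $(a, b_2, b_1, c_2, c_1)$ attached to each representative in that theorem, reducing the coordinates of each resulting five-tuple into $[0,1)^5$. Each such $F_3$-tuple is non-resonant and algebraic by the dictionary, and surjectivity onto all non-resonant algebraic $F_3$-tuples is automatic because $f$ is a bijection on the GKZ data. I would then regroup the tuples under the symmetries that appear in the statement -- the variable swap $(a_1, a_2, b_1, b_2) \leftrightarrow (a_2, a_1, b_2, b_1)$, the $\vect{a} \leftrightarrow \vect{b}$ symmetry inherent in the $F_3$ series, and conjugation by $(\Z/D\Z)^*$ -- and verify that the orbit representatives coincide with the five tuples given in the statement. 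The main obstacle is the combinatorial bookkeeping: several of the seven $F_2$-orbits will fuse into a single $F_3$-orbit under the enlarged symmetries, so one must trace carefully which orbits collapse and check that the resulting list of representatives matches the five tuples listed.
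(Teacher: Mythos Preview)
Your approach is exactly the paper's: the paper gives no separate proof of this theorem, merely stating that the linear isomorphism $f$ carries the $F_A$ set $\A$ to the $F_B$ set $\A$ and that ``Remark~\ref{rmk:transformations} gives the following results''. Your proposal makes this transfer explicit, including the parameter dictionary $a_i' \equiv b_i - c_i$, $b_i' \equiv b_i$, $c' \equiv -a + \sum b_i$, which is correct. One small caution for the bookkeeping step: beyond the four permutations listed in the statement, the $F_3$ series (and the GKZ set $\A_{F_B}$) is also invariant under swapping $a_i \leftrightarrow b_i$ for each $i$ separately, and you may need these extra symmetries to collapse all seven $F_2$-orbits down to the five listed $F_3$-orbits.
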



\subsection{The Appell $F_4$ and Lauricella $F_C$ functions}\label{subsec:F4}

The Lauricella $F_C$ function is defined by 
\begin{equation*}
F_C(a,b,\vect{c} | \vect{z}) = 
\sum_{\vect{m} \in \Z_{\geq 0}^n} \frac{\poch{a}{|\vect{m}|} \poch{b}{|\vect{m}|}}{\poch{\vect{c}}{\vect{m}} \vect{m}!} \vect{z}^{\vect{m}}.  
\end{equation*}
The lattice is $\lat = \bigoplus_{i=1}^n \Z(-\e_1-\e_2+\e_{i+2}+\e_{n+i+2}) \subs \Z^{2n+2}$ and we can choose 
\begin{equation*}
\A = \{\e_1, \e_2, \ld, \e_{n+2}, \e_1+\e_2-\e_3, \e_1+\e_2-\e_4, \ld, \e_1+\e_2-\e_{n+2}\} \subs \Z^{n+2}.
\end{equation*}
We have $\gam = (-a, -b, \vect{c}-1, \vect{0})$, so $\bal = (-a, -b, \vect{c}-1) \in \Q^{n+2}$. \\

For $I \subs \{3, \ld, n+2\}$, let $\tilde{I} = \{3, \ld, n+2\} \setminus I$ and $V_I = \{\e_1,\e_2\} \cup \{\e_i | i \in \tilde{I}\} \cup \{\e_1+\e_2-\e_i | i \in I\}$.
The determinant of the vectors in $V_I$ equals $\pm 1$, so the vectors are the vertices of an $(n+1)$-dimensional simplex.

\begin{lem}\label{lem:FC_CVi}
For $I \subs \{3, \ld, n+2\}$, we have
\begin{equation*}
C(V_I) = \{\x \in \R^{n+2} \ | \ x_1, x_2 \geq 0; \forall i \in I: x_i \leq 0; \forall i \in \tilde{I}: x_i \geq 0; x_1+\sum_{i \in I} x_i \geq 0; x_2+\sum_{i \in I} x_i \geq 0\}.
\end{equation*}
\end{lem}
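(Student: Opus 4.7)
The plan is a direct computation. Since the vectors in $V_I$ have determinant $\pm 1$, they are linearly independent, so any $\x \in \R^{n+2}$ has a \emph{unique} representation as a real linear combination of the elements of $V_I$. The membership $\x \in C(V_I)$ is then equivalent to all coefficients in that unique representation being non-negative, and the strategy is simply to solve for the coefficients in terms of the coordinates of $\x$ and translate their non-negativity into the stated inequalities.

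Concretely, I would write
\begin{equation*}
\x = \mu_1 \e_1 + \mu_2 \e_2 + \sum_{i \in \tilde{I}} \mu_i \e_i + \sum_{i \in I} \lambda_i (\e_1 + \e_2 - \e_i),
\end{equation*}
and read off the coordinates: $x_1 = \mu_1 + \sum_{i\in I}\lambda_i$, $x_2 = \mu_2 + \sum_{i\in I}\lambda_i$, $x_i = \mu_i$ for $i \in \tilde{I}$, and $x_i = -\lambda_i$ for $i \in I$. Solving yields
\begin{equation*}
\lambda_i = -x_i \ (i \in I), \qquad \mu_i = x_i \ (i \in \tilde{I}), \qquad \mu_1 = x_1 + \sum_{i \in I} x_i, \qquad \mu_2 = x_2 + \sum_{i \in I} x_i.
\end{equation*}
The inclusion $\subseteq$ then follows because the coefficients are $\geq 0$ by assumption when $\x \in C(V_I)$, giving the inequalities on the right. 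For $\supseteq$, if $\x$ satisfies all the inequalities, the same formulas define non-negative coefficients producing $\x$ as an element of $C(V_I)$.

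There is no real obstacle here; the only thing to be a bit careful about is that one uses the uniqueness of the expansion (guaranteed by linear independence of $V_I$) when deducing that the coefficients $\mu_j, \lambda_i$ \emph{must} take the stated values, so that non-negativity of coefficients in $C(V_I)$ really is equivalent to the stated list of inequalities. With that in hand, the two inclusions are immediate from the explicit formulas.
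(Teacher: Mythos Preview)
Your proposal is correct and is exactly the approach the paper has in mind: the paper omits the proof of this lemma, but the analogous Lemma~\ref{lem:FA_CVi} for $F_A$ is proved by precisely the same direct computation of solving for the (unique) coefficients and translating their non-negativity into the stated inequalities. One minor remark: the conditions $x_1,x_2\geq 0$ in the statement are redundant (they follow from $x_k+\sum_{i\in I}x_i\geq 0$ together with $x_i\leq 0$ for $i\in I$, or directly from $x_k=\mu_k+\sum_{i\in I}\lambda_i$), which is why they do not appear among your coefficient conditions; your argument still establishes both inclusions as written.
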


\begin{cor}\label{cor:FC_CA}
\begin{equation*}
\bigcup_I C(V_I) = \{\x \in \R^{n+2} \ | \ \forall I \subs \{3, \ld, n+2\}: x_1+\sum_{i \in I} x_i \geq 0; x_2+\sum_{i \in I} x_i \geq 0\}.
\end{equation*}
\end{cor}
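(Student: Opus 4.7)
The plan is to prove the two inclusions separately, using Lemma~\ref{lem:FC_CVi} to unpack $C(V_I)$ on the left-hand side. Denote the set on the right-hand side by $S$.

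For the inclusion $\bigcup_I C(V_I) \subseteq S$, I would fix $I \subseteq \{3, \ld, n+2\}$ and $\x \in C(V_I)$, and verify the inequalities defining $S$ for an arbitrary $J \subseteq \{3, \ld, n+2\}$. Splitting $J = (J \cap I) \cup (J \cap \tilde{I})$ and using that Lemma~\ref{lem:FC_CVi} gives $x_i \leq 0$ for $i \in I$ and $x_i \geq 0$ for $i \in \tilde{I}$, the sum $\sum_{i \in J \cap I} x_i$ drops only non-positive terms compared to $\sum_{i \in I} x_i$, so $x_1 + \sum_{i \in J \cap I} x_i \geq x_1 + \sum_{i \in I} x_i \geq 0$; adding the nonnegative tail $\sum_{i \in J \cap \tilde{I}} x_i$ preserves nonnegativity and yields $x_1 + \sum_{i \in J} x_i \geq 0$. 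The same argument works with $x_2$ in place of $x_1$.

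For the reverse inclusion $S \subseteq \bigcup_I C(V_I)$, the key trick is to let the sign pattern of $\x$ choose the index set. Given $\x \in S$, I would define
\begin{equation*}
I^\ast = \{i \in \{3, \ld, n+2\} \ | \ x_i \leq 0\}
\end{equation*}
and show $\x \in C(V_{I^\ast})$ by checking every condition in Lemma~\ref{lem:FC_CVi}. The sign conditions $x_i \leq 0$ for $i \in I^\ast$ and $x_i \geq 0$ for $i \in \tilde{I^\ast}$ hold by construction. The conditions $x_1 \geq 0$ and $x_2 \geq 0$ follow from the defining inequalities of $S$ applied with $J = \emptyset$. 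Finally, the two remaining inequalities $x_1 + \sum_{i \in I^\ast} x_i \geq 0$ and $x_2 + \sum_{i \in I^\ast} x_i \geq 0$ are exactly the conditions from $S$ specialised to $J = I^\ast$.

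I do not expect any step of this argument to be difficult; the only thing that requires any care is the observation, used in the first inclusion, that restricting a sum of non-positive numbers to a subset of indices only makes the sum larger, which is what lets a single inequality of $C(V_I)$ produce the whole family of inequalities defining $S$. Once that is noted, the proof is essentially a one-line sign-pattern argument in each direction.
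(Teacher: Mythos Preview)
Your proof is correct and is exactly the natural argument the paper has in mind; the paper states this result as an immediate corollary of Lemma~\ref{lem:FC_CVi} without spelling out a proof, and your sign-pattern argument (choosing $I^\ast=\{i\mid x_i\le 0\}$ for the harder inclusion) is the standard way to fill in the details.
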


\begin{lem}\label{lem:FC_trian}
$\trian = \{Q(V_I) \ | \ I \subs \{3, \ld, n+2\}\}$ is a triangulation of $Q(\A)$.
\end{lem}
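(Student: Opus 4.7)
The plan is to invoke Lemma~\ref{lem:convex_hull} with the family $\{V_I\}_{I \subseteq \{3, \ld, n+2\}}$, so I need to check its four hypotheses: each $V_I$ has determinant $\pm 1$, $\A = \cup_I V_I$, $\cup_I C(V_I)$ is convex, and $C(V_I) \cap C(V_J) \subs C(V_I \cap V_J)$ for all $I, J$. Linear independence with determinant $\pm 1$ is already noted in the paragraph preceding Lemma~\ref{lem:FC_CVi}. The covering $\A = \cup_I V_I$ is immediate: taking $I = \emptyset$ picks up the standard basis vectors $\e_1, \ld, \e_{n+2}$, and taking $I = \{3, \ld, n+2\}$ picks up $\e_1, \e_2$ together with every $\e_1+\e_2-\e_i$. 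The convexity of $\cup_I C(V_I)$ is exactly Corollary~\ref{cor:FC_CA}, which rewrites the union as an intersection of half-spaces.

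The only step requiring actual computation is the containment $C(V_I) \cap C(V_J) \subs C(V_I \cap V_J)$. Following the blueprint of Lemma~\ref{lem:FA_trian}, I would show that both sides equal the explicit set
\begin{equation*}
S_{I,J} = \{\x \in \R^{n+2} \mid x_1, x_2 \geq 0;\ \forall i \in I \cap J: x_i \leq 0;\ \forall i \in \tilde I \cap \tilde J: x_i \geq 0;
\end{equation*}
\begin{equation*}
\forall i \in (I \cap \tilde J) \cup (\tilde I \cap J): x_i = 0;\ x_1 + \textstyle\sum_{i \in I \cap J} x_i \geq 0;\ x_2 + \textstyle\sum_{i \in I \cap J} x_i \geq 0\}.
\end{equation*}
For the $C(V_I) \cap C(V_J) \subs S_{I,J}$ direction, intersect the two descriptions from Lemma~\ref{lem:FC_CVi}: a coordinate $i \in I \cap \tilde J$ must satisfy both $x_i \leq 0$ and $x_i \geq 0$, hence vanishes, and the zero coordinates collapse $\sum_{i \in I} x_i$ and $\sum_{i \in J} x_i$ into $\sum_{i \in I \cap J} x_i$, so the two pairs of sum-inequalities merge into the single pair appearing in $S_{I,J}$.

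For $S_{I,J} \subs C(V_I \cap V_J)$, I would write any $\x \in S_{I,J}$ as a nonnegative combination of the vectors in $V_I \cap V_J = \{\e_1, \e_2\} \cup \{\e_i \mid i \in \tilde I \cap \tilde J\} \cup \{\e_1 + \e_2 - \e_i \mid i \in I \cap J\}$ by setting $\mu_i = -x_i \geq 0$ for $i \in I \cap J$ as the coefficient of $\e_1+\e_2-\e_i$, $\nu_i = x_i \geq 0$ for $i \in \tilde I \cap \tilde J$ as the coefficient of $\e_i$, and $\lambda_1 = x_1 + \sum_{i \in I \cap J} x_i \geq 0$, $\lambda_2 = x_2 + \sum_{i \in I \cap J} x_i \geq 0$ as the coefficients of $\e_1, \e_2$. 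The reverse inclusion $C(V_I \cap V_J) \subs S_{I,J}$ is then read off from this parametrization. The main (modest) obstacle is simply careful bookkeeping of the four-way partition $\{3, \ld, n+2\} = (I \cap J) \sqcup (I \cap \tilde J) \sqcup (\tilde I \cap J) \sqcup (\tilde I \cap \tilde J)$, but this is entirely analogous to the $F_A$ case already handled in Lemma~\ref{lem:FA_trian}.
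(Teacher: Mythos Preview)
Your proposal is correct and follows essentially the same route as the paper: invoke Lemma~\ref{lem:convex_hull}, get convexity from Corollary~\ref{cor:FC_CA}, and verify $C(V_I)\cap C(V_J)\subs C(V_I\cap V_J)$ by identifying both sides with the explicit set $S_{I,J}$ you wrote down (which is exactly the set the paper displays). The paper merely asserts that ``one can show'' both cones equal this set, whereas you actually spell out the parametrization and the bookkeeping, so your version is a strict elaboration of the paper's argument.
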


\begin{proof}
By Lemma~\ref{lem:convex_hull}, it suffices to prove that $\cup_I C(V_I)$ is convex and $C(V_I) \cap C(V_J) \subs C(V_I \cap V_J)$ for all $I, J \subs \{3, \ld, n+2\}$.
The first statement follows from Corollary~\ref{cor:FC_CA}.
For the second statement, one can show that both $C(V_I) \cap C(V_J)$ and $C(V_I \cap V_J)$ equal 
\begin{equation*}
\begin{split}
\{\x \in \R^{n+2} \ | \ & x_1, x_2 \geq 0; 
\forall i \in I \cap J: x_i \leq 0; 
\forall i \in \tilde{I} \cap \tilde{J}: x_i \geq 0; \\
 & \forall i \in (I \cap \tilde{J}) \cup (\tilde{I} \cap J): x_i = 0;
x_1+\sum_{i \in I \cap J} x_i \geq 0; 
x_2+\sum_{i \in I \cap J} x_i \geq 0\}.
\qedhere
\end{split}
\end{equation*}
\end{proof}

\begin{cor}\label{cor:FC_prop_trian}
$\A$ is saturated, the volume of $Q(\A)$ is $2^n$ and 
\begin{equation*}
C(\A) = \{\x \in \R^{n+2} \ | \ \forall I \subs \{3, \ld, n+2\}: x_1+\sum_{i \in I} x_i \geq 0; x_2+\sum_{i \in I} x_i \geq 0\}.
\end{equation*}
$F_C(a,b,\vect{c} | \vect{z})$ is non-resonant if and only if $-a + \sum_{i \in I} c_i$ and $-b + \sum_{i \in I} c_i$ are non-integral for all $I \subs \{1, \ld, n\}$.
\end{cor}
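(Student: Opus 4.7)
The plan is to read off the first three claims directly from the triangulation provided by Lemma~\ref{lem:FC_trian} via the general machinery of Lemma~\ref{lem:properties_trian}, and then verify the non-resonance criterion by evaluating each facet-defining linear form of $C(\A)$ at $\bal$ modulo $\Z$.

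First I would observe that the unimodular triangulation $\trian = \{Q(V_I) \mid I \subs \{3, \ld, n+2\}\}$ given by Lemma~\ref{lem:FC_trian} consists of exactly $2^n$ simplices, one for each subset of $\{3, \ld, n+2\}$. Applying the three parts of Lemma~\ref{lem:properties_trian} then immediately yields that $\A$ is saturated and that $Q(\A)$ has simplex volume $2^n$, while part (ii) together with Corollary~\ref{cor:FC_CA} gives the stated half-space description of $C(\A)$.

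For the non-resonance criterion, every proper face of $C(\A)$ lies in a hyperplane $\ell(\x) = 0$, where $\ell$ is one of the facet-defining forms $\ell^{(1)}_I(\x) = x_1 + \sum_{i \in I} x_i$ or $\ell^{(2)}_I(\x) = x_2 + \sum_{i \in I} x_i$ for $I \subs \{3, \ld, n+2\}$. Substituting $\bal = (-a, -b, \vect{c}-1)$ and reindexing via $J = \{i-2 \mid i \in I\} \subs \{1, \ld, n\}$, a direct computation gives
\begin{equation*}
\ell^{(1)}_I(\bal) \equiv -a + \sum_{j \in J} c_j \pmod \Z \andd \ell^{(2)}_I(\bal) \equiv -b + \sum_{j \in J} c_j \pmod \Z,
\end{equation*}
the shift $-|J|$ coming from the $-1$ terms in $\vect{c}-1$ being absorbed modulo $\Z$. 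Since $\ell^{(k)}_I$ has integer coefficients, $\ell^{(k)}_I(\bal + \vect{v}) \equiv \ell^{(k)}_I(\bal) \pmod \Z$ for every $\vect{v} \in \Z^{n+2}$; hence if all these quantities are non-integral, no element of $\bal + \Z^{n+2}$ can lie on any face hyperplane, so $H_\A(\bal)$ is non-resonant.

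The main obstacle is the converse direction: if one of the expressions above is integral, I must exhibit an actual point of $\bal + \Z^{n+2}$ on the corresponding face of $C(\A)$, not merely on its supporting hyperplane. To handle this I would first choose $\vect{v}_0 \in \Z^{n+2}$ with $\ell(\bal + \vect{v}_0) = 0$ (possible because $\ell$ has coefficients in $\{0,1\}$) and then add to $\vect{v}_0$ a suitable $\vect{w}$ in the rank-$(n+1)$ sublattice $\Z^{n+2} \cap \ker \ell$ pointing deep into the relative interior of the $(n+1)$-dimensional facet; this forces every other defining inequality to hold strictly, placing $\bal + \vect{v}_0 + \vect{w}$ on the face itself.
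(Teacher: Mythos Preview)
Your proposal is correct and follows exactly the route the paper intends: the corollary is stated without proof, and the intended argument is precisely to invoke Lemma~\ref{lem:properties_trian} on the unimodular triangulation of Lemma~\ref{lem:FC_trian} together with Corollary~\ref{cor:FC_CA}, then read off the non-resonance condition from the facet description of $C(\A)$ and Definition~\ref{defn:resonance}. Your treatment of the converse direction (producing an actual lattice translate on the facet, not just on its supporting hyperplane) is more explicit than anything the paper writes down, but it is the standard argument and is what the paper tacitly assumes.
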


\begin{cor}\label{cor:FC_reduction}
If $F_C(a,b,\vect{c} | \vect{z})$ is non-resonant and algebraic, then for every $i \in \{1, \ld, n\}$, 
$F_C(a, b, c_1, \ld, c_{i-1}, c_{i+1}, \ld, c_n | z_1, \ld, z_{i-1}, z_{i+1}, \ld, z_n)$ is also non-resonant and algebraic.
\end{cor}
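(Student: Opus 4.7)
The plan is to establish non-resonance and algebraicity of the reduced function separately, since both follow almost immediately from results already in hand and no new combinatorics is required. Without loss of generality, take $i = n$, so the reduced function is $F_C(a,b,c_1,\ld,c_{n-1} \mid z_1,\ld,z_{n-1})$.

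For non-resonance, I would invoke Corollary~\ref{cor:FC_prop_trian}, which characterizes non-resonance of the full $F_C(a,b,\vect{c}|\vect{z})$ by the condition that $-a + \sum_{i \in I} c_i$ and $-b + \sum_{i \in I} c_i$ are non-integral for every $I \subs \{1,\ld,n\}$. Applied to the smaller $F_C$, the same characterization asks for those non-integrality conditions only over $I \subs \{1,\ld,n-1\}$. Since every such $I$ is in particular a subset of $\{1,\ld,n\}$, this is a subfamily of the hypotheses, so non-resonance of the reduced function is automatic. Note that, in contrast to Lemma~\ref{lem:FD_reduction}, no auxiliary combinatorial check is needed here, because the non-resonance conditions for $F_C$ are indexed cleanly by subsets of $\{1,\ld,n\}$ and restrict without loss when one parameter is dropped.

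For algebraicity, the key observation is that the reduced function is literally the specialization of $F_C(a,b,\vect{c}|\vect{z})$ at $z_n = 0$. Indeed, in the defining series, setting $z_n = 0$ kills every term with $m_n > 0$, while the terms with $m_n = 0$ simplify via $\poch{c_n}{0} = 1$ and $0! = 1$ to produce exactly $F_C(a,b,c_1,\ld,c_{n-1} \mid z_1,\ld,z_{n-1})$. Remark~\ref{rmk:reduction_algebraicity}, applied with the constant $0 \in \C$, then transfers algebraicity from the $n$-variable function to the $(n-1)$-variable one. Neither half of the argument presents any real obstacle; the whole corollary is a one-step consequence of Corollary~\ref{cor:FC_prop_trian} and Remark~\ref{rmk:reduction_algebraicity}, and the only thing to verify is the straightforward identification of the $z_n = 0$ specialization with the $(n-1)$-variable $F_C$.
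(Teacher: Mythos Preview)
Your proof is correct and matches the paper's intent. The paper states this corollary without proof, treating it as an immediate consequence of Corollary~\ref{cor:FC_prop_trian} and Remark~\ref{rmk:reduction_algebraicity}, exactly the two ingredients you invoke; your write-up simply makes explicit what the paper leaves to the reader.
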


\begin{rmk}\label{rmk:F4_irreducible_is_nonresonant}
Kato shows that $F_4(a,b,c_1,c_2|x,y)$ is irreducible if and only if $a$, $c_1-a$, $c_2-a$, $c_1+c_2-a$, $b$, $c_1-b$, $c_2-b$ and $c_1+c_2-b$ are non-integral (see~\cite[Theorem~7.2]{kato_Appell_F4_connection_formulas} and~\cite[Theorem~1]{kato_Appell_F4_irreducibilities}).
However, we could not find similar results for $F_C$ in the literature.
\end{rmk}

\begin{lem}\label{lem:F4_interlacing}
Let $\fp{a} \leq \fp{b}$.
Suppose that $F_4(a,b,c_1,c_2|x,y)$ is irreducible.
Then there are 4 apexpoints if and only if  
$\fp{a} \leq \fp{c_1}, \fp{c_2} < \fp{b} \leq \fp{c_1}+\fp{c_2} < \fp{a}+1$.
If $F_C(a,b,c_1,c_2,c_3|z_1,z_2,z_3)$ is non-resonant, then there are 8 apexpoints if and only if 
$\fp{a} \leq \fp{c_1}, \fp{c_2}, \fp{c_3} < \fp{b} \leq \fp{c_1}+\fp{c_2}, \fp{c_1}+\fp{c_3}, \fp{c_2}+\fp{c_3} < \fp{a}+1 \leq \fp{c_1}+\fp{c_2}+\fp{c_3} < \fp{b}+1$.
\end{lem}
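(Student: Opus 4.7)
The plan is to apply the algorithm of Remark \ref{rmk:algorithm_interlacing} to the cone
\[
C(\A) = \{\x \in \R^{n+2} \mid m_{j,I}(\x) \geq 0 \text{ for all } j \in \{1,2\},\ I \subs \{3,\ldots,n+2\}\}
\]
from Corollary \ref{cor:FC_prop_trian}, where $m_{j,I}(\x) = x_j + \sum_{i \in I} x_i$. By Lemma \ref{lem:signature_entier_linear_forms} the signature depends only on the integer parts $k_{j,I} := \entier{m_{j,I}(\bal)}$. Since $\bal = (1-\fp{a},\, 1-\fp{b},\, \fp{c_1},\ldots,\fp{c_n})$ has every coordinate in $(0,1)$, each $k_{j,I}$ takes only the values $0,1,\ldots,|I|$, giving a finite enumeration.

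First I would rewrite the apexpoint conditions of Lemma \ref{lem:apexpoints} as integer inequalities: $\vect{p} = \x + \bal$, with $\x \in \Z^{n+2}$, is an apexpoint iff $m_{j,I}(\x) \geq -k_{j,I}$ for every $(j,I)$ and for every $\va \in \A$ there is some $(j,I)$ with $m_{j,I}(\x) \leq m_{j,I}(\va) - k_{j,I} - 1$. The values $m_{j,I}(\va) \in \{0,1,2\}$ are read off directly from the description of $\A$. For each consistent tuple $(k_{j,I})$ I would pick a concrete $\bal \in (0,1)^{n+2}$ realizing it and enumerate the integer solutions $\x$.

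The key observation is that the maximum signature $2^n$ (Corollary \ref{cor:FC_prop_trian}) forces $(k_{j,I})$ to lie on a rigid staircase. For $n=2$ one finds $k_{1,\emptyset}=0$, $k_{1,\{3\}}=k_{1,\{4\}}=k_{1,\{3,4\}}=1$, $k_{2,\emptyset}=k_{2,\{3\}}=k_{2,\{4\}}=0$ and $k_{2,\{3,4\}}=1$; for $n=3$ analogously $k_{1,I}=0,1,1,2$ and $k_{2,I}=0,0,1,1$ as $|I|=0,1,2,3$. A second solution family is obtained by swapping $j=1,2$, i.e.\ swapping $a$ and $b$, and the hypothesis $\fp{a}\leq\fp{b}$ selects the first. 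Unpacking $k_{1,I} = \entier{1-\fp{a}+\sum_{i \in I}\fp{c_i}}$ and $k_{2,I} = \entier{1-\fp{b}+\sum_{i \in I}\fp{c_i}}$, and using non-resonance to turn boundary equalities into strict inequalities, yields exactly the stated nested interlacing.

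The main obstacle is bookkeeping: the enumeration of integer-part tuples is large (a few dozen for $n=2$, several hundred for $n=3$), but most tuples are either inconsistent with some $\bal \in (0,1)^{n+2}$ or cut out an apexpoint polytope of integer volume strictly less than $2^n$. Once the staircase pattern is spotted the analysis reduces to a handful of genuine cases; verification in either direction is then routine linear arithmetic of the kind that Remark \ref{rmk:algorithm_interlacing} suggests delegating to a computer.
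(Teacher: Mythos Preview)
Your proposal is correct and, for $n=2$, follows exactly the paper's route via the algorithm of Remark~\ref{rmk:algorithm_interlacing}: the paper also records that the signature is $4$ precisely when the floor-tuple $(\entier{\al_1+\al_3},\entier{\al_1+\al_4},\entier{\al_1+\al_3+\al_4},\entier{\al_2+\al_3},\entier{\al_2+\al_4},\entier{\al_2+\al_3+\al_4})$ equals $(1,1,1,0,0,1)$ or its $a\leftrightarrow b$ swap $(0,0,1,1,1,1)$, which is your staircase.

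For $n=3$ you plan a direct enumeration of all admissible tuples $(k_{j,I})$, and you correctly identify the answer. The paper instead takes a shortcut: it observes that the forms $m_{j,I}$ with $|I|\leq 2$ are exactly the forms occurring in the three induced $F_4$ functions, so the already-established $F_4$ case fixes all of their floors, and only the two remaining values $\entier{\al_1+\al_3+\al_4+\al_5}$ and $\entier{\al_2+\al_3+\al_4+\al_5}$ need to be determined. This reduces your ``several hundred'' cases to a handful. Both approaches are valid and yield the same interlacing; the paper's reduction just buys efficiency by recycling the $n=2$ result rather than re-running the full enumeration.
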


\begin{proof}
For $F_4(a,b,c_1,c_2|x,y)$, one easily computes that there are 4 apexpoints if and only if 
$(\entier{\al_1+\al_3}, \entier{\al_1+\al_4}, \entier{\al_1+\al_3+\al_4}, \entier{\al_2+\al+3}, \entier{\al_2+\al_4}, \entier{\al_2+\al_3+\al_4})$ equals $(1,1,1,0,0,1)$ or $(0,0,1,1,1,1)$.
Note that $F_C(a,b,c_1,c_2,c_3|z_1,z_2,z_3)$ can only be algebraic if all induced $F_4$ functions are also algebraic. 
Hence we only have to find the values of $(\entier{\al_1+\al_3+\al_4+\al_5}, \entier{\al_2+\al_3+\al_4+\al_5})$, given the values of the other linear forms, as induced by the $F_4$ functions.
This can again easily be done by the algorithm of Remark~\ref{rmk:algorithm_interlacing}.
\end{proof}

\begin{lem}\label{lem:F4_a+b=c1+c2}
Suppose that $F(a,b,c_1|x)$ and $F(a,b,c_2|x)$ are irreducible and algebraic, and either $a+b \equiv c_1+c_2 \pmod \Z$, or at least two of $c_1, c_2$ and $b-a$ are equivalent to $\half$ modulo $\Z$.
Then $F_4(a,b,c_1,c_2|x,y)$ is also irreducible and algebraic.
\end{lem}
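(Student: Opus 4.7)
The plan is to verify separately the two conditions defining an irreducible algebraic $F_4$ function: non-resonance (which here coincides with irreducibility by Remark~\ref{rmk:F4_irreducible_is_nonresonant}) and, via Theorem~\ref{thm:algebraic_solutions}, maximal signature $\signk{k}=2^2=4$ for every $k$ coprime with the smallest common denominator $D$ of $a,b,c_1,c_2$. The principal tools will be the interlacing criterion of Lemma~\ref{lem:F4_interlacing} together with the Gauss irreducibility and algebraicity conditions of Theorem~\ref{thm:gauss_irreducible_algebraic} applied to $F(a,b,c_1|x)$ and $F(a,b,c_2|x)$.

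For non-resonance, Corollary~\ref{cor:FC_prop_trian} (with $n=2$) demands that $a$, $b$, $c_i-a$, $c_i-b$ (for $i=1,2$), $c_1+c_2-a$, and $c_1+c_2-b$ all be non-integral. The first six are immediate from the Gauss irreducibility of the two hypothesized functions. For the last two I split on the hypothesis: if $a+b\equiv c_1+c_2\pmod\Z$ then $c_1+c_2-a\equiv b$ and $c_1+c_2-b\equiv a$, both non-integral; if at least two of $c_1,c_2,b-a$ are $\equiv\half\pmod\Z$, each of the three subcases reduces to the Gauss irreducibility. For instance, if $c_1\equiv b-a\equiv \half$, then $c_1+c_2-a\in\Z$ would force $c_2\equiv a-\half$ and hence $c_2-b\in\Z$, contradicting the Gauss irreducibility of $F(a,b,c_2|x)$; the other subcases are handled analogously.

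For algebraicity, I fix $k$ with $\gcd(k,D)=1$ and, after possibly interchanging the roles of $a$ and $b$ (which preserves the hypothesis since $b-a\equiv -(a-b)\equiv\half$), assume $\fp{ka}\leq\fp{kb}$. The goal is the interlacing
\begin{equation*}
\fp{ka}\leq\fp{kc_1},\fp{kc_2}<\fp{kb}\leq\fp{kc_1}+\fp{kc_2}<\fp{ka}+1
\end{equation*}
of Lemma~\ref{lem:F4_interlacing}. The ``side'' inequalities $\fp{ka}\leq\fp{kc_i}<\fp{kb}$ are immediate from the Gauss interlacing applied to $F(a,b,c_i|x)$ at exponent $k$, which is permissible because the denominator of $(a,b,c_i)$ divides $D$.

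The hard part will be the ``sum'' inequality $\fp{kb}\leq\fp{kc_1}+\fp{kc_2}<\fp{ka}+1$, which I plan to handle by case analysis. If $a+b\equiv c_1+c_2\pmod\Z$, the side inequalities give $2\fp{ka}\leq\fp{kc_1}+\fp{kc_2}<2\fp{kb}$, so the integer $(\fp{ka}+\fp{kb})-(\fp{kc_1}+\fp{kc_2})\in(-2,2)$ must vanish (the values $\pm 1$ would force $\fp{kb}-\fp{ka}\geq 1$), and the sum inequality collapses to the trivial $\fp{kb}\leq\fp{ka}+\fp{kb}<\fp{ka}+1$. If instead two of $c_1,c_2,b-a$ are $\equiv\half\pmod\Z$, the denominator $2$ divides $D$, so $k$ is odd and $\fp{k\cdot\half}=\half$; each of the three subcases is then a one-line check (for example, if $c_1\equiv b-a\equiv\half$ then $\fp{kc_1}=\half$ and $\fp{kb}=\fp{ka}+\half$, reducing the sum inequality to the already-established $\fp{ka}\leq\fp{kc_2}<\fp{ka}+\half$). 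The main subtlety, and the step that most requires care, is recognizing that hypothesis (2) implicitly forces $\gcd(k,2)=1$ through $2\mid D$; once this parity observation is made, each subcase is short.
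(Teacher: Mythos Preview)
Your proposal is correct and follows essentially the same approach as the paper: verify non-resonance from the Gauss irreducibility conditions plus a case split on the extra hypothesis, then check the $F_4$ interlacing of Lemma~\ref{lem:F4_interlacing} for each $k$ by reducing to the Gauss interlacing for $(a,b,c_1)$ and $(a,b,c_2)$ and handling the sum inequality case by case. Your write-up is in fact more explicit than the paper's in two places---the integer-in-$(-2,2)$ argument forcing $\fp{kc_1}+\fp{kc_2}=\fp{ka}+\fp{kb}$ when $a+b\equiv c_1+c_2$, and the observation that $2\mid D$ (hence $k$ odd) in the half-integer case---both of which the paper leaves implicit.
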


\begin{proof}
Note that $F_4(a,b,c_1,c_2|x,y)$ is irreducible if $c_1+c_2-a$ and $c_1+c_2-b$ are non-integral.

Suppose that $a+b \equiv c_1+c_2 \pmod \Z$.
Then $c_1+c_2-a \equiv b$ is non-integral, and the same holds for $c_1+c_2-b \equiv a$.
Let $k$ be coprime with the denominators of $a,b,c_1$ and $c_2$.
Then we can assume that $\fp{ka} \leq \fp{kc_1}, \fp{kc_2} < \fp{kb}$.
Then $\fp{kc_1}+\fp{kc_2}=\fp{ka}+\fp{kb}$, so the interlacing condition is satisfied.

Now suppose that at least two of $c_1, c_2$ and $b-a$ are equivalent to $\half$ modulo $\Z$.
We can assume that $c_1 \equiv \half$ and $\fp{ka} \leq \half, \fp{kc_2} < \fp{kb}$.
If $c_2 \equiv \half$, then $c_1+c_2-a \equiv -a$ and $c_1+c_2-b\equiv -b$ are non-integral and the interlacing condition is satisfied.
If $b-a \equiv \half$, then $c_1+c_2-a \equiv c_2-b$ and $c_1+c_2-b \equiv c_2-a$ are non-integral.
Since $\fp{ka} \leq \half, \fp{kc_2} < \fp{ka} + \half \leq 1+\left( \fp{kc_2}-\half \right) < 1+\fp{ka}$, the interlacing condition is again satisfied.
\end{proof}

\begin{thm}\label{thm:F4_solutions}
$F_4(a,b,c_1,c_2 | x,y)$ is irreducible and algebraic if and only if $(a,b,c_1)$ and $(a,b,c_2)$ are Gauss triples, and either $a+b \equiv c_1+c_2 \pmod \Z$, or at least two of $c_1, c_2$ and $b-a$ are equivalent to $\half$ modulo $\Z$.
Up to conjugation and permutations of $\{a,b\}$ and of $\{c_1,c_2\}$, the parameters of the irreducible algebraic functions are the tuples in Table~\ref{tab:F4_solutions}.
\end{thm}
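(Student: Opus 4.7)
The \emph{if} direction is precisely Lemma~\ref{lem:F4_a+b=c1+c2}, so the work is in the \emph{only if} direction and in producing Table~\ref{tab:F4_solutions}.

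Assume $F_4(a,b,c_1,c_2|x,y)$ is irreducible and algebraic. Since $F_4$ is $F_C$ with $n=2$, Corollary~\ref{cor:FC_reduction} gives that both one-variable restrictions $F(a,b,c_1|z_1)$ and $F(a,b,c_2|z_2)$ are non-resonant and algebraic. The non-resonance conditions of Corollary~\ref{cor:FC_prop_trian} applied to $F_4$ force $c_1-a$, $c_2-a$, $c_1-b$, $c_2-b$ to be non-integral, so together with non-integrality of $a$ and $b$, Theorem~\ref{thm:gauss_irreducible_algebraic} yields that both $(a,b,c_1)$ and $(a,b,c_2)$ are Gauss triples.

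To prove the extra condition I would argue by contradiction. Suppose neither $a+b \equiv c_1+c_2 \pmod \Z$ nor at least two of $c_1,c_2,b-a$ lie in $\half+\Z$. The plan is to exhibit a $k$ coprime with the common denominator $D$ for which the $F_4$-interlacing of Lemma~\ref{lem:F4_interlacing} fails. Writing $\delta_k = \fp{kc_1}+\fp{kc_2}-\fp{k(c_1+c_2)}$ and $\epsilon_k = \fp{ka}+\fp{kb}-\fp{k(a+b)}$, the $F_4$-interlacing forces $\fp{kc_1}+\fp{kc_2}-\fp{ka}-\fp{kb} = \delta_k - \epsilon_k + \fp{k(c_1+c_2)} - \fp{k(a+b)}$ into a narrow window, and since $c_1+c_2-a-b$ is a nonzero rational, Lemma~\ref{lem:closetohalf} applied to whichever of $c_1,c_2,b-a$ is not $\equiv \half$ produces a $k$ for which the required $\fp{kc_i}$ or $\fp{k(b-a)}$ lies in a chosen interval $[t,\half)$ that pushes $\fp{kc_1}+\fp{kc_2}$ outside $[\fp{kb},\fp{ka}+1)$. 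The main obstacle here is the case split on which, if any, of $c_1,c_2,b-a$ lies in $\half+\Z$, together with the selection of the threshold $t$ in each case so that the resulting $k$ produced by Lemma~\ref{lem:closetohalf} is coprime with the full denominator $D$ (for which Lemma~\ref{lem:higher_denom} is needed).

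Finally, to produce Table~\ref{tab:F4_solutions} I enumerate pairs of Gauss triples sharing $(a,b)$, following the strategy of the proofs of Theorems~\ref{thm:F1_solutions} and~\ref{thm:F2_solutions}. When at least one of the two triples is of type 2, the denominator of $a$ is bounded by $60$, leaving finitely many candidates for $(a,b,c_1,c_2)$ which are checked by computer against the extra condition. When both triples are of type 1 with common parameter $r \in \parset$, the extra condition either holds identically in $r$, giving infinite families, or forces $a+b \equiv c_1+c_2 \pmod \Z$, which gives a type-1 constraint on $a+b$ pinning down the denominator of $r$ to a finite set. Collating the survivors and reducing modulo conjugation and the symmetries $\{a,b\}$ and $\{c_1,c_2\}$ yields the list in Table~\ref{tab:F4_solutions}.
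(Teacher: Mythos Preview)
Your direct contradiction argument for the ``only if'' direction has a genuine gap. Lemma~\ref{lem:closetohalf} lets you place \emph{one} fractional part, say $\fp{kc_1}$, into a target interval $[t,\tfrac12)$, but the $F_4$ interlacing condition of Lemma~\ref{lem:F4_interlacing} constrains the \emph{four} quantities $\fp{ka},\fp{kb},\fp{kc_1},\fp{kc_2}$ simultaneously; you give no mechanism for controlling the other three when you choose $k$. The sentence ``pushes $\fp{kc_1}+\fp{kc_2}$ outside $[\fp{kb},\fp{ka}+1)$'' is an assertion, not an argument. The paper itself notes (Remark~\ref{rmk:Kato_F4}) that a conceptual proof of this direction exists but requires Kato's monodromy analysis; a short fractional-part argument of the kind you sketch is not known.

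Because of this gap, your table production is also circular: you filter candidates by the \emph{extra condition}, which is only legitimate once ``only if'' is established. The paper's route is the opposite: enumerate pairs of Gauss triples sharing $(a,b)$, filter by the \emph{interlacing condition} (i.e.\ by algebraicity), and then \emph{observe} that every survivor happens to satisfy the extra condition --- this observation \emph{is} the proof of ``only if''. Concretely, in the type-1/type-1 case the tuple $(r,r+\tfrac12,2r,2r)$ fails the extra condition for generic $r$, and the paper eliminates it not via the extra condition but by showing directly from interlacing that the denominator of $r$ must be less than $6$, whence the tuple collapses to $(r,-r,\tfrac12,\tfrac12)$. Your sentence ``the extra condition either holds identically in $r$, \ldots, or forces $a+b\equiv c_1+c_2$'' misses this case. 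Swap your filter from the extra condition to the interlacing condition and the argument goes through.
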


\begin{proof}
It suffices to find all tuples satisfying the interlacing condition and prove that they satisfy $a+b \equiv c_1+c_2 \pmod \Z$ or at least two of $c_1, c_2$ and $b-a$ are equivalent to $\half$ modulo $\Z$.

First suppose that $(a,b,c_1)$ and $(a,b,c_2)$ are both Gauss triples of type 1.
Then we have $(a,b,c_1,c_2) \in \{(r,-r,\half,\half), (r,r+\half,\half,\half), (r,r+\half,\half,2r), (r,r+\half,2r,\half), (r,r+\half,2r,2r)\}$ for some $r \in \parset$ (up to equivalence modulo $\Z$).
By Lemma~\ref{lem:F4_a+b=c1+c2}, all these tuples give algebraic functions, possibly except for $(r,r+\half,2r,2r)$.
So suppose that $(a,b,c_1,c_2)=(r,r+\half,2r,2r)$.
Write $r=\frac{p}{q}$ with $\gcd(p,q)=1$.
Then for every $k$ coprime with $2q$ such that $\fp{kr}>\half$, the interlacing condition implies that $\fp{kr}-\half \leq 2\fp{kr}-1 < \fp{kr} \leq 4\fp{kr}-2 < \fp{kr}+\half$, so $\fp{kr} \in [\frac{2}{3}, \frac{5}{6})$.
Hence for every $k$ with $\gcd(k,2q)=1$, it must hold that $\fp{kr} < \frac{5}{6}$.
There exists $k$ such that $kp \equiv -1 \pmod q$.
Choose $l$ such that $\gcd(l,2q)=1$ and $lp \equiv -1 \pmod q$.
If $q \geq 6$, then $\fp{lr} = \frac{q-1}{q} \geq \frac{5}{6}$.
Contradiction, so for all algebraic solutions, it holds that $q<6$. 
By symmetry, $r+\half$ also has a denominator smaller than 6, so $r=\frac{1}{4}$ or $\frac{3}{4}$.
However, this only gives solutions of the form $(r,-r,\half,\half)$. 

If $(a,b,c_1)$ is a Gauss triple of type 1, and $(a,b,c_2)$ is a Gauss triple of type 2, then the denominator of $a$ is at most 60. 
This gives 72 solutions, which all turn out to satisfy $c_1=\half$ and $b-a \equiv \half \pmod \Z$. 
By symmetry, if $(a,b,c_1)$ is of type 2 and $(a,b,c_2)$ is of type 1, we get 72 solutions which all satisfy $c_2=\half$ and $b-a \equiv \half \pmod \Z$. 

Finally, if $(a,b,c_1)$ and $(a,b,c_2)$ are both of type 1, then there are 480 irreducible algebraic functions, and all the tuples $(a,b,c_1,c_2)$ either satisfy $a+b \equiv c_1+c_2 \pmod \Z$, or at least two of $c_1, c_2$ and $b-a$ are equivalent to $\half$ modulo $\Z$.
\end{proof}

\renewcommand{\arraystretch}{2}
\begin{table}[htb]
\begin{center}
\caption{The tuples $(a,b,c_1,c_2)$ such that $F_4(a,b,c_1,c_2|x,y)$ is irreducible and algebraic}
\label{tab:F4_solutions} 
\begin{tabularx}{\textwidth}{XXXXXX} 
\hline
$(r,		-r,		\frac{1}{2},	\frac{1}{2})$ &
$(r,		r+\frac{1}{2},	\frac{1}{2},	\frac{1}{2})$ &
\multicolumn{2}{l}{$(r,		r+\frac{1}{2},	\frac{1}{2},	2r)$} & 
\multicolumn{2}{l}{with $r \in \parset$} \\

$(\frac{1}{2}, 	\frac{1}{6}, 	\frac{1}{3}, 	\frac{1}{3})$ &
$(\frac{1}{4}, 	\frac{3}{4}, 	\frac{1}{2}, 	\frac{1}{3})$ &
$(\frac{1}{4}, 	\frac{3}{4}, 	\frac{1}{3}, 	\frac{2}{3})$ &
$(\frac{1}{4}, 	\frac{7}{12}, 	\frac{1}{2}, 	\frac{1}{2})$ &
$(\frac{1}{4}, 	\frac{7}{12}, 	\frac{1}{2}, 	\frac{1}{3})$ &
$(\frac{1}{6}, 	\frac{5}{6}, 	\frac{1}{3}, 	\frac{2}{3})$ \\

$(\frac{1}{6},	\frac{5}{6}, 	\frac{1}{4}, 	\frac{3}{4})$ &
$(\frac{1}{6}, 	\frac{5}{6}, 	\frac{1}{5}, 	\frac{4}{5})$ &
$(\frac{1}{6}, 	\frac{5}{12}, 	\frac{1}{3}, 	\frac{1}{4})$ &
$(\frac{1}{6}, 	\frac{11}{30}, 	\frac{1}{3}, 	\frac{1}{5})$ &
$(\frac{1}{10}, \frac{3}{10}, 	\frac{1}{5}, 	\frac{1}{5})$ &
$(\frac{1}{10}, \frac{7}{10}, 	\frac{2}{5}, 	\frac{2}{5})$ \\

$(\frac{1}{10}, \frac{9}{10}, 	\frac{1}{3}, 	\frac{2}{3})$ &
$(\frac{1}{10}, \frac{9}{10}, 	\frac{1}{5}, 	\frac{4}{5})$ &
$(\frac{1}{10}, \frac{13}{30}, 	\frac{1}{3}, 	\frac{1}{5})$ &
$(\frac{1}{12}, \frac{5}{12}, 	\frac{1}{4}, 	\frac{1}{4})$ &
$(\frac{1}{12}, \frac{7}{12}, 	\frac{1}{2}, 	\frac{1}{3})$ &
$(\frac{1}{12}, \frac{7}{12}, 	\frac{1}{3}, 	\frac{1}{3})$ \\

$(\frac{1}{15}, \frac{7}{15}, 	\frac{1}{3}, 	\frac{1}{5})$ &
$(\frac{1}{15}, \frac{11}{15}, 	\frac{1}{5}, 	\frac{3}{5})$ &
$(\frac{1}{15}, \frac{13}{15}, 	\frac{1}{3}, 	\frac{3}{5})$ &
$(\frac{1}{20}, \frac{11}{20}, 	\frac{1}{2}, 	\frac{1}{5})$ &
$(\frac{1}{20}, \frac{11}{20}, 	\frac{1}{2}, 	\frac{2}{5})$ &
$(\frac{1}{20}, \frac{11}{20}, 	\frac{1}{5}, 	\frac{2}{5})$ \\

$(\frac{1}{20}, \frac{13}{20}, 	\frac{1}{2}, 	\frac{1}{2})$ &
$(\frac{1}{20}, \frac{13}{20}, 	\frac{1}{2}, 	\frac{1}{5})$ &
$(\frac{1}{20}, \frac{17}{20}, 	\frac{1}{2}, 	\frac{1}{2})$ &
$(\frac{1}{20}, \frac{17}{20}, 	\frac{1}{2}, 	\frac{2}{5})$ &
$(\frac{1}{24}, \frac{13}{24}, 	\frac{1}{2}, 	\frac{1}{3})$ &
$(\frac{1}{24}, \frac{13}{24}, 	\frac{1}{2}, 	\frac{1}{4})$ \\

$(\frac{1}{24}, \frac{13}{24}, 	\frac{1}{3}, 	\frac{1}{4})$ &
$(\frac{1}{24}, \frac{17}{24}, 	\frac{1}{2}, 	\frac{1}{2})$ &
$(\frac{1}{24}, \frac{17}{24}, 	\frac{1}{2}, 	\frac{1}{4})$ &
$(\frac{1}{24}, \frac{19}{24}, 	\frac{1}{2}, 	\frac{1}{2})$ &
$(\frac{1}{24}, \frac{19}{24}, 	\frac{1}{2}, 	\frac{1}{3})$ &
$(\frac{1}{30}, \frac{11}{30}, 	\frac{1}{5}, 	\frac{1}{5})$ \\

$(\frac{1}{30}, \frac{19}{30}, 	\frac{1}{3}, 	\frac{1}{3})$ &
$(\frac{1}{60}, \frac{31}{60}, 	\frac{1}{2}, 	\frac{1}{3})$ &
$(\frac{1}{60}, \frac{31}{60}, 	\frac{1}{2}, 	\frac{1}{5})$ &
$(\frac{1}{60}, \frac{31}{60}, 	\frac{1}{3}, 	\frac{1}{5})$ &
$(\frac{1}{60}, \frac{41}{60}, 	\frac{1}{2}, 	\frac{1}{2})$ &
$(\frac{1}{60}, \frac{41}{60}, 	\frac{1}{2}, 	\frac{1}{5})$ \\

$(\frac{1}{60}, \frac{49}{60}, 	\frac{1}{2}, 	\frac{1}{2})$ &
$(\frac{1}{60},	\frac{49}{60}, 	\frac{1}{2}, 	\frac{1}{3})$ \\
\hline 
\end{tabularx}
\end{center}
\end{table}
\renewcommand{\arraystretch}{1}

\begin{rmk}\label{rmk:Kato_F4}
We proved Theorem~\ref{thm:F4_solutions} by computing all tuples for which $F_4(a,b,c_1,c_2 | x,y)$ is irreducible and algebraic, and checking whether for each of the tuples either $a+b \equiv c_1+c_2 \pmod \Z$, or at least two of $c_1, c_2$ and $b-a$ are equivalent to $\half$ modulo $\Z$.
Unfortunately, this doesn't give much insight.
In \cite{kato_Appell_F4}, Kato proves the same Theorem using monodromy groups, without computing all solutions explicitly.
However, to find all non-resonant algebraic $F_C$ functions in more than 2 variables, we do need to know the solutions for $n=2$ explicitly.
\end{rmk}

For the Lauricella $F_D, F_A$ and $F_B$ functions, from a certain number of parameters on, there are no non-resonant algebraic functions. 
However, for the Lauricella $F_C$ function, the situation is entirely different: for every number of parameters there are three infinite families of non-resonant algebraic functions.
The following Lemma is a generalization of Lemma~\ref{lem:F4_interlacing} in one direction.

\begin{lem}\label{lem:FC_2napexpoints}
Let $0 \leq a \leq b < 1$ and $0 \leq c_1, \ld, c_n < 1$.
Suppose that for all $I,J \subs \{1, \ld, n\}$ with $|I|$ even and $|J|$ odd it holds that
\begin{equation*}
b-1 \leq \sum_{i \in I} c_i - \frac{|I|}{2} < a \leq \sum_{j \in J} c_j - \frac{|J|-1}{2} < b.
\end{equation*}
Then there are $2^n$ apexpoints.
In particular, if $c_3=\ld=c_n=\half$, then there are $2^n$ apexpoints if 
\begin{equation*}
a \leq c_1, c_2 < b \leq c_1+\half, c_2+\half, c_1+c_2 < a+1 \leq c_1+c_2+\half < b+1.
\end{equation*}
\end{lem}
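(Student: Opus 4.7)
The plan is to show that the signature equals $2^n$ by exhibiting $2^n$ distinct apexpoints, one for each subset $S \subseteq \{1, \ld, n\}$. Since Corollary~\ref{cor:FC_prop_trian} gives $\mathrm{vol}(Q(\A)) = 2^n$, Lemma~\ref{lem:maximal_signature} provides the matching upper bound $\sign \leq 2^n$, so producing that many apexpoints suffices.

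For each $S$, write $\bar S = \{1,\ld,n\} \setminus S$, $k = |\bar S|$, $s_{\bar S} = \sum_{j \in \bar S} c_j$, $v_S = k - s_{\bar S}$, and set $\vect{p}^S = \x^S + \bal$ with $x^S_{i+2} = 1$ if $i \in S$ and $0$ otherwise, and with $x_1^S$, $x_2^S$ the unique integers placing $p_1^S, p_2^S$ in $[v_S, v_S+1)$. The hypothesis forces $a < b$ when $n \geq 1$ and also ensures $a + v_S, b + v_S \notin \Z$, so uniqueness holds. The identity
\begin{equation*}
\sum_{i \in I'} p^S_{i+2} = s_{I'} - |I' \setminus S|
\end{equation*}
attains its minimum over $I' \subseteq \{1, \ld, n\}$ at $I' = \bar S$ with value $-v_S$, so Corollary~\ref{cor:FC_prop_trian} yields $\vect{p}^S \in C(\A)$, and the $2^n$ candidates are clearly pairwise distinct.

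To confirm each $\vect{p}^S$ is an apexpoint, Lemma~\ref{lem:apexpoints} requires $\vect{p}^S - \va \notin C(\A)$ for every $\va \in \A$. The cases $\va \in \{\e_1, \e_2\}$ follow directly from $p_j^S < v_S + 1$; the case $\va = \e_{i+2}$ with $i \notin S$ reduces to the same bound because $\bar S \ni i$, and $\va = \e_1+\e_2-\e_{i+2}$ with $i \in S$ reduces likewise because $\bar S \not\ni i$. The two genuinely new cases---$\va = \e_{i+2}$ with $i \in S$ and $\va = \e_1+\e_2-\e_{i+2}$ with $i \notin S$---require $\min(p_1^S, p_2^S) - v_S$ to be strictly less than $1 - c_i$ and $c_i$, respectively. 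The main obstacle is computing this minimum via a parity case split on $k$: locating $\lceil a - s_{\bar S}\rceil$ and $\lceil b - s_{\bar S}\rceil$ from the hypothesis's bounds on $s_{\bar S}$ gives
\begin{equation*}
\min(p_1^S - v_S,\, p_2^S - v_S) = \begin{cases} s_{\bar S} + 1 - k/2 - b & \text{if } k \text{ is even,} \\ s_{\bar S} - (k-1)/2 - a & \text{if } k \text{ is odd.} \end{cases}
\end{equation*}
Plugging this into the two required inequalities and moving $c_i$ across converts them to strict upper bounds on $s_{\bar S \cup \{i\}}$ or $s_{\bar S \setminus \{i\}}$, i.e.\ on sums over sets of cardinality with opposite parity to $k$; these are precisely the endpoints $< a$ or $< b$ of the hypothesis, so both inequalities hold. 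The ``In particular'' statement then follows by substituting $c_3 = \ld = c_n = \half$ and observing that all even/odd conditions collapse to the four displayed ones after collecting half-integer contributions.
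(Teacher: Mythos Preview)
Your proof is correct and follows essentially the same strategy as the paper's: both exhibit $2^n$ candidate apexpoints indexed by subsets of $\{1,\ldots,n\}$ (your $\bar S$ corresponds to the paper's index set $I'$), verify membership in $C(\A)$ and the apexpoint criterion of Lemma~\ref{lem:apexpoints}, and handle the nontrivial cases $\va = \e_{i+2}$ and $\va = \e_1+\e_2-\e_{i+2}$ via a parity split on $|\bar S|$. Your version is slightly more streamlined in that it computes $\min(p_1^S, p_2^S) - v_S$ once up front and then applies it, whereas the paper repeats the parity analysis inside each case, but the content is identical.
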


\begin{proof}
The second statement follows easily from the first.
For the first statement, we claim that all points $\x+\bal$ with $x_3, \ld, x_{n+2} \in \{-1,0\}$ and $x_k=|I|-\entier{\al_k+\sum_{i \in I} \al_i}$ are apexpoints, where $k=1,2$ and $I=\{i \in \{3, \ld, n+2\} \ | \ x_i=-1\}$.
Let $I'=\{i-2 \ | \ i \in I\}$.
Note that $x_k+\al_k+\sum_{i \in I} (x_i+\al_i) = \fp{\al_k+\sum_{i \in I} \al_i}$.
This equals $1-a+\sum_{i \in I'} c_i - \frac{|I|}{2}$ or $1-b+\sum_{i \in I'} c_i- \frac{|I|}{2}$ if $|I|$ is even, and $1-a+\sum_{i \in I'} c_i - \frac{|I|+1}{2}$ or $1-b+\sum_{i \in I'} c_i - \frac{|I|-1}{2}$ if $|I|$ is odd.

To show that $\x+\bal \in C(\A)$, we have to show that $x_k+\al_k+\sum_{j \in J} (x_j+\al_j) \geq 0$ for $k=1,2$ and $J \subs \{3, \ld, n+2\}$.
Since $x_j+\al_j \geq 0$ if and only if $j \not\in I$, it suffices to take $J=I$.
But then $x_k+\al_k+\sum_{j \in J} (x_j+\al_j) = \fp{\al_k+\sum_{i \in I} \al_i}$, which is clearly non-negative.
Since it is smaller than 1, we also have $\x+\bal-\e_1, \x+\bal-\e_2 \not\in C(\A)$.

Now we show that $\vect{y}=\x+\bal-\e_l \not\in C(\A)$ for $3 \leq l \leq n+2$, so we have to find $J$ such that $y_k+\al_k+\sum_{j \in J} (y_j+\al_j) < 0$.
Take $J=I \cup \{l\}$.
If $l \in I$, then $y_k+\al_k+\sum_{j \in J} (y_j+\al_j) = \fp{\al_k+\sum_{i \in I} \al_i}-1$, which is negative.
Hence we can assume that $l \not\in I$.
Then $y_k+\al_k+\sum_{j \in J} (y_j+\al_j) = \fp{\al_k+\sum_{i \in I} \al_i} + \al_l - 1$.
If $|I|$ is odd, take $k=1$.
Then $\fp{\al_k+\sum_{i \in I} \al_i} = 1-a+\sum_{i \in I'} c_i - \frac{|I|+1}{2}$, so
$y_k+\al_k+\sum_{j \in J} (y_j+\al_j) = 
-a+\sum_{i \in I'} c_i - \frac{|I|+1}{2} + c_{l-2} =
-a + \sum_{j \in J'} c_j - \frac{|J|}{2} < 0$.
Similarly, if $|I|$ is even, take $k=2$ to get
$\fp{\al_k+\sum_{i \in I} \al_i} = 1-b+\sum_{i \in I'} c_i- \frac{|I|}{2}$
and
$y_k+\al_k+\sum_{j \in J} (y_j+\al_j) = 
-b+\sum_{j \in J'} c_j- \frac{|J|-1}{2} < 0$. 

Finally, we show that $\vect{z}=\x+\bal-(\e_1+\e_2-\e_l) \not\in C(\A)$ for $3 \leq l \leq n+2$.
Take $J=I \setminus \{l\}$.
If $l \not\in I$, then $z_k+\al_k+\sum_{j \in J} (z_j+\al_j) = \fp{\al_k+\sum_{i \in I} \al_i}-1$, which is negative.
Let $l \in I$.
Then $z_k+\al_k+\sum_{j \in J} (z_j+\al_j) = \fp{\al_k+\sum_{i \in I} \al_i} -\al_1$. 
If $|I|$ is even, take $k=2$.
This gives $z_k+\al_k+\sum_{j \in J} (z_j+\al_j) = 
1-b+\sum_{i \in I'} c_i- \frac{|I|}{2} - c_{l-2} =
-b+\sum_{i \in I' \setminus \{l\}} c_i - \frac{|I' \setminus \{l\}|-1}{2} < 0$.
If $|I|$ is odd, we take $k=1$ to get
$\fp{\al_k+\sum_{i \in I} \al_i} -\al_1 =
1-a+\sum_{i \in I'} c_i - \frac{|I|+1}{2} - c_{1-2} =
-a + \sum_{i \in I' \setminus \{l\}} c_i - \frac{|I' \setminus \{l\}|}{2} < 0$.
\end{proof}

\begin{thm}\label{thm:FC_solutions}
For $n \geq 3$, $F_C(a,b,\vect{c} | \vect{z})$ is a non-resonant algebraic function if and only if up to permutations of $\{a, b\}$ and permutations of $\{c_1,\ld,c_n\}$, we have $c_3=\ld=c_n=\half$ and the tuple $(a,b,c_1,c_2)$ is conjugate to one of the tuples from Table~\ref{tab:FC_solutions}.
\end{thm}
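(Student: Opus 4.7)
The plan is to prove the characterization by induction on $n$, using the reduction to $F_4$ afforded by Corollary~\ref{cor:FC_reduction}, the classification in Theorem~\ref{thm:F4_solutions}, and the apex count of Lemma~\ref{lem:FC_2napexpoints}. The base case is $n=3$; the inductive step for $n\geq 4$ is a clean application of the induction hypothesis to $(n-1)$-variable restrictions.

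For the base case $n=3$, suppose $F_C(a,b,c_1,c_2,c_3\mid \vect{z})$ is non-resonant and algebraic. By Corollary~\ref{cor:FC_reduction} each restriction $F_4(a,b,c_i,c_j\mid x,y)$ is non-resonant and algebraic, so Theorem~\ref{thm:F4_solutions} gives either $a+b\equiv c_i+c_j\pmod\Z$ or two of $c_i,c_j,b-a$ are $\equiv\half\pmod\Z$ for every pair. The crucial step is to show at least one $c_i\equiv\half\pmod\Z$: if not, then $a+b\equiv c_i+c_j\pmod\Z$ for every pair, forcing $c_1\equiv c_2\equiv c_3\equiv c\pmod\Z$ and $a+b\equiv 2c\pmod\Z$. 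With $\fp{a}\leq\fp{b}$, the $n=3$ condition of Lemma~\ref{lem:F4_interlacing} requires $\fp{ka}+1\leq 3\fp{kc}$ for every $k$ coprime to the common denominator $D$. Since $c\not\equiv 0,\half\pmod\Z$, its reduced denominator $q$ satisfies $q\geq 3$, so by Lemma~\ref{lem:higher_denom} there is $k$ coprime to $D$ with $\fp{kc}=1/q\leq 1/3$. Non-resonance (Corollary~\ref{cor:FC_prop_trian} with $I=\emptyset$) forces $\fp{ka}>0$, giving $3\fp{kc}\leq 1<\fp{ka}+1$, a contradiction. After permuting we may take $c_3=\half$, and then $(a,b,c_1,c_2)\in$~Table~\ref{tab:F4_solutions}; the additional requirement that $F_4(a,b,c_i,\half\mid x,y)$ be algebraic for $i=1,2$ (again by Corollary~\ref{cor:FC_reduction}) defines Table~\ref{tab:FC_solutions}. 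Conversely, if $c_3=\half$ and $(a,b,c_1,c_2)\in$~Table~\ref{tab:FC_solutions}, then Lemma~\ref{lem:FC_2napexpoints} applies to every conjugate ($D$ is even, so every admissible $k$ is odd and $\fp{k/2}=\half$), producing $8$ apexpoints, equal to the volume of $Q(\A)$ by Corollary~\ref{cor:FC_prop_trian}; Theorem~\ref{thm:algebraic_solutions} yields algebraicity.

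For the inductive step $n\geq 4$, assume the theorem for $n-1$ and suppose $F_C(a,b,\vect{c}\mid\vect{z})$ is non-resonant and algebraic. Every $(n-1)$-variable restriction is also non-resonant and algebraic by Corollary~\ref{cor:FC_reduction}, hence by induction has at most two retained $c_j$'s not $\equiv\half$. Let $s$ denote the number of indices with $c_i\not\equiv\half$. If $s=n$, removing any $c_i$ leaves $n-1\geq 3$ non-$\half$ parameters, contradicting the induction; so some $c_{i_0}\equiv\half$, and removing it leaves $s$ non-$\half$ values among $n-1$ variables, forcing $s\leq 2$. Permuting so that $c_3=\ld=c_n=\half$ and iterating Corollary~\ref{cor:FC_reduction} to strip $c_n,\ld,c_4$ reduces to the $n=3$ case, placing $(a,b,c_1,c_2)$ in Table~\ref{tab:FC_solutions}. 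For the converse, under the same hypotheses Lemma~\ref{lem:FC_2napexpoints} yields $2^n$ apexpoints for every conjugate, matching the volume $2^n$ of $Q(\A)$ from Corollary~\ref{cor:FC_prop_trian}, so $F_C$ is algebraic by Theorem~\ref{thm:algebraic_solutions}.

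The main obstacle is the first step of the base case: excluding the possibility that none of $c_1,c_2,c_3$ equals $\half$ modulo $\Z$. The $F_4$ classification funnels us into the homogeneous situation $c_1\equiv c_2\equiv c_3$ with $a+b\equiv 2c_1$, at which point a conjugation argument via Lemma~\ref{lem:higher_denom} manufactures a small value of $\fp{kc}$ incompatible with the $n=3$ interlacing inequality. Everything after that is a clean reduction to the already-classified $F_4$ situation combined with a straightforward induction on $n$.
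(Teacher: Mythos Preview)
Your proof is correct and the inductive step for $n\geq 4$ matches the paper's exactly. The base case $n=3$, however, follows a genuinely different route.

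The paper's $n=3$ argument is a direct enumeration: it takes two of the three $F_4$ restrictions, say $(a,b,c_1,c_2)$ and $(a,b,c_1,c_3)$, splits into the four type~1/type~2 cases from Table~\ref{tab:F4_solutions}, and for each case either bounds the denominator of the free parameter or checks a finite list against the $n=3$ interlacing condition of Lemma~\ref{lem:F4_interlacing}. That some $c_i\equiv\tfrac12$ emerges only \emph{a posteriori}, as the output of the computation.

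Your approach instead proves $c_i\equiv\tfrac12$ for some $i$ \emph{a priori}: from Theorem~\ref{thm:F4_solutions} applied to all three pairs you force $c_1\equiv c_2\equiv c_3$ and $a+b\equiv 2c_1$, then manufacture via Lemma~\ref{lem:higher_denom} a conjugate with $\fp{kc}=1/q\leq\tfrac13$, which violates the inequality $\min(\fp{ka},\fp{kb})+1\leq 3\fp{kc}$ from the $n=3$ interlacing. This is more conceptual and explains structurally why Table~\ref{tab:FC_solutions} has the shape it does.

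One point deserves a sentence more than you gave it. Your necessity argument lands the tuple in the set $S=\{(a,b,c_1,c_2)\in\text{Table~\ref{tab:F4_solutions}}:(a,b,c_i,\tfrac12)\in\text{Table~\ref{tab:F4_solutions}}\text{ for }i=1,2\}$, and you assert that $S$ coincides with Table~\ref{tab:FC_solutions}. This is true, but it is not automatic: it still requires a finite verification (filtering Table~\ref{tab:F4_solutions} by the extra condition and matching against Table~\ref{tab:FC_solutions}), comparable in spirit to the paper's own computer check. You should state explicitly that this is a finite check and that it has been carried out. Likewise, the sufficiency direction needs the brief observation (which the paper makes) that every tuple in Table~\ref{tab:FC_solutions} is non-resonant, i.e.\ that none of $-a+\sum_{i\in I}c_i$, $-b+\sum_{i\in I}c_i$ is half-integral.
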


\begin{proof}
First we show that all tuples from Table~\ref{tab:FC_solutions} indeed give non-resonant algebraic functions.
For non-resonance, it suffices to prove that $-a,-a+c_1,-a+c_2,-a+c_1+c_2,-b,-b+c_1,-b+c_2,-b+c_1+c_2$ are not half-integral.
This can easily be checked for all tuples.
To prove that the functions are algebraic, we use the second statement of Lemma~\ref{lem:FC_2napexpoints} to show that all conjugates have $2^n$ apexpoints. 
Again, this is an easy check.

Now we show that all non-resonant algebraic functions have parameters from Table~\ref{tab:FC_solutions}.
For $n=3$, we use the interlacing condition from Lemma~\ref{lem:F4_interlacing} and the fact that both $(a,b,c_1,c_2)$ and $(a,b,c_1,c_3)$ must be $F_4$ tuples. 
If both $(a,b,c_1,c_2)$ and $(a,b,c_1,c_3)$ are of type 1, then we have, up to permutations of $\{c_1,c_2,c_3\}$, $(a,b,c_1,c_2,c_3) \in \{(r,-r,\half,\half,\half), (r,r+\half,\half,\half,\half), (r,r+\half,\half,\half,2r), (r,r+\half,\half,2r,2r)\} \pmod \Z$.
The first three give non-resonant algebraic functions.
Let $(a,b,c_1,c_2,c_3) = (r,r+\half,\half,2r,2r)$.
$(a,b,c_2,c_3)$ must also be an $F_4$ tuple, but it is not of type 1 (unless $r=\pm \frac{1}{4}$, in which case it equals $(r,-r,\half,\half)$).
Hence it is of type 2, so the denominator of $2r$ is at most 5.
If the denominator of $r$ is 4, then the tuple is $(r,-r,\half,\half,\half)$ and if the denominator of $r$ equals 3 or 6, then the function will be resonant.
Therefore, we can assume that $r$ has denominator 5, 8 or 10.
Using the interlacing condition, we easily compute all algebraic functions.

If one of the tuples $(a,b,c_1,c_2)$ and $(a,b,c_1,c_3)$ is of type 1, then the parameter has denominator at most 60, so there are finitely many possibilities.
The same holds if both $(a,b,c_1,c_2)$ and $(a,b,c_1,c_3)$ are of type 2.
This gives the 720 conjugates of the tuples in Table~\ref{tab:FC_solutions}.

Finally, let $n \geq 4$, and suppose that all non-resonant algebraic functions in $n-1$ variables are given by Table~\ref{tab:FC_solutions}.
Let $(a,b,c_1,\ld,c_n)$ correspond to a non-resonant algebraic function.
Of each $n-1$ $c_i$'s, at least $n-3$ have to be equal to $\half$.
Hence at least $n-2$ of $c_1, \ld, c_n$ are equal to $\half$, so we can assume that $c_3=\ld=c_n=\half$.
Since $(a,b,c_1,\ld,c_{n-1})$ must also give a non-resonant algebraic function, $(a,b,c_1,c_2)$ must be one of the tuples in Table~\ref{tab:FC_solutions}.
\end{proof}

\renewcommand{\arraystretch}{2}
\begin{table}[htb]
\begin{center}
\caption{The tuples $(a,b,c_1,c_2)$ such that $F_C(a,b,c_1,c_2,\half, \ld, \half|\vect{z})$ is non-resonant and algebraic} 
\label{tab:FC_solutions} 
\begin{tabularx}{\textwidth}{XXXXX} 
\hline
$(r,		-r,		\frac{1}{2}, 	\frac{1}{2})$ &  
$(r, 		r+\frac{1}{2}, 	\frac{1}{2}, 	\frac{1}{2})$ &
$(r, 		r+\frac{1}{2},	\frac{1}{2}, 	2r)$ &
\multicolumn{2}{l}{with $r \in \parset$} \\

$(\frac{1}{4}, 	\frac{3}{4}, 	\frac{1}{2}, 	\frac{1}{3})$ &
$(\frac{1}{4}, 	\frac{3}{4}, 	\frac{1}{3}, 	\frac{2}{3})$ &
$(\frac{1}{4}, 	\frac{7}{12}, 	\frac{1}{2}, 	\frac{1}{2})$ &
$(\frac{1}{4}, 	\frac{7}{12}, 	\frac{1}{2}, 	\frac{1}{3})$ &
$(\frac{1}{12}, \frac{7}{12}, 	\frac{1}{2}, 	\frac{1}{3})$ \\

$(\frac{1}{12}, \frac{7}{12}, 	\frac{1}{3}, 	\frac{1}{3})$ &
$(\frac{1}{20}, \frac{11}{20}, 	\frac{1}{2}, 	\frac{1}{5})$ &
$(\frac{1}{20}, \frac{11}{20}, 	\frac{1}{2}, 	\frac{2}{5})$ &
$(\frac{1}{20}, \frac{11}{20}, 	\frac{1}{5}, 	\frac{2}{5})$ &
$(\frac{1}{20}, \frac{13}{20}, 	\frac{1}{2}, 	\frac{1}{2})$ \\

$(\frac{1}{20}, \frac{13}{20}, 	\frac{1}{2}, 	\frac{1}{5})$ &
$(\frac{1}{24}, \frac{13}{24}, 	\frac{1}{2}, 	\frac{1}{3})$ &
$(\frac{1}{24}, \frac{13}{24}, 	\frac{1}{2}, 	\frac{1}{4})$ &
$(\frac{1}{24}, \frac{13}{24}, 	\frac{1}{3}, 	\frac{1}{4})$ &
$(\frac{1}{24}, \frac{17}{24}, 	\frac{1}{2}, 	\frac{1}{2})$ \\

$(\frac{1}{24}, \frac{17}{24}, 	\frac{1}{2}, 	\frac{1}{4})$ &
$(\frac{1}{24}, \frac{19}{24}, 	\frac{1}{2}, 	\frac{1}{2})$ &
$(\frac{1}{24}, \frac{19}{24}, 	\frac{1}{2}, 	\frac{1}{3})$ &
$(\frac{1}{60}, \frac{31}{60}, 	\frac{1}{2}, 	\frac{1}{3})$ &
$(\frac{1}{60}, \frac{31}{60}, 	\frac{1}{2}, 	\frac{1}{5})$ \\

$(\frac{1}{60}, \frac{31}{60}, 	\frac{1}{3}, 	\frac{1}{5})$ &
$(\frac{1}{60}, \frac{41}{60}, 	\frac{1}{2}, 	\frac{1}{2})$ &
$(\frac{1}{60}, \frac{41}{60}, 	\frac{1}{2}, 	\frac{1}{5})$ &
$(\frac{1}{60},	\frac{49}{60}, 	\frac{1}{2}, 	\frac{1}{2})$ &
$(\frac{1}{60}, \frac{49}{60}, 	\frac{1}{2}, 	\frac{1}{3})$ \\
\hline
\end{tabularx}
\end{center}
\end{table}
\renewcommand{\arraystretch}{1}


\section{The Horn $G$ functions}

\subsection{The Horn $G_1$ function}\label{subsec:G1}

The $G_1$ function is defined by 
\begin{equation*}
G_1(a,b_1,b_2|x,y) = \sum_{m,n \geq 0} \frac{\poch{a}{m+n} \poch{b_1}{n-m} \poch{b_2}{m-n}}{m! n!} x^m y^n.
\end{equation*}
Hence the lattice is $\lat= \Z (-1, 1, -1, 1, 0) \oplus \Z (-1, -1, 1, 0, 1)$.
We choose $\A = \{\e_1, \e_2, \e_3, \e_1-\e_2+\e_3, \e_1+\e_2-\e_3\}$ and $\gam=(-a, -b_1, -b_2, 0, 0)$.
Then $\bal=(-a, -b_1, -b_2)$.

$\A$ lies in the hyperplane $x_1+x_2+x_3=1$.
By projecting $\A$ onto the $(x_1,x_2)$-plane, we get the set shown in Figure~\ref{fig:G1_A}.
The thick dots represent $\A$, the dark gray region is the set $Q(\A)$ and light gray region is a part of the set $C(\A)$.
From this Figure, it is clear that $Q(\A)$ has volume 3 and has a unimodular triangulation.
Hence $\A$ is saturated.
It is clear that 
\begin{equation*}
C(\A) = \{\x \in \R^3 \ | \ x_1 \geq 0, x_1+x_2 \geq 0, x_1+x_3 \geq 0, x_2+x_3 \geq 0\}.
\end{equation*}
Hence $G_1(a,b_1,b_2|x,y)$ is non-resonant if and only if $a$, $a+b_1$, $a+b_2$ and $b_1+b_2$ are non-integral.

\begin{lem}\label{lem:G1_interlacing}
There are 3 apexpoints if and only if either $\fp{a}+\fp{b_1} \leq 1$, $\fp{a}+\fp{b_2} \leq 1$ and $\fp{b_1}+\fp{b_2}>1$, or $\fp{a}+\fp{b_1}>1$, $\fp{a}+\fp{b_2}>1$ and $\fp{b_1}+\fp{b_2} \leq 1$.
\end{lem}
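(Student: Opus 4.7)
The plan is to apply Lemma~\ref{lem:apexpoints} together with Lemma~\ref{lem:signature_entier_linear_forms}, reducing the count of apexpoints to a small finite case analysis indexed by the floors $E_j = \entier{m_j(\bal)}$ of the four face-defining forms $m_1(\x) = x_1$, $m_2(\x) = x_1+x_2$, $m_3(\x) = x_1+x_3$, $m_4(\x) = x_2+x_3$. With $\al_1 = 1-\fp{a}$, $\al_2 = 1-\fp{b_1}$, $\al_3 = 1-\fp{b_2}$ one has $E_1 = 0$, and by non-resonance $E_j \in \fp{0,1}$ for $j=2,3,4$ with the dictionary $E_2 = 1$ iff $\fp{a}+\fp{b_1} < 1$, and similarly for $E_3, E_4$. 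Thus the signature depends only on the triple $(E_2, E_3, E_4) \in \fp{0,1}^3$.

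Next I would translate the apexpoint condition ``$\vect{p}-\va_i \not\in C(\A)$ for each $\va_i$'' into linear conditions on $\x$. Since $\vect{p} = \x + \bal \in C(\A)$ is assumed, several of the face inequalities that could fail for $\vect{p}-\va_i$ are automatic, so each condition collapses to a short disjunction: for $\va_1 = \e_1$, one needs $x_1 = 0$ or $x_1+x_2 = -E_2$ or $x_1+x_3 = -E_3$; for $\e_2$, $x_1+x_2 = -E_2$ or $x_2+x_3 = -E_4$; for $\e_3$, the symmetric condition; for $\va_4 = \e_1-\e_2+\e_3$, $x_1 = 0$ or $x_1+x_3 \leq 1-E_3$; and for $\va_5 = \e_1+\e_2-\e_3$, $x_1 = 0$ or $x_1+x_2 \leq 1-E_2$. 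Here the reduction to these disjunctions uses that $\al_i \in (0,1)$ under non-resonance.

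With this description in hand I would split on whether $x_1 = 0$ or $x_1 \geq 1$. When $x_1 \geq 1$, the conditions coming from $\va_4$ and $\va_5$ together with $\vect{p} \in \KA$ force $x_1+x_2$ and $x_1+x_3$ into the two-element sets $\fp{-E_2, 1-E_2}$ and $\fp{-E_3, 1-E_3}$, leaving only $O(1)$ candidate values of $\x$ to inspect per case. Exploiting the swap symmetry $\e_2 \leftrightarrow \e_3$, $\va_4 \leftrightarrow \va_5$ halves the enumeration. Running through the eight triples $(E_2, E_3, E_4)$ I would then verify that three apexpoints occur precisely when $(E_2, E_3, E_4) = (1,1,0)$, realised by $\x \in \fp{(0,0,0), (0,1,-1), (0,-1,1)}$, or when $(E_2, E_3, E_4) = (0,0,1)$, realised by $\x \in \fp{(0,0,0), (1,-1,0), (1,0,-1)}$; the remaining six cases give one or two apexpoints. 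Translating back through the dictionary between $E_j$ and the fractional-part sums recovers the stated interlacing condition. The only real obstacle is the bookkeeping in the case analysis, but the strong constraints from the $\va_4, \va_5$ conditions keep each case very short.
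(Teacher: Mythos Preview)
Your approach is the same as the paper's: reduce via Lemma~\ref{lem:signature_entier_linear_forms} to determining which triples $(\entier{\al_1+\al_2},\entier{\al_1+\al_3},\entier{\al_2+\al_3})$ yield maximal signature, then translate back to fractional-part inequalities in $a,b_1,b_2$. The paper simply asserts the outcome $\{(0,0,1),(1,1,0)\}$ of the enumeration you sketch in detail.

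One small point to tighten. You write $\al_2 = 1-\fp{b_1}$, $\al_3 = 1-\fp{b_2}$ and claim ``$\al_i\in(0,1)$ under non-resonance'', but the non-resonance conditions for $G_1$ ($a$, $a+b_1$, $a+b_2$, $b_1+b_2$ non-integral) do not by themselves exclude $b_1\in\Z$ or $b_2\in\Z$; in those cases $\al_2=0$ or $\al_3=0$ and your dictionary $E_2=1\iff\fp{a}+\fp{b_1}<1$ fails. The paper addresses exactly this: after obtaining the floor criterion it observes that in each of the two admissible cases one of $\al_1+\al_2\geq 1$ or $\al_2+\al_3\geq 1$ holds, forcing $\al_2>0$, and symmetrically $\al_3>0$. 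Once you insert this one-line check (or equivalently note that both sides of the claimed biconditional force $b_1,b_2\notin\Z$), your argument is complete and matches the paper.
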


\begin{proof}
It is easy to show that there are 3 apexpoints if and only if $(\entier{\al_1+\al_2}, \entier{\al_1+\al_3}, \entier{\al_2+\al_3}) \in \{(0,0,1),(1,1,0)\}$.
Since $a \not\in \Z$, we have $\al_1=1-\fp{a}$.
We have either $\al_1+\al_2 \geq 1$ or $\al_2+\al_3 \geq 1$, so $\al_2>0$. 
Similarly, $\al_3>0$ and hence $\bal=(1-\fp{a},1-\fp{b_1},1-\fp{b_2})$.
\end{proof}

\begin{lem}\label{lem:G1_implies_F}
If $G_1(a,b_1,b_2|x,y)$ is non-resonant and algebraic, then $F(a,b_1,a+b_1+b_2|z)$ is irreducible and algebraic.
\end{lem}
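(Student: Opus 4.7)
The strategy is to mimic the reduction used in Lemma~\ref{lem:F1_b+c} and verify the Gauss criterion of Theorem~\ref{thm:gauss_irreducible_algebraic} for the triple $(a, b_1, c)$ with $c := a + b_1 + b_2$. Non-resonance of $G_1$ already gives that $a$, $c - b_1 = a + b_2$ and $c - a = b_1 + b_2$ are non-integral, so the only remaining point for irreducibility is $b_1 \not\in \Z$. If $\fp{b_1} = 0$, then the first alternative of Lemma~\ref{lem:G1_interlacing} would force $\fp{b_2} > 1$ and the second would force $\fp{a} > 1$; both are impossible. Hence $b_1 \not\in \Z$ and Theorem~\ref{thm:gauss_irreducible_algebraic} yields irreducibility of $F(a, b_1, c | z)$. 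In either alternative of Lemma~\ref{lem:G1_interlacing} one also checks that $\fp{a} + \fp{b_1} + \fp{b_2} \in (1,2)$ strictly, so $c \not\in \Z$ and $\fp{c} = \fp{a} + \fp{b_1} + \fp{b_2} - 1$.

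For algebraicity, let $D$ be the smallest common denominator of $(a, b_1, b_2)$ and $\tilde{D}$ that of $(a, b_1, c)$. Since $c$ is a $\Z$-linear combination of the former parameters, $\tilde{D} \mid D$. Fix $k$ with $\gcd(k, \tilde{D}) = 1$. By Lemma~\ref{lem:higher_denom} there is an integer $l$ with $l \equiv k \pmod{\tilde{D}}$ and $\gcd(l, D) = 1$, so $\fp{ka} = \fp{la}$, $\fp{kb_1} = \fp{lb_1}$ and $\fp{kc} = \fp{lc}$. By Corollary~\ref{cor:algebraic_orbits} the GKZ-function associated with $l \bal$ is also non-resonant and algebraic, so Lemma~\ref{lem:G1_interlacing} applies to $(la, lb_1, lb_2)$. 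Writing $A = \fp{la}$ and $B_i = \fp{lb_i}$, the same case analysis as above shows $A + B_1 + B_2 \in (1,2)$ and hence $\fp{lc} = A + B_1 + B_2 - 1$.

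In the first interlacing alternative ($A + B_1 \leq 1$, $A + B_2 \leq 1$, $B_1 + B_2 > 1$) we obtain $A \leq \fp{lc} < B_1$: the left inequality is equivalent to $B_1 + B_2 \geq 1$, and the right to $A + B_2 < 1$, where strictness uses that $l(a + b_2) \not\in \Z$ by non-resonance of $G_1$ at $l \bal$. The second alternative gives symmetrically $B_1 \leq \fp{lc} < A$, using now $l(b_1 + b_2) \not\in \Z$. Both are instances of the interlacing condition of Theorem~\ref{thm:gauss_irreducible_algebraic}, which therefore concludes that $F(a, b_1, a + b_1 + b_2 | z)$ is algebraic. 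The main obstacle is purely bookkeeping: one must invoke non-resonance at the right moments to upgrade weak inequalities to strict ones, but the reduction is conceptually straightforward.
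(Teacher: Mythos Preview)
Your proposal is correct and follows the same route as the paper: deduce the Gauss interlacing of Theorem~\ref{thm:gauss_irreducible_algebraic} for $(a,b_1,a+b_1+b_2)$ from the $G_1$ interlacing of Lemma~\ref{lem:G1_interlacing}, using non-resonance to turn the weak inequalities into strict ones. You are simply more explicit than the paper about the denominator bookkeeping via Lemma~\ref{lem:higher_denom} (mirroring Lemma~\ref{lem:F1_b+c}), which the paper's terse proof leaves implicit.
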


\begin{proof}
Irreducibility follows from the interlacing condition for $G_1(a,b_1,b_2|x,y)$.

We show that the interlacing condition for $G_1(a,b_1,b_2|x,y)$ implies the interlacing condition for $F(a,b_1,a+b_1+b_2|z)$.
If $\fp{a}+\fp{b_1} \leq 1$, $\fp{a}+\fp{b_2} \leq 1$ and $\fp{b_1}+\fp{b_2}>1$, then $\fp{a+b_1+b_2} = \fp{a}+\fp{b_1}+\fp{b_2}-1$ and $\fp{a} \leq \fp{a}+\fp{b_1}+\fp{b_2}-1 < \fp{b_2}$.
The other case is similar.
\end{proof}

\begin{thm}\label{thm:G1_solutions}
$G_1(a,b_1,b_2|x,y)$ is non-resonant and algebraic if and only if $(a,b_1,b_2) \pmod \Z$ is one of the following:
$\pm (\frac{1}{6}, \half, \frac{2}{3})$,
$\pm (\frac{1}{6}, \frac{2}{3}, \half)$ and
$\pm (\frac{1}{6}, \frac{2}{3}, \frac{2}{3})$.
\end{thm}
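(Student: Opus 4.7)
For sufficiency, the plan is to take each of the six listed tuples in turn and verify directly that none of $a$, $a+b_1$, $a+b_2$, $b_1+b_2$ is integral (non-resonance, from the description of $C(\A)$ above), and that every conjugate $k(a, b_1, b_2) \pmod \Z$ with $\gcd(k, 6) = 1$ satisfies the interlacing condition of Lemma~\ref{lem:G1_interlacing}. Since the common denominator of each tuple is $6$, only $k = 1$ and $k = 5$ need be checked, a small direct computation; by Theorem~\ref{thm:algebraic_solutions} this proves algebraicity.

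For necessity, I would begin by observing that the defining series gives $G_1(a, b_2, b_1 | x, y) = G_1(a, b_1, b_2 | y, x)$, so the swap $b_1 \leftrightarrow b_2$ preserves non-resonance and algebraicity. Combined with Lemma~\ref{lem:G1_implies_F} applied to both orderings, both Gauss functions $F(a, b_1, c | z)$ and $F(a, b_2, c | z)$ must be irreducible and algebraic, where $c := a + b_1 + b_2$. Hence both $(a, b_1, c)$ and $(a, b_2, c)$ appear in Schwarz's list, i.e., they are Gauss triples of type 1 or type 2.

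Now I would case-split on the types of these two triples, which share $a$ and $c$. In Case 1 (both type 1), I use the characterization that $(\alpha, \beta, \gamma)$ is type 1 iff two of $\{1-\gamma,\ \gamma-\alpha-\beta,\ \beta-\alpha\}$ are congruent to $\half$ modulo $\Z$. Enumerating the nine combinations of subcases for $i=1,2$ and imposing $c = a + b_1 + b_2$ collapses all but one branch: the surviving one forces $b_1 \equiv b_2 \equiv a + \half$ and $3a \equiv \half \pmod \Z$, yielding $a \in \{\frac{1}{6}, \frac{5}{6}\}$ and thus the orbit $\pm(\frac{1}{6}, \frac{2}{3}, \frac{2}{3})$. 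In Case 2 (one type 1, one type 2), up to the symmetry above I may assume $(a, b_1, c)$ is type 2, so $a$ has denominator at most $60$ and $c$ at most $5$. For each candidate in the finite type-2 list I would set $b_2 := c - a - b_1$, check that $(a, b_2, c)$ is a type 1 triple, and verify Lemma~\ref{lem:G1_interlacing} on every conjugate; the survivors will be $\pm(\frac{1}{6}, \frac{1}{2}, \frac{2}{3})$ and $\pm(\frac{1}{6}, \frac{2}{3}, \frac{1}{2})$. In Case 3 (both type 2), both triples lie in the finite type-2 list and share $(a, c)$, so a similar finite search yields no additional solutions.

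The main obstacle is the conceptual work in Case 1, where I must enumerate the subcases of the type 1 characterization carefully enough that no combination is overlooked; the finite searches in Cases 2 and 3 are routine and are most efficiently carried out by computer, in the same spirit as the proof of Theorem~\ref{thm:F1_solutions}.
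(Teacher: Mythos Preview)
Your approach is correct but differs from the paper's in a meaningful way. The paper uses only the single reduction of Lemma~\ref{lem:G1_implies_F}, splitting into two cases according to whether $(a,b_1,a+b_1+b_2)$ is of type~1 or type~2. In the type~1 case, each of the three parametric forms $(r,-r,\half)$, $(r,r+\half,-2r)$, $(r,r+\half,\half)$ for $(a,b_1,b_2)$ is tested directly against the $G_1$ interlacing condition of Lemma~\ref{lem:G1_interlacing}, and Lemma~\ref{lem:closetohalf} is invoked to force the denominator of $r$ to be~$6$.

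You instead exploit the symmetry $G_1(a,b_2,b_1|x,y)=G_1(a,b_1,b_2|y,x)$ to obtain \emph{two} Gauss reductions sharing $a$ and $c=a+b_1+b_2$, then split into three cases by the pair of types. In your Case~1 the constraint $3a\equiv\half$ drops out purely from the Schwarz characterisation (since $c-a-b_1=b_2$ and $c-a-b_2=b_1$, the nine combinations collapse to $b_1\equiv b_2\equiv a+\half$ with $c\equiv\half$), so you never need the interlacing analysis or Lemma~\ref{lem:closetohalf} in the infinite-parameter branch. The price is that the solutions $\pm(\frac{1}{6},\frac{2}{3},\half)$, which the paper finds already in its type~1 case, migrate to your mixed Case~2 (one checks that $(a,b_2,c)=(\frac{1}{6},\half,\frac{1}{3})$ is of type~2). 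Your decomposition thus mirrors the paper's treatment of $F_1$ in Theorem~\ref{thm:F1_solutions} rather than its treatment of $G_1$, trading one analytic step for an extra case and a slightly larger finite search.
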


\begin{proof}
We only have to consider $(a,b_1,b_2)$ such that $(a,b_1,a+b_1+b_2)$ is a Gauss triple.
Suppose that $(a,b_1,a+b_1+b_2)$ is a Gauss triple of type 1.
Then $(a,b_1,b_2) \in \{(r,-r,\half), (r,r+\half,-2r), (r,r+\half,\half) \pmod \Z$.
If $(a,b_1,b_2) = (r,-r,\half)$, then $a+b_1 \in \Z$, so $G_1(a,b_1,b_2|x,y)$ is resonant.
Suppose that $(a,b_1,b_2)=(r,r+\half,-2r)$.
Then the function is non-resonant if $r \neq \frac{1}{4}$.
The interlacing condition is satisfied if and only if $r \leq \frac{1}{4}$ or $r>\frac{3}{4}$.
Hence we must have $\fp{kr} \leq \frac{1}{4}$ or $\fp{kr}>\frac{3}{4}$ for all $k$ coprime with the denominator of $r$.
By Lemma~\ref{lem:closetohalf}, this is possible only if the denominator of $r$ is 6, so $r=\frac{1}{6}$ or $r=\frac{5}{6}$.
This gives the solutions $(a,b_1,b_2) = \pm(\frac{1}{6}, \frac{2}{3}, \frac{2}{3})$.
If $(a,b_1,b_2)=(r,r+\half,\half)$, then the interlacing condition again reduces to $\fp{kr} \leq \frac{1}{4}$ or $\fp{kr}>\frac{3}{4}$, so the solutions are $\pm(\frac{1}{6}, \frac{2}{3}, \half)$.
If $(a,b_1,b_2)$ is a Gauss triple of type 2, then there are only finitely many possibilities.
This gives two more non-resonant algebraic functions, with $(a,b_1,b_2)=\pm(\frac{1}{6}, \half, \frac{2}{3})$.
\end{proof}


\subsection{The Horn $G_2$ function}\label{subsec:G2}

The $G_2$ function is defined by 
\begin{equation*}
G_2(a_1,a_2,b_1,b_2|x,y) = \sum_{m,n \geq 0} \frac{\poch{a_1}{m} \poch{a_2}{n} \poch{b_1}{n-m} \poch{b_2}{m-n}}{m! n!} x^m y^n. \\
\end{equation*}
Hence the lattice is $\lat= \Z (-1, 0, 1, -1, 1, 0) \oplus \Z (0, -1, -1, 1, 0, 1)$.
We choose $\A = \{\e_1,\e_2,\e_3,\e_4,\e_1-\e_3+\e_4,\e_2+\e_3-\e_4\}$.
With $\gam=(-a_1, -a_2, -b_1, -b_2, 0, 0)$, we get $\bal=(-a_1, -a_2, -b_1, -b_2)$.
The function $f:\Z^4 \rightarrow \Z^4: x \mapsto (x_2,x_3,x_3+x_4,x_1-x_3)$ maps $F_1$ to $G_2$ as in Remark~\ref{rmk:transformations}.
Since $f^{-1}:\Z^4 \rightarrow \Z^4: x \mapsto (x_2+x_4,x_1,x_2,-x_2+x_3)$, we get:

\begin{lem}\label{lem:G2_irreducible}
$G_2(a_1,a_2,b_1,b_2|x,y)$ is non-resonant if and only if $a_1$, $a_2$, $a_1+b_1$, $a_2+b_2$ and $b_1+b_2$ are non-integral.
\end{lem}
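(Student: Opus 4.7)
The plan is to invoke Remark~\ref{rmk:transformations} directly, using the explicit linear isomorphism $f$ already given in the text, to transfer the non-resonance criterion for $F_1$ (which is $F_D$ with $n=2$) to $G_2$. Everything to be done is essentially bookkeeping once the transformation is set up correctly.

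First, I would verify that $f$ really is a transformation in the sense of Remark~\ref{rmk:transformations}. Its inverse is already supplied, so it is a $\Z$-linear isomorphism of $\Z^4$; what remains is to check that $f$ carries the $F_1$ set
$$\A^{F_1} = \{\e_1,\e_2,\e_3,\e_4,\e_1+\e_2-\e_4,\e_1+\e_3-\e_4\}$$
onto the $G_2$ set $\A^{G_2} = \{\e_1,\e_2,\e_3,\e_4,\e_1-\e_3+\e_4,\e_2+\e_3-\e_4\}$. This is a direct calculation: applying $f(x)=(x_2,x_3,x_3+x_4,x_1-x_3)$ to each generator of $\A^{F_1}$ yields a permutation of $\A^{G_2}$ (for instance, $\e_3 \mapsto \e_2+\e_3-\e_4$ and $\e_1+\e_2-\e_4 \mapsto \e_1-\e_3+\e_4$).

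Next I would translate parameters. Write $\bal^{F_1}=(-a^*,-b_1^*,-b_2^*,c^*-1)$ for the $F_1$ parameter vector and $\bal^{G_2}=(-a_1,-a_2,-b_1,-b_2)$ for the $G_2$ one. The identification $\bal^{G_2}=f(\bal^{F_1})$ supplied by Remark~\ref{rmk:transformations} yields the dictionary
$$b_1^* = a_1,\qquad b_2^* = a_2,\qquad c^* = 1+a_2-b_1,\qquad a^* = a_2+b_2.$$
By Corollary~\ref{cor:FD_irreducible}, $F_1(a^*,b_1^*,b_2^*,c^*|x,y)$ is non-resonant iff $a^*,\ b_1^*,\ b_2^*,\ c^*-a^*,\ c^*-b_1^*-b_2^*$ are non-integral. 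Substituting the dictionary turns these five conditions into "$a_2+b_2$, $a_1$, $a_2$, $1-b_1-b_2$, $1-a_1-b_1$ non-integral", which is precisely the list claimed by the lemma.

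The only step that requires any care is the bookkeeping check that $f$ sends $\A^{F_1}$ to $\A^{G_2}$ as sets; once that is done, Remark~\ref{rmk:transformations} and Corollary~\ref{cor:FD_irreducible} together reduce the lemma to a substitution, so I do not expect any real obstacle.
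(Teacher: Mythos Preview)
Your proposal is correct and follows exactly the approach of the paper: the paper states the map $f$ and its inverse, invokes Remark~\ref{rmk:transformations} to transfer the non-resonance criterion from $F_1$ to $G_2$, and then simply records the resulting conditions. You have in fact written out more detail than the paper does (the explicit image of each $\va_i$ and the parameter dictionary), but the method is identical.
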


\begin{thm}\label{thm:G2_solutions}
$G_2(a_1,a_2,b_1,b_2|x,y)$ is non-resonant and algebraic if and only if $(a_1,a_2,b_1,b_2)$ or $(a_2,a_1,b_2,b_1) \pmod \Z$ equals 
$\pm(\frac{1}{3}, \frac{1}{6}, \half, \frac{2}{3})$,
$\pm(\frac{1}{6}, \frac{1}{6}, \half, \frac{2}{3})$ or
$\pm(\frac{1}{6}, \frac{1}{6}, \frac{2}{3}, \frac{2}{3})$.
\end{thm}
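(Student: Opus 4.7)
The plan is to reduce the problem to the classification of algebraic Appell $F_1$ functions given in Theorem~\ref{thm:F1_solutions}, using the linear isomorphism $f:\Z^4\to\Z^4$ described in the discussion preceding Lemma~\ref{lem:G2_irreducible}. That discussion already tells us that $f$ maps the set $\A$ of $F_1$ onto the set $\A$ of $G_2$; by Remark~\ref{rmk:transformations}, $G_2(a_1,a_2,b_1,b_2|x,y)$ is non-resonant and algebraic if and only if the $F_1$ function with parameter vector $f^{-1}(-a_1,-a_2,-b_1,-b_2)$ is non-resonant and algebraic.

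The first step is to work out the parameter dictionary explicitly. Writing the $F_1$ parameter vector as $(-a^{F_1},-b_1^{F_1},-b_2^{F_1},c^{F_1}-1)$ and applying $f^{-1}:\x\mapsto(x_2+x_4,x_1,x_2,-x_2+x_3)$ to $(-a_1,-a_2,-b_1,-b_2)$, one reads off
\begin{equation*}
a^{F_1}\equiv a_2+b_2,\quad b_1^{F_1}\equiv a_1,\quad b_2^{F_1}\equiv a_2,\quad c^{F_1}\equiv a_2-b_1 \pmod{\Z}.
\end{equation*}
Equivalently, a $G_2$ tuple lifts from an $F_1$ tuple via $(a_1,a_2,b_1,b_2)\equiv(b_1^{F_1},b_2^{F_1},b_2^{F_1}-c^{F_1},a^{F_1}-b_2^{F_1}) \pmod{\Z}$.

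The second step is to run through the ten algebraic $F_1$ tuples listed in Theorem~\ref{thm:F1_solutions} and apply the dictionary to each. A short calculation shows that the images collapse into precisely three orbits modulo $\Z$, namely $\pm(\frac{1}{3},\frac{1}{6},\half,\frac{2}{3})$, $\pm(\frac{1}{6},\frac{1}{6},\half,\frac{2}{3})$ and $\pm(\frac{1}{6},\frac{1}{6},\frac{2}{3},\frac{2}{3})$. For instance, the $F_1$ tuple $(\frac{1}{6},\frac{5}{6},\frac{5}{6},\frac{1}{2})$ yields $(a_1,a_2,b_1,b_2)\equiv(\frac{5}{6},\frac{5}{6},\frac{1}{3},\frac{1}{3})\equiv-(\frac{1}{6},\frac{1}{6},\frac{2}{3},\frac{2}{3})$. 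The extra identification $(a_1,a_2,b_1,b_2)\leftrightarrow(a_2,a_1,b_2,b_1)$ in the theorem statement reflects the manifest $x\leftrightarrow y$ symmetry of the $G_2$ series, which corresponds under $f$ to the $b_1^{F_1}\leftrightarrow b_2^{F_1}$ symmetry of $F_1$.

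The main point requiring care is bookkeeping: one must verify that the dictionary is compatible with the conjugation action of $(\Z/D\Z)^*$ and with the involution $\bal\mapsto-\bal$. Since $f$ is a $\Z$-linear isomorphism, both compatibilities are automatic, so the orbit structure transports cleanly. Once the dictionary is in place the proof reduces to a finite table check, and I do not expect any genuine obstacle beyond keeping the representatives straight.
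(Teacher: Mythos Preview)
Your approach is correct and is exactly the one the paper intends: the theorem is stated without a separate proof precisely because it follows from Theorem~\ref{thm:F1_solutions} via the linear isomorphism $f$ and Remark~\ref{rmk:transformations}. Your parameter dictionary and the subsequent table check are accurate; the only minor point worth noting explicitly is that Theorem~\ref{thm:F1_solutions} is phrased in terms of irreducibility rather than non-resonance, but Remark~\ref{rmk:FD_irreducible_is_nonresonant} guarantees these coincide for $F_1$, so the transport via Remark~\ref{rmk:transformations} is legitimate.
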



\begin{figure}
\centering
\psset{unit=0.75cm}
\subfigure[The sets $\A$, $Q(\A)$ and $C(\A)$ for $G_1$]{
\label{fig:G1_A}
\begin{picture}(3,4.5)(-1,-2)
\pspolygon[linestyle=none,fillstyle=solid,fillcolor=lightgray](0,0)(2,-2)(2,2)(0,2)
\pspolygon[linewidth=2pt,fillstyle=solid,fillcolor=darkgray](0,0)(1,-1)(1,1)(0,1)
\psline(0,2)(0,0)(2,-2)
\psline[linewidth=2pt](0,0)(1,1)
\psline[linewidth=2pt](0,0)(1,0)
\psdots[dotsize=5pt](1,0)(0,1)(0,0)(1,-1)(1,1)
\psgrid[subgriddiv=1,griddots=1,gridwidth=2pt,gridlabels=0](0,0)(-1,-2)(2,2)
\psaxes[ticks=none,linewidth=0.5pt,labels=none](0,0)(-1,-2)(2,2)
\end{picture}}
\subfigure[The sets $\A$, $Q(\A)$ and $C(\A)$ for $G_3$]{
\label{fig:G3_A}
\begin{picture}(6,3)(-1,-0.5)
\pspolygon[linestyle=none,fillstyle=solid,fillcolor=lightgray](0,2)(2,0)(6,2)
\psline(0,2)(2,0)(6,2)
\psline[linewidth=2pt](1,1)(4,1)
\psdots[dotsize=5pt](1,1)(2,1)(3,1)(4,1)
\psgrid[subgriddiv=1,griddots=1,gridwidth=2pt,gridlabels=0](2,0)(0,-0.5)(6,2)
\psaxes[ticks=none,linewidth=0.5pt,labels=none](2,0)(0,-0.5)(6,2)
\end{picture}}
\subfigure[The interlacing condition for $G_3$]{
\label{fig:G3_interlacing_al}
\begin{picture}(5,4)(-1.5,-0.5)
\pspolygon[fillstyle=solid,fillcolor=lightgray,linestyle=none](0,0)(2,1)(3,0)
\pspolygon[fillstyle=solid,fillcolor=lightgray,linestyle=none](0,3)(1,2)(3,3)
\psaxes[Dx=0.5,Dy=0.5,dx=1.5,dy=1.5](3,3)
\psline[linestyle=dotted,linewidth=1.5pt](3,0)(3,3)
\psline[linestyle=dotted,linewidth=1.5pt](0,3)(3,3)
\psline(0,3)(1,2)(3,3)
\psline[linestyle=dashed](0,0)(2,1)(3,0)
\psline[linestyle=dotted,linewidth=1pt](0,1.5)(1,2)(2,1)(3,1.5)
\end{picture}}
\caption{Pictures for the $G$ functions}
\psset{unit=1cm}
\end{figure}
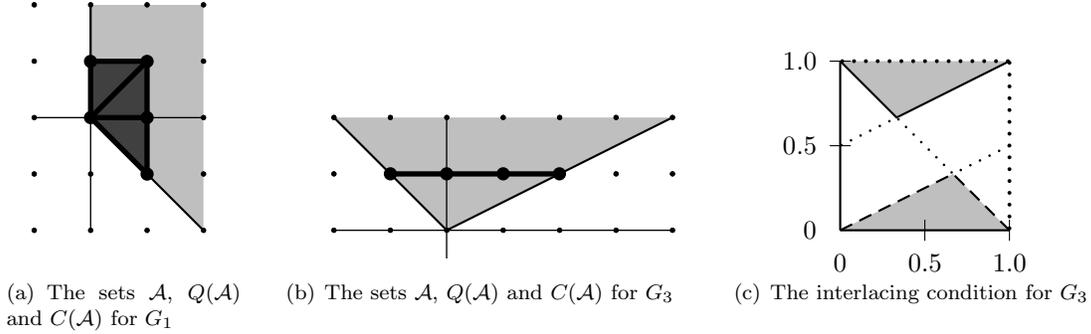


\subsection{The Horn $G_3$ function}\label{subsec:G3}

The $G_3$ function is defined by 
\begin{equation*}
G_3(a_1,a_2|x,y) = \sum_{m,n \geq 0} \frac{\poch{a_1}{2n-m} \poch{a_2}{2m-n}}{m! n!} x^m y^n. \\ 
\end{equation*}
Hence the lattice is $\lat= \Z (1, -2, 1, 0) \oplus \Z (-2, 1, 0, 1)$.
We choose $\A = \{\e_1+\e_2, \e_2, -\e_1+\e_2, 2\e_1+\e_2\}$ and $\gam=(-a_1,-a_2,0,0)$.
This gives $\bal=(-a_1, -a_1-a_2)$.

In Figure~\ref{fig:G3_A}, the thick dots represent the set $\A$; the thick line is $Q(\A)$ and the gray region is a part of the cone $C(\A)$. 
It is clear that $Q(\A)$ has volume 3 and has a unimodular triangulation, so $\A$ is saturated.
We have 
\begin{equation*}
C(\A) = \{\x \in \R^2 \ | \ x_1+x_2 \geq 0, -x_1+2x_2 \geq 0\}.
\end{equation*}
It follows that $G_3(a_1,a_2|x,y)$ is non-resonant if and only if $2a_1+a_2$ and $a_1+2a_2$ are non-integral.

One easily computes that there are 3 apexpoints if and only if either $-1 \leq -\al_1+2 \al_2 < 0 \leq \al_1+\al_2 < 1$ or $1 \leq -\al_1+2\al_2, \al_1+\al_2 < 2$. 
Figure~\ref{fig:G3_interlacing_al} gives a graphical interpretation of the interlacing condition.
There are 3 apexpoints if and only if $\bal$ lies in the gray region.
This Figure also gives a idea how to find all algebraic functions:
if some multiple of $\al_2$ is close enough to $\half$, then the function will not be algebraic.
Hence the denominator of $\al_2$ must be smal..
Furthermore, if the numerator of a multiple of $\al_1$ equals 1, then $\al_2$ must be sufficiently small.
Hence the denominator of $\al_1$ can not be too big.

\begin{thm}\label{G3_solutions}
$G_3(a_1,a_2|x,y)$ is non-resonant and algebraic if and only if $a_1+a_2 \in \Z$ or, up to equivalence modulo $\Z$, 
$(a_1,a_2) \in \{(\half,\frac{1}{3}), (\half,\frac{2}{3}), (\frac{1}{3}, \half), (\frac{2}{3}, \half)\}$.
\end{thm}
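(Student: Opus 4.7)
The plan is to translate the interlacing condition into an explicit region in the $(\al_1, \al_2)$-plane, handle the boundary case $a_1 + a_2 \in \Z$ directly, and then bound the denominators of $\al_1$ and $\al_2$ using Lemma~\ref{lem:closetohalf} so that a finite case check completes the argument. I will write $\al_1 = \fp{-a_1}$ and $\al_2 = \fp{-(a_1+a_2)}$ for the normalized parameter vector, so that conjugation by $k$ acts as $(\al_1, \al_2) \mapsto (\fp{k\al_1}, \fp{k\al_2})$.

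First I would check that non-resonance is equivalent to $\al_1 + \al_2 \notin \Z$ and $\al_1 - 2\al_2 \notin \Z$, and that the stated interlacing inequalities describe the union of the two open triangles
\[
T_1 = \{(\al_1, \al_2) \in [0,1)^2 : \al_1 > 2\al_2,\ \al_1 + \al_2 < 1\}, \qquad
T_2 = \{(\al_1, \al_2) \in [0,1)^2 : \al_1 \leq 2\al_2 - 1,\ \al_1 + \al_2 \geq 1\},
\]
shown as the gray regions in Figure~\ref{fig:G3_interlacing_al}; these are exchanged by the involution $(\al_1, \al_2) \mapsto (1 - \al_1, 1 - \al_2)$ induced by $k = -1$. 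In the case $a_1 + a_2 \in \Z$, every $k$-conjugate has $\al_2 = 0$, non-resonance forces $\al_1 \in (0, 1)$, and $(\al_1, 0)$ lies in $T_1$ trivially, so Theorem~\ref{thm:algebraic_solutions} yields algebraicity.

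Assuming instead $a_1 + a_2 \notin \Z$, after conjugating by $-1$ if necessary I may take $\al_2 \in (0, 1/2)$. Since every conjugate $\fp{k\al_2}$ must lie in $[0, 1/3] \cup [2/3, 1)$, Lemma~\ref{lem:closetohalf} with $t = 1/3$ (combined with Lemma~\ref{lem:higher_denom} to lift coprimality from $q_2$ to the common denominator $D$) forces the denominator $q_2$ of $\al_2$ to satisfy $q_2 < 3d$, where $d \in \{1, 2, 4\}$ depends on $q_2 \bmod 4$; direct inspection at $k = 1$ rules out $q_2 = 2, 3$, leaving $q_2 \in \{4, 6, 10\}$. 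For each such $\al_2$ the interlacing condition forces $\fp{k\al_1}$ into an explicit interval that depends on $\fp{k\al_2}$ — of length $1 - 3\al_2$ when $\fp{k\al_2}$ and $\al_2$ lie on the same side of $\half$, and as narrow as $1/10$ when $\al_2 = 3/10$. A second application of Lemma~\ref{lem:closetohalf} to $\al_1$, with $t$ chosen so that $[t, \half)$ misses the allowed interval, bounds the denominator $q_1$ of $\al_1$ by a small explicit constant for each admissible $q_2$.

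With both denominators bounded, a short enumeration of the remaining candidates — checking interlacing for every $k$ coprime to $D$ — yields precisely the two $\pm 1$-orbits $\{(\half, \frac{1}{3}), (\half, \frac{2}{3})\}$ and $\{(\frac{1}{3}, \half), (\frac{2}{3}, \half)\}$ listed in the theorem. The main obstacle is the second bounding step, since the admissible interval for $\fp{k\al_1}$ depends on $\fp{k\al_2}$ rather than on $\al_1$ alone; one has to apply Lemma~\ref{lem:closetohalf} separately in each of the three cases $q_2 \in \{4, 6, 10\}$, paying attention to which of $T_1, T_2$ a given conjugate lands in. Once those case-by-case bounds are in place, the remaining verification is routine.
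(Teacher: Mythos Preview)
Your overall strategy matches the paper's: translate the interlacing condition into the two triangles $T_1, T_2$, handle $\al_2 = 0$ separately, use Lemma~\ref{lem:closetohalf} with $t = \frac{1}{3}$ to reduce to $q_2 \in \{4, 6, 10\}$, then bound $q_1$ and finish by a finite check. The first step and the endgame are correct.

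The gap is in your second bounding step. Lemma~\ref{lem:closetohalf} applied to $\al_1$ produces a $k$ coprime to $q_1$ with $\fp{k\al_1} \in [t, \half)$, but gives no control over $k \bmod q_2$; and splitting into the three cases $q_2 \in \{4,6,10\}$ does not remove the difficulty, because within each case the allowed interval for $\fp{k\al_1}$ still depends on $\fp{k\al_2}$, i.e.\ on the residue of $k$ modulo $q_2$. For instance, when $q_2 = 4$ the allowed interval is $(\half, \frac{3}{4})$ if $k \equiv 1 \pmod 4$ but $[\frac{1}{4}, \half]$ if $k \equiv 3 \pmod 4$; the latter contains $[t, \half)$ for every $t \geq \frac{1}{4}$, so the lemma yields no contradiction unless you already know which residue class the $k$ it produces falls into --- and the construction $k = \frac{q_1-d}{2}p'$ in the proof of Lemma~\ref{lem:closetohalf} gives you no such information.

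The paper does not use Lemma~\ref{lem:closetohalf} a second time. Instead it picks (via Lemma~\ref{lem:higher_denom}) an $l$ coprime to $q_1 q_2$ with $\fp{l\al_1} = \frac{1}{q_1}$, and reads off a bound on $q_1$ directly from the interlacing inequalities at that single conjugate: if $\fp{l\al_2} = \frac{t}{v} < \frac{1}{3}$, then membership in $T_1$ forces $\frac{1}{q_1} > \frac{2t}{v}$, i.e.\ $q_1 < \frac{v}{2t}$; if $\fp{l\al_2} = \frac{t}{v} > \frac{2}{3}$, then membership in $T_2$ forces $\frac{1}{q_1} \geq 1 - \frac{t}{v}$, i.e.\ $q_1 \leq \frac{v}{v - t}$. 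Either way $q_1 \leq 10$, and the remaining finite check produces exactly the orbits listed. The point is that both $T_1$ and $T_2$ are bounded away from the axis $\al_1 = 0$ by an amount depending only on $\al_2$, so forcing $\fp{l\al_1}$ to be as \emph{small} as possible is the right move --- not forcing it near $\half$.
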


\begin{proof}
Write $\al_1=\frac{p}{q}$ and $\al_2=\frac{u}{v}$ with $\gcd(p,q)=\gcd(u,v)=1$, $0 \leq p < q$ and $0 \leq u < v$.
It follows immediately from the interlacing condition that $\al_1 \neq 0$ and $\al_2 \neq \half$.
Furthermore, if $\al_2=0$ (i.e.\ $a_1+a_2 \in \Z$), then the interlacing condition is satisfied for all $\al_1$.
Therefore, we will assume that $q \geq 2$, $v \geq 3$ and $p,u \neq 0$.

The interlacing condition is not satisfied if there exists $k$ such that $\fp{k \al_2} \in [\frac{1}{3}, \half)$ and $\gcd(k,qv)=1$.
By Lemma~\ref{lem:closetohalf}, such $k$ exists unless $v \in \{4,6,10\}$.
Hence we can assume that $v \in \{4,6,10\}$.
Choose $l$ such that $lp \equiv 1 \pmod q$ and $\gcd(l,qv)=1$.
Then $\fp{l \al_1} = \frac{1}{q}$.
Write $\fp{l \al_2} = \frac{t}{v}$ with $0 \leq t < v$.
If $\frac{t}{v} < \frac{1}{3}$, then it must hold that $2\fp{l \al_2}<\fp{l \al_1}$, i.e. $2tq<v$.
This gives $(\fp{l \al_1}, \fp{l \al_2}) \in 
\{(\frac{1}{2},\frac{1}{6}), (\frac{1}{2},\frac{1}{10}), (\frac{1}{3},\frac{1}{10}), (\frac{1}{4},\frac{1}{10})\}$.
If $\frac{t}{v} \geq \frac{2}{3}$, then we must have $\fp{l \al_1}+\fp{l \al_2} \geq 1$, so $q \leq 4$ if $v=2$, $q \leq 6$ if $v=6$, $q \leq 3$ if $\frac{t}{v}=\frac{7}{10}$ and $q \leq 10$ if $\frac{t}{v}=\frac{9}{10}$.
Now one easily checks that all conjugates satisfy the interlacing condition if and only if $(\fp{l \al_1}, \fp{l \al_2}) \in \{(\half, \frac{1}{6}), (\half, \frac{5}{6}), (\frac{1}{3}, \frac{5}{6})\}$.

It is easy to check that these parameters, as well as $\al_2=0$, give non-resonant functions.
Hence the non-resonant algebraic functions are given by the orbits of these parameters.
\end{proof}


\section{The Horn $H$ functions}

\subsection{The Horn $H_1$ function}\label{subsec:H1}

The $H_1$ function is defined by 
\begin{equation*}
H_1(a,b,c,d|x,y) =
\sum_{m,n \geq 0} \frac{\poch{a}{m-n} \poch{b}{m+n} \poch{c}{n}}{\poch{d}{m} m! n!} x^m y^n. 
\end{equation*}
Hence the lattice is $\lat= \Z (-1, -1, 0, 1, 1, 0) \oplus \Z (1, -1, -1, 0, 0, 1)$.
Take $\A=\{\e_1, \e_2, \e_3, \e_4, \e_1+\e_2-\e_4, -\e_1+\e_2+\e_3\}$ and $\bal=(-a, -b, -c, d-1)$.
Using Lemma~\ref{lem:convex_hull}, it is easily checked that $V_1=\{\e_1, \e_2, \e_3, \e_4\}$, $V_2=\{\e_1, \e_2, \e_3, \e_1+\e_2-\e_4\}$, $V_3=\{-\e_1+\e_2+\e_3, \e_2, \e_3, \e_4\}$ and $V_4 = \{-\e_1+\e_2+\e_3, \e_2, \e_3, \e_1+\e_2-\e_4\}$ give a unimodular triangulation of $Q(\A)$. 
This implies the following Lemma:

\begin{lem}\label{lem:H1_prop_sd}
$\A$ is saturated, the volume of $Q(\A)$ is 4 and 
\begin{equation*}
\begin{split}
C(\A) = \{\x \in \R^4\ | \ & 
x_2 \geq 0, x_3 \geq 0, x_1+x_2 \geq 0, x_1+x_3 \geq 0, \\
 & x_2+x_4 \geq 0, x_1+x_2+2x_4 \geq 0, x_1+x_3+x_4 \geq 0\}. 
\end{split}
\end{equation*}
$H_1(a,b,c,d|x,y)$ is non-resonant if and only if $b$, $c$, $a+b$, $a+c$, $d-b$, $d-a-c$ and $2d-a-b$ are non-integral.
\end{lem}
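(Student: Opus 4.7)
The plan is to apply Lemma~\ref{lem:convex_hull} to the family $\trian = \{Q(V_1), Q(V_2), Q(V_3), Q(V_4)\}$ to obtain a unimodular triangulation of $Q(\A)$, then read off saturation, volume, and the description of $C(\A)$ from Lemma~\ref{lem:properties_trian}. Finally, I translate the face description of $C(\A)$ into the non-resonance condition by substitution into the defining linear forms.

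First, a direct $4\times 4$ determinant computation shows that each $V_i$ has determinant $\pm 1$, so spans a unit simplex; visibly $\A = V_1 \cup V_2 \cup V_3 \cup V_4$. Inverting the corresponding matrices expresses each $C(V_i)$ as a set of four linear inequalities:
\begin{align*}
C(V_1) &= \{\x \ | \ x_1, x_2, x_3, x_4 \geq 0\}, \\
C(V_2) &= \{\x \ | \ x_3 \geq 0, x_4 \leq 0, x_1+x_4 \geq 0, x_2+x_4 \geq 0\}, \\
C(V_3) &= \{\x \ | \ x_1 \leq 0, x_4 \geq 0, x_1+x_2 \geq 0, x_1+x_3 \geq 0\}, \\
C(V_4) &= \{\x \ | \ x_4 \leq 0, x_1+x_4 \leq 0, x_1+x_2+2x_4 \geq 0, x_1+x_3+x_4 \geq 0\}.
\end{align*}

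Second, I verify the two remaining hypotheses of Lemma~\ref{lem:convex_hull}. The six inclusions $C(V_i) \cap C(V_j) \subs C(V_i \cap V_j)$ are direct set-theoretic checks: the intersection forces to zero exactly those linear forms that switch sign between $V_i$ and $V_j$, and the resulting set coincides with $C(V_i \cap V_j)$. For convexity of $\bigcup_i C(V_i)$, I show directly that this union equals the set $S$ defined by the seven inequalities claimed for $C(\A)$. The inclusion $\bigcup_i C(V_i) \subs S$ is immediate from the four formulas above. For the reverse, a case analysis on the signs of $x_1$ and $x_4$ places any $\x \in S$ into some $C(V_i)$: the case $x_1 \geq 0 \geq x_4$ requires a further split on the sign of $x_1+x_4$ to distinguish $C(V_2)$ from $C(V_4)$, while the remaining three sign cases each fall directly into $C(V_1)$, $C(V_3)$ or $C(V_4)$. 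Lemmas~\ref{lem:convex_hull} and~\ref{lem:properties_trian} then deliver saturation of $\A$, the volume $4$ of $Q(\A)$, and the displayed formula for $C(\A)$.

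For non-resonance, Definition~\ref{defn:resonance} together with the description of $C(\A)$ by the seven linear forms $m_i$ just found reduces the condition to non-integrality of each $m_i(\bal)$, exactly as in the analogous corollaries of the previous sections. Substituting $\bal = (-a,-b,-c,d-1)$ into the seven forms gives $-b$, $-c$, $-a-b$, $-a-c$, $d-b-1$, $2d-a-b-2$ and $d-a-c-1$, whose non-integrality is equivalent to the stated list. The main obstacle is the unavoidable bookkeeping in the second step: with an asymmetric family like this one, the six pairwise intersection inclusions and the global convexity of the union must be checked by hand with no symmetry to exploit.
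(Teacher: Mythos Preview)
Your proof is correct and follows exactly the approach the paper takes: the paper introduces the same four sets $V_1,\dots,V_4$ just before the lemma, invokes Lemma~\ref{lem:convex_hull} to obtain a unimodular triangulation, and then the statement follows from Lemma~\ref{lem:properties_trian}. You have simply written out the details that the paper leaves implicit, and your cone formulas, case split, and substitution into the seven linear forms all check out.
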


\begin{lem}\label{lem:H1_interlacing}
Suppose that $H_1(a,b,c,d|x,y)$ is non-resonant.
Then there are 4 apexpoints if and only if one of the following conditions holds:
\begin{equation*}
\fp{a}+\fp{c} \leq \fp{d} \andd \fp{a}+\fp{b}>1, 2\fp{d} 
\end{equation*}
or
\begin{equation*}
\fp{d}+1 < \fp{a} + \fp{c} \andd \fp{a}+\fp{b} \leq 1, 2\fp{d} 
\end{equation*}
or
\begin{equation*}
\fp{a}+\fp{c} -1, \fp{b} \leq \fp{d} \andd 2\fp{d} < \fp{a}+\fp{b} \andd \min(\fp{b},\fp{c}) \leq 1 -\fp{a} < \max(\fp{b},\fp{c}) 
\end{equation*}
or
\begin{equation*}
\fp{d} < \fp{a}+\fp{c}, \fp{b} \andd \fp{a}+\fp{b} \leq 2\fp{d} \andd \min(\fp{b},\fp{c}) \leq 1 -\fp{a} < \max(\fp{b},\fp{c}) 
\end{equation*}
\end{lem}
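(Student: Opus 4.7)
The strategy is the one used for Lemmas~\ref{lem:F2_interlacing} and~\ref{lem:F4_interlacing}: apply the algorithm of Remark~\ref{rmk:algorithm_interlacing} to the seven linear forms
\begin{align*}
m_1(\x) &= x_2, & m_2(\x) &= x_3, & m_3(\x) &= x_1+x_2, & m_4(\x) &= x_1+x_3, \\
m_5(\x) &= x_2+x_4, & m_6(\x) &= x_1+x_2+2x_4, & m_7(\x) &= x_1+x_3+x_4
\end{align*}
cutting out $C(\A)$ according to Lemma~\ref{lem:H1_prop_sd}. By Lemma~\ref{lem:signature_entier_linear_forms}, the apexpoint count depends only on $\boldsymbol\eta = (\entier{m_1(\bal)},\ld,\entier{m_7(\bal)})$.

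First, I would determine the range of $\boldsymbol\eta$. With the normalization $\bal=(1-\fp{a},1-\fp{b},1-\fp{c},\fp{d})$ and $\al_i \in (0,1)$, direct evaluation gives $\eta_1,\eta_2 \in \{0\}$, $\eta_3,\eta_4,\eta_5 \in \{0,1\}$, $\eta_6 \in \{0,1,2,3\}$ and $\eta_7 \in \{0,1,2\}$. The non-resonance hypothesis from Lemma~\ref{lem:H1_prop_sd} forbids each boundary value, and linear inequalities among the $m_i$ (for instance $m_6 \leq m_3 + 2m_5$ and $m_7 \leq m_4 + m_5$) reduce the list of realizable tuples considerably.

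Second, for each realizable $\boldsymbol\eta$ I would pick a representative $\bal$ and enumerate the apexpoints using Lemma~\ref{lem:apexpoints}: these are the integer points $\x \in \Z^4$ satisfying $m_i(\x) \geq -\eta_i$ for all $i$, together with the requirement that for every $\va_j \in \A$ there exists some $i$ with $m_i(\x) \leq m_i(\va_j) - \eta_i - 1$. Since Lemma~\ref{lem:H1_prop_sd} gives volume $4$ for $Q(\A)$, the maximum possible count is $4$ by Lemma~\ref{lem:maximal_signature}. I would retain only the $\boldsymbol\eta$ attaining it and translate them back to conditions on $(\fp{a},\fp{b},\fp{c},\fp{d})$ via identities such as $\eta_3 = \entier{2-\fp{a}-\fp{b}}$ and $\eta_6 = \entier{2-\fp{a}-\fp{b}+2\fp{d}}$. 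Grouping the tuples that yield equivalent inequalities produces the four displayed cases.

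The main obstacle is this final grouping. The third and fourth cases in the statement involve $\min(\fp{b},\fp{c})$ and $\max(\fp{b},\fp{c})$, which means several distinct $\boldsymbol\eta$ tuples, each with its own behaviour in $\fp{b}$ relative to $\fp{c}$, have to be amalgamated into a single clean condition. I would carry out the enumeration with computer assistance as indicated in Remark~\ref{rmk:algorithm_interlacing} and then verify each of the four resulting cases by hand; the individual apexpoint counts are routine, but the combinatorial merging is where errors are easiest to make.
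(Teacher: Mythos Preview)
Your approach is essentially the paper's own: the paper's proof simply records that the algorithm of Remark~\ref{rmk:algorithm_interlacing} yields exactly six tuples $(\entier{\al_1+\al_2},\entier{\al_1+\al_3},\entier{\al_2+\al_4},\entier{\al_1+\al_2+2\al_4},\entier{\al_1+\al_3+\al_4})$ with four apexpoints, then observes that each of these forces $\al_1>0$, so the normalization $\bal=(1-\fp{a},1-\fp{b},1-\fp{c},\fp{d})$ is legitimate. The only point to watch is that you assume $\al_1\in(0,1)$ from the outset, whereas non-resonance does not directly exclude $a\in\Z$; the paper deduces $a\notin\Z$ a posteriori from the list of tuples, and you should do the same (or simply allow $\al_1\in[0,1)$ in the enumeration).
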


\begin{proof}
One easily computes that there are 4 apexpoints if and only if $(\entier{\al_1+\al_2}$, $\entier{\al_1+\al_3}$, $\entier{\al_2+\al_4}$, $\entier{\al_1+\al_2+2\al_4}$, $\entier{\al_1+\al_3+\al_4}) \in$ $\{(1,0,0,2,1)$, $(0,1,0,2,1)$, $(0,1,0,1,2)$, $(1,0,1,2,1)$, $(1,0,1,1,1)$, $(0,1,1,1,1)\}$. 
Since either $\al_1+\al_2 \geq 1$ or $\al_1+\al_3 \geq 0$, we have $a \not\in \Z$ and $\bal=(1-\fp{a},1-\fp{b},1-\fp{c},\fp{d})$.
\end{proof}

\begin{lem}\label{lem:H1_implies_f}
Suppose that $H_1(a,b,c,d|x,y)$ is non-resonant and algebraic.
Then $F(a,b,d|z)$ and $F(b-d,c,d-a|z)$ are irreducible and algebraic.
\end{lem}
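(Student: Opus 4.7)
The plan follows the template used in Lemmas~\ref{lem:F1_b+c}, \ref{lem:F2_a-c_2} and~\ref{lem:G1_implies_F}: first establish irreducibility from the non-resonance hypothesis (with one small extra step for $F(a,b,d|z)$), then derive the Gauss interlacing condition of Theorem~\ref{thm:gauss_irreducible_algebraic} for each of the two Gauss functions from the four-apexpoint condition of Lemma~\ref{lem:H1_interlacing}.

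Irreducibility is almost free. For $F(b-d,c,d-a|z)$, Theorem~\ref{thm:gauss_irreducible_algebraic} requires $b-d$, $c$, $(d-a)-(b-d)=2d-a-b$, and $(d-a)-c=d-a-c$ to be non-integral, and all four are exactly the conditions listed in Lemma~\ref{lem:H1_prop_sd}. For $F(a,b,d|z)$ I need $a,b,d-a,d-b$ non-integral; non-resonance supplies $b$ and $d-b$, the proof of Lemma~\ref{lem:H1_interlacing} already notes $a\notin\Z$, and I would obtain $d-a\notin\Z$ by assuming $\fp{d}=\fp{a}$ and inspecting the four cases of Lemma~\ref{lem:H1_interlacing}: case~1 forces $\fp{c}=0$, case~2 forces $\fp{c}>1$, case~3 collapses $\fp{b}\leq\fp{d}$ and $2\fp{d}<\fp{a}+\fp{b}$ to $\fp{b}\leq\fp{d}<\fp{b}$, and case~4 collapses $\fp{d}<\fp{b}$ and $\fp{a}+\fp{b}\leq 2\fp{d}$ to $\fp{b}\leq\fp{d}<\fp{b}$.

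For algebraicity, fix $k$ coprime with the common denominator of the parameters of the relevant Gauss function and apply Lemma~\ref{lem:higher_denom} to lift $k$ to an integer $\ell$ coprime with the full denominator $\tilde{D}$ of $(a,b,c,d)$ with $\ell\equiv k$ modulo the smaller denominator; the assumed algebraicity of $H_1$ then puts $(\ell a,\ell b,\ell c,\ell d)$ in one of the four cases of Lemma~\ref{lem:H1_interlacing}. For $F(a,b,d|z)$ I would deduce the Gauss interlacing directly: case~1 gives $\fp{\ell a}\leq\fp{\ell d}<\fp{\ell b}$ from $\fp{a}+\fp{c}\leq\fp{d}$ and $\fp{a}+\fp{b}>2\fp{d}$; case~2 gives $\fp{\ell b}<\fp{\ell d}<\fp{\ell a}$ from $\fp{d}+1<\fp{a}+\fp{c}$ and $\fp{a}+\fp{b}\leq 2\fp{d}$; and in cases~3 and~4 the ordering of $\fp{d}$ relative to $\fp{a}$ and $\fp{b}$ is forced by combining $2\fp{d}\lessgtr\fp{a}+\fp{b}$ with the given bound $\fp{b}\lessgtr\fp{d}$.

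For $F(b-d,c,d-a|z)$ I would unfold $\fp{\ell(b-d)}$ as $\fp{\ell b}-\fp{\ell d}$ or $1+\fp{\ell b}-\fp{\ell d}$ (and analogously for $\fp{\ell(d-a)}$), the choice being dictated by the sign comparisons already established. The required interlacing then reduces to a pair of given inequalities in each case: in cases~1 and~3 one obtains $\fp{\ell c}\leq\fp{\ell(d-a)}<\fp{\ell(b-d)}$ from $\fp{a}+\fp{c}-1\leq\fp{d}$ together with $2\fp{d}<\fp{a}+\fp{b}$, while in cases~2 and~4 one obtains the reverse ordering $\fp{\ell(b-d)}\leq\fp{\ell(d-a)}<\fp{\ell c}$ from $\fp{a}+\fp{b}\leq 2\fp{d}$ together with $\fp{d}<\fp{a}+\fp{c}$. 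The main bookkeeping obstacle will be cases~3 and~4, where the hypothesis $\min(\fp{b},\fp{c})\leq 1-\fp{a}<\max(\fp{b},\fp{c})$ splits into two subcases depending on whether $\fp{b}\leq\fp{c}$ or $\fp{c}<\fp{b}$; I must check that the above inequalities yield the same ordering uniformly in both subcases, which is what makes the conclusion work out to a genuine Gauss interlacing condition.
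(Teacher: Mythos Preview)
Your proposal is correct and, for $F(b-d,c,d-a|z)$, tracks the paper's argument exactly: unfold $\fp{b-d}$ and $\fp{d-a}$ according to the sign comparisons already forced in each case, then read off the Gauss interlacing from $\fp{a}+\fp{c}(-1)\leq\fp{d}$ and $2\fp{d}\lessgtr\fp{a}+\fp{b}$. One simplification you are missing: the hypothesis $\min(\fp{b},\fp{c})\leq 1-\fp{a}<\max(\fp{b},\fp{c})$ in cases~3 and~4 is never used, so the subcase split you flag as the ``main bookkeeping obstacle'' does not arise.

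The genuine difference is your treatment of $F(a,b,d|z)$. The paper dispatches algebraicity in one line by observing $F(a,b,d|z)=H_1(a,b,c,d|z,0)$ and invoking Remark~\ref{rmk:reduction_algebraicity}, exactly as in Lemma~\ref{lem:H4_f(a-2c)} and Theorem~\ref{H5_solutions}. You instead rederive the Gauss interlacing for $(a,b,d)$ from the four $H_1$ cases. Your route works (your deductions in each case are sound), and it has the minor advantage of being uniform with the $F(b-d,c,d-a|z)$ argument, but the specialization shortcut is both shorter and more conceptual. It also makes the lift via Lemma~\ref{lem:higher_denom} unnecessary for the first Gauss function.
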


\begin{proof}
Irreducibility follows from the interlacing condition and non-resonance for $H_1(a,b,c,d|x,y)$.
Note that $F(a,b,d|z) = H_1(a,b,c,d|z,0)$.
Hence it suffices to show that the interlacing condition for $H_1(a,b,c,d|x,y)$ implies the interlacing condition for $F(b-d,c,d-a|z)$.

If $\fp{a}+\fp{c} \leq \fp{d}$ and $\fp{a}+\fp{b}>1, 2\fp{d}$, then $\fp{d-a}=\fp{d}-\fp{a}$ and $\fp{b} > 2\fp{d}-\fp{a} \geq \fp{d}$, so $\fp{b-d}=\fp{b}-\fp{d}$.
Hence $\fp{c} \leq \fp{d}-\fp{a} = \fp{d-a} < \fp{b}-\fp{d} = \fp{b-d}$.

If $\fp{a}+\fp{c} -1, \fp{b} \leq \fp{d}$, $2\fp{d} < \fp{a}+\fp{b}$ and $\min(\fp{b},\fp{c}) \leq 1 -\fp{a} < \max(\fp{b},\fp{c})$, then $\fp{b-d}=\fp{b}-\fp{d}+1$ and $\fp{a} > 2\fp{d}-\fp{b} \geq \fp{d}$, so $\fp{d-a}=\fp{d}-\fp{a}+1$.
Hence $\fp{c} \leq \fp{d}-\fp{a}+1 = \fp{d-a} < \fp{b}-\fp{d}+1 = \fp{b-d}$.

The other two cases are similar.
\end{proof}

\begin{thm}\label{thm:H1_solutions}
$H_1(a,b,c,d|x,y)$ is non-resonant and algebraic if and only if $(a,b,c,d)$ is, up to equivalence modulo $\Z$,  one of the following:
$\pm(\frac{1}{3}, \frac{5}{6}, \half, \frac{2}{3})$, 
$\pm(\frac{1}{4}, \frac{7}{12}, \frac{5}{6}, \half)$ and
$\pm(\frac{1}{4}, \frac{11}{12}, \frac{1}{6}, \half)$. 
\end{thm}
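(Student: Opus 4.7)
The plan is to mirror the proof of Theorem~\ref{thm:G1_solutions}. By Lemma~\ref{lem:H1_implies_f}, whenever $H_1(a,b,c,d|x,y)$ is non-resonant and algebraic, both $(a,b,d)$ and $(b-d,c,d-a)$ are Gauss triples (i.e.\ parameter vectors of irreducible algebraic Gauss functions). So the set of candidate tuples $(a,b,c,d)$ embeds into the product of two Gauss orbits, which by Tables~\ref{tab:gauss_abc_orbits} is either finite (type 2) or a one-parameter family indexed by $r \in \parset$ (type 1). For every candidate one then checks, for every $k$ coprime to the common denominator $D$ of $(a,b,c,d)$, that the interlacing condition of Lemma~\ref{lem:H1_interlacing} holds for $k(a,b,c,d)$; the tuples that pass are the non-resonant algebraic ones.

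I would split into four cases according to the type of each of the two associated Gauss triples. If both $(a,b,d)$ and $(b-d,c,d-a)$ are of type 2, the common denominator of $a,b,d$ is at most 60, and similarly for $b-d,c,d-a$, giving a finite list that can be fed to the computer check of Remark~\ref{rmk:algorithm_interlacing}. If exactly one of the two triples is of type 2, say $(a,b,d)$, then the denominator of at least one of $a,b,d$ is at most 60; combined with the type 1 form of the other triple this still yields finitely many candidates, which are again verified by computer.

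The interesting case is when both $(a,b,d)$ and $(b-d,c,d-a)$ are of type 1. There are three forms for each triple, giving nine sub-cases; for each I would write $(a,b,d) \in \{(r,-r,\half), (r,r+\half,\half), (r,r+\half,2r)\}$ and read off $(b-d,c,d-a)$, then require the resulting tuple to be congruent modulo $\Z$ to one of the type 1 forms for some $s \in \parset$. Most of these sub-cases force algebraic relations between $r$ and $s$ that pin down $r \in \{\pm\frac{1}{6},\pm\frac{1}{4},\pm\frac{1}{3}\}$ directly, or lead to a denominator that cannot be excluded by elementary means. For the latter I would invoke Lemma~\ref{lem:closetohalf}: a denominator $q \geq 6$ (or $q \geq 10$ when $q \equiv 2 \pmod 4$) produces an integer $k$ coprime to $q$ with $\fp{kr}$ arbitrarily close to $\half$ from below, and substituting this into Lemma~\ref{lem:H1_interlacing} violates one of the strict inequalities on $\fp{a}+\fp{b}$, $\fp{a}+\fp{c}$, or $2\fp{d}$, forcing $q$ small. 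Only the three tuples listed in the theorem survive.

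The main obstacle is bookkeeping in the type 1 / type 1 sub-case: Lemma~\ref{lem:H1_interlacing} has four disjunctive branches, each with several inequalities mixing $\fp{a},\fp{b},\fp{c},\fp{d}$, and one must determine for each of the nine $(r,s)$ parametrisations which branch can ever be satisfied, then chase the resulting inequality system on $r$ through conjugation. Once the nine sub-cases have been reduced to bounded-denominator problems (either via an explicit relation between $r$ and $s$ or via Lemma~\ref{lem:closetohalf}), the remaining verification is the same finite computer check that handles the type 2 cases, and it turns out to yield precisely the three orbits $\pm(\frac{1}{3},\frac{5}{6},\half,\frac{2}{3})$, $\pm(\frac{1}{4},\frac{7}{12},\frac{5}{6},\half)$, and $\pm(\frac{1}{4},\frac{11}{12},\frac{1}{6},\half)$.
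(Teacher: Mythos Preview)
Your plan is structurally the same as the paper's: reduce via Lemma~\ref{lem:H1_implies_f} to the requirement that $(a,b,d)$ and $(b-d,c,d-a)$ are both Gauss triples, split into four cases by type, and feed the resulting finite lists to the interlacing check. The finite cases (any triple of type~2) are handled identically.

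Where your proposal diverges is in the type~1/type~1 sub-case. You propose nine parametrised sub-cases and then an appeal to Lemma~\ref{lem:closetohalf} to bound denominators. The paper instead makes a single observation that collapses this immediately: if $(a,b,d)$ is of type~1 with parameter $r\in\parset$, then $d-a\equiv \half-r$ or $d-a\equiv r\pmod\Z$, and in either case $d-a\not\equiv\half$. But the third slot of a type~1 Gauss triple is always $\half$ or $2s$; hence if $(b-d,c,d-a)$ is also of type~1 it must be of the form $(s,s+\half,2s)$, forcing $d-a\equiv 2(b-d)\pmod\Z$. This relation pins $r$ down to a handful of values directly, with no need for Lemma~\ref{lem:closetohalf}. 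Similarly, when $(b-d,c,d-a)$ is of type~2, the paper uses that the \emph{third} entry $d-a$ then has denominator at most~5 (not~60), which bounds the denominator of $r$ by~10 rather than~60.

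Your route would work, but the unsubstantiated step is the claim that $\fp{kr}$ near $\half$ always violates one of the four branches of Lemma~\ref{lem:H1_interlacing}; that lemma mixes $\fp{a},\fp{b},\fp{c},\fp{d}$ in ways that do not obviously reduce to a single inequality in $r$, and you would have to verify this branch by branch for each of your nine sub-cases. The paper's $d-a\not\equiv\half$ observation avoids this entirely.
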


\begin{proof}
If $H_1(a,b,c,d|x,y)$ is non-resonant and algebraic, then $(a,b,d)$ and $(b-d,c,d-a)$ are Gauss triples.
Suppose that $(a,b,d)$ is of type 1.
Then there exists $r \in \parset$ such that $(b-d,d-a) \in \{(-r+\half, -r+\half), (r, -r+\half), (-r+\half,r)\} \pmod \Z$.
Note that $d-a \neq \half$. 
Hence if $(b-d,c,d-a)$ is also of type 1, then there exists $s \in \parset$ such that $(b-d,c,d-a) = (s,s+\half,2s)$.
Modulo $\Z$, we have $d-a \equiv 2(b-d)$.
This implies that $(b-d,d-a) = (r, -r+\half)$ with $r=\pm \frac{1}{6}$ or $(b-d,d-a) = (-r+\half,r)$ with $r=\pm \frac{1}{3}$.
Hence the possibilities for $(a,b,c,d)$ are $\pm(\frac{1}{6}, \frac{2}{3}, \frac{2}{3}, \half)$ and $\pm(\frac{1}{3}, \frac{5}{6}, \frac{2}{3}, \frac{2}{3})$.
However, they don't satisfy the interlacing condition.
If $(b-d,c,d-a)$ is of type 2, then the denominator of $d-a$ is at most 5.
Hence the denominator of $r$ is at most 10.
We check all possibilities, and find as solutions the tuples $\pm(\frac{1}{3}, \frac{5}{6}, \half, \frac{2}{3})$.

Now suppose that $(a,b,d)$ is of type 2.
Then the denominator of $b-d$ is at most 60.
If $(b-d,c,d-a)$ is of type 1, then it is of the form $(s,-s,\half), (s,s+\half,\half)$ or $(s,s+\half,2s)$ where the denominator of $s$ is at most 60.
This gives finitely many possibilities, and there are no algebraic functions.

Finally, if both $(a,b,d)$ and $(b-d,c,d-a)$ are of type 2, then the solutions are $\pm(\frac{1}{4}, \frac{7}{12}, \frac{5}{6}, \half)$ and $\pm(\frac{1}{4}, \frac{11}{12}, \frac{1}{6}, \half)$.
\end{proof}


\subsection{The Horn $H_2$ function}\label{subsec:H2}

The $H_2$ function is defined by
\begin{equation*}
H_2(a,b,c,d,e|x,y) = \sum_{m,n \geq 0} \frac{\poch{a}{m-n} \poch{b}{m} \poch{c}{n} \poch{d}{n}}{\poch{e}{m} m! n!} x^m y^n. 
\end{equation*}
The lattice is $\lat = \Z (-1, -1, 0, 0, 1, 1, 0) \oplus \Z (1, 0, -1, -1, 0, 0, 1)$.
Take $\A = \{\e_1, \e_2, \e_3, \e_4, \e_5, \e_1+\e_2-\e_5, -\e_1+\e_3+\e_4\}$ and $\gam = (-a,-b,-c,-d,e-1,0,0)$.
Then $\bal=(-a,-b,-c,-d,e-1)$.
The function $f: x \mapsto (x_1+x_3, x_2, x_3+x_5, x_3, x_4)$ maps $F_2$ to $H_2$.

\begin{lem}\label{lem:H2_irreducible}
$H_2(a,b,c,d,e|x,y)$ is non-resonant if and only if $b, c, d$, $a+c, a+d$, $e-b$, $e-a-c$ and $e-a-d$ are non-integral.
\end{lem}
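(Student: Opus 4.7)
The plan is to apply Remark~\ref{rmk:transformations} to the linear isomorphism $f$ exhibited just above the lemma, thereby reducing the non-resonance characterization for $H_2$ to the already-known criterion for $F_2$. Since $f$ is a linear isomorphism sending the $F_2$-set $\A$ bijectively onto the $H_2$-set $\A$, the two GKZ-systems are simultaneously (non-)resonant whenever their parameter vectors correspond under $f$.

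The execution is a short bookkeeping exercise. First, I would invert $f$ and apply it to $\bal = (-a, -b, -c, -d, e-1)$; writing the resulting vector in the standard $F_2$-form $(-a', -b_1', -b_2', c_1'-1, c_2'-1)$ should yield the identifications $a' = a+d$, $b_1' = b$, $b_2' = d$, $c_1' = e$, and $c_2' = d-c+1$. Plugging these into the $F_2$ non-resonance criterion of Remark~\ref{rmk:F2_irreducible_is_nonresonant}, which demands that $a'$, $b_1'$, $b_2'$, $c_1'-a'$, $c_2'-a'$, $c_1'+c_2'-a'$, $c_1'-b_1'$ and $c_2'-b_2'$ all be non-integral, converts the eight conditions into non-integrality of $a+d$, $b$, $d$, $e-a-d$, $1-a-c$, $e-a-c+1$, $e-b$ and $1-c$. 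Since non-integrality is preserved under negation and under integer shifts, this is precisely the list stated in the lemma.

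No substantive obstacle is expected: once the bijection $f(\A_{F_2}) = \A_{H_2}$ is verified vector-by-vector and the identity $\bal = f(\bal')$ is read off consistently with the formulas for $\gam$ and $\gam_{F_2}$, Remark~\ref{rmk:transformations} delivers the conclusion at once. The entire proof amounts to executing the substitution above and matching the two resulting lists of non-integrality conditions; the only mild subtlety is tracking the $\pm 1$ shifts introduced by the last two coordinates of $\gam$, which is what turns the $F_2$-conditions $c_2'-a'$ and $c_1'+c_2'-a'$ into the stated conditions on $a+c$ and $e-a-c$ after reducing modulo $\Z$.
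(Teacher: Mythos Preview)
Your approach is exactly the paper's: the lemma is stated without proof immediately after exhibiting the linear isomorphism $f$, and the reader is expected to apply Remark~\ref{rmk:transformations} precisely as you describe. Note that when you carry out the vector-by-vector verification of $f(\A_{F_2}) = \A_{H_2}$ you will find that the paper's formula has a sign typo (the first coordinate should read $x_1 - x_3$, not $x_1 + x_3$); with that correction your identifications $a' = a+d$, $b_1' = b$, $b_2' = d$, $c_1' = e$, $c_2' = d-c+1$ and the resulting eight non-integrality conditions all check out.
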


\begin{thm}\label{thm:H2_solutions}
$H_2(a,b,c,d,e|x,y)$ is non-resonant and algebraic if and only if, up to equivalence modulo $\Z$ and permutations of $\{c,d\}$, $(a,b,c,d,e)$ is conjugate to one of the following tuples:
$(\half, \frac{1}{6}, \frac{5}{12}, \frac{11}{12}, \frac{1}{3})$,
$(\frac{1}{3}, \frac{5}{6}, \frac{1}{4}, \frac{3}{4}, \frac{2}{3})$,
$(\frac{1}{3}, \frac{5}{6}, \frac{1}{6}, \frac{5}{6}, \frac{2}{3})$,
$(\frac{1}{3}, \frac{5}{6}, \frac{1}{10}, \frac{9}{10}, \frac{2}{3})$, 
$(\frac{1}{4}, \frac{3}{4}, \frac{1}{6}, \frac{5}{6}, \half)$,
$(\frac{1}{4}, \frac{7}{12}, \frac{1}{6}, \frac{5}{6}, \half)$,
$(\frac{1}{5}, \frac{7}{10}, \frac{1}{6}, \frac{5}{6}, \frac{2}{5})$,
$(\frac{1}{5}, \frac{7}{10}, \frac{1}{10}, \frac{9}{10}, \frac{2}{5})$ and
$(\frac{1}{6}, \frac{5}{6}, \frac{5}{12}, \frac{11}{12}, \frac{2}{3})$.
\end{thm}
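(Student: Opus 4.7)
The plan is to apply Remark~\ref{rmk:transformations} with the linear isomorphism $f$ stated just before the theorem: since $f$ identifies $\A_{F_2}$ with $\A_{H_2}$ (as sets), it induces a bijection between non-resonant algebraic solutions of the two GKZ systems via $\bal_{H_2} = f(\bal_{F_2})$. The entire problem therefore reduces to transferring the $F_2$ classification of Theorem~\ref{thm:F2_solutions} across this bijection, exactly as was done for $F_B$ via $F_A$ in Section~\ref{subsec:F3}.

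First I would write out the explicit parameter correspondence: starting from $\bal_{F_2} = (-a', -b_1', -b_2', c_1'-1, c_2'-1)$, apply $f$ coordinate-wise and identify the result with $\bal_{H_2} = (-a, -b, -c, -d, e-1)$. This produces linear formulas expressing $(a, b, c, d, e)$ in terms of $(a', b_1', b_2', c_1', c_2')$ modulo $\Z$. At this stage non-resonance is automatic from Remark~\ref{rmk:transformations}, and one can cross-check consistency against the direct non-resonance condition of Lemma~\ref{lem:H2_irreducible}.

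Next I would cycle through the seven orbit representatives listed in Theorem~\ref{thm:F2_solutions}, compute the image of each under $f$, and reduce the resulting $H_2$ tuple modulo $\Z$. The final bookkeeping step is to collect these images into orbits under conjugation and the natural $(c,d)$-swap symmetry of $H_2$, and verify the list obtained is precisely the nine tuples in the theorem.

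The main obstacle is the bookkeeping subtlety in this last step: the $F_2$ swap symmetry $(b_1',b_2',c_1',c_2') \leftrightarrow (b_2',b_1',c_2',c_1')$ does not correspond under $f$ to the $(c,d)$-swap of $H_2$, so a single $F_2$ orbit (under conjugation plus the $F_2$ swap) can split into either one or two $H_2$ orbits (under conjugation plus $(c,d)$-swap), depending on whether some conjugate of the $F_2$ tuple happens to coincide with its $F_2$-swap. Thus I expect the seven $F_2$ orbits to yield exactly the nine $H_2$ orbits listed, with some $F_2$ orbits contributing one $H_2$ orbit and others contributing two. Carefully enumerating these, without duplication or omission, is the only non-trivial (but purely finite) content of the proof.
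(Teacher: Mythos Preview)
Your proposal is correct and matches the paper's approach exactly: the paper gives no explicit proof of this theorem, relying instead on the isomorphism $f$ stated just before it to transfer the $F_2$ classification of Theorem~\ref{thm:F2_solutions} via Remark~\ref{rmk:transformations}, precisely as was done for $F_B$, $G_2$, $H_3$, $H_6$, and $H_7$. Your observation about the symmetry bookkeeping (the $F_2$-swap not corresponding to the $H_2$ $(c,d)$-swap, so that seven $F_2$ orbits yield nine $H_2$ orbits) is the only subtlety, and you have identified it correctly.
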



\subsection{The Horn $H_3$ function}\label{subsec:H3}

The $H_3$ function is defined by 
\begin{equation*}
H_3(a,b,c|x,y) = \sum_{m,n \geq 0} \frac{\poch{a}{2m+n} \poch{b}{n}}{\poch{c}{m+n} m! n!} x^m y^n. 
\end{equation*}
Hence the lattice is equals 
$\lat= \Z (-2, 0, 1, 1, 0) \oplus \Z (-1, -1, 1, 0, 1)$.
Choose $\A = \{\e_1,\e_2,\e_3,2\e_1-\e_3,\e_1+\e_2-\e_3\}$ and $\gam=(-a,-b,c-1,0,0)$.
Then $\bal=(-a,-b,c-1)$.
Consider $f: x \mapsto (x_1+x_3, x_2+x_3, -x_3)$.
This function maps the set $\A$ of $G_1$ to the set $\A$ of $H_3$.
Hence:

\begin{lem}\label{lem:H3_irreducible}
$H_3(a,b,c|x,y)$ is non-resonant if and only if $a$, $b$, $c-a$ and $2c-a-b$ are non-integral.
\end{lem}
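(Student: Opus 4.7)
The plan is to apply Remark~\ref{rmk:transformations} via the linear isomorphism $f: \Z^3 \rightarrow \Z^3$, $\x \mapsto (x_1+x_3,\, x_2+x_3,\, -x_3)$, already noted above to carry $\A_{G_1}$ onto $\A_{H_3}$. A direct computation shows that $f$ is an involution, so $f^{-1} = f$, and the bijection on the five generators is verified vector-by-vector. By Remark~\ref{rmk:transformations} this gives $C(\A_{H_3}) = f(C(\A_{G_1}))$, and each facet form of $C(\A_{H_3})$ is obtained by composing a facet form of $C(\A_{G_1})$ with $f^{-1}$.

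From Section~\ref{subsec:G1} we have
\begin{equation*}
C(\A_{G_1}) = \{\x \in \R^3 \mid x_1 \geq 0,\ x_1+x_2 \geq 0,\ x_1+x_3 \geq 0,\ x_2+x_3 \geq 0\}.
\end{equation*}
Composing each of these four forms with $f^{-1}$ and simplifying yields $y_1+y_3$, $y_1+y_2+2y_3$, $y_1$, $y_2$ (in the new coordinates $\vect{y}$), so
\begin{equation*}
C(\A_{H_3}) = \{\vect{y} \in \R^3 \mid y_1 \geq 0,\ y_2 \geq 0,\ y_1+y_3 \geq 0,\ y_1+y_2+2y_3 \geq 0\}.
\end{equation*}
By Definition~\ref{defn:resonance}, non-resonance of $H_\A(\bal)$ amounts to $\bal+\Z^3$ avoiding every proper face of $C(\A)$; since each face of a full-dimensional rational polyhedral cone lies in the zero locus of at least one of its integer-coefficient facet forms $m_i$, this is equivalent to requiring $m_i(\bal) \not\in \Z$ for every $i$. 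Evaluating the four forms at $\bal_{H_3} = (-a,-b,c-1)$ produces $-a$, $-b$, $c-a-1$ and $2c-a-b-2$, which are non-integral precisely when $a$, $b$, $c-a$ and $2c-a-b$ are non-integral.

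The argument is entirely mechanical once the transformation framework is invoked. The only points needing any care are the initial verification that $f$ sends $\A_{G_1}$ bijectively onto $\A_{H_3}$ (an elementary check on five vectors) and the observation that the four pulled-back forms really are the facet forms of $C(\A_{H_3})$ rather than redundant inequalities; the latter follows either by noting that each pulled-back form vanishes on at least two elements of $\A_{H_3}$, or immediately from Remark~\ref{rmk:transformations} together with the fact that the four forms of $C(\A_{G_1})$ were themselves facet-defining.
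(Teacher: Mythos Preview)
Your proof is correct and follows exactly the approach the paper intends: the paper simply records the map $f:\x\mapsto(x_1+x_3,\,x_2+x_3,\,-x_3)$ sending $\A_{G_1}$ to $\A_{H_3}$ and writes ``Hence:'' before the lemma, leaving the reader to invoke Remark~\ref{rmk:transformations} and read off the facet forms. You have filled in precisely those details, including the pleasant observation that $f$ is an involution, and your computations of the pulled-back forms and their values at $\bal_{H_3}=(-a,-b,c-1)$ are all correct.
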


\begin{thm}\label{thm:H3_solutions_abc}
$H_3(a,b,c|x,y)$ is non-resonant and algebraic if and only if $(a,b,c)$ is one of the following:
$\pm(\frac{1}{3}, \frac{5}{6}, \half)$,
$\pm(\frac{1}{6}, \frac{2}{3}, \frac{1}{3})$ and
$\pm(\frac{1}{6}, \frac{5}{6}, \frac{1}{3})$. 
\end{thm}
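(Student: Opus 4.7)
The plan is to deduce Theorem~\ref{thm:H3_solutions_abc} from Theorem~\ref{thm:G1_solutions} via the linear isomorphism $f:\x\mapsto(x_1+x_3,x_2+x_3,-x_3)$ introduced just before Lemma~\ref{lem:H3_irreducible}. One checks directly on the five generators that $f$ carries the $G_1$ set $\{\e_1,\e_2,\e_3,\e_1-\e_2+\e_3,\e_1+\e_2-\e_3\}$ bijectively onto the $H_3$ set $\{\e_1,\e_2,\e_3,2\e_1-\e_3,\e_1+\e_2-\e_3\}$, and that $f$ is a $\Z$-linear automorphism of $\Z^3$ (indeed $f^2=\mathrm{id}$ and $\det f=-1$). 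By Remark~\ref{rmk:transformations}, this implies that the $H_3$-system with parameter vector $\boldsymbol\beta=f(\bal_{G_1})$ is non-resonant and algebraic if and only if the $G_1$-system with parameter vector $\bal_{G_1}$ is.

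Next I would translate the correspondence into a formula on parameters. Writing $\bal_{G_1}=(-a',-b_1',-b_2')$ and $\bal_{H_3}=(-a,-b,c-1)$, the identity $\bal_{H_3}=f(\bal_{G_1})$ unwinds to the congruences
\[
a \equiv a'+b_2', \qquad b \equiv b_1'+b_2', \qquad c \equiv b_2' \pmod{\Z}.
\]
I would then plug in the six non-resonant algebraic $G_1$ triples supplied by Theorem~\ref{thm:G1_solutions}, namely the orbits $\pm(\tfrac16,\tfrac12,\tfrac23)$, $\pm(\tfrac16,\tfrac23,\tfrac12)$, $\pm(\tfrac16,\tfrac23,\tfrac23)$, reduce each image into $[0,1)^3$, and observe that they partition into the three $k=\pm 1$ orbits $\pm(\tfrac13,\tfrac56,\tfrac12)$, $\pm(\tfrac16,\tfrac23,\tfrac13)$, $\pm(\tfrac16,\tfrac56,\tfrac13)$ claimed in the theorem.

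There is essentially no combinatorial obstacle in this argument: once the action of $f$ on the five generators has been checked, Remark~\ref{rmk:transformations} carries both the non-resonance criterion (which is independently recorded in Lemma~\ref{lem:H3_irreducible}) and the algebraicity statement across without further work, and what remains is a six-line arithmetic check to enumerate the images. The only subtlety worth flagging is keeping the direction of the transfer straight, i.e.\ applying $f$ (rather than $f^{-1}$) to pass from a $G_1$ parameter to the corresponding $H_3$ parameter; but since $f=f^{-1}$ in this instance even this point is moot.
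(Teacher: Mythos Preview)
Your proposal is correct and is exactly the approach the paper takes: the paper introduces the involution $f$ mapping the $G_1$ data to the $H_3$ data, invokes Remark~\ref{rmk:transformations}, and then simply records Lemma~\ref{lem:H3_irreducible} and Theorem~\ref{thm:H3_solutions_abc} without further argument. Your write-up makes explicit the parameter translation $a\equiv a'+b_2'$, $b\equiv b_1'+b_2'$, $c\equiv b_2'$ and the six arithmetic checks that the paper leaves implicit, and your observation that $f=f^{-1}$ (so the direction of transfer is irrelevant) is a nice touch.
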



\subsection{The Horn $H_4$ function}\label{subsec:H4}

The $H_4$ function is defined by
\begin{equation*}
H_4(a,b,c,d|x,y) = \sum_{m,n \geq 0} \frac{\poch{a}{2m+n} \poch{b}{n}}{\poch{c}{m} \poch{d}{n} m! n!} x^m y^n. 
\end{equation*}
Hence the lattice is $\lat = \Z (-2,0,1,0,1,0) \oplus \Z (-1,-1,0,1,0,1)$.
Take $\A = \{\e_1, \e_2, \e_3, \e_4, 2\e_1-\e_3, \e_1+\e_2-\e_4\}$.
With $\gam = (-a,-b,c-1,d-1,0,0)$ we get $\bal=(-a,-b,c-1,d-1)$. 
The sets $V_1=\{\e_1, \e_2, \e_3, \e_4\}$, $V_2=\{\e_1, \e_2, \e_3, \e_1+\e_2-\e_4\}$, $V_3=\{\e_1, \e_2, \e_4, 2\e_1-\e_3\}$ and $V_4=\{\e_1, \e_2, 2\e_1-\e_3, \e_1+\e_2-\e_4\}$ define a unimodular triangulation of $Q(\A)$.
Hence:

\begin{lem}\label{lem:H4_prop_trian}
$\A$ is saturated, the volume of $Q(\A)$ is 4 and 
\begin{equation*}
C(\A) = \{\x \in \R^4 \ | \ x_1, x_2 \geq 0, x_1+2x_3 \geq 0, x_1+x_4 \geq 0, x_2+x_4 \geq 0, x_1+2x_3+x_4 \geq 0\}.
\end{equation*}
$H_4(a,b,c,d|x,y)$ is non-resonant if and only if $a$, $b$, $2c-a$, $d-a$, $d-b$ and $2c+d-a$ are non-integral.
\end{lem}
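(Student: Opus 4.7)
The plan is to apply Lemma~\ref{lem:convex_hull} to the proposed decomposition $\{V_1, V_2, V_3, V_4\}$ and then invoke Lemma~\ref{lem:properties_trian} to obtain saturation, the volume, and (after identification) the explicit description of $C(\A)$. The non-resonance statement will then follow from reading off Definition~\ref{defn:resonance} on the faces of $C(\A)$.

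First I would verify the hypotheses of Lemma~\ref{lem:convex_hull}. A direct $4\times 4$ determinant computation shows that each $V_i$ consists of $4$ linearly independent vectors with determinant $\pm 1$, and it is immediate from inspection that $\A = V_1 \cup V_2 \cup V_3 \cup V_4$. Next, solving the linear system $\x = \sum_j \la_j \vect{v}_j$ for each $V_i$ and reading off the non-negativity conditions on the $\la_j$ yields
\begin{align*}
C(V_1) & = \{x_1, x_2, x_3, x_4 \geq 0\}, \\
C(V_2) & = \{x_3 \geq 0,\ x_4 \leq 0,\ x_1+x_4 \geq 0,\ x_2+x_4 \geq 0\}, \\
C(V_3) & = \{x_2 \geq 0,\ x_3 \leq 0,\ x_4 \geq 0,\ x_1+2x_3 \geq 0\}, \\
C(V_4) & = \{x_3 \leq 0,\ x_4 \leq 0,\ x_2+x_4 \geq 0,\ x_1+2x_3+x_4 \geq 0\}.
\end{align*}
Note that these four cones correspond to the four sign-patterns of $(x_3, x_4)$.

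The key step is checking the two combinatorial conditions of Lemma~\ref{lem:convex_hull}. For each pair $(i,j)$, I would verify $C(V_i) \cap C(V_j) \subs C(V_i \cap V_j)$ by a short direct computation using the formulas above (e.g.\ $V_1 \cap V_2 = \{\e_1, \e_2, \e_3\}$ with $C(V_1 \cap V_j) = \{x_1, x_2, x_3 \geq 0, x_4 = 0\}$, and combining the defining inequalities of $C(V_1)$ and $C(V_2)$ forces $x_4 = 0$). For convexity of $\bigcup_i C(V_i)$, I would show it equals the polyhedron
\begin{equation*}
P = \{\x \in \R^4 \ | \ x_1 \geq 0, x_2 \geq 0, x_1+2x_3 \geq 0, x_1+x_4 \geq 0, x_2+x_4 \geq 0, x_1+2x_3+x_4 \geq 0 \};
\end{equation*}
the inclusion $\bigcup C(V_i) \subs P$ is immediate, and the reverse inclusion follows by a four-case split on the signs of $x_3, x_4$: in each sign-octant the inequalities defining $P$ specialize exactly to those defining the corresponding $C(V_i)$. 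The main obstacle is just bookkeeping in these case analyses; no conceptual difficulty arises.

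Having established that $\trian=\{Q(V_1), \ld, Q(V_4)\}$ is a unimodular triangulation, Lemma~\ref{lem:properties_trian} gives $vol(Q(\A))=4$, saturation of $\A$, and $C(\A) = \bigcup C(V_i) = P$. Finally, for the non-resonance statement, I would identify the six linear forms $m_i$ defining the facets of $C(\A)$ and evaluate them on $\bal = (-a, -b, c-1, d-1)$: the facets $x_1 \geq 0$, $x_2 \geq 0$, $x_1 + 2x_3 \geq 0$, $x_1 + x_4 \geq 0$, $x_2 + x_4 \geq 0$ and $x_1 + 2x_3 + x_4 \geq 0$ give respectively the values $-a$, $-b$, $2c-a-2$, $d-a-1$, $d-b-1$ and $2c+d-a-3$. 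By Definition~\ref{defn:resonance}, $H_\A(\bal)$ is non-resonant iff none of these has an integer representative in $\bal + \Z^4$ lying on the corresponding face, which is equivalent to none of $a$, $b$, $2c-a$, $d-a$, $d-b$, $2c+d-a$ being integral.
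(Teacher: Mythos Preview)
Your proposal is correct and follows exactly the approach the paper takes: the paper simply states the four sets $V_1,\ldots,V_4$, asserts they form a unimodular triangulation, and writes ``Hence:'' before the lemma, implicitly invoking Lemma~\ref{lem:convex_hull} and Lemma~\ref{lem:properties_trian}. You have supplied the details the paper omits (the explicit cones $C(V_i)$, the sign-of-$(x_3,x_4)$ case split, and the facet evaluation at $\bal$), and these are all correct; note only the small typo $C(V_1\cap V_j)$ where you mean $C(V_1\cap V_2)$.
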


\begin{lem}\label{lem:H4_interlacing_alpha}
Suppose that $H_4(a,b,c,d|x,y)$ is non-resonant.
Then there are 4 apexpoints if and only if either $\fp{a} \leq \fp{d} < \fp{b}$ and $2\fp{c} < \fp{a}+1 \leq 2\fp{c}+\fp{d}$, or $\fp{b} \leq \fp{d} < \fp{a} \leq 2\fp{c}$ and $2\fp{c}+\fp{d} < \fp{a}+1$.
\end{lem}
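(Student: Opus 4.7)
The plan is to apply the algorithm of Remark~\ref{rmk:algorithm_interlacing}, in direct analogy with Lemma~\ref{lem:F2_interlacing} and Lemma~\ref{lem:H1_interlacing}. By Lemma~\ref{lem:H4_prop_trian}, the cone $C(\A)$ is cut out by the six linear forms
\[
m_1(\x)=x_1,\ m_2(\x)=x_2,\ m_3(\x)=x_1+2x_3,\ m_4(\x)=x_1+x_4,\ m_5(\x)=x_2+x_4,\ m_6(\x)=x_1+2x_3+x_4,
\]
so by Lemma~\ref{lem:signature_entier_linear_forms} the signature depends only on the integer parts $\entier{m_j(\bal)}$ for $j=1,\ld,6$.

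First I would use the standing convention $0\le\al_i<1$ together with non-resonance (which forces $a$ and $b$ to be non-integral) to write $\bal=(1-\fp{a},1-\fp{b},\fp{c},\fp{d})$. Then $\entier{m_1(\bal)}=\entier{m_2(\bal)}=0$, while $n_3:=\entier{m_3(\bal)}\in\{0,1,2\}$, $n_4:=\entier{m_4(\bal)}\in\{0,1\}$, $n_5:=\entier{m_5(\bal)}\in\{0,1\}$ and $n_6:=\entier{m_6(\bal)}\in\{0,1,2,3\}$. The consistency inequalities $n_3,n_4\le n_6\le n_3+1$, which follow from $0\le\al_3,\al_4<1$, prune the 48 a priori tuples to a short list.

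Next, for each surviving tuple I would enumerate the apexpoints via Lemma~\ref{lem:apexpoints}: they are the points $\x+\bal$ for $\x\in\Z^4$ with $m_j(\x)\ge-n_j$ for every $j$ (ensuring $\x+\bal\in C(\A)$) and, for each generator $\va_i\in\A$, some $j$ with $m_j(\x)\le m_j(\va_i)-n_j-1$ (ensuring $\x+\bal-\va_i\notin C(\A)$). A finite case check, analogous to those carried out for $F_2$ and $H_1$ earlier in the paper, will show that exactly two tuples produce four apexpoints, namely $(n_3,n_4,n_5,n_6)=(1,1,0,2)$ and $(n_3,n_4,n_5,n_6)=(1,0,1,1)$.

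Finally I would translate these back using the identities $n_4=1\iff\fp{a}\le\fp{d}$, $n_5=0\iff\fp{d}<\fp{b}$, $n_3=1\iff\fp{a}\le2\fp{c}<\fp{a}+1$, $n_6=2\iff\fp{a}+1\le2\fp{c}+\fp{d}$ and $n_6=1\iff2\fp{c}+\fp{d}<\fp{a}+1$, together with their reverses, which reassemble precisely into the two disjuncts stated in the lemma. The main obstacle will be the combinatorial enumeration itself: for each of the surviving $(n_3,n_4,n_5,n_6)$-tuples one must solve a small integer linear system and check, generator by generator, that the apex conditions are satisfied. This is tedious but entirely routine, and could equally be offloaded to a computer as hinted in Remark~\ref{rmk:algorithm_interlacing}.
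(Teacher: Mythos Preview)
Your proposal is correct and follows exactly the same approach as the paper: the paper's proof simply states that there are $4$ apexpoints if and only if $(\entier{\al_1+2\al_3},\entier{\al_1+\al_4},\entier{\al_2+\al_4},\entier{\al_1+2\al_3+\al_4})$ equals $(1,1,0,2)$ or $(1,0,1,1)$, which is precisely your $(n_3,n_4,n_5,n_6)$, and leaves both the enumeration and the back-translation implicit. Your write-up is a fuller version of the same argument, with the details (ranges of the $n_j$, the pruning inequalities, and the translation identities) spelled out.
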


\begin{proof}
There are 4 apexpoints if and only if $(\entier{\al_1+2\al_3}, \entier{\al_1+\al_4}, \entier{\al_2+\al_4}, \entier{\al_1+2\al_3+\al_4})$ equals $(1,1,0,2)$ or $(1,0,1,1)$.
\end{proof}

\begin{lem}\label{lem:H4_f(a-2c)}
If $H_4(a,b,c,d|x,y)$ is non-resonant and algebraic, then $F(\frac{a}{2},\frac{a+1}{2},c|z)$, $F(a,b,d|z)$ and $F(b,a-2c,d|z)$ are irreducible and algebraic.
\end{lem}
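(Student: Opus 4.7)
The plan is to treat the three Gauss functions differently: the first two are specializations of $H_4$, while the third requires a direct manipulation of interlacing inequalities.

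First, for $F(\frac{a}{2},\frac{a+1}{2},c|z)$ and $F(a,b,d|z)$, I would exploit the duplication formula $\poch{a}{2m}=4^m\poch{\frac{a}{2}}{m}\poch{\frac{a+1}{2}}{m}$ to identify $H_4(a,b,c,d|x,0)$ with $F(\frac{a}{2},\frac{a+1}{2},c|4x)$, and observe directly that $H_4(a,b,c,d|0,y)$ equals $F(a,b,d|y)$. Algebraicity of both then follows from Remark~\ref{rmk:reduction_algebraicity}. For irreducibility I would use the non-resonance conditions of Lemma~\ref{lem:H4_prop_trian}: $a,b,d-a,d-b\notin\Z$ handles $F(a,b,d|z)$, while $a\notin\Z$ together with $2c-a\notin\Z$ handles $F(\frac{a}{2},\frac{a+1}{2},c|z)$, since $a$ is neither an even nor an odd integer and $2c-a$ is neither an even nor an odd integer.

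Second, the function $F(b,a-2c,d|z)$ does not arise by specialization. Its irreducibility is still immediate: the conditions $b,a-2c,d-b,d-a+2c\notin\Z$ are four of the non-resonance conditions in Lemma~\ref{lem:H4_prop_trian}. For algebraicity I would verify the criterion of Theorem~\ref{thm:gauss_irreducible_algebraic}. Let $D$ be the smallest common denominator of $(b,a-2c,d)$ and $\tilde D$ that of $(a,b,c,d)$. For any $k$ coprime to $D$, Lemma~\ref{lem:higher_denom} supplies $l\equiv k\pmod D$ with $\gcd(l,\tilde D)=1$, so Lemma~\ref{lem:H4_interlacing_alpha} applies to $l\bal$; since $\fp{lb}=\fp{kb}$, $\fp{l(a-2c)}=\fp{k(a-2c)}$ and $\fp{ld}=\fp{kd}$, any Gauss interlacing established for $l$ is automatically valid for $k$.

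The main obstacle, and the only real calculation, is converting the $H_4$-interlacing inequalities into the Gauss interlacing for $F(b,a-2c,d|z)$. I would begin by showing that in both alternatives of Lemma~\ref{lem:H4_interlacing_alpha} the strict inequalities $\fp{la}<2\fp{lc}<\fp{la}+1$ hold, so that $\fp{l(a-2c)}=\fp{la}-2\fp{lc}+1$. In case (i), $2\fp{lc}\leq\fp{la}$ would combine with $\fp{ld}<1$ to violate $2\fp{lc}+\fp{ld}\geq\fp{la}+1$; in case (ii), the equality $\fp{la}=2\fp{lc}$ is ruled out by non-resonance of $2c-a$, and the upper bound follows from $2\fp{lc}+\fp{ld}<\fp{la}+1$. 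With this formula in hand, the inequality $2\fp{lc}+\fp{ld}\geq\fp{la}+1$ of case (i) rearranges to $\fp{l(a-2c)}\leq\fp{ld}$, while the strict $2\fp{lc}+\fp{ld}<\fp{la}+1$ of case (ii) rearranges to $\fp{ld}<\fp{l(a-2c)}$. Combined with $\fp{la}\leq\fp{ld}<\fp{lb}$ or $\fp{lb}\leq\fp{ld}<\fp{la}$ from the $H_4$-condition, these yield the two alternative forms $\fp{l(a-2c)}\leq\fp{ld}<\fp{lb}$ and $\fp{lb}\leq\fp{ld}<\fp{l(a-2c)}$ of the Gauss interlacing, completing the proof.
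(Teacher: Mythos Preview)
Your proposal is correct and follows essentially the same approach as the paper: specialization for the first two Gauss functions and a direct verification of the Gauss interlacing for $F(b,a-2c,d|z)$ from the $H_4$ interlacing in Lemma~\ref{lem:H4_interlacing_alpha}. The only cosmetic difference is that you establish the exact identity $\fp{l(a-2c)}=\fp{la}-2\fp{lc}+1$ in both cases (by first proving $\fp{la}<2\fp{lc}<\fp{la}+1$), whereas the paper works with the one-sided bounds $\fp{a-2c}\le\fp{a}-2\fp{c}+1$ in case~(i) and $\fp{a-2c}\ge\fp{a}-2\fp{c}+1$ in case~(ii); your explicit use of Lemma~\ref{lem:higher_denom} to pass from $k$ to $l$ is also left implicit in the paper.
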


\begin{proof}
Irreducibility is clear.
Since we have $F(\frac{a}{2},\frac{a+1}{2},c|z) = H_4(a,b,c,d|\frac{z}{4},0)$ and $F(a,b,d|z) = H_4(a,b,c,d|0,z)$, these functions are algebraic.

If $\fp{a} \leq \fp{d} < \fp{b}$ and $2\fp{c} < \fp{a}+1 \leq 2\fp{c}+\fp{d}$,
then we have $\fp{a}-2\fp{c}+1>0$, so $\fp{a-2c} \leq \fp{a}-2\fp{c}+1<\fp{d}<\fp{b}$.
If $\fp{b} \leq \fp{d} < \fp{a} \leq 2\fp{c}$ and $2\fp{c}+\fp{d} < \fp{a}+1$,
then $\fp{a}-2\fp{c}<0$ (since $a-2c \not \in \Z$), and hence $\fp{b} \leq \fp{d} < \fp{a}-2\fp{c}+1 \leq \fp{a-2c}$.
Hence the interlacing condition for $H_4(a,b,c,d|x,y)$ implies the interlacing condition for $F(b,a-2c,d|z)$.
\end{proof}

\begin{thm}\label{thm:H4_solutions}
$H_4(a,b,c,d|x,y)$ is non-resonant and algebraic if and only if $(a,b,c,d)$ is conjugate to one of the tuples in Table~\ref{tab:H4_solutions}.
\end{thm}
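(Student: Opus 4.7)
The plan is to exploit Lemma~\ref{lem:H4_f(a-2c)}: if $H_4(a,b,c,d|x,y)$ is non-resonant and algebraic, then the three Gauss functions $F(\frac{a}{2},\frac{a+1}{2},c|z)$, $F(a,b,d|z)$ and $F(b,a-2c,d|z)$ must all be irreducible and algebraic. By the classification recalled in Section~\ref{subsec:Gauss}, every such Gauss triple is either of type~1, i.e.\ congruent modulo $\Z$ to one of $(r,-r,\half)$, $(r,r+\half,\half)$, $(r,r+\half,2r)$ with $r\in\parset$, or is of type~2, in which case the denominators of its entries are bounded (at most $60$ for the first two, at most $5$ for the third). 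This lets us enumerate possibilities for $(a,b,c,d)$ by a case distinction on the types of the three induced Gauss triples, and then reduce each case to a finite computer check followed by verification of the interlacing condition of Lemma~\ref{lem:H4_interlacing_alpha} for every conjugate.

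First I would analyse the constraints imposed by the special shape of $(\frac{a}{2},\frac{a+1}{2},c)$: its second entry exceeds its first by $\half$ modulo $\Z$. Among type~1 triples this is compatible only with $(r,r+\half,\half)$ or $(r,r+\half,2r)$ (the form $(r,-r,\half)$ would force $\fp{a}\equiv -\half$, a special sub-case). So if this triple is of type~1 we have $r=\frac{a}{2}$ and either $c\equiv\half$ or $c\equiv a$ modulo~$\Z$. If instead it is of type~2, then $\frac{a}{2}$ appears as the first coordinate of a tuple in Table~\ref{tab:gauss_abc_orbits}, which bounds the denominator of $a$ by $120$, leaving a finite search.

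Next I would combine these constraints with the analogous information from $(a,b,d)$ and $(b,a-2c,d)$. If all three Gauss triples are of type~1, parameters $r,s,t\in\parset$ are forced to satisfy $r=\frac{a}{2}$, $s=a$ (or $s=b$) and $t=b$ (or $t=a-2c$), giving congruences like $b\equiv a+\half$, $d\equiv\half$ or $d\equiv 2a$, and $a-2c\equiv \pm b$ modulo $\Z$. These relations reduce the degrees of freedom to essentially one rational parameter; I would then apply Lemma~\ref{lem:closetohalf} and the interlacing condition of Lemma~\ref{lem:H4_interlacing_alpha} to bound the denominator of that parameter, leaving only finitely many candidates, all of which can be discarded or confirmed by a direct check. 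Whenever at least one of the three Gauss triples is of type~2, the denominator of one of $a$, $b$, $c$, $d$ is a priori bounded (by $60$ or $120$), so the remaining verification is again finite.

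The main obstacle is the all-type-1 case, because here the computer cannot immediately enumerate possibilities; one has to carefully track which combinations of $(r,r+\half,\half)$, $(r,r+\half,2r)$ and $(r,-r,\half)$ are mutually compatible across the three induced Gauss triples and then invoke Lemma~\ref{lem:closetohalf} to rule out large denominators. Once all cases have been reduced to a finite list of candidate tuples, I would ask the computer to verify non-resonance via Lemma~\ref{lem:H4_prop_trian} and maximality of $\signk{k}$ for every $k$ coprime with the common denominator, thereby confirming that the algebraic tuples are exactly those in Table~\ref{tab:H4_solutions}.
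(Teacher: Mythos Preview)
Your strategy is the paper's strategy: invoke Lemma~\ref{lem:H4_f(a-2c)} and do a case split on the types of the induced Gauss triples. The paper in fact splits only on the types of $(\tfrac{a}{2},\tfrac{a+1}{2},c)$ and $(a,b,d)$, using the third triple $(b,a-2c,d)$ merely as an extra filter inside the type~1/type~1 branch; your three-way split is heavier but not wrong. Your observation that type~1 for $(\tfrac{a}{2},\tfrac{a+1}{2},c)$ forces $c\equiv\tfrac12$ or $c\equiv a$ is exactly what the paper uses.

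There is, however, a real gap in your all-type-1 case. You write that after reducing to one free rational parameter you will ``apply Lemma~\ref{lem:closetohalf} \dots\ to bound the denominator of that parameter, leaving only finitely many candidates.'' This cannot work as stated: Table~\ref{tab:H4_solutions} begins with the three \emph{infinite} families $(r,-r,\tfrac12,\tfrac12)$, $(r,r+\tfrac12,\tfrac12,\tfrac12)$, $(r,r+\tfrac12,\tfrac12,2r)$, valid for every $r\in\parset$, and these sit precisely in the all-type-1 branch (with $c\equiv\tfrac12$). For each of them the $H_4$ interlacing condition of Lemma~\ref{lem:H4_interlacing_alpha} holds for \emph{all} conjugates of $r$, so Lemma~\ref{lem:closetohalf}---whose role is to exhibit a conjugate \emph{violating} an interlacing inequality---gives no bound whatsoever. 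The paper does not try to bound the denominator here; instead it enumerates the six one-parameter families arising from $c\equiv\tfrac12$ or $c\equiv a$ combined with the three type-1 shapes of $(a,b,d)$, checks for each whether $(b,a-2c,d)$ is again a Gauss triple, and then verifies directly that the three surviving $c\equiv\tfrac12$ families satisfy the interlacing condition identically in $r$. Your plan must be amended to first separate out the sub-families where the interlacing condition holds for all $r$ (these give the infinite rows of the table) before any denominator-bounding argument can be applied to the residual sub-cases.
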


\begin{proof}
We only have to consider $(a,b,c,d)$ such that $(\frac{a}{2},\frac{a+1}{2},c)$, $(a,b,d)$ and $(b,a-2c,d)$ are Gauss triples.
Suppose that $(\frac{a}{2},\frac{a+1}{2},c)$ is of type 1. Then either $c=\half$ or $c=a$.
If $(a,b,d)$ is also of type 1, then there exists $r \in \parset$ such that $(a,b,c,d) \pmod \Z \in$ $\{(r,-r,\half,\half)$, $(r,-r,r,\half)$, $(r,r+\half,\half,\half)$, $(r,r+\half,r,\half)$, $(r,r+\half,\half,2r)$, $(r,r+\half,r,2r)\}$. This gives $(b,a-2c,d) \pmod \Z \in$ $\{(-r,\pm r,\half)$, $(r+\half,\pm r,\half)$, $(r+\half,\pm r,\half)$.
One easily checkes that precisely the triples with $a-2c$ are indeed Gauss triples.
In those cases, all conjugates of $(a,b,c,d)$ satisfy the interlacing condition.
If $(a,b,d)$ is of type 2, we can just check the interlacing condition for all tuples $(a,b,\half,d)$ and $(a,b,a,d)$.
This gives 408 solutions.

If $(\frac{a}{2},\frac{a+1}{2},c)$ is of type 2 and $(a,b,d)$ is of type 1, then the denominator of $a$ is at most 30.
We check the interlacing condition for all possibilities and find 8 solutions.

Finally, if both $(\frac{a}{2},\frac{a+1}{2},c)$ and $(a,b,d)$ are of type 2, then there are finitely many possibilities.
This gives another 36 solutions.
Of all 452 solutions, the smallest conjugate is given in Table~\ref{tab:H4_solutions}.
\end{proof}

\renewcommand{\arraystretch}{2}
\begin{table}[htb]
\begin{center}
\caption{The tuples $(a,b,c,d)$ such that $H_4(a,b,c,d|x,y)$ is non-resonant and algebraic} 
\label{tab:H4_solutions} 
\begin{tabularx}{\textwidth}{XXXXXX}
\hline
$(r,-r,\half,\half)$ & 
$(r,r+\half,\half,\half)$ &
\multicolumn{2}{l}{$(r,r+\half,\half,2r)$} &
\multicolumn{2}{l}{with $r \in \parset$} \\

$(\half, \frac{1}{6}, \half, \frac{1}{3})$ &
$(\half, \frac{1}{6}, \frac{1}{3}, \frac{1}{3})$ &
$(\frac{1}{4}, \frac{3}{4}, \half, \frac{1}{3})$ &
$(\frac{1}{4}, \frac{7}{12}, \half, \half)$ &
$(\frac{1}{4}, \frac{7}{12}, \half, \frac{1}{3})$ &
$(\frac{1}{6}, \half, \half, \frac{1}{3})$ \\

$(\frac{1}{6}, \frac{5}{6}, \half, \frac{1}{3})$ & 
$(\frac{1}{6}, \frac{5}{6}, \half, \frac{2}{3})$ &
$(\frac{1}{6}, \frac{5}{6}, \half, \frac{1}{4})$ & 
$(\frac{1}{6}, \frac{5}{6}, \half, \frac{1}{5})$ &
$(\frac{1}{6}, \frac{5}{6}, \frac{1}{3}, \frac{2}{3})$ &
$(\frac{1}{6}, \frac{5}{12}, \half, \frac{1}{3})$ \\

$(\frac{1}{6}, \frac{5}{12}, \half, \frac{1}{4})$ &
$(\frac{1}{6}, \frac{11}{30}, \half, \frac{1}{3})$ & 
$(\frac{1}{6}, \frac{11}{30}, \half, \frac{1}{5})$ &
$(\frac{1}{10}, \frac{3}{10}, \half, \frac{1}{5})$ &
$(\frac{1}{10}, \frac{7}{10}, \half, \frac{2}{5})$ &
$(\frac{1}{10}, \frac{7}{10}, \frac{2}{5}, \frac{2}{5})$ \\

$(\frac{1}{10}, \frac{9}{10}, \half, \frac{1}{3})$ &
$(\frac{1}{10}, \frac{9}{10}, \half, \frac{1}{5})$ &
$(\frac{1}{10}, \frac{9}{10}, \half, \frac{4}{5})$ & 
$(\frac{1}{10}, \frac{9}{10}, \frac{1}{5}, \frac{4}{5})$ &
$(\frac{1}{10}, \frac{13}{30}, \half, \frac{1}{3})$ &
$(\frac{1}{10}, \frac{13}{30}, \half, \frac{1}{5})$ \\

$(\frac{1}{12}, \frac{3}{4}, \half, \half)$ & 
$(\frac{1}{12}, \frac{3}{4}, \half, \frac{1}{3})$ & 
$(\frac{1}{12}, \frac{3}{4}, \frac{1}{3}, \half)$ & 
$(\frac{1}{12}, \frac{5}{6}, \half, \frac{2}{3})$ &
$(\frac{1}{12}, \frac{5}{6}, \half, \frac{1}{4})$ &
$(\frac{1}{12}, \frac{5}{6}, \frac{1}{4}, \frac{2}{3})$ \\

$(\frac{1}{12}, \frac{5}{12}, \half, \frac{1}{4})$ &
$(\frac{1}{12}, \frac{7}{12}, \half, \frac{1}{3})$ &
$(\frac{1}{12}, \frac{7}{12}, \frac{1}{3}, \half)$ &
$(\frac{1}{12}, \frac{11}{12}, \frac{1}{3}, \half)$ &
$(\frac{1}{15}, \frac{7}{15}, \half, \frac{1}{3})$ & 
$(\frac{1}{15}, \frac{7}{15}, \half, \frac{1}{5})$ \\

$(\frac{1}{15}, \frac{11}{15}, \half, \frac{1}{5})$ &
$(\frac{1}{15}, \frac{11}{15}, \half, \frac{3}{5})$ &
$(\frac{1}{15}, \frac{13}{15}, \half, \frac{1}{3})$ &
$(\frac{1}{15}, \frac{13}{15}, \half, \frac{3}{5})$ &
$(\frac{1}{20}, \frac{11}{20}, \half, \frac{1}{5})$ & 
$(\frac{1}{20}, \frac{11}{20}, \half, \frac{2}{5})$ \\

$(\frac{1}{20}, \frac{13}{20}, \half, \half)$ &
$(\frac{1}{20}, \frac{13}{20}, \half, \frac{1}{5})$ & 
$(\frac{1}{20}, \frac{17}{20}, \half, \half)$ &
$(\frac{1}{20}, \frac{17}{20}, \half, \frac{2}{5})$ &
$(\frac{1}{24}, \frac{13}{24}, \half, \frac{1}{3})$ &
$(\frac{1}{24}, \frac{13}{24}, \half, \frac{1}{4})$ \\
 
$(\frac{1}{24}, \frac{17}{24}, \half, \half)$ &
$(\frac{1}{24}, \frac{17}{24}, \half, \frac{1}{4})$ &
$(\frac{1}{24}, \frac{19}{24}, \half, \half)$ & 
$(\frac{1}{24}, \frac{19}{24}, \half, \frac{1}{3})$ &
$(\frac{1}{30}, \frac{5}{6}, \half, \frac{2}{3})$ &
$(\frac{1}{30}, \frac{5}{6}, \half, \frac{1}{5})$ \\

$(\frac{1}{30}, \frac{5}{6}, \frac{1}{5}, \frac{2}{3})$ &
$(\frac{1}{30}, \frac{7}{10}, \half, \frac{1}{3})$ &
$(\frac{1}{30}, \frac{7}{10}, \half, \frac{2}{5})$ &
$(\frac{1}{30}, \frac{7}{10}, \frac{1}{3}, \frac{2}{5})$ &
$(\frac{1}{30}, \frac{11}{30}, \half, \frac{1}{5})$ &
$(\frac{1}{30}, \frac{19}{30}, \half, \frac{1}{3})$ \\

$(\frac{1}{60}, \frac{31}{60}, \half, \frac{1}{3})$ &
$(\frac{1}{60}, \frac{31}{60}, \half, \frac{1}{5})$ &
$(\frac{1}{60}, \frac{41}{60}, \half, \half)$ &
$(\frac{1}{60}, \frac{41}{60}, \half, \frac{1}{5})$ &
$(\frac{1}{60}, \frac{49}{60}, \half, \half)$ &
$(\frac{1}{60}, \frac{49}{60}, \half, \frac{1}{3})$ \\
\hline
\end{tabularx}
\end{center}
\end{table}
\renewcommand{\arraystretch}{1}


\subsection{The Horn $H_5$ function}\label{subsec:H5}

The $H_5$ function is defined by 
\begin{equation*}
H_5(a,b,c|x,y) = \sum_{m,n \geq 0} \frac{\poch{a}{2m+n} \poch{b}{n-m}}{\poch{c}{n} m! n!} x^m y^n. 
\end{equation*}
Hence the lattice is $\lat= \Z (-2, 1, 0, 1, 0) \oplus \Z (-1, -1, 1, 0, 1)$.
We can take $\A = \{\e_1, \e_2, \e_3, 2\e_1-\e_2, \e_1+\e_2-\e_3\}$ and $\gam = (-a,-b,c-1,0,0)$, so $\bal=(-a,-b,c-1)$.

The projection of $\A$ onto the $(x_1,x_2)$-plane is shown in Figure~\ref{fig:H5_A}. 
The thick dots represent $\A$, the dark gray region is the set $Q(\A)$ and light gray region is a part of the set $C(\A)$.
It is clear that $Q(\A)$ has a unimodular triangulation and has volume 4, so $\A$ is saturated.
Furthermore,
\begin{equation*}
C(\A) = \{\x \in \R^3 \ | \ x_1 \geq 0, x_1+2x_2 \geq 0, x_1+x_3 \geq 0, x_1+2x_2+3x_3 \geq 0\}.
\end{equation*}
Hence $H_5(a,b,c|x,y)$ is non-resonant if and only if $a$, $a+2b$, $c-a$ and $3c-a-2b$ are non-integral.

\begin{figure}[htb]
\begin{center}
\psset{unit=0.6cm}
\begin{picture}(3,4.5)(-1,-2)
\pspolygon[linestyle=none,fillstyle=solid,fillcolor=lightgray](0,0)(3,-1.5)(3,2)(0,2)
\pspolygon[linewidth=2pt,fillstyle=solid,fillcolor=darkgray](0,0)(2,-1)(1,1)(0,1)
\psline(0,2)(0,0)(3,-1.5)
\psline[linewidth=2pt](0,1)(2,-1)
\psline[linewidth=2pt](0,0)(1,0)(1,1)
\psdots[dotsize=5pt](1,0)(0,1)(0,0)(2,-1)(1,1)
\psgrid[subgriddiv=1,griddots=1,gridwidth=2pt,gridlabels=0](0,0)(-1,-2)(3,2)
\psaxes[ticks=none,linewidth=0.5pt,labels=none](0,0)(-1,-2)(3,2)
\end{picture}
\caption{The sets $\A$, $Q(\A)$ and $C(\A)$ for $H_5$}
\label{fig:H5_A}
\psset{unit=1cm}
\end{center}
\end{figure}

\begin{lem}\label{lem:H5_interlacing}
There are 4 apexpoints if and only if either $\fp{a} \leq \fp{c}$, $1 < \fp{a}+2\fp{b} \leq 2$ and $3\fp{c} < \fp{a}+2\fp{b} \leq 3\fp{c}+1$, or $\fp{c} < \fp{a}$, $1 < \fp{a}+2\fp{b} \leq 2$ and $3\fp{c}-1 < \fp{a}+2\fp{b} \leq 3\fp{c}$. 
\end{lem}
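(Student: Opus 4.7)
The plan is to apply the algorithm from Remark~\ref{rmk:algorithm_interlacing} directly. The cone is cut out by four linear forms
\begin{equation*}
m_1(\x)=x_1,\quad m_2(\x)=x_1+2x_2,\quad m_3(\x)=x_1+x_3,\quad m_4(\x)=x_1+2x_2+3x_3,
\end{equation*}
so by Lemma~\ref{lem:signature_entier_linear_forms} the signature $\sign$ depends only on the tuple $(\entier{m_1(\bal)},\entier{m_2(\bal)},\entier{m_3(\bal)},\entier{m_4(\bal)})$. Taking $\bal=(1-\fp{a},1-\fp{b},\fp{c})$ as in Corollary~\ref{cor:algebraic_orbits}, one sees $\entier{m_1(\bal)}=0$, while the other three floors range respectively in $\{0,1,2\}$, $\{0,1\}$ and $\{0,1,\ld,5\}$. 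This leaves only a short finite list of tuples $(e_1,e_2,e_3,e_4)$ to examine.

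For each admissible tuple, the apexpoints are the $\x+\bal$ with $\x\in\Z^3$ satisfying $m_j(\x)\geq -e_j$ for all $j$ and, for each of the five generators $\va_i\in\A$, at least one inequality of the form $m_j(\x)\leq m_j(\va_i)-e_j-1$. After tabulating the matrix of values $m_j(\va_i)$, these are systems of linear inequalities in three integer unknowns, solved by inspection (or by the same routine used in the analogous Lemmas~\ref{lem:F2_interlacing}, \ref{lem:H1_interlacing} and~\ref{lem:H4_interlacing_alpha}). Since $\vol(Q(\A))=4$, Lemma~\ref{lem:maximal_signature} gives $\sign\leq 4$, and the enumeration will show that equality holds exactly when $(e_1,e_2,e_3,e_4)\in\{(0,1,1,2),(0,1,0,3)\}$.

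It remains to translate these two tuples into inequalities on $\fp{a},\fp{b},\fp{c}$. The equality $\entier{m_2(\bal)}=1$ unwinds to $1<\fp{a}+2\fp{b}\leq 2$. The choice $\entier{m_3(\bal)}=1$ is equivalent to $\fp{a}\leq\fp{c}$, and when combined with $\entier{m_4(\bal)}=2$ it yields $3\fp{c}<\fp{a}+2\fp{b}\leq 3\fp{c}+1$; dually, $\entier{m_3(\bal)}=0$ means $\fp{c}<\fp{a}$ and $\entier{m_4(\bal)}=3$ yields $3\fp{c}-1<\fp{a}+2\fp{b}\leq 3\fp{c}$. These recover precisely the two cases of the statement. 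The only genuine obstacle is the bookkeeping of the finite search through the admissible tuples, but thanks to the small number of generators and the bound $\sign\leq 4$ this is a manageable case analysis rather than a conceptual hurdle.
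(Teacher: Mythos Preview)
Your proposal is correct and follows exactly the same route as the paper: apply the algorithm of Remark~\ref{rmk:algorithm_interlacing}, determine that the maximal signature occurs precisely for $(\entier{\al_1+2\al_2},\entier{\al_1+\al_3},\entier{\al_1+2\al_2+3\al_3})\in\{(1,1,2),(1,0,3)\}$, and then translate back to $\fp{a},\fp{b},\fp{c}$. One small point of order: you take $\bal=(1-\fp{a},1-\fp{b},\fp{c})$ at the outset, which tacitly assumes $b\notin\Z$; the paper instead first finds the two admissible tuples and only then observes that $\entier{\al_1+2\al_2}=1$ forces $\al_2>0$, hence $b\notin\Z$, so that $\al_2=1-\fp{b}$ is legitimate.
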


\begin{proof}
There are 4 apexpoints if and only if $(\entier{\al_1+2\al_2}, \entier{\al_1+\al_3}, \entier{\al_1+2\al_2+3\al_3})$ equals $(1,0,3)$ or $(1,1,2)$.
In those cases, $\al_2$ is non-integral, so $\bal=(1-\fp{a}, 1-\fp{b}, \fp{c})$.
\end{proof}

\begin{thm}\label{H5_solutions}
$H_5(a,b,c|x,y)$ is non-resonant and algebraic if and only if $(a,b,c)$ is, up to equivalence modulo $\Z$, conjugate to one of the following:
$(r,-r,\half)$ for some $r \in \parset$,
$(\frac{1}{6}, \half, \frac{1}{3})$,
$(\frac{1}{6}, \frac{2}{3}, \frac{1}{3})$,
$(\frac{1}{6}, \frac{5}{6}, \frac{1}{3})$,
$(\frac{1}{10}, \frac{3}{5}, \frac{1}{5})$ and
$(\frac{1}{12}, \frac{3}{4}, \half)$.
\end{thm}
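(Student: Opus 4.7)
The plan is to mimic the approach of Lemma~\ref{lem:H1_implies_f} and Theorem~\ref{thm:H1_solutions}: reduce $H_5$ to two one-variable Gauss functions, force both to be irreducible algebraic Gauss triples via Remark~\ref{rmk:reduction_algebraicity}, and split into cases according to whether each triple is of type 1 or type 2. By Corollary~\ref{cor:algebraic_orbits} it suffices to work with $0 \leq a, b, c < 1$.

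First I set up the reduction. Specializing the GKZ variable $z_4$ to $0$ in the Laurent series for $H_5$ kills the $m$-index and yields $F(a,b,c\,|\,y)$ up to a monomial factor; specializing $z_5$ to $0$ kills the $n$-index and, after the identities $\poch{a}{2m} = 4^m \poch{a/2}{m} \poch{(a+1)/2}{m}$ and $\poch{b}{-m} = (-1)^m / \poch{1-b}{m}$, yields $F\bigl(\tfrac{a}{2}, \tfrac{a+1}{2}, 1-b \,|\, -4x\bigr)$. Both specializations must therefore be irreducible and algebraic. Irreducibility of $F(a,b,c\,|\,y)$ requires $a, b, c-a, c-b \not\in \Z$: the conditions on $a$ and $c-a$ are given by non-resonance, and an inspection of Lemma~\ref{lem:H5_interlacing} forces $\fp{b} > 0$ (since $1 < \fp{a} + 2\fp{b}$) and $\fp{b} \neq \fp{c}$ (equating them would contradict the inequalities comparing $\fp{a}+2\fp{b}$ with $3\fp{c}$). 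Irreducibility of the second Gauss function reduces to the non-integrality of $a$ and $a+2b$, both of which are $H_5$ non-resonance hypotheses. Hence both $(a,b,c)$ and $(\tfrac{a}{2}, \tfrac{a+1}{2}, 1-b)$ are Gauss triples.

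I then split the argument by type. In the (type 1, type 1) case, the triple $(\tfrac{a}{2}, \tfrac{a+1}{2}, 1-b)$ has its first two entries differing by $\half$, so the only type 1 normal form it can take is $(s, s+\half, 2s)$ with $s = a/2$, which forces $b \equiv 1-a \pmod \Z$. Substituting into the three type 1 normal forms of $(a,b,c)$, the form $(r, -r, \half)$ automatically satisfies $b \equiv 1-a$ for every $r \in \parset$, while the two other forms force $r \equiv \tfrac{1}{4} \pmod{\half}$, which collapses them back into the $(r, -r, \half)$ family. A direct substitution into Lemma~\ref{lem:H5_interlacing} then shows that every conjugate of $(r, -r, \half)$ has four apexpoints: when $\fp{kr} < \half$ the first inequality of the lemma holds, and when $\fp{kr} > \half$ the second does. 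In each of the remaining cases (type 1, type 2), (type 2, type 1), and (type 2, type 2), at least one of the two Gauss triples has denominator at most $60$, which bounds the parameters in the other triple as well, so only finitely many candidates remain; these are tested against Lemma~\ref{lem:H5_interlacing} by the algorithm of Remark~\ref{rmk:algorithm_interlacing}, yielding precisely the five sporadic tuples listed in the theorem.

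The main obstacle is the bookkeeping in the (type 1, type 1) case, where one must track carefully how $(a/2, (a+1)/2)$ fits into the three type 1 normal forms and in particular rule out the possibility that a sub-case with $c \equiv 2r$ contributes an extra infinite family; the (type 1, type 2), (type 2, type 1), and (type 2, type 2) subcases are by contrast a mechanical enumeration once the reduction to Gauss triples has been established.
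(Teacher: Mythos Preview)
Your approach is correct and genuinely different from the paper's. The paper uses only the single reduction $F(a,b,c\,|\,z)=H_5(a,b,c\,|\,0,z)$ and then, for each of the three type~1 normal forms of $(a,b,c)$, analyzes the $H_5$ interlacing condition of Lemma~\ref{lem:H5_interlacing} directly, invoking Lemma~\ref{lem:closetohalf} to bound the denominator of $r$ in the $(r,r+\tfrac12,2r)$ sub-case. You instead introduce a second Gauss reduction $H_5(a,b,c\,|\,x,0)=F(\tfrac{a}{2},\tfrac{a+1}{2},1-b\,|\,-4x)$, and the extra constraint that $(\tfrac{a}{2},\tfrac{a+1}{2},1-b)$ be a Gauss triple lets you dispose of the (type~1,\,type~1) case almost immediately, pushing the sporadic solutions (which the paper finds inside the $(r,r+\tfrac12,2r)$ family via the interlacing analysis) into the mixed (type~1,\,type~2) case, where they are found by finite enumeration. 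Your route avoids Lemma~\ref{lem:closetohalf} entirely at the cost of a somewhat larger finite search; the paper's route keeps the enumeration smaller but requires the direct interlacing estimates.

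One point needs repair. You assert that the only type~1 normal form available to $(\tfrac{a}{2},\tfrac{a+1}{2},1-b)$ is $(s,s+\tfrac12,2s)$, but $(s,s+\tfrac12,\tfrac12)$ also has its first two entries differing by $\tfrac12$, and it yields $b\equiv\tfrac12$ rather than $b\equiv 1-a$. This sub-case is in fact vacuous in the (type~1,\,type~1) setting, because $b\equiv\tfrac12$ forces the parameter $r$ of the type~1 form of $(a,b,c)$ to lie in $\{0,\tfrac12\}$, which is excluded; but you should say so rather than silently drop the case.
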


\begin{proof}
Suppose that $H_5(a,b,c|x,y)$ is non-resonant and algebraic.
Then $F(a,b,c|z) = H_5(a,b,c|0,z)$ is also algebraic and irreducible, so $(a,b,c)$ is a Gauss triple.

First suppose that $(a,b,c)$ is a Gauss triple of type 1.
If $(a,b,c) = (r,-r,\half)$, then the function is non-resonant.
If $r<\half$, then $1 < \fp{a}+2\fp{b} \leq 2$ and $3\fp{c} < \fp{a}+2\fp{b} \leq 3\fp{c}+1$, and if $r>\half$, then $\fp{c} < \fp{a}$, $1 < \fp{a}+2\fp{b} \leq 2$ and $3\fp{c}-1 < \fp{a}+2\fp{b} \leq 3\fp{c}$.
Hence the interlacing condition is satisfied.
Since all conjugates are of the same form, the function is algebraic.

Suppose that $(a,b,c) = (r, r+\half, \half)$.
The function is non-resonant if $r$ is not equal to $\frac{1}{3}$, $\frac{2}{3}$, $\frac{1}{6}$ or $\frac{5}{6}$.
Then the interlacing condition is satisfied if and only if $r \in (\frac{1}{6}, \frac{1}{3}] \cup (\frac{2}{3},\frac{5}{6}]$.
All conjugates of $r$ also have to be in this set.
By choosing a conjugate with numerator 1, we get that the denominator of $r$ can at most be 5.
$\frac{1}{4}$ and $\frac{3}{4}$ give the same tuple as for $(r,-r,\half)$.
If $r$ has denominator 5, then $\frac{2}{5}$ is a conjugate that doesn't satisfy the condition.
Hence this gives no extra algebraic functions.

Finally, suppose that $(a,b,c) = (r, r+\half, 2r)$.
Then the function is non-resonant if $r$ is not equal to $\frac{1}{3}$ or $\frac{2}{3}$.
The interlacing condition is satisfied if and only if $r \in (0,\frac{1}{3}) \cup (\frac{2}{3},1)$.
By Lemma~\ref{lem:closetohalf}, this implies that the denominator of $r$ is 4, 6 or 10.
If the denominator is 4, then we again get the tuple $(r,-r,\half)$.
There are two solutions with denominator 6: $(\frac{1}{6}, \frac{2}{3}, \frac{1}{3})$ and $(\frac{5}{6}, \frac{1}{3}, \frac{2}{3})$.
With denominator 10, we find the solutions $(\frac{1}{10}, \frac{3}{5}, \frac{1}{5})$, $(\frac{3}{10}, \frac{4}{5}, \frac{3}{5})$, $(\frac{7}{10}, \frac{1}{5}, \frac{2}{5})$ and $(\frac{9}{10}, \frac{2}{5}, \frac{4}{5})$.
For all these tuples, the interlacing condition is indeed satisfied.

If $(a,b,c)$ is a Gauss triple of type 2, then there are only finitely many possibilities.
There are 8 solutions: 
$\pm(\frac{1}{6}, \half, \frac{1}{3})$,
$\pm(\frac{1}{6}, \frac{5}{6}, \frac{1}{3})$,
$\pm(\frac{1}{12}, \frac{3}{4}, \half)$ and
$\pm(\frac{5}{12}, \frac{3}{4}, \half)$.
\end{proof}


\subsection{The Horn $H_6$ function}\label{sec:H6}

The $H_6$ function is defined by 
\begin{equation*}
H_6(a,b,c|x,y) = \sum_{m,n \geq 0} \frac{\poch{a}{2m-n} \poch{b}{n-m} \poch{c}{n}}{m! n!} x^m y^n. 
\end{equation*}
Hence the lattice is $\lat= \Z (-2, 1, 0, 1, 0) \oplus \Z (1, -1, -1, 0, 1)$.
We choose $\A = \{\e_1, \e_2, \e_3, 2\e_1-\e_2, -\e_1+\e_2+\e_3\}$ and $\gam=(-a, -b, -c, 0, 0)$.
Then $\bal=(-a, -b, -c)$.
The function $f: x \mapsto (x_1-x_2, x_2, x_2+x_3)$ maps $G_1$ to $H_6$.
Hence:

\begin{lem}\label{lem:H6_irreducible}
$H_6(a,b,c|x,y)$ is non-resonant if and only if $a+b$, $a+2b$, $c$ and $a+c$ are non-integral.
\end{lem}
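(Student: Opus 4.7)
The plan is to deduce this lemma directly from Remark~\ref{rmk:transformations} and the non-resonance criterion for $G_1$ established in Section~\ref{subsec:G1}, via the linear map $f: \Z^3 \to \Z^3$ given by $f(x_1,x_2,x_3) = (x_1-x_2, x_2, x_2+x_3)$ that the text already identifies as mapping $G_1$ to $H_6$.

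First I would verify that $f$ is actually a $\Z$-linear isomorphism sending the set $\A_{G_1} = \{\e_1, \e_2, \e_3, \e_1-\e_2+\e_3, \e_1+\e_2-\e_3\}$ bijectively to $\A_{H_6} = \{\e_1, \e_2, \e_3, 2\e_1-\e_2, -\e_1+\e_2+\e_3\}$. The matrix of $f$ in the standard basis has determinant $1$, so $f$ is invertible over $\Z$, with inverse $f^{-1}(x_1,x_2,x_3) = (x_1+x_2, x_2, x_3-x_2)$. A direct computation gives $f(\e_1) = \e_1$, $f(\e_2) = -\e_1+\e_2+\e_3$, $f(\e_3) = \e_3$, $f(\e_1-\e_2+\e_3) = 2\e_1-\e_2$, and $f(\e_1+\e_2-\e_3) = \e_2$, so $f(\A_{G_1}) = \A_{H_6}$ as unordered sets, which is exactly the condition in Remark~\ref{rmk:transformations}.

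Next I would pull back the $H_6$ parameter vector $\bal_{H_6} = (-a,-b,-c)$ along $f$, obtaining $f^{-1}(\bal_{H_6}) = (-(a+b),\, -b,\, -(c-b))$. Interpreting this as a $G_1$ parameter vector $(-a', -b_1, -b_2)$ gives $a' = a+b$, $b_1 = b$, $b_2 = c-b$. By Remark~\ref{rmk:transformations}, $H_6$ with parameters $(a,b,c)$ is non-resonant if and only if $G_1$ with parameters $(a+b,\, b,\, c-b)$ is non-resonant.

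Finally I would invoke the $G_1$ criterion from Section~\ref{subsec:G1}: $G_1(a', b_1, b_2 | x, y)$ is non-resonant iff $a'$, $a'+b_1$, $a'+b_2$, $b_1+b_2$ are all non-integral. Substituting the identifications above, these four quantities become $a+b$, $a+2b$, $a+c$, and $c$ respectively, giving exactly the stated condition. There is no real obstacle here: the lemma reduces to a mechanical check of $f$ on the five generators of $\A_{G_1}$ together with a bookkeeping translation of the four non-resonance inequalities through $f^{-1}$.
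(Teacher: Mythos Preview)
Your proposal is correct and follows exactly the approach the paper intends: the paper simply records the map $f(x_1,x_2,x_3)=(x_1-x_2,x_2,x_2+x_3)$, writes ``Hence:'', and states the lemma, relying on Remark~\ref{rmk:transformations} together with the $G_1$ non-resonance criterion. You have filled in precisely the omitted verifications (that $f$ bijects $\A_{G_1}$ onto $\A_{H_6}$, and that pulling back $\bal_{H_6}$ and substituting into the $G_1$ criterion yields $a+b$, $a+2b$, $a+c$, $c$).
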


\begin{thm}\label{thm:H6_solutions}
$H_6(a,b,c|x,y)$ is non-resonant and algebraic if and only if up to equivalence modulo $\Z$, $(a,b,c)$ equals
$\pm(\half, \frac{1}{3}, \frac{2}{3})$,
$\pm(\half, \frac{1}{3}, \frac{5}{6})$ or
$\pm(\frac{1}{3}, \half, \frac{5}{6})$.
\end{thm}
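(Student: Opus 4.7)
The plan is to apply Remark~\ref{rmk:transformations} to the map $f:\x\mapsto(x_1-x_2,\,x_2,\,x_2+x_3)$ already introduced just before the theorem. The verification that $f$ is a linear isomorphism of $\Z^3$ mapping the $\A$-set of $G_1$ bijectively onto the $\A$-set of $H_6$ is routine: one computes the image of each of the five $G_1$-vectors, for instance $f(\e_1-\e_2+\e_3)=(2,-1,0)=2\e_1-\e_2$ and $f(\e_1+\e_2-\e_3)=(0,1,0)=\e_2$, etc. Remark~\ref{rmk:transformations} then transfers both non-resonance and algebraicity between the two GKZ-systems, so it suffices to translate the classification of Theorem~\ref{thm:G1_solutions} into $H_6$-parameters.

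Next I would compute the parameter correspondence explicitly. Writing the $G_1$-parameter as $\bal_{G_1}=(-a,-b_1,-b_2)$ and the $H_6$-parameter as $\bal_{H_6}=(-a_H,-b_H,-c_H)$, the relation $\bal_{H_6}=f(\bal_{G_1})$ gives the substitution
\[
(a_H,b_H,c_H) \;\equiv\; (a-b_1,\; b_1,\; b_1+b_2) \pmod{\Z}.
\]
Substituting the three positive representatives from Theorem~\ref{thm:G1_solutions} yields
\begin{align*}
(\tfrac{1}{6},\tfrac{1}{2},\tfrac{2}{3}) &\;\longmapsto\; (-\tfrac{1}{3},\tfrac{1}{2},\tfrac{7}{6}) \;\equiv\; -(\tfrac{1}{3},\tfrac{1}{2},\tfrac{5}{6}),\\
(\tfrac{1}{6},\tfrac{2}{3},\tfrac{1}{2}) &\;\longmapsto\; (-\tfrac{1}{2},\tfrac{2}{3},\tfrac{7}{6}) \;\equiv\; -(\tfrac{1}{2},\tfrac{1}{3},\tfrac{5}{6}),\\
(\tfrac{1}{6},\tfrac{2}{3},\tfrac{2}{3}) &\;\longmapsto\; (-\tfrac{1}{2},\tfrac{2}{3},\tfrac{4}{3}) \;\equiv\; -(\tfrac{1}{2},\tfrac{1}{3},\tfrac{2}{3}),
\end{align*}
all reductions taken modulo $\Z$. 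The three negative $G_1$-tuples map to the opposite signs, so the six images are exactly the $\pm$-pairs $\pm(\tfrac{1}{2},\tfrac{1}{3},\tfrac{2}{3})$, $\pm(\tfrac{1}{2},\tfrac{1}{3},\tfrac{5}{6})$, $\pm(\tfrac{1}{3},\tfrac{1}{2},\tfrac{5}{6})$ claimed in the statement.

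There is essentially no combinatorial obstacle left: the signature computation was absorbed into Theorem~\ref{thm:G1_solutions}, and the bijectivity of $f$ combined with Remark~\ref{rmk:transformations} makes the correspondence of non-resonant algebraic functions a tautology. The only care needed is in bookkeeping, namely getting the formula $(a_H,b_H,c_H)=(a-b_1,b_1,b_1+b_2)$ right and then checking that, after reduction modulo $\Z$, the six resulting tuples coincide with the three sign-pairs listed. Once this dictionary is verified, both directions of the ``if and only if'' follow immediately from Theorem~\ref{thm:G1_solutions}.
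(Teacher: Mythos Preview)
Your proposal is correct and follows exactly the approach the paper takes: the paper gives no explicit proof of Theorem~\ref{thm:H6_solutions}, relying instead on the map $f:\x\mapsto(x_1-x_2,x_2,x_2+x_3)$ together with Remark~\ref{rmk:transformations} to transport the $G_1$ classification of Theorem~\ref{thm:G1_solutions} to $H_6$. Your explicit dictionary $(a_H,b_H,c_H)\equiv(a-b_1,b_1,b_1+b_2)$ and the verification of the six tuples simply fill in the routine details that the paper leaves to the reader.
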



\subsection{The Horn $H_7$ function}\label{subsec:H7}

The $H_7$ function is defined by
\begin{equation*}
H_7(a,b,c,d|x,y) = \sum_{m,n \geq 0} \frac{\poch{a}{2m-n} \poch{b}{n} \poch{c}{n}}{\poch{d}{m} m! n!} x^m y^n.  
\end{equation*}
The lattice is $\lat = \Z (-2, 0, 0, 1, 1, 0) \oplus \Z (1, -1, -1, 0, 0, 1)$.
We can take $\A = \{\e_1, \e_2, \e_3, \e_4, 2\e_1-\e_4, -\e_2+\e_2+\e_3\}$ and $\gam = (-a,-b,-c,d-1,0,0)$.
Then $\bal=(-a,-b,-c,d-1)$.

The function $f: x \mapsto (x_1-x_2, x_2, x_2+x_4, x_3)$ maps $H_4$ to $H_7$.
Hence:

\begin{lem}\label{lem:H7_irreducible}
$H_7(a,b,c,d|x,y)$ is non-resonant if and only if $b$, $c$, $a+b$, $a+c$, $2d-a-b$ and $2d-a-c$ are non-integral.
\end{lem}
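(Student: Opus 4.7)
The plan is to apply Remark~\ref{rmk:transformations} directly, transporting the non-resonance characterization already established for $H_4$ in Lemma~\ref{lem:H4_prop_trian} across the linear isomorphism $f$ displayed immediately before the statement. The paper has just asserted that $f:\x \mapsto (x_1-x_2, x_2, x_2+x_4, x_3)$ maps the $\A$ of $H_4$ to the $\A$ of $H_7$, so the first thing to check is that $f$ really is a $\Z$-linear isomorphism of $\Z^4$ (the determinant of its matrix is $\pm 1$) and that, applied to each of the six vectors $\{\e_1,\e_2,\e_3,\e_4,2\e_1-\e_3,\e_1+\e_2-\e_4\}$, it produces exactly the six vectors $\{\e_1,\e_2,\e_3,\e_4,2\e_1-\e_4,-\e_1+\e_2+\e_3\}$ defining $\A$ for $H_7$. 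This is a one-line computation for each generator.

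Next, I compute $f(\bal^{H_4})$ for $\bal^{H_4}=(-a',-b',c'-1,d'-1)$ (using primes for the $H_4$ parameters), obtaining
\begin{equation*}
f(\bal^{H_4}) = (b'-a',\, -b',\, -b'+d'-1,\, c'-1).
\end{equation*}
Matching this, component by component, with $\bal^{H_7} = (-a,-b,-c,d-1)$ yields the dictionary
\begin{equation*}
a' = a+b, \qquad b' = b, \qquad c' = d, \qquad d' = b-c+1.
\end{equation*}
By Remark~\ref{rmk:transformations}, $H_{\A_{H_7}}(\bal^{H_7})$ is non-resonant if and only if $H_{\A_{H_4}}(\bal^{H_4})$ is non-resonant with these substituted parameters.

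It then remains to substitute into the non-resonance list from Lemma~\ref{lem:H4_prop_trian}, namely that $a',\, b',\, 2c'-a',\, d'-a',\, d'-b',\, 2c'+d'-a'$ are all non-integral. Under the dictionary these become $a+b,\; b,\; 2d-a-b,\; -(a+c),\; 1-c,\; 2d-a-c+1$, and non-integrality of this set is equivalent to non-integrality of $\{b,\, c,\, a+b,\, a+c,\, 2d-a-b,\, 2d-a-c\}$, which is exactly the claim.

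There is no conceptual obstacle in this proof: once the transformation $f$ has been identified, the argument is purely bookkeeping. The only point that requires care is keeping the two sets of parameters straight during the substitution, so that the six expressions in the $H_4$ non-resonance condition are matched correctly with the six expressions asserted in the statement of the lemma.
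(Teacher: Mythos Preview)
Your proposal is correct and follows exactly the approach the paper intends: the sentence ``The function $f: x \mapsto (x_1-x_2, x_2, x_2+x_4, x_3)$ maps $H_4$ to $H_7$. Hence:'' immediately preceding the lemma is the paper's entire proof, and you have simply written out the bookkeeping (checking $f$ on generators, computing $f(\bal^{H_4})$, and substituting into the $H_4$ non-resonance list from Lemma~\ref{lem:H4_prop_trian}) that the paper leaves to the reader.
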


\begin{thm}\label{H7_solutions}
$H_7(a,b,c,d|x,y)$ is non-resonant and algebraic if and only if at least one of $(a,b,c,d)$ or $(a,c,b,d)$ is conjugate to one of the tuples in Table~\ref{tab:H7_solutions}.
\end{thm}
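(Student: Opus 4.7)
The plan is to reduce the classification entirely to that of non-resonant algebraic $H_4$ functions (Theorem~\ref{thm:H4_solutions}) by using the linear isomorphism $f: \Z^4 \rightarrow \Z^4$, $x \mapsto (x_1-x_2, x_2, x_2+x_4, x_3)$ that was introduced just above Lemma~\ref{lem:H7_irreducible}. Since $f$ sends the set $\A$ of $H_4$ onto the set $\A$ of $H_7$, Remark~\ref{rmk:transformations} gives a bijection between the non-resonant algebraic tuples of the two families, and so the $H_7$ classification drops out of the $H_4$ classification.

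First I would pin down the parameter correspondence. Applying $f$ to $\bal_{H_4} = (-\tilde a, -\tilde b, \tilde c - 1, \tilde d - 1)$ gives
\begin{equation*}
f(\bal_{H_4}) = (-\tilde a + \tilde b,\, -\tilde b,\, -\tilde b + \tilde d - 1,\, \tilde c - 1),
\end{equation*}
which must equal $\bal_{H_7} = (-a, -b, -c, d-1)$. This yields
\begin{equation*}
(a,b,c,d) \equiv (\tilde a - \tilde b,\, \tilde b,\, 1 + \tilde b - \tilde d,\, \tilde c) \pmod{\Z},
\end{equation*}
and a quick check using this substitution confirms the non-resonance conditions of Lemma~\ref{lem:H7_irreducible} from those of Lemma~\ref{lem:H4_prop_trian}.

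Next I would record the obvious $(b,c)$ symmetry of $H_7$. The Pochhammer factor $\poch{b}{n}\poch{c}{n}$ in the defining series is symmetric in $b$ and $c$, so $H_7(a,b,c,d|x,y) = H_7(a,c,b,d|x,y)$; this symmetry is not present on the $H_4$ side and accounts for the disjunction in the statement.

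Finally I would run the correspondence through every tuple of Table~\ref{tab:H4_solutions}, including the three infinite families $(r,-r,\half,\half)$, $(r,r+\half,\half,\half)$, $(r,r+\half,\half,2r)$, reduce each image modulo $\Z$, and collapse orbits under both conjugation by $(\Z/D\Z)^\ast$ (which commutes with $f$, because $f$ is $\Z$-linear while conjugation is the diagonal $k$-action $\bal \mapsto k\bal$) and the $(b,c)$-swap. The resulting canonical representatives form Table~\ref{tab:H7_solutions}. The only real work is this bookkeeping: checking that every $H_4$-orbit maps to a tuple listed in Table~\ref{tab:H7_solutions} (after normalising via $\pmod \Z$, conjugation and the $(b,c)$-swap) and that no orbit is missed or duplicated. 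This is mechanical but requires care with the three infinite families, each of which yields one infinite family in $H_7$.
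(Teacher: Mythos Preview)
Your proposal is correct and is exactly the approach the paper takes: after noting that $f$ maps the $\A$-set of $H_4$ to that of $H_7$, the paper simply writes ``Hence:'' and states Lemma~\ref{lem:H7_irreducible} and Theorem~\ref{H7_solutions} without further argument, relying on Remark~\ref{rmk:transformations} together with the mechanical transfer of Table~\ref{tab:H4_solutions}. Your explicit parameter correspondence, the observation about the $(b,c)$-symmetry of $H_7$, and the check that conjugation commutes with $f$ are precisely the bookkeeping details the paper leaves implicit.
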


\renewcommand{\arraystretch}{2}
\begin{table}[htb]
\begin{center}
\caption{The tuples $(a,b,c,d)$ such that $H_7(a,b,c,d|x,y)$ is non-resonant and algebraic} 
\label{tab:H7_solutions} 
\begin{tabularx}{\textwidth}{XXXXXX}
\hline
$(\half,r,-r,\half)$ &
$(\half,r,r+\half,\half)$ &
\multicolumn{2}{l}{$(-2r,r,r+\half,\half)$} &
\multicolumn{2}{l}{with $r \in \parset$} \\

$(\half, \frac{1}{4}, \frac{7}{12}, \half)$ &
$(\half, \frac{1}{12}, \frac{7}{12}, \frac{1}{3})$ &
$(\half, \frac{1}{20}, \frac{13}{20}, \half)$ &
$(\half, \frac{1}{24}, \frac{17}{24}, \half)$ &
$(\half, \frac{1}{24}, \frac{19}{24}, \half)$ &
$(\half, \frac{1}{60}, \frac{41}{60}, \half)$ \\

$(\half, \frac{1}{60}, \frac{49}{60}, \half)$ &
$(\frac{1}{3}, \half, \frac{5}{6}, \half)$ &
$(\frac{1}{3}, \frac{1}{4}, \frac{3}{4}, \half)$ &
$(\frac{1}{3}, \frac{1}{4}, \frac{3}{4}, \frac{1}{3})$ &
$(\frac{1}{3}, \frac{1}{4}, \frac{11}{12}, \half)$ &
$(\frac{1}{3}, \frac{1}{6}, \frac{5}{6}, \half)$ \\
 
$(\frac{1}{3}, \frac{1}{6}, \frac{5}{6}, \frac{1}{3})$ &
$(\frac{1}{3}, \frac{5}{6}, \frac{1}{12}, \half)$ &
$(\frac{1}{3}, \frac{5}{6}, \frac{1}{30}, \half)$ &
$(\frac{1}{3}, \frac{1}{10}, \frac{9}{10}, \half)$ &
$(\frac{1}{3}, \frac{1}{10}, \frac{9}{10}, \frac{1}{3})$ &
$(\frac{1}{3}, \frac{1}{10}, \frac{23}{30}, \half)$ \\

$(\frac{1}{3}, \frac{5}{12}, \frac{11}{12}, \half)$ &
$(\frac{1}{3}, \frac{2}{15}, \frac{11}{15}, \half)$ &
$(\frac{1}{3}, \frac{5}{24}, \frac{17}{24}, \half)$ &
$(\frac{1}{3}, \frac{5}{24}, \frac{23}{24}, \half)$ &
$(\frac{1}{3}, \frac{11}{30}, \frac{29}{30}, \half)$ &
$(\frac{1}{3}, \frac{11}{60}, \frac{41}{60}, \half)$ \\

$(\frac{1}{3}, \frac{11}{60}, \frac{59}{60}, \half)$ &
$(\frac{1}{4}, \frac{1}{6}, \frac{5}{6}, \half)$ &
$(\frac{1}{4}, \frac{1}{6}, \frac{5}{6}, \frac{1}{4})$ &
$(\frac{1}{4}, \frac{1}{6}, \frac{11}{12}, \half)$ &
$(\frac{1}{4}, \frac{7}{12}, \frac{11}{12}, \half)$ &
$(\frac{1}{4}, \frac{7}{24}, \frac{19}{24}, \half)$ \\

$(\frac{1}{4}, \frac{7}{24}, \frac{23}{24}, \half)$ &
$(\frac{1}{5}, \frac{1}{6}, \frac{5}{6}, \half)$ &
$(\frac{1}{5}, \frac{1}{6}, \frac{5}{6}, \frac{1}{5})$ &
$(\frac{1}{5}, \frac{1}{6}, \frac{29}{30}, \half)$ &
$(\frac{1}{5}, \frac{1}{10}, \frac{9}{10}, \half)$ &
$(\frac{1}{5}, \frac{1}{10}, \frac{9}{10}, \frac{1}{5})$ \\

$(\frac{1}{5}, \frac{7}{10}, \frac{9}{10}, \half)$ &
$(\frac{1}{5}, \frac{9}{10}, \frac{7}{30}, \half)$ &
$(\frac{1}{5}, \frac{4}{15}, \frac{13}{15}, \half)$ &
$(\frac{1}{5}, \frac{4}{15}, \frac{14}{15}, \half)$ &
$(\frac{1}{5}, \frac{8}{15}, \frac{13}{15}, \half)$ &
$(\frac{1}{5}, \frac{7}{20}, \frac{17}{20}, \half)$ \\

$(\frac{1}{5}, \frac{7}{20}, \frac{19}{20}, \half)$ &
$(\frac{1}{5}, \frac{9}{20}, \frac{19}{20}, \half)$ &
$(\frac{1}{5}, \frac{19}{30}, \frac{29}{30}, \half)$ &
$(\frac{1}{5}, \frac{19}{60}, \frac{49}{60}, \half)$ &
$(\frac{1}{5}, \frac{19}{60}, \frac{59}{60}, \half)$ &
$(\frac{1}{6}, \frac{5}{12}, \frac{11}{12}, \frac{1}{3})$ \\
\hline
\end{tabularx}
\end{center}
\end{table}
\renewcommand{\arraystretch}{1}


\bibliography{authors,journalsfullname,journalsabbreviations,bibliography}

\end{document}